\pgfplotsset{compat=1.5}
\theoremstyle{plain}
\newcommand{\N}{\mathbb{N}}
\newcommand{\R}{\mathbb{R}}
\newcommand{\Z}{\mathbb{Z}}
\newcommand{\Q}{\mathbb{Q}}
\newcommand{\Ind}{\bold{1}}
\newcommand{\quand}{\quad\text{and}\quad}
\newcommand{\squand}{\ \text{and}\ }
\newcommand{\quif}{\quad\text{if}\quad}
\newcommand{\ssi}{\quad\text{if and only if}\quad}
\newcommand{\lf}{\lfloor}
\newcommand{\rf}{\rfloor}
\newcommand{\ent}[1]{\left\lfloor #1\right\rfloor}
\newtheorem{theorem}{Theorem}[section]
\newtheorem{corollary}[theorem]{Corollary}
\newtheorem{lemma}[theorem]{Lemma}
\newtheorem{proposition}[theorem]{Proposition}
\theoremstyle{definition}
\newtheorem{definition}[theorem]{Definition}
\newtheorem{remark}[theorem]{Remark}
\newtheorem{example}[theorem]{Example}
\title{Rotation number and dynamics of 3-interval piecewise $\lambda$-affine contractions.}
\author{P. Guiraud, M. Hern\'andez, A. Meyroneinc and A. Nogueira}
\date{}
\begin{document}
\maketitle


\begin{abstract}
We consider a  family of  piecewise contractions admitting a rotation number and 
defined for every $x\in[0,1)$ by $f(x)=\lambda x + \delta + d \theta_a(x) \pmod 1$, where $\lambda\in(0,1)$, $d\in(0,1-\lambda)$, $\delta\in[0,1]$, $a\in[0,1]$ and  $\theta_a(x)=1$ if $x\geq a$ and $\theta_a(x)=0$ otherwise. 
In the special case where $a=1$, the family reduces to the well studied ``contracted rotations" $x\mapsto \lambda x + \delta \pmod 1$, which  are  2-interval piecewise $\lambda$-affine  contractions when $\delta\in(1-\lambda,1)$.
Considering $a\in(0,1)$ allows maps with an additional discontinuity, that is, $3$-interval piecewise $\lambda$-affine  contractions.  
Supposing $\lambda$ and $d$ fixed, for any $\rho\in(0,1)$ and $\alpha\in[0,1]$, we provide the values of the parameters $\delta$ and $a$ for which the corresponding map has rotation number $\rho$, and a symbolic dynamics containing that of the rotation $R_\rho:[0,1)\to[0,1)$ of angle $\rho$ with respect to the partition given by the positions of $1-\rho$ and $\alpha$ in $[0,1)$.
This enables in particular to determine the maps that have a given number of periodic orbits of an arbitrary period, or a Cantor set attractor supporting a dynamics of a given complexity.
\end{abstract}

\noindent
{\bf 2020 Mathematics Subject Classification:}
37E05, 37E10, 37E45, 37B10,  37G35.

\noindent
{\bf Key words:} Piecewise contraction, rotation number, periodic orbit, Cantor set, symbolic dynamics.

\section{Introduction}
An interval map $f:I\to I$ is a piecewise contraction if there exist $\lambda\in(0,1)$ and a partition of $I$ in  intervals $I_1,I_2,\dots,I_N$, such that  $|f(x)-f(y)|\leq\lambda |x-y|$ for every  $x,y\in I_i$ and $i\in\{1,\dots,N\}$. 
During the last decade, several results on the asymptotic dynamics of these maps have been obtained: we can mention the existence of a decomposition of their attractor into periodic and Cantor sets components \cite{CCG20}, sharp bounds on the number of their periodic orbits \cite{B06,CCG20,GN22,NP15, NPR18},  and the possible complexity functions of their symbolic dynamics \cite{CGM20,P19}.
These results give  the general characteristics  of the asymptotic dynamics of interval piecewise contractions.

In theory, given a piecewise contraction, the number of periodic and Cantor set components of its attractor, as well as the complexity of its symbolic dynamics, can be estimated by analyzing  the recurrence properties of the set of its discontinuities \cite{CCG20,CGM20}.
In general,  there is no explicit criterion relating directly  the parameters
of the map  (such as the positions of the discontinuities or the contraction rate  on each subinterval) to the exact structure and symbolic dynamics of its attractor. But when such a criterion can be established, it facilitates the detailed description of the dynamics as a function of the parameters, as well as the construction of examples whose attractor has some desired characteristics.
It is the case   for  the 2-interval piecewise $\lambda$-affine contraction, often called contracted rotation, defined for any  
$x\in[0,1)$ by 
\begin{equation}\label{2PCM}
f(x)=\left\{\begin{array}{lcl}
\lambda x + \delta &\text{if}& 0\leq x<\eta\\
\lambda x + \delta -1 &\text{if}& \eta\leq x<1
\end{array}
\right.,
\end{equation}
where $\lambda\in(0,1)$, $\delta\in(1-\lambda,1)$ and $\eta=(1-\delta)/\lambda$.
The following results, relating the parameters values of \eqref{2PCM}  to 
the properties of its attractor, have been proved in \cite{B93,BC99,Coutinho1999,JO19,LN18} by establishing explicit links between  the rotations  and \eqref{2PCM}.
The map $f$ admits a rotation number (see \eqref{DEFRN}), that  is equal to $\rho\in(0,1)$ if and only if 
\begin{equation}\label{DELTA2PCM}
\delta\in[\delta(\rho^-),\delta(\rho)],\quad\text{where}\quad
\delta(\rho)=(1-\lambda)\left(1+\frac{1-\lambda}{\lambda}\sum_{k=1}^{\infty}\lambda^k\ent{k\rho}\right)
\end{equation}
and $\delta(\rho^-)$ is the left-sided limit of $\delta(\cdot)$ at $\rho$. 
When $\delta$ satisfies \eqref{DELTA2PCM}  for some $\rho\in(0,1)$ the asymptotic dynamics of $f$ has the following properties.
If $\rho\notin\Q$, then the closure of the attractor $\Lambda:=\bigcap_{n\in\N}f^n([0,1))$ of $f$ is a Cantor set. If $\rho=p/q\in\Q$ and $\delta\neq\delta(\rho)$, then $\Lambda$ is a $q$-periodic orbit.
In both cases, the symbolic dynamics of $f$ on the set $\Lambda$, with respect to the natural partition $\{[0,\eta),[\eta,1)\}$, is the same as that of the rotation $R_\rho$ of angle $\rho$ with respect to the partition $\{[0,1-\rho),[1-\rho,1)\}$. 
Besides, for irrational rotation number, the map restricted to the closure of $\Lambda$ is semi-conjugate to the rotation. 

The map \eqref{2PCM} has at most one periodic orbit and if its attractor is a Cantor set the code of any orbit on this set has the Sturmian complexity $p(n)=n+1$ of the irrational rotation.
This illustrates the recent theoretical results mentioned earlier. 
On the one hand, injective piecewise contractions with $N-1$ discontinuities have at most $N$ periodic orbits \cite{NP15}. If the map is right continuous and piecewise increasing, such as \eqref{2PCM}, its number of periodic orbits is bounded by $N-l$, where $l$ is the number of discontinuities whose image is equal to 0 (see \cite{GN22}).
On the other hand, for injective piecewise contractions, the complexity function of the code of any orbit (with respect to the natural partition) is an eventually affine function of slope at most $N-1$ \cite{CGM20,P19}. 
It follows that in order to observe a richer dynamics than that of \eqref{2PCM} in the class of injective piecewise contractions, it is necessary to consider maps with more discontinuities.

It is known that the bounds on the number of periodic orbits and on the growth rate of the complexity functions are sharp. 
The sharpness of the first bound is proved by exhibiting a map  with $N-1$ discontinuities and $N$ fixed points.
The proof of the sharpness of the second one is harder, since maps with a non periodic
attractor are exotic \cite{G25,GP, GN22,JL25,JO19,NPR18, P20}. 
For instance, in \cite{CGM20} the map \eqref{2PCM} is used as a base case to prove by induction the existence for any $N$ of maps with $N-1$ discontinuities and maximal complexity.
Alternatively, in \cite{P19} the proof stands on showing that for any minimal interval exchange transformation  there exists an injective piecewise contraction that is semi-conjugate to it. 
The family of the contracted rotations provides examples showing that the bounds are 
sharp for $N=2$. But for $N\geq 3$ arbitrary, examples with the maximal number of periodic orbits of any period or with a Cantor set attractor of maximal complexity remain difficult to construct. More generally, it is  still missing a parametric family for which one can observe and control the features of the asymptotic dynamics arising when the number of discontinuities is greater than one.

The  criterion \eqref{DELTA2PCM} allows on the one hand  a detailed description of the attractor of the map \eqref{2PCM} as a function of the parameter $\delta$. On the other hand, by choosing $\rho$ adequately, \eqref{DELTA2PCM} enables  to construct examples having a stable periodic orbit of an arbitrary period, or a Cantor set attractor, in both cases with the symbolic dynamics of the corresponding rotation. 
In this paper, we show that it is possible to generalize methods developed   for \eqref{2PCM} in \cite{Coutinho1999} to a larger family of maps having up to two discontinuities.
For this family, we establish a criterion like \eqref{DELTA2PCM}, which relates directly the values of the parameters to some specific properties of the attractor.
In particular, this enables  to generate a wide set of examples with $N-1=2$ discontinuities for which the theoretical bounds on the number of periodic orbits and complexity are reached.

The family we consider is constructed as follows. We can verify that the map \eqref{2PCM} 
admits a lift $F_1:\R\to\R$ defined  by 
\begin{equation}\label{L2PCM}
F_1(x)=\lambda x + \delta + (1-\lambda)\lf x \rf \quad\forall x\in\R,
\end{equation}
where $\ent{x}$ and $\{x\}$ are the integer part and the fractional part of $x$. In order to give rise to  3-interval generalizations of \eqref{2PCM}, 
we let $F:\R\to\R$ be defined  by 
\begin{equation}\label{LIFT}
F(x)=\lambda x + \delta + (1-\lambda)\lf x \rf + d\theta_a(\{x\})\quad\forall x\in\R,
\end{equation}
where $\lambda\in(0,1)$, $d\in(0,1-\lambda)$, $\delta,a\in[0,1]$ and $\theta_a$ is the step function defined by 
\begin{equation}\label{THETAA}
\theta_a(z)=\left\{\begin{array}{lcr}
0 &\text{if}& z<a\\
1 &\text{if}& z\geq a
\end{array}
\right.
\quad\forall z\in\R.
\end{equation}
We are interested in  the map defined by
\begin{equation}\label{PROJ}
f(x)=F(x)\pmod 1\quad\forall x\in[0,1).
\end{equation}
In the particular case where $a=1$ (or $d=0$), the map $f$ is given by \eqref{2PCM}.  For $a\in(0,1)$ the map $F$ has 2 discontinuities in $[0,1)$. But depending on the value of $\delta\in(1-\lambda-d,1)$, as an interval map $f$ can have 1 or 2 discontinuities and therefore be a 2-interval or a 3-interval piecewise contraction (see Section \ref{PMR} for an exhaustive study).
We can verify that $F(x+1)=F(x)+1$ for all $x\in\R$ and that $F$ is a lift for $f$. Also, $f$ admits a rotation number 
\begin{equation}\label{DEFRN}
\rho_f:=\lim_{n\to\infty}\frac{F^n(x)}{n} \pmod 1,
\end{equation}
since $F$ is monotonically increasing for $d\in(0,1-\lambda)$, which implies that \eqref{DEFRN} exists and does not depend on $x$, see \cite{Brette,RT86}.

In this paper, for each $\lambda$ and $d$ fixed, we give  the expression of  subsets $\mathcal{P}_{\rho,\alpha}$ of $[0,1]^2$, whose  union over $\alpha\in[0,1]$ provides all the values of 
$(\delta,a)$ for which  the rotation number of $f=f_{\delta,a}$ is $\rho\in(0,1)$.
The number $\alpha$ allows to control additional features of $f_{\delta,a}$ such as its symbolic dynamics, its number of periodic orbits, and the relative position of $a$ with respect to the other discontinuity. 
Indeed, 
we show that 
when $(\delta,a)$ belongs to $\mathcal{P}_{\rho,\alpha}$, the relation  $f_{\delta,a} \circ \phi_{\delta,\rho,\alpha} = \phi_{\delta,\rho,\alpha} \circ R_\rho$
holds for an explicit function $\phi_{\delta,\rho,\alpha}:[0,1)\to\Lambda=\bigcap_{n\in\N}f_{\delta,a}^n([0,1))$ which is
right continuous and non-decreasing.
It allows to prove that the symbolic dynamics of $f_{\delta,a}$ contains that of the rotation $R_\rho$, with respect to the partition in three (or two) intervals given by $1-\rho$ and $\alpha$. 
If $\{\alpha\}$ does not belong to the  orbit (backward and forward) of $1-\rho$ by $R_\rho$, then $\Lambda$ is the image of $\phi_{\delta,\rho,\alpha}$ and either it  is 
the union of 2 periodic orbits ($\rho$ rational) or its closure is a Cantor set ($\rho$ irrational) supporting a dynamics of complexity $p(n)=2n+1$. Otherwise, $\Lambda$ supports a dynamics of any eventually affine complexity (of slope 0 or 1) determined by $\rho$ and the position of $\{\alpha\}$ in the orbit of $1-\rho$.

As a consequence, we provide the values of the parameters $\delta$ and $a$ 
for which the maps \eqref{PROJ} have a given number of periodic orbits of an arbitrary period or a Cantor set attractor with a given affine  complexity:
to obtain a map $f_{\delta,a}$ whose attractor has some desired properties, it is enough to choose $\rho$ and $\alpha$ accordingly and to pick $(\delta,a)$ in the set $\mathcal{P}_{\rho,\alpha}$. The rotation number $\rho$ alone determines only the Cantor set or periodic nature of the attractor. Together, $\rho$ and $\alpha$  set the symbolic dynamics, that in turn determines the number of periodic orbits and the complexity.

We trust that our results can be generalized  to $N$-interval piecewise $\lambda$-affine contractions, provided they admit an increasing lift. 
It would give the parameters of the maps  
having any prescribed rotation number, number of periodic orbits and complexity, within the admissible values of these quantities in such a family. 
Also, every injective piecewise contraction of $N$ intervals is topologically conjugate to a piecewise affine contraction of $N$ intervals 
whose slopes are  1/2 in absolute value \cite{NP15}.
Therefore, an extension  of our work to $N$-interval piecewise $1/2$-affine contractions would also be of interest for the study of general order-preserving injective piecewise contractions as initiated in  \cite{B06}.
The approach used  in the present paper consists in establishing explicit relations between some interval exchange transformations, namely the rotations,
and the piecewise contractions \eqref{PROJ}. The study of injective maps with some decreasing branches, may require to consider more general interval exchange transformations, such as those with flips, as it is the case in \cite{FP20}.

The results on \eqref{2PCM} have been a starting point for further analysis concerning the relation between the algebraic properties of the parameters $\lambda$ and $\delta$ 
and the rational nature of the rotation number of the map \cite{LN18}. 
Also,  the transcendental nature of the elements of the Cantor attractors of the maps \eqref{2PCM} having as slope  the reciprocal of an integer has been established \cite{BuKLN}.
On the other hand, the map \eqref{2PCM}  appears in the study of geodesics of homothety surfaces \cite{BS20}, period adding bifurcations \cite{GAK17}, voting methods \cite{JO19}, biological systems  \cite{CFLM, NS72}, electronics \cite{FC91} and finance \cite{MSG}.
This is why we think that the present work, that extends the framework and results on \eqref{2PCM},  can  serve as a basis for further theoretical studies and will also be of interest for applied  fields.

The paper is organized as follows. In Section \ref{PMR}, we give the possible forms of the map \eqref{PROJ} and present the main results of the paper. In Section \ref{CGPP}, we show how to construct the function $\phi_{\delta,\rho,\alpha}$ which relates a rotation to maps of the family \eqref{PROJ} and we give its principal properties. In Sections \ref{PTH1}, \ref{PP}, \ref{PTH2} and \ref{PTH3} we provide the proofs of the main results. Finally, 
in Appendix \ref{APPA} we gather some auxiliary and complementary materials.

\section{Preliminaries and main results}\label{PMR}

\subsection{The different projections of $F$}

In this paper, we always assume that $\lambda\in(0,1)$ and $d\in(0,1-\lambda)$ are fixed. Now, according to the values of the parameters $\delta$ and $a\in[0,1]$, the map \eqref{PROJ} has different forms that we explicit in the following.

We start by noticing that for $\delta\in[0,1]\setminus(1-\lambda-d,1)$, the map $f$ has one or two fixed points for any value of $a\in[0,1]$. This implies that $\rho_f=0$ and that the $\omega$-limit set of any point is one of these fixed points. For $\delta\in(1-\lambda-d,1)$ the map can also have a fixed point, but for values of $a$ that now depend on $\delta$, see \eqref{F1} and \eqref{F2} bellow. Hence, we will suppose $\delta\in(1-\lambda-d,1)$ and let $a\in[0,1]$. 

So, let $\delta\in(1-\lambda-d,1)$ and consider the quantities
\begin{equation}\label{DETA1ETA2}
\eta_1:=\frac{1-\delta-d}{\lambda}\quand \eta_2:=\frac{1-\delta}{\lambda},
\end{equation}
which are the only two possible solutions of $F(x)=1$. Then, we obtain three different types for $f$  by letting $a\in[0,\eta_1)$, $a\in[\eta_1,\eta_2]$ or $a\in(\eta_2,1]$. The first type exists  if $\eta_1>0$ ($\delta<1-d$) and the third one  if $\eta_2<1$ ($\delta>1-\lambda$). 
As $\eta_1< 1$ and $0<\eta_2$, there is always some $a\in[0,1]\cap[\eta_1,\eta_2]$ and the second type of map exists for any $\delta\in(1-\lambda-d,1)$. 
Now, we can verify  that the three sets $M_1,M_2$ and $M_3$ defined by 
\begin{align*}
M_1:=&\{(\delta,a): \delta\in(1-\lambda-d,1-d)\squand a\in[0,\eta_1)\},\\
M_2:=&\{(\delta,a): \delta\in(1-\lambda-d,1)\squand a\in[0,1]\cap[\eta_1,\eta_2]\},\\
M_3:=&\{(\delta,a): \delta\in(1-\lambda,1)\squand a\in(\eta_2,1]\},
\end{align*}
form a partition of the set $M:=(1-\lambda-d,1)\times[0,1]$ of the values of $(\delta,a)$ considered in this paper, see Figure \ref{FPARAM}.
\begin{figure}[h]
\includegraphics[scale=0.7]{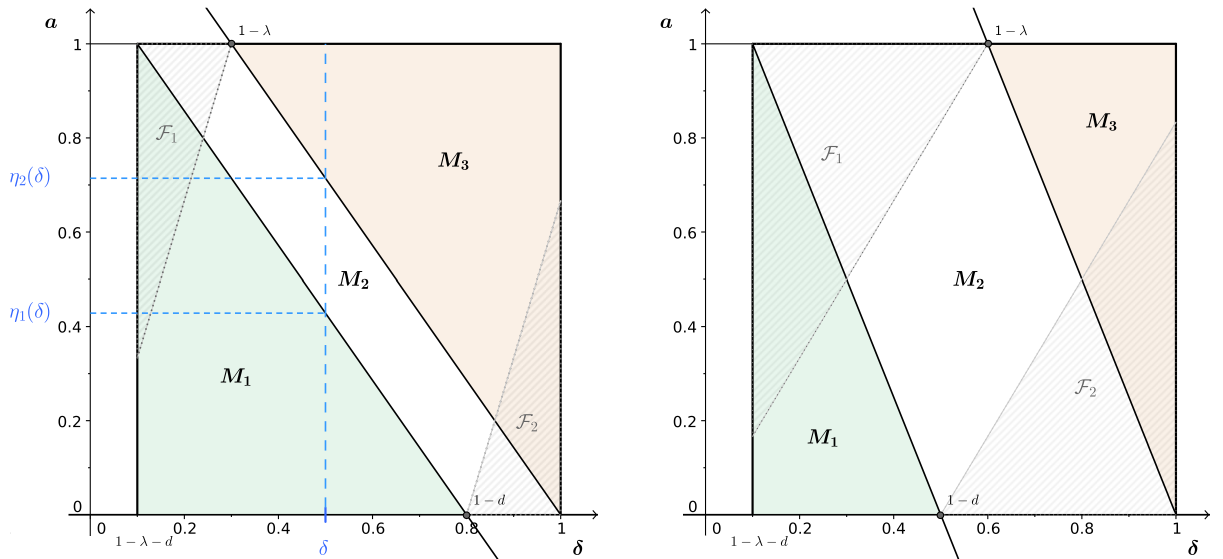} 
\centering
\caption{
 Partition of the set of the parameters $M$ for $\lambda=0.7$ and $d=0.2$ (left), and for $\lambda=0.4$ and $d=0.5$ (right). The dashed regions are the values of parameters $\mathcal{F}_1$
 and $\mathcal{F}_2$ for which the map has a fixed point, see \eqref{F1} and \eqref{F2}. 
}\label{FPARAM}
\end{figure}

The types $M_1$ and $M_3$ are of particular interest since they correspond to 
$3$-interval piecewise contractions. Indeed, for any $(\delta,a)\in M_1$ the map \eqref{PROJ} is given by
\begin{equation}\label{MAP2}
f(x)=\left\{\begin{array}{lcl}
\lambda x + \delta &\text{if}& 0\leq x<a\\
\lambda x + \delta +d &\text{if}& a\leq x<\eta_1\\
\lambda x + \delta + d -1 &\text{if}& \eta_1\leq x<1
\end{array}
\right..
\end{equation}
Also, for any $(\delta,a)\in M_3$ the map \eqref{PROJ} is given by 
\begin{equation}\label{MAP}
f(x)=\left\{\begin{array}{lcl}
\lambda x + \delta &\text{if}& 0\leq x<\eta_2\\
\lambda x + \delta -1 &\text{if}& \eta_2\leq x<a\\
\lambda x + \delta - 1 + d  &\text{if}& a\leq x<1
\end{array}
\right..
\end{equation}
We note that for $a=1$ the map \eqref{MAP} is  $x\mapsto\lambda x +\delta\pmod 1$, that is, the contracted rotation \eqref{2PCM}. In the same way, for $a=0$ the map \eqref{MAP2} is the 2-interval piecewise contraction  defined by $x\mapsto\lambda x +\delta+d\pmod 1$.  
Other 2-interval piecewise contractions  are obtained by letting $(\delta,a)\in M_2$. In this case \eqref{PROJ} writes as 
\begin{equation}\label{MAP3}
f(x)=\left\{\begin{array}{lcr}
\lambda x + \delta &\text{if}& 0\leq x<a\\
\lambda x + \delta + d -1 &\text{if}& a\leq x<1
\end{array}
\right..
\end{equation}
The graphs of the different projections of \eqref{LIFT}  are given in Figure \ref{FGRAPH}.
\begin{figure}[h]
\includegraphics[width=0.98\textwidth]{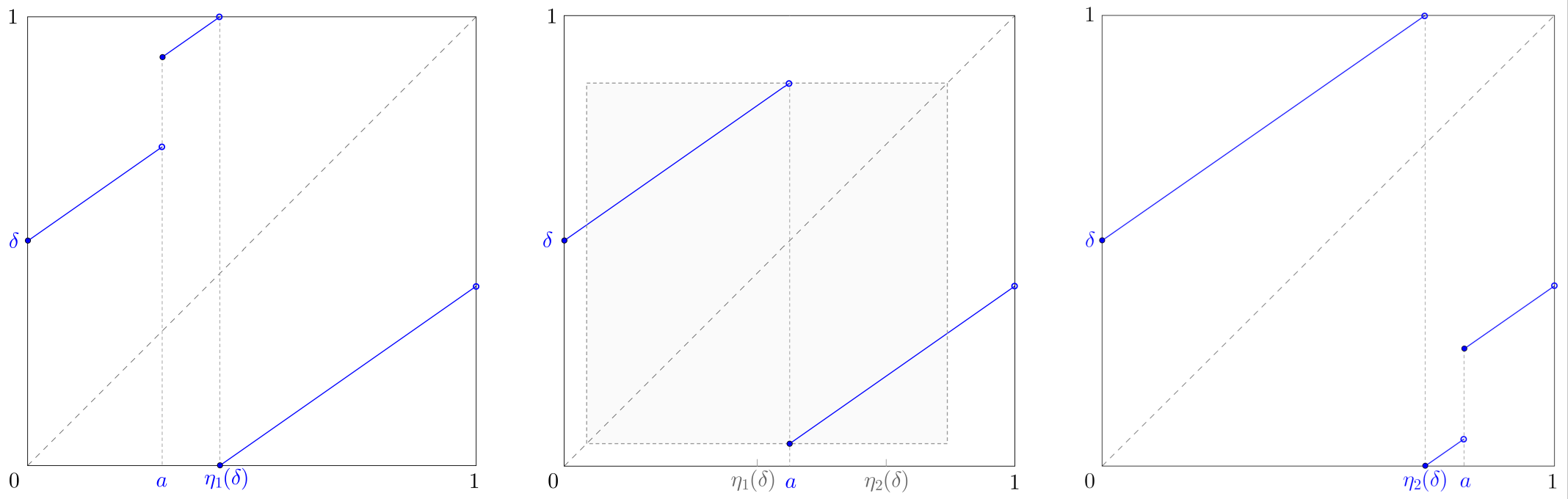}
\centering
\caption{
From left to right, an example of map of the type \eqref{MAP2}, \eqref{MAP3} and \eqref{MAP}. 
All the examples have in common the parameter values $\lambda=0.7$, $d=0.2$ and $\delta=0.5$. They correspond to three values of $(\delta, a)$ taken on the vertical dashed line  
$\delta=0.5$ of  Figure \ref{FPARAM} at $a=0.3$ for the leftmost, $a=0.5$ for the middle, and $a=0.8$ for the rightmost example.}
\label{FGRAPH}
\end{figure}

As mentioned earlier, some maps given by the set $M$ can have a (ghost) fixed point $x_*$. They are the maps such that $(\delta,a)\in\mathcal{F}_1\subset (M_1\cup M_2)$, where 
\begin{equation}\label{F1}
\mathcal{F}_1:=\left\{(\delta,a) : \delta\in(1-\lambda-d,1-\lambda]\quand a\in \left[\frac{\delta}{1-\lambda},1\right]\right\}\quad\text{with}\quad x_*=\frac{\delta}{1-\lambda},
\end{equation}
and such that $(\delta,a)\in\mathcal{F}_2\subset (M_2\cup M_3)$, where 
\begin{equation}\label{F2}
\mathcal{F}_2:=\left\{(\delta,a) : \delta\in[1-d,1)\quand a\in \left[0,\frac{\delta+d-1}{1-\lambda}\right]\right\}\quad\text{with}\quad x_*=\frac{\delta+d-1}{1-\lambda}.
\end{equation}
The sets $\mathcal{F}_1$ and $\mathcal{F}_2$ are shown  in Figure \ref{FPARAM}.
We say that $x_*$ is a ghost fixed point if $x_*=f(x^-_*)$ but  $x_*\neq f(x_*)$ or $x_*\notin f([0,1))$. Such a point exists only for the maps of  $\mathcal{F}_1$ for which  $x_*\in\{a,1\}$. In our case, a ghost fixed point is not in the image of the map, but the orbits accumulate on it and the map has zero rotation number. So, the sets $\mathcal{F}_1$ and $\mathcal{F}_2$ gather all the maps of $M$ with zero rotation number. 

\begin{remark} Note that, unless $a\in\{0,1\}$,  when seen in the circle the maps defined by the set $M$ have two discontinuities,  one at $x=0$ and the other one at $x=a$. However, as shown in Figure \ref{FGRAPH}, the map \eqref{MAP3} has an attracting invariant interval, namely $[f(a),f(a^-))$. Its restriction to this interval can be renormalized with an affine transformation to obtain the map  \eqref{2PCM}, which has one discontinuity less. 
The dynamical properties of this map can be deduced from those of \eqref{2PCM}. On the opposite, the maps \eqref{MAP2} and \eqref{MAP} can be seen as renormalizations of some maps with a discontinuity jump smaller than 1, and that are not necessarily orientation preserving, see Figure \ref{RENORM}. So, \eqref{MAP2} and \eqref{MAP} are also of interest to study some 3-interval piecewise contractions, with a discontinuity jump smaller than 1 and three discontinuities when seen in the circle.
\end{remark}

\begin{figure}[h]
\includegraphics[width=0.7\textwidth]{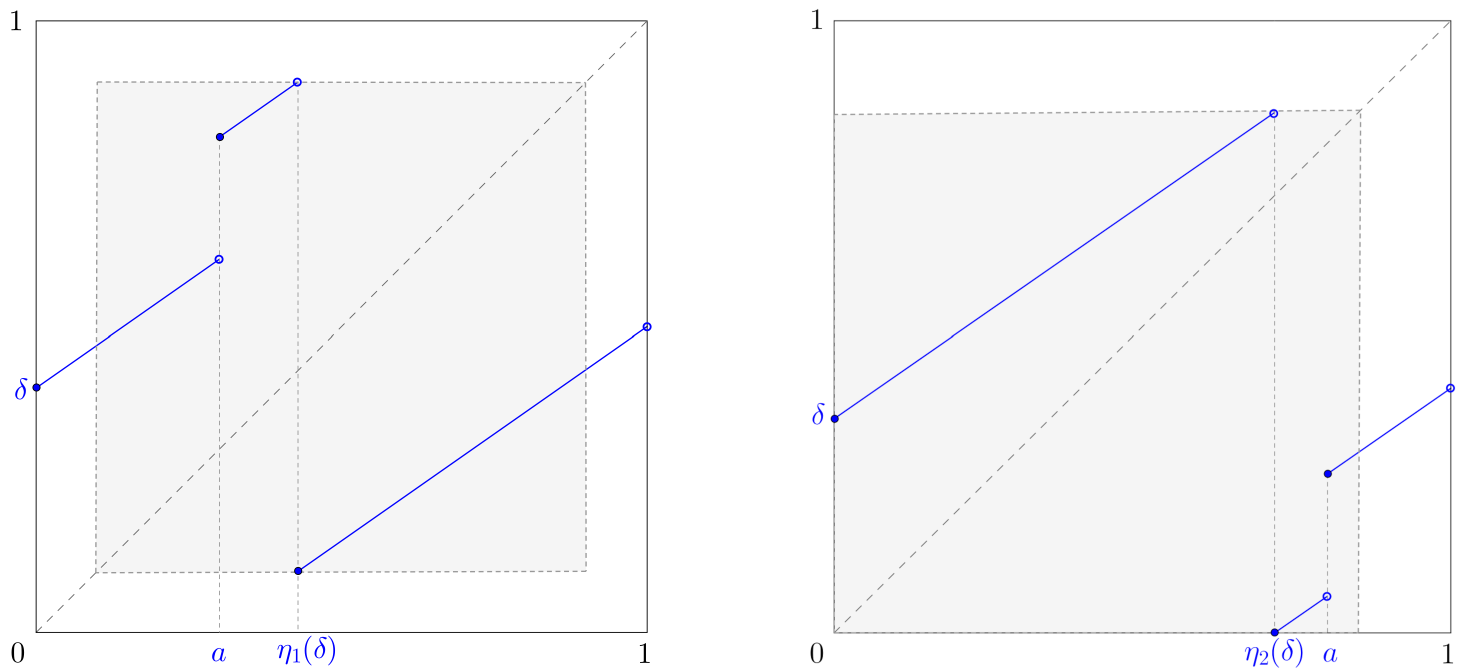} 
\centering
\caption{Examples of circle maps with three discontinuities which have a renormalization belonging to the model $M_1$ (left) and $M_3$ (right). 
}\label{RENORM}
\end{figure}

\subsection{Main results}\label{MAIN}

Our results  rely on the study of the two following functions.
\begin{definition} Let $\lambda\in(0,1)$ and $d\in(0,1-\lambda)$.
For any $\rho\in[0,1]$ and $\alpha\in[0,1]$, let 
$\delta(\rho,\alpha)$ and $a(\delta,\rho,\alpha)$ be defined by
\[
\delta(\rho,\alpha)=1-\lambda-d+\frac{1-\lambda}{\lambda}\sum_{k=1}^{\infty}\lambda^k\left((1-\lambda)\lf k\rho\rf+d\theta_{1-\alpha}(\{k\rho\})\right)
\]
and
\[
a(\delta,\rho,\alpha)=\frac{\delta}{1-\lambda}+\frac{1}{\lambda}
\sum_{k=1}^{\infty}\lambda^k\left((1-\lambda)\ent{\alpha-k\rho}+d\theta_{\alpha}(\{\alpha-k\rho\})\right)\quad\forall\delta\in[0,1].
\]
\end{definition}
Both series are uniformly convergent in any bounded set of values of $(\rho,\alpha)$. In particular, they admit a left-sided and right-sided limit at any value of $\rho\in(0,1)$, that we denote by
\[
\delta(\rho^-,\alpha):=\lim_{r\to\rho^-}\delta(r,\alpha)\quand a(\delta,\rho^+,\alpha):=\lim_{r\to\rho^+}a(\delta,r,\alpha).
\]
This notation will also be used for other functions appearing in the paper. We denote by $\Ind_{A}:\R\to\{0,1\}$  the indicator function of a set $A\subset\R$, defined for every $x\in\R$ by $\Ind_{A}(x)=1$ if $x\in A$ and $\Ind_{A}(x)=0$, otherwise.

We begin our study with the following Theorem \ref{THROT}, which allows us to find values of $\delta$ and $a$  for which the map \eqref{PROJ} has some prescribed rotation number.

\begin{theorem}\label{THROT} Suppose $\lambda\in(0,1)$ and $d\in(0,1-\lambda)$. Let  $\rho\in(0,1)$ and $\alpha\in[0,1]$. Then, for every  $\delta\in(1-\lambda-d,1)$ and $a\in[0,1]$ such that
\begin{equation}\label{CONDTH}
\delta\in[\delta(\rho^-,\alpha),\delta(\rho,\alpha)]
\quand
a\in[a(\delta,\rho^+,\alpha),a(\delta,\rho,\alpha)] 
\end{equation}
the rotation number of the map $f$ defined in \eqref{PROJ}  is $\rho$. Moreover,  
\begin{enumerate}
\item We have $[\delta(\rho^-,\alpha),\delta(\rho,\alpha)]\subset(1-\lambda-d,1)$, and if $\delta$ satisfies \eqref{CONDTH}, then $[a(\delta,\rho^+,\alpha),a(\delta,\rho,\alpha)]\cap[0,1]\neq\emptyset$ with $[a(\delta,\rho^+,\alpha),a(\delta,\rho,\alpha)]\subset [0,1]$ if $\alpha\in(0,1)$ or $\rho\notin \Q$.

\item If $\rho\in(0,1)\setminus\Q$, then 
\[
\delta(\rho^-,\alpha)=\delta(\rho,\alpha) \quand a(\delta,\rho,\alpha)=a(\delta,\rho^+,\alpha)\quad\forall\delta\in[0,1]
\]
if $k\rho\notin\Z+\alpha$ for every $k\in\Z^*$ and otherwise 
either $\delta(\rho^{-},\alpha)<\delta(\rho,\alpha)$ or $a(\delta,\rho^+,\alpha)<a(\delta,\rho,\alpha)$.

\item If $\rho=p/q$, where $0<p<q$ are co-prime, then
\begin{equation}\label{GAPSDRAT}
\delta(\rho,\alpha)-\delta(\rho^-,\alpha)\geq\frac{\lambda^{q-1}}{1-\lambda^q}(1-\lambda)(1-\lambda-d+d\Ind_{\{0,1\}}(\alpha)),
\end{equation}
and
\begin{equation}\label{GAPSARAT}
a(\delta,\rho,\alpha)-a(\delta,\rho^+,\alpha)\geq\frac{\lambda^{q-1}}{1-\lambda^q}((1-\lambda-d)\Ind_{\{0,1\}}(\alpha)+d)\quad\forall\delta\in[0,1].
\end{equation}
Moreover,  \eqref{GAPSDRAT} and \eqref{GAPSARAT} are equalities if and only if $rp/q\notin\Z+\alpha$ for every $r\in\{1,\dots,q-1\}$.
\end{enumerate}
\end{theorem}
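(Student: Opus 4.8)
\emph{Proof proposal.} The plan splits into two independent parts: deriving $\rho_f=\rho$ from a semi-conjugacy with the rotation $R_\rho$, and then establishing the three numbered statements by a direct analysis of the series defining $\delta(\cdot,\alpha)$ and $a(\delta,\cdot,\alpha)$. For the first part I would use the function $\phi_{\delta,\rho,\alpha}$ constructed in Section~\ref{CGPP}: for $(\delta,a)$ in the range \eqref{CONDTH} it is right-continuous and non-decreasing, its lift $\Phi$ satisfies $\Phi(x+1)=\Phi(x)+1$, and it intertwines the translation $\mathrm{id}+\rho$ with $F$, i.e. $F\circ\Phi=\Phi\circ(\mathrm{id}+\rho)$ on $\R$. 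Iterating gives $F^n(\Phi(x))-\Phi(x)=\Phi(x+n\rho)-\Phi(x)$, and monotonicity together with $\Phi(y+1)=\Phi(y)+1$ forces this quantity to lie between $\ent{n\rho}$ and $\ent{n\rho}+1$; dividing by $n$ and letting $n\to\infty$ gives $\rho_f=\rho$. In substance, Section~\ref{CGPP} shows that $\delta(\rho,\alpha)$ and $a(\delta,\rho,\alpha)$ are precisely the thresholds at which the itinerary of the distinguished orbit of $f$, relative to the discontinuities $0$ and $a$, matches the itinerary of the $R_\rho$-orbit of $\alpha$ relative to the partition of $[0,1)$ given by $1-\rho$ and $\alpha$; this itinerary, and hence the rotation number, is locally constant in $(\delta,a)$ and therefore constant on the whole rectangle \eqref{CONDTH}. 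I expect the identification of \eqref{CONDTH} — in particular the $\delta$-dependent window for $a$ — as the exact locus where $f$ reproduces the $R_\rho$-itinerary to be the main work, and to be exactly what the explicit form and the monotonicity of $\phi_{\delta,\rho,\alpha}$ are for.

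\emph{Item 1.} From $0\le\ent{k\rho}\le k-1$ and $0\le\theta_{1-\alpha}(\{k\rho\})\le1$, summing the resulting geometric series yields $1-\lambda-d\le\delta(\rho,\alpha)\le1$, with both inequalities strict because for $\rho\in(0,1)$ there are infinitely many $k$ with $\ent{k\rho}\ge1$ and infinitely many with $\ent{k\rho}<k-1$. Next, $r\mapsto(1-\lambda)\ent{kr}+d\theta_{1-\alpha}(\{kr\})$ is non-decreasing — at $kr\in\Z$ the floor jumps by $1$ and the $\theta$-term by $-d\,\Ind_{(0,1)}(\alpha)$, net $\ge1-\lambda-d>0$, and at the values of $r$ where $\{kr\}$ reaches $1-\alpha$ the $\theta$-term jumps by $+d$ — so $\delta(\cdot,\alpha)$ is non-decreasing and $\delta(\rho^-,\alpha)\le\delta(\rho,\alpha)<1$, while the same argument applied to $r$ close to $\rho$ keeps $\delta(\rho^-,\alpha)>1-\lambda-d$. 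For the $a$-interval I would record the identities expressing $a(\delta,\rho,\alpha)$ and $a(\delta,\rho^+,\alpha)$ in terms of $\eta_1,\eta_2$ that come out of $F\circ\Phi=\Phi\circ(\mathrm{id}+\rho)$ evaluated at $\alpha$; combined with $\delta\in[\delta(\rho^-,\alpha),\delta(\rho,\alpha)]$ these force $[a(\delta,\rho^+,\alpha),a(\delta,\rho,\alpha)]$ to meet $[0,1]$, and to lie inside $[0,1]$ except when $\alpha\in\{0,1\}$ and $\rho\in\Q$, the only configuration where $a$ may reach an endpoint (the degenerate projections with $a\in\{0,1\}$). This last containment is the one genuinely fiddly estimate among the numbered items.

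\emph{Item 2.} For $\rho$ irrational, $r\mapsto\ent{kr}$ is continuous at $\rho$, while $r\mapsto\theta_{1-\alpha}(\{kr\})$ can be discontinuous at $\rho$ only when $k\rho\in\Z+(1-\alpha)$, and $r\mapsto\theta_{\alpha}(\{\alpha-kr\})$ only when $k\rho\in\Z+\alpha$; allowing $k\in\Z^*$, the union of all these bad sets is exactly $\{k\in\Z^*:k\rho\in\Z+\alpha\}$. If this set is empty, uniform convergence gives continuity of both series at $\rho$, hence $\delta(\rho^-,\alpha)=\delta(\rho,\alpha)$ and $a(\delta,\rho^+,\alpha)=a(\delta,\rho,\alpha)$ for every $\delta$. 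If it is non-empty, the term of least modulus produces a genuine one-sided jump in one of the two series, and since — as noted in item~1 — all term-wise jumps have the same sign, nothing cancels, which yields the stated alternative.

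\emph{Item 3.} Let $\rho=p/q$. In $\delta(\rho,\alpha)-\delta(\rho^-,\alpha)$, each term with $q\mid k$, say $k=qm$, contributes $(1-\lambda)\cdot1+d\cdot(-\Ind_{(0,1)}(\alpha))=1-\lambda-d+d\,\Ind_{\{0,1\}}(\alpha)$, and $\tfrac{1-\lambda}{\lambda}\sum_{m\ge1}\lambda^{qm}$ times this equals exactly the right-hand side of \eqref{GAPSDRAT}; the terms with $q\nmid k$ have continuous floor and contribute $+d$ precisely when $\{(k\bmod q)\,p/q\}=1-\alpha$, and $0$ otherwise, so \eqref{GAPSDRAT} holds, with equality iff no $r\in\{1,\dots,q-1\}$ satisfies $\{rp/q\}=1-\alpha$, equivalently (via $r\mapsto q-r$) $rp/q\notin\Z+\alpha$. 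The estimate for $a(\delta,\rho,\alpha)-a(\delta,\rho^+,\alpha)$ runs in parallel: because $\{\alpha-k\rho\}$ hits $\alpha$, where $\theta_\alpha$ jumps, exactly when $q\mid k$, the surviving contribution for $\alpha\in(0,1)$ comes from the $\theta$-term, the terms with $q\mid k$ sum to the right-hand side of \eqref{GAPSARAT}, the remaining terms are $\ge0$ and are strictly positive precisely when some $r\in\{1,\dots,q-1\}$ has $rp/q\in\Z+\alpha$, which gives the same equality condition.
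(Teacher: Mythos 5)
Your route for the interior of the parameter region and for items 1--3 is essentially the paper's: the conjugacy $F\circ\phi_{\delta,\rho,\alpha}=\phi_{\delta,\rho,\alpha}\circ L_\rho$ together with $\ent{\phi(y)}=\ent{y}$ gives $\rho_f=\rho$, and items 2 and 3 follow from the term-by-term jump formulas for the two series exactly as in Lemmas \ref{LINTIR} and \ref{LINTRAT} (your bookkeeping of which terms jump, and the equivalence $\{rp/q\}=1-\alpha \Leftrightarrow (q-r)p/q\in\Z+\alpha$, match the paper). The containment statements of item 1 are also obtained the way the paper does, from $\phi(0^-)\le 0\le\phi(0)$, monotonicity of $\phi$, and $\phi(\alpha^\pm)=a(\delta,\rho^{(\pm)},\alpha)$.

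There is, however, a genuine gap at the boundary of the rectangle when $\rho\in\Q$. The intertwining relation holds \emph{if and only if} $\delta\in[\delta(\rho^-,\alpha),\delta(\rho,\alpha))$ and $a\in(a(\delta,\rho^+,\alpha),a(\delta,\rho,\alpha)]$ — half-open intervals (Proposition \ref{PROROT}) — whereas the theorem asserts the conclusion on the closed intervals \eqref{CONDTH}. Your semi-conjugacy argument therefore does not apply at $\delta=\delta(\rho,\alpha)$ or $a=a(\delta,\rho^+,\alpha)$, and the sentence claiming the itinerary ``and hence the rotation number, is locally constant in $(\delta,a)$ and therefore constant on the whole rectangle'' is not a proof: at these boundary parameters the attractor can degenerate to a ghost periodic orbit, the image $\phi([0,1))$ need no longer lie in $[0,1)$, and local constancy on an open set never by itself extends a conclusion to the closure. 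What is actually needed is continuity of $(\delta,a)\mapsto\rho(F_{\delta,a})$ (the paper proves this in Appendix \ref{ACONT} via the Rhodes--Thompson results for monotone degree-one maps, using that $F_{\delta,a}$ is strictly increasing) together with a careful choice of approximating sequences $(\delta_n,a_m)$ lying both in the half-open rectangle and in $M$; the paper needs three separate cases here because when $a(\delta,\rho^+,\alpha)\ge 1$ or $a(\delta,\rho,\alpha)\le 0$ one must first pin down which endpoint of the $\delta$-interval one is at before the approximants can be constructed. You should either supply this continuity argument or restrict the first assertion of your proof to the half-open rectangle and treat the boundary separately.
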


Theorem \ref{THROT} can be read as follows. Let $\rho\in(0,1)$. If $\alpha$ is any number in $[0,1]$ and $\delta$ is chosen in $[\delta(\rho^-,\alpha),\delta(\rho,\alpha)]$, then, to obtain a map \eqref{PROJ} with rotation number $\rho$ it is enough to select any $a\in[a(\delta,\rho^+,\alpha),a(\delta,\rho,\alpha)]\cap[0,1]$. 
Item 1)  ensures  that the region 
\begin{equation}\label{PRA}
\mathcal{P}_{\rho,\alpha}:=\{(\delta,a)\in\R^2: \delta\in[\delta(\rho^-,\alpha),\delta(\rho,\alpha)]
\quand
a\in[a(\delta,\rho^+,\alpha),a(\delta,\rho,\alpha)]\}
\end{equation}
 defined by \eqref{CONDTH}  intersects our set of interest $M=(1-\lambda-d,1)\times[0,1]$ for every $\rho\in(0,1)$ and $\alpha\in[0,1]$, and  is actually included in $M$ unless  $\rho\in \Q$  and $\alpha\in\{0,1\}$.

As shown by items 2) and 3), the size of the intervals \eqref{CONDTH} depends on the rational or irrational nature of $\rho$, but also on the relation between $\rho$ and $\alpha$. 
For $\rho$ irrational,  the intervals \eqref{CONDTH} are both a singleton for almost all values of $\alpha$. These intervals are not singletons only if $\alpha$ is such that $\{k\rho\}=\alpha$ for some $k=k_\alpha\in\Z^*$, that is, if $\alpha$ belongs to the orbit  of $\rho$ by the irrational rotation of the same angle. For such $\alpha$,  one of the two intervals (but not both) is not a singleton. Its exact length depends on $k_\alpha$
and is given in Lemma \ref{LINTIR}. For $\rho=p/q$ rational, the intervals \eqref{CONDTH} have always positive Lebesgue measure. The inequalities \eqref{GAPSDRAT} and \eqref{GAPSARAT} become equalities if $\alpha$ does not belong to the $q$-periodic orbit of $\rho$ by the rotation of the same angle. 
In particular, if $p$ and $q$ are coprime, the size of both intervals is a function of $\alpha$ which is constant in $[0,1]\setminus\{0,1/q,2/q,\dots,1\}$. Now, for $\alpha\in\{0,1/q,2/q,\dots,1\}$ the intervals are bigger and their exact length, which depends on $\alpha$, is given in Lemma \ref{LINTRAT}. 

If $\rho=p/q$ is rational, the series defining $\delta(\rho,\alpha)$ and $a(\delta,\rho,\alpha)$ are finite sums given in Lemma \ref{DELT_ARAT}, which can be numerically computed with a high precision.   Also, $\delta(\rho^-,\alpha)$ and $a(\delta,\rho^+,\alpha)$ are deduced from item 3), or from Lemma \ref{LINTRAT} if $rp/q\in\Z+\alpha$ for some $r\in\{1,\dots,q-1\}$. As an example, we carry out the numerical computation of $\mathcal{P}_{\rho,\alpha}$ for $\rho=1/3$ when $\alpha=1/2$ and plot it in the left panel of Figure \ref{RHO13}.
Since $a(\delta,\rho,\alpha)$ is an affine function of $\delta$ and  $a(\delta,\rho,\alpha)-a(\delta,\rho^+,\alpha)$ does not depend on $\delta$ (see item 3) or Lemma \ref{LINTRAT}), the region obtained is a parallelogram. Now, as $\delta(p/q,\alpha)$ and $a(\delta,p/q,\alpha)$ are piecewise constant functions of $\alpha$, see Lemma \ref{DELT_ARAT},
if $\alpha$ takes all the values in $[0,1]$, then \eqref{CONDTH} gives only a finite number of different regions $\mathcal{P}_{\rho,\alpha}$ for which the rotation number is $p/q$. If  $p$ and $q$ are coprime,  this number is equal to $2q+1$ and the regions are obtained by letting $\alpha=l/q$ for every  
$l\in\{0,1,\dots,q\}$ and by choosing any $\alpha\in(l/q,(l+1)/q)$ for every $l\in\{0,1,\dots,q-1\}$. In the right panel of Figure \ref{RHO13}, the $2q+1=7$ regions $\mathcal{P}_{\rho,\alpha}$ corresponding to $\rho=1/3$ are shown.  We note  that $\mathcal{P}_{\rho,l/q}$ and $\mathcal{P}_{\rho,(l+1)/q}$ have a non-empty intersection which is equal to the region $\mathcal{P}_{\rho,\alpha}$ obtained for any $\alpha\in(l/q,(l+1)/q)$.

\begin{figure}[H]
\includegraphics[scale=0.50]{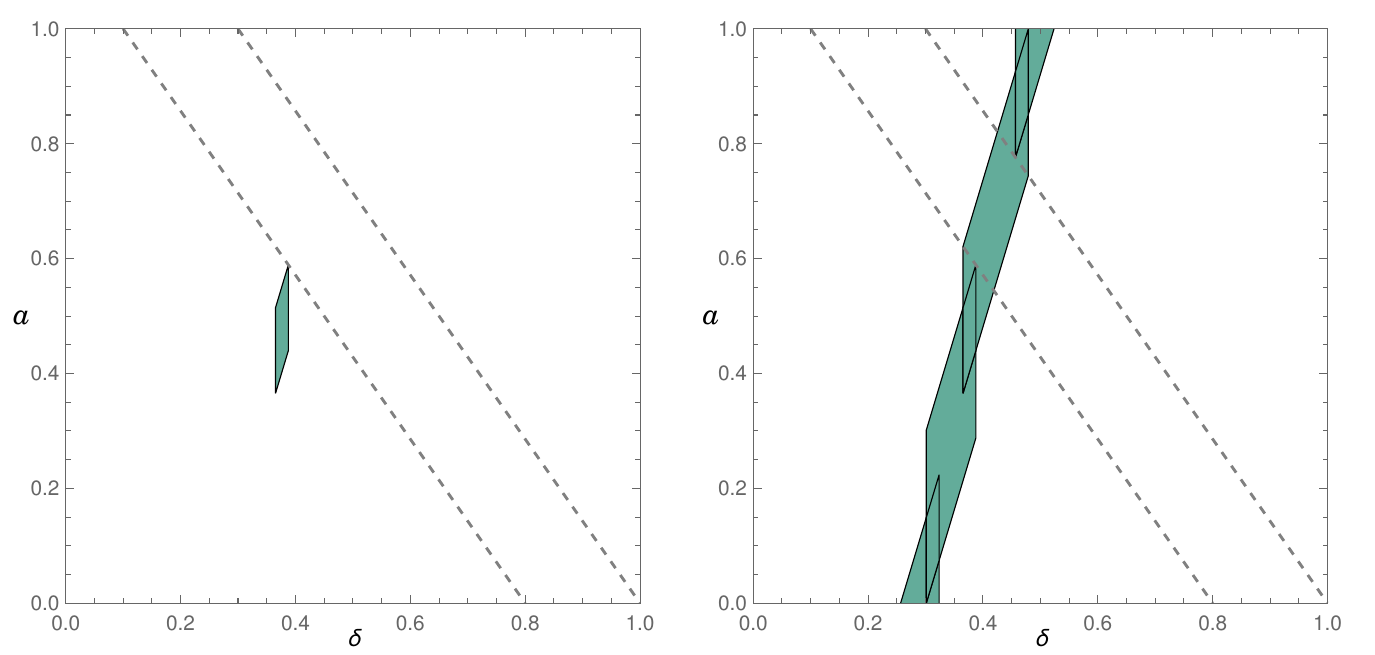} 
\centering
\caption{In green, the set $\mathcal{P}_{\rho,\alpha}$  for $\lambda=0.7, d=0.2$, $\rho=p/q=1/3$ and different values of $\alpha$. Left panel: the parallelogram $\mathcal{P}_{1/3,1/2}$. Right panel: the union of $\mathcal{P}_{1/3,\alpha}$ over 
all $\alpha\in[0,1]$. The resulting set is made of $2q+1=7$ different regions $\mathcal{P}_{1/3,\alpha}$. We note  that  $\mathcal{P}_{1/3,1/2}$ is also given by the intersection $\mathcal{P}_{1/3,1/3}\cap\mathcal{P}_{1/3,2/3}$.}
\label{RHO13}
\end{figure}

The following Proposition \ref{RANGE} shows  that for any $\rho$ we can choose 
$\alpha$ in such a way that the set of values of $\delta$ and $a$ 
given by \eqref{CONDTH} contains elements  of the desired type $M_1$, $M_2$ or $M_3$. Thus, Proposition \ref{RANGE} establishes that Theorem \ref{THROT} allows us to obtain maps of any type $M_1, M_2$ or $M_3$ with any desired rotation number.

\begin{proposition}\label{RANGE} 
Let $\lambda\in(0,1)$ and $d\in(0,1-\lambda)$. Let  $\rho\in(0,1)$, $\alpha\in[0,1]$ and choose $\delta\in[\delta(\rho^-,\alpha),\delta(\rho,\alpha)]$. 
If $\rho\notin\Q$, then for every $a\in[a(\delta,\rho^+,\alpha),a(\delta,\rho,\alpha)]$ the couple  $(\delta,a)$ belongs to $M_1$ if $\alpha\in[0,1-\rho)$, to $M_2$ if $\alpha=1-\rho$ and  to $M_3$  if $\alpha\in(1-\rho,1]$. If $\rho\in\Q$, then there exists  $a\in[a(\delta,\rho^+,\alpha),a(\delta,\rho,\alpha)]$ such that $(\delta,a)$ belongs to $M_1$ if $\alpha\in[0,1-\rho)$, to $M_2$ if $\alpha=1-\rho$ and  to $M_3$  if $\alpha\in(1-\rho,1]$. Precisely, 
\begin{enumerate}
\item If $\alpha\in[0,1-\rho)$, then $\delta\in (1-\lambda-d,1-d)$. Moreover, 
\begin{enumerate}
\item For $\alpha\neq 0$  
\begin{equation}
[a(\delta,\rho^+,\alpha),a(\delta,\rho,\alpha)]\subset[0,\eta_1], 
\end{equation}
with $a(\delta,\rho^+,\alpha)=0$  only if $\rho\in\Q$ and $\delta=\delta(\rho^-,\alpha)$, and 
with $a(\delta,\rho,\alpha)=\eta_1$ only if $\rho\in\Q$ and  $\delta=\delta(\rho,\alpha)$. 

\item For $\alpha=0$, 
\[
a(\delta,\rho^+,\alpha)=a(\delta,\rho,\alpha)=0 \quif \rho\notin\Q \quand a(\delta,\rho^+,\alpha)\leq 0\leq a(\delta,\rho,\alpha)\leq\eta_1\quif \rho\in\Q,
\]
where the last inequality is strict if $\delta\neq\delta(\rho,\alpha)$.
\end{enumerate}
\item If $\alpha\in(1-\rho,1]$, then $\delta\in (1-\lambda,1)$. Moreover, 
\begin{enumerate}
\item For $\alpha\neq 1$  
\begin{equation}
[a(\delta,\rho^+,\alpha),a(\delta,\rho,\alpha)]\subset[\eta_2,1], 
\end{equation}
with $a(\delta,\rho^+,\alpha)=\eta_2$ only if $\rho\in\Q$ and $\delta=\delta(\rho^-,\alpha)$, and with $a(\delta,\rho,\alpha)=1$ only if $\rho\in\Q$ and  $\delta=\delta(\rho,\alpha)$. 
\item For $\alpha=1$, 
\[
a(\delta,\rho^+,\alpha)=a(\delta,\rho,\alpha)=1 \quif \rho\notin\Q \quand \eta_2\leq a(\delta,\rho^+,\alpha)\leq 1\leq a(\delta,\rho,\alpha)\quif \rho\in\Q,
\]
where the first inequality is strict if $\delta\neq\delta(\rho^-,\alpha)$ and the second one if $\delta\neq\delta(\rho,\alpha)$.
\end{enumerate}
\item If $\alpha=1-\rho$, then $\delta\in (1-\lambda-d,1)$. Moreover,
\[
a(\delta,\rho^+,\alpha)=a(\delta,\rho,\alpha)\in[\eta_1,\eta_2] \quif \rho\notin\Q
\quand
[a(\delta,\rho^+,\alpha),a(\delta,\rho,\alpha)]\cap[\eta_1,\eta_2]\neq\emptyset\quif \rho\in\Q,
\]
with $a(\delta,\rho,\alpha)= \eta_1$  if and only if $\delta=\delta(\rho^-,\alpha)$ and $a(\delta,\rho^+,\alpha)=\eta_2$  if and only if $\delta=\delta(\rho,\alpha)$.
\end{enumerate}
\end{proposition}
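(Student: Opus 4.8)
\emph{Proof plan.} The plan is to reduce the whole statement to the location of the two endpoints of $[a(\delta,\rho^+,\alpha),a(\delta,\rho,\alpha)]$ relative to the four values $0$, $\eta_1=(1-\delta-d)/\lambda$, $\eta_2=(1-\delta)/\lambda$ and $1$. Among admissible parameters $\delta\in(1-\lambda-d,1)$, $a\in[0,1]$, the types $M_1,M_2,M_3$ are distinguished exactly by $a<\eta_1$, $\eta_1\le a\le\eta_2$, $a>\eta_2$; moreover the refined $\delta$-ranges then come for free, since $0\le a<\eta_1$ forces $\eta_1>0$, i.e.\ $\delta<1-d$, and $\eta_2<a\le 1$ forces $\eta_2<1$, i.e.\ $\delta>1-\lambda$ (for the degenerate cases $\alpha\in\{0,1\}$ one uses instead $\delta(\rho,0)=\delta(\rho)-d<1-d$ and $\delta(\rho,1)=\delta(\rho)>1-\lambda$ directly, $\delta(\rho)$ being the contracted-rotation value of \eqref{DELTA2PCM}, which lies in $(1-\lambda,1)$). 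Item 1) of Theorem \ref{THROT} already gives $\delta\in(1-\lambda-d,1)$ and $[a(\delta,\rho^+,\alpha),a(\delta,\rho,\alpha)]\subset[0,1]$ when $\alpha\in(0,1)$ or $\rho\notin\Q$, so only the positions of the endpoints remain.

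Next I would pass to the corners of $\mathcal P_{\rho,\alpha}$. As functions of $\delta$, both $a(\delta,\rho,\alpha)$ and $a(\delta,\rho^+,\alpha)$ are affine with slope $1/(1-\lambda)$, while $\eta_1$ and $\eta_2$ are affine with slope $-1/\lambda$; hence $a-\eta_i$, $a$ and $1-a$ are strictly monotone on $[\delta(\rho^-,\alpha),\delta(\rho,\alpha)]$. Therefore $a(\delta,\rho,\alpha)\le\eta_1$ (resp.\ $\le 1$) on the whole $\delta$-interval iff it holds at $\delta=\delta(\rho,\alpha)$; $a(\delta,\rho^+,\alpha)\ge 0$ (resp.\ $\ge\eta_2$) iff it holds at $\delta=\delta(\rho^-,\alpha)$; and, in the balanced case $\alpha=1-\rho$, $\eta_1\le a\le\eta_2$ throughout is equivalent to the two corner identities $a(\delta(\rho^-,\alpha),\cdot)=\eta_1$ and $a(\delta(\rho,\alpha),\cdot)=\eta_2$. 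This is exactly why every extremal equality in the statement is attached to the endpoint $\delta(\rho^-,\alpha)$ or $\delta(\rho,\alpha)$.

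The core is the evaluation of $\delta$ and $a$ at these corners. I would use the summation-by-parts identity $\frac{1-\lambda}{\lambda}\sum_{k\ge1}\lambda^k\ent{k\rho}=\sum_{j\ge1}\theta_{1-\rho}(\{j\rho\})\lambda^j$, together with the telescoping $\ent{\alpha-k\rho}=-\sum_{j=0}^{k-1}\Ind_{\{\alpha-j\rho\}<\rho}$ for $\alpha\in[0,1)$, to rewrite $\delta(\rho,\alpha)$ and $a(\delta,\rho,\alpha)$ as ``kneading'' series in the $\theta$'s only, and then substitute the expression for $\delta$ at a corner into the one for $a$. In the balanced case this yields $\delta(\rho,1-\rho)=(1-d)\delta(\rho)+d(1-\lambda)$ and $a(\delta,\rho,1-\rho)=\frac{\delta}{1-\lambda}-\frac{(1-d)(\delta(\rho)-(1-\lambda))}{\lambda(1-\lambda)}$, from which the two coincidences $a(\delta(\rho,1-\rho),\rho,1-\rho)=\eta_2$ and (using $\delta(\rho^-,1-\rho)=\delta(\rho,1-\rho)-(1-\lambda)d$) $a(\delta(\rho^-,1-\rho),\rho,1-\rho)=\eta_1$ drop out, so the segment stays in $[\eta_1,\eta_2]$. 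For $\alpha<1-\rho$, using that $\theta_{1-\alpha}(\{k\rho\})=1$ forces $\{(k-1)\rho\}<1-\rho$ (vacuous at $k=1$, whose term vanishes since $\rho<1-\alpha$), one obtains $a\le\eta_1$ at $\delta=\delta(\rho,\alpha)$ and $a\ge0$ at $\delta=\delta(\rho^-,\alpha)$; the mirror argument for $\alpha>1-\rho$ gives $\eta_2<a$ at $\delta=\delta(\rho^-,\alpha)$ and $a\le1$ at $\delta=\delta(\rho,\alpha)$. Conceptually, $\eta_2$ and $a$ are the values taken at $1-\rho$ and at $\alpha$ by the non-decreasing, right-continuous semi-conjugacy of Section \ref{CGPP}, so the order of $\eta_2$ and $a$ must follow that of $1-\rho$ and $\alpha$, and the corner choice of $\delta$ pushes $a$ to the far side of its jump.

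Finally the equality cases. For $\rho=p/q$ the series are the finite sums of Lemma \ref{DELT_ARAT} and $\delta(\rho^-,\alpha),a(\delta,\rho^+,\alpha)$ come from Lemma \ref{LINTRAT}; substituting these into the corner formulas makes all four corner values explicit and shows that $a$ reaches $0,\eta_1,\eta_2$ or $1$ precisely at the extreme $\delta$, that in the $M_2$ case the segment still meets $[\eta_1,\eta_2]$, and that for $\alpha=0$ (resp.\ $\alpha=1$) one may have $a(\delta,\rho^+,0)\le 0$ (resp.\ $a(\delta,\rho,1)\ge 1$) with the stated strictness. For $\rho\notin\Q$, Theorem \ref{THROT}, item 2), together with Lemma \ref{LINTIR}, says the two intervals are singletons unless $\alpha$ lies in the $R_\rho$-orbit of $1-\rho$, in which case only one endpoint moves and the relevant inequality remains strict on the non-trivial side; when $\alpha=1-\rho$ (where the $\delta$-interval is non-trivial) the two corner identities of the previous paragraph give exactly the asserted ``if and only if''. \textbf{Main obstacle:} the corner computations of the third paragraph — isolating the exact cancellations that turn the double series (the forward data $\ent{k\rho},\{k\rho\}$ in $\delta(\rho,\alpha)$ against the backward data $\{\alpha-k\rho\}$ in $a$) into $\eta_1$ or $\eta_2$ in the balanced case, and then keeping all the inequality directions consistent when $\alpha\ne 1-\rho$ — and the bookkeeping of exactly which finitely many coincidences $a\in\{0,\eta_1,\eta_2,1\}$ occur in the rational case and how they correspond to $\alpha\in\{l/q\}$.
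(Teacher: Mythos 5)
Your route is recognizably different in its computational core from the paper's. The paper never manipulates the defining series of $a(\delta,\rho,\alpha)$ directly against that of $\delta(\rho,\alpha)$: it identifies $a(\delta,\rho^+,\alpha)=\phi(\alpha^-)$, $a(\delta,\rho,\alpha)=\phi(\alpha)$ (Lemma \ref{PROPPHIA}), proves the symmetry $\psi_\alpha(-z)=-(\psi_{1-\alpha}(z^-)+1-\lambda-d)$ (Lemma \ref{SYM}), and from it gets the single corner identity $\phi(1-\rho)=\eta+\tfrac{\delta-\delta(\rho^-,\alpha)}{\lambda(1-\lambda)}$ (and its left-limit companion, Lemma \ref{LPHI1MR}); everything else follows from monotonicity of $\phi$ in the phase variable $y$, which orders $\phi(0),\phi(\alpha),\phi(1-\rho),\phi(1^-)$ as $0,\alpha,1-\rho,1$ are ordered. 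Your plan replaces this by monotonicity in the parameter $\delta$ (valid: $a-\eta_i$ is affine increasing in $\delta$) plus explicit summation-by-parts evaluation of the corners. Your corner formulas check out: I verified $\delta(\rho,1-\rho)=(1-d)\delta(\rho,1)+d(1-\lambda)$, the expression for $a(\delta,\rho,1-\rho)$, and the two coincidences with $\eta_1,\eta_2$ at the endpoints of the $\delta$-interval (for $\rho$ irrational). The two approaches buy different things: the paper's symmetry lemma handles the rational case uniformly through the left/right limits of $\psi$, whereas your direct series manipulation will force you to redo the boundary bookkeeping ($\{k\rho\}=0$, $\{k\rho\}=1-\alpha$) separately via Lemmas \ref{DELT_ARAT} and \ref{LINTRAT} — exactly the obstacle you flag.

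There is, however, one genuine logical gap beyond deferred bookkeeping: several strict inequalities in the statement do not come "for free" from your non-strict containments. You deduce $\delta<1-d$ from $0\le a\le\eta_1$, but this only yields $\eta_1\ge0$, i.e.\ $\delta\le 1-d$; when $\rho\notin\Q$ and the $a$-interval is a singleton, nothing in your term-by-term comparison (which produces $a\le\eta_1$ and $a\ge 0$) rules out $a=0=\eta_1$. Likewise the clauses ``$a(\delta,\rho,\alpha)=\eta_1$ \emph{only if} $\rho\in\Q$'' and ``$a(\delta,\rho^+,\alpha)=0$ \emph{only if} $\rho\in\Q$'' require strictness at the corner for irrational $\rho$, which your comparison does not deliver. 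The paper closes both points by independent means: Lemma \ref{INTERVALDELTA} proves $\delta(\rho,\alpha)<1-d$ by a direct strict series estimate \emph{before} any statement about $a$, and Proposition \ref{PHISCR} shows $\phi$ is \emph{strictly} increasing when $\rho\notin\Q$ (via density of $(\{y-k\rho\})_k$), so that $0\le\phi(0)<\phi(\alpha^-)\le\phi(\alpha)<\phi((1-\rho)^-)\le\eta_1$ for $\alpha\in(0,1-\rho)$. You invoke the semi-conjugacy only as ``non-decreasing''; to complete your argument you must either import this strict monotonicity (equivalently, exhibit for each $\alpha<\alpha'$ some $k$ with $\lf\alpha-k\rho\rf<\lf\alpha'-k\rho\rf$) or prove the $\delta$-range bound $\delta(\rho,\alpha)<1-d$ directly as the paper does.
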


The proposition ensures the following properties to the regions $\mathcal{P}_{\rho,\alpha}$.
Suppose $\alpha\in[0,1-\rho)$. If $\rho\notin\Q$, the region $\mathcal{P}_{\rho,\alpha}$ contains exclusively values of $(\delta,a)$ corresponding to a map of the type  \eqref{MAP2}. 
If $\rho\in\Q$ and $\alpha\neq 0$, the same remains true, excepted possibly for the value of $(\delta,a)$ given by the (right) extremity of each interval \eqref{CONDTH}. 
This value of $(\delta,a)$ can belong to $M_2$ ($a\in[\eta_1,\eta_2]$) instead of $M_1$ ($a<\eta_1$) and in such a case the corresponding map is of the type \eqref{MAP3}. 
If $\alpha=0$, the region $\mathcal{P}_{\rho,\alpha}$ is not necessarily contained in $M$, but its intersection with $M$ is nonempty and has the same properties as the regions with $\alpha\neq 0$.
The same holds for $\alpha\in(1-\rho,1]$, replacing \eqref{MAP2} by \eqref{MAP}, 
$\alpha\neq 0$ by $\alpha\neq 1$, (right) by (left), $M_1$ ($a<\eta_1$) by $M_3$ ($a>\eta_2$) and $\alpha= 0$ by $\alpha= 1$. Finally, for $\alpha=1-\rho$ and $\rho\notin\Q$ the region $\mathcal{P}_{\rho,\alpha}$ contains exclusively values of $(\delta,a)$ corresponding to a map of the type \eqref{MAP3}. If  $\rho\in\Q$, then $\mathcal{P}_{\rho,\alpha}$ still contains such values of $(\delta,a)$, but also intersects a large domain of $M_1$ and/or $M_3$. All these features can be observed in Figure \ref{FIGPROP}.

\begin{figure}[H]
\includegraphics[scale=0.55]{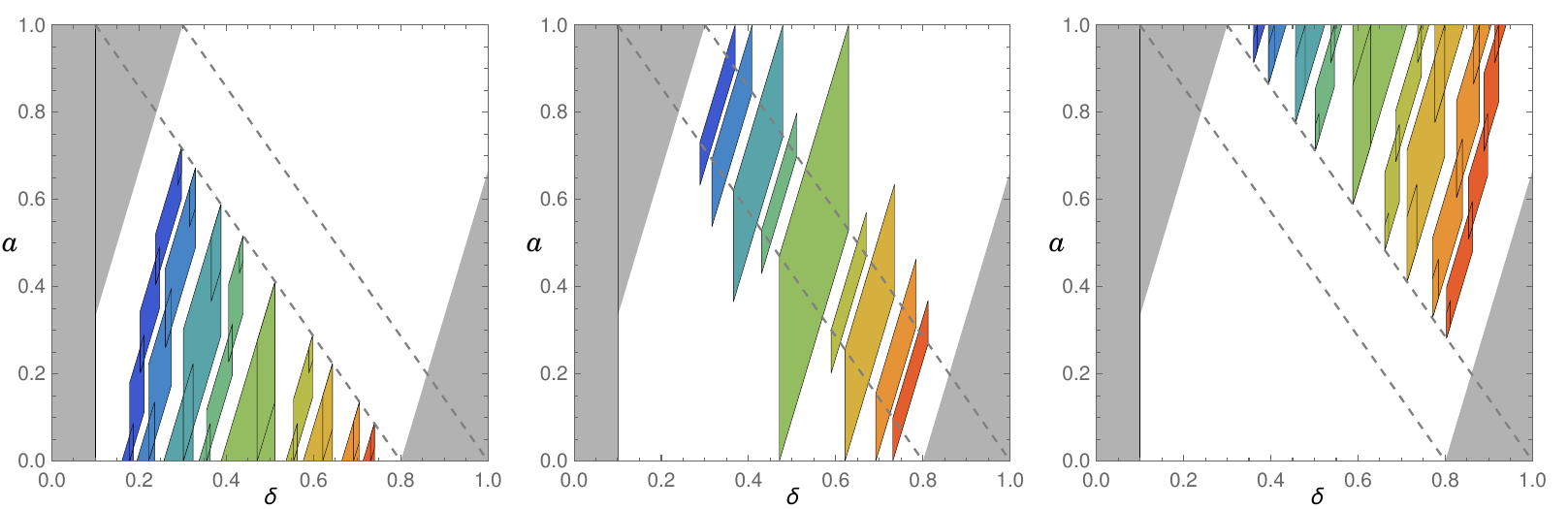} 
\centering
\caption{Plot of the $\mathcal{P}_{\rho,\alpha}$ for $\lambda=0.7$, $d=0.2$ and $\rho=1/5, 1/4, 1/3, 2/5$, $1/2, 3/5, 2/3, 3/4, 4/5$ (from left in blue to right in red), with $\alpha \in [0,1-\rho)$ (left panel)
$\alpha = 1-\rho$ (middle panel) and $\alpha \in (1-\rho,1]$ (right panel).
}\label{FIGPROP}
\end{figure}

\begin{remark}\label{RTHROTA1} Items $1(b)$ and $2(b)$ of Proposition \ref{RANGE}  allow to obtain the simplified formulation of the Theorem \ref{THROT} for the 2-interval piecewise contractions corresponding to $a=0$ and $a=1$. For instance, let us suppose that $a=1$, that is, $f$ is the contracted rotation  \eqref{2PCM}. By Item $2 (b)$,  for any $\rho\in(0,1)$ we have $a=1\in[a(\delta,\rho^+,1),a(\delta,\rho,1)]$, provided $\delta\in[\delta(\rho^-,1),\delta(\rho,1)]$. 
In other words, it is enough that $\delta\in[\delta(\rho^-,1),\delta(\rho,1)]$ for some $\rho\in(0,1)$, to ensure that $\delta$ and $a$ satisfy \eqref{CONDTH} for this value of $\rho$ and $\alpha=1$.
As a consequence, for the map \eqref{2PCM}, Theorem \ref{THROT} reduces to: if $\delta\in[\delta(\rho^-,1),\delta(\rho,1)]$, then the rotation number of \eqref{2PCM} is $\rho$. This agrees with \eqref{DELTA2PCM} since $\delta(\cdot,1)$ is equal to the function $\delta(\cdot)$ defined in \eqref{DELTA2PCM}.  
\end{remark}

\begin{figure}[h!]
\includegraphics[scale=0.65]{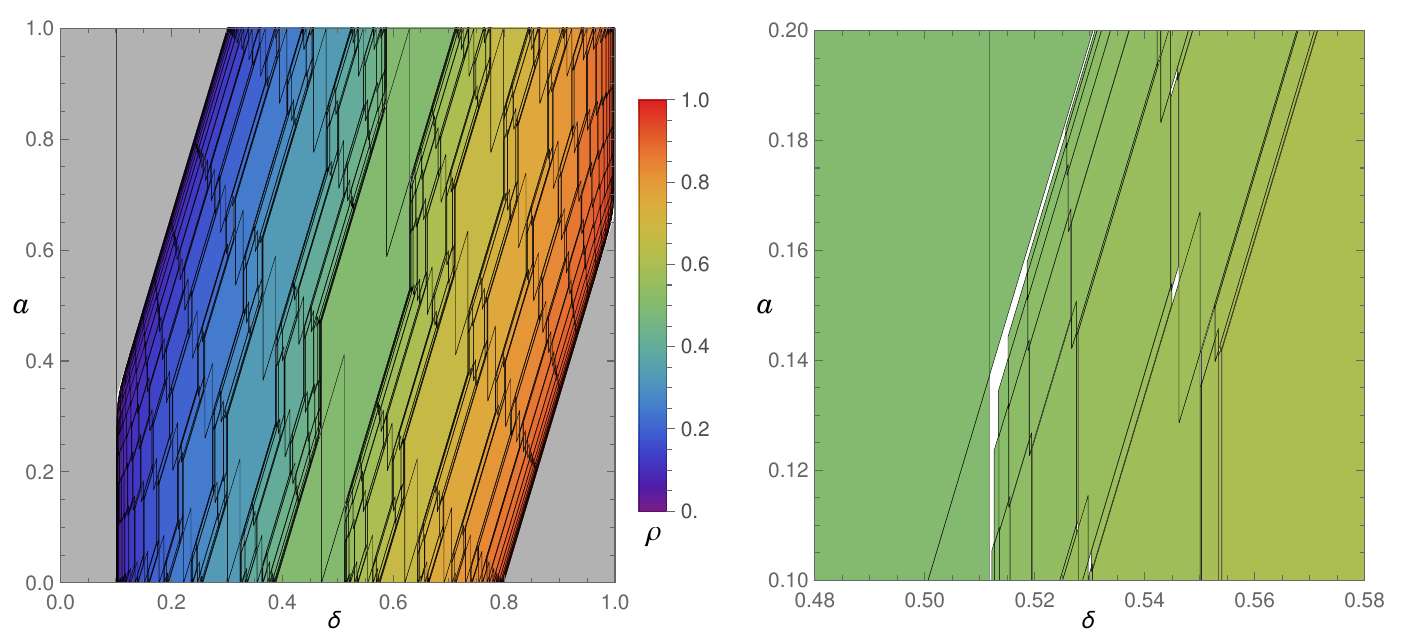} 
\centering
\caption{Left panel: Plot of the regions \eqref{CONDTH} for  every $\rho$ in the Farey sequence $F_{20}$  and every $\alpha\in [0,1]$. Right panel: zoom on a part of the left
panel showing white zones that are not covered by any of the plotted regions.}\label{FIGRHODELTAA}
\end{figure}

In the left panel of Figure \ref{FIGRHODELTAA}, we plot the union over $\alpha\in[0,1]$ of $\mathcal{P}_{\rho,\alpha}$ for a set of $127$ values of $\rho$. 
If the set $M$ could be covered using a finite set of values of $\rho$, then some values of rotation number would not be reached in the family \eqref{PROJ}, which contradicts Theorem \ref{THROT}. This is why in Figure \ref{FIGRHODELTAA} we observe  values of $(\delta,a)$ that do not belong to any of the plotted regions, see right panel. 
Therefore, the question we ask now is {\it if for any $(\delta,a)\in M$, there exist some  $\rho\in(0,1)$ and $\alpha\in[0,1]$ such that $\delta$ and $a$ satisfy \eqref{CONDTH}}. 
If the answer is in the positive, then by Theorem \ref{THROT} and unicity of the rotation number, any map $f$ of the form \eqref{PROJ} satisfies  \eqref{CONDTH} for $\rho=\rho_f$ and some $\alpha\in[0,1]$. In other words, the reciprocal of Theorem \ref{THROT} would be true.

We can already answer our question if we suppose that  $a=1$  and $\delta\in(1-\lambda,1)$. As observed in Remark \ref{RTHROTA1}, we only have to show that $\delta\in[\delta(\rho^-,1),\delta(\rho,1)]$ for some $\rho\in(0,1)$, since it will  imply that $a$ also satisfies \eqref{CONDTH} for the same $\rho$ and $\alpha=1$. We can show (see Lemma  \ref{RANGEDELTA}) that $\rho\mapsto\delta(\rho,1)$ is an increasing function such that 
\[
\lim_{\rho\to 0}\delta(\rho,1)=1-\lambda\quand \lim_{\rho\to 1^-}\delta(\rho,1)=1.
\] 
As $\delta\in(1-\lambda,1)$, this implies that $\delta$ belongs either to the image of $\rho\mapsto\delta(\rho,1)$ or to a discontinuity gap. In both cases $\delta\in[\delta(\rho^-,1),\delta(\rho,1)]$ for some $\rho\in(0,1)$, as desired.  We conclude that for any $\delta\in(1-\lambda,1)$ and $a=1$, there exists $\rho\in(0,1)$ and $\alpha=1$ such that $\delta$ and $a$ satisfy \eqref{CONDTH}. 
As a consequence, the reciprocal of Theorem \ref{THROT} is true for the subclass \eqref{2PCM}. So, we have re-proved that the rotation number of \eqref{2PCM} is $\rho$ if and only if $\delta\in[\delta(\rho^-,1),\delta(\rho,1)]$ (see Remark \ref{RTHROTA1}).

We have been able to answer our question for the maps \eqref{2PCM} because for $a=1$ the suitable  value $\alpha=1$ is suggested by Proposition \ref{RANGE}. The same can be done for the maps with  $a=0$, but it is more involved if $a\in(0,1)$. 
For any given $\delta\in(1-\lambda-d,1)$ and any $\alpha\in(0,1)$, there is some $\rho\in(0,1)$ such that $\delta\in[\delta(\rho^-,\alpha),\delta(\rho,\alpha)]$, see Lemma \ref{RANGEDELTA}. But, within all these couples of $\rho$ and $\alpha$, there 
is a priori no guaranty that $a\in[a(\delta,\rho^+,\alpha),a(\delta,\rho,\alpha)]$ for one of them.
The following Theorem \ref{RECTHROT} gives such a guaranty and states also  that the reciprocal of Theorem \ref{THROT} is true.

\begin{theorem}\label{RECTHROT} Let $\lambda\in(0,1)$ and $d\in(0,1-\lambda)$. For any $\delta\in(1-\lambda-d,1)$ and $a\in[0,1]$ such that $(\delta,a)\notin\mathcal{F}_1\cup\mathcal{F}_2$, there exist
$\rho\in(0,1)$ and $\alpha\in[0,1]$ such that $\delta$ and $a$ satisfy \eqref{CONDTH}. Consequently, if $\rho_f\in(0,1)$ is the rotation number of \eqref{PROJ}, then there exists $\alpha\in[0,1]$, such that 
\begin{equation}\label{INTREC}
\delta\in[\delta(\rho_f^-,\alpha),\delta(\rho_f,\alpha)]
\quand
a\in[a(\delta,\rho^+_f,\alpha),a(\delta,\rho_f,\alpha)]. 
\end{equation}
Moreover, $\alpha\in[0,1-\rho_f]$ if $(\delta,a)\in M_1$, $\alpha\in[1-\rho_f,1]$ if $(\delta,a)\in M_3$ and $\alpha=1-\rho_f$ if $(\delta,a)\in M_2$ and $\rho_f\notin\Q$ or $a\notin\{\eta_1,\eta_2\}$. 
\end{theorem}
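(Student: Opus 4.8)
The plan is to prove only the first assertion — that every $(\delta,a)\in M\setminus(\mathcal F_1\cup\mathcal F_2)$ belongs to some region $\mathcal P_{\rho,\alpha}$ with $\rho\in(0,1)$ and $\alpha\in[0,1]$ — and to deduce everything else from it. Indeed, for such $(\delta,a)$ one has $\rho_f\in(0,1)$ (by \eqref{DEFRN}, $\rho_f\in[0,1)$, and by the discussion following \eqref{F2}, $\rho_f=0$ precisely on $\mathcal F_1\cup\mathcal F_2$), and once $(\delta,a)\in\mathcal P_{\rho,\alpha}$ is established, Theorem~\ref{THROT} gives $\rho_f=\rho$, hence \eqref{INTREC}. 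The position of $\alpha$ relative to $1-\rho_f$ is then forced by Proposition~\ref{RANGE}: a point of $M_1$ (resp.\ $M_3$) cannot lie in $\mathcal P_{\rho,\alpha}$ for $\alpha\in(1-\rho,1]$ (resp.\ $\alpha\in[0,1-\rho)$), so $\alpha\in[0,1-\rho_f]$ (resp.\ $\alpha\in[1-\rho_f,1]$); the case $(\delta,a)\in M_2$ is handled directly below.

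We argue according to whether $(\delta,a)$ lies in $M_1$, $M_2$ or $M_3$. If $(\delta,a)\in M_2$, then by the Remark following \eqref{F2} the map \eqref{MAP3} restricted to its attracting invariant interval $[f(a),f(a^-))$ is, after an explicit affine renormalization, a contracted rotation \eqref{2PCM}; the converse for \eqref{2PCM} recalled in Remark~\ref{RTHROTA1} (equivalently \eqref{DELTA2PCM}) gives $\rho=\rho_f$, and transporting the $\delta$-parameter of that contracted rotation back through the change of variables yields $\delta\in[\delta(\rho^-,1-\rho),\delta(\rho,1-\rho)]$; by Proposition~\ref{RANGE}, item~$3$, the second condition in \eqref{CONDTH} then holds with $\alpha=1-\rho$ (when $\rho\notin\Q$ or $a\notin\{\eta_1,\eta_2\}$ the $a$-interval reduces to the single point $a(\delta,\rho,1-\rho)\in(\eta_1,\eta_2)$, which one checks equals $a$). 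The case $(\delta,a)\in M_3$ is parallel to the case $(\delta,a)\in M_1$ treated next, with $\eta_1$, $[0,\eta_1)$ and $[0,1-\rho)$ replaced by $\eta_2$, $(\eta_2,1]$ and $(1-\rho,1]$; so it remains to treat $M_1$.

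Fix $(\delta,a)\in M_1$, i.e.\ $\delta\in(1-\lambda-d,1-d)$ and $a\in[0,\eta_1)$ with $\eta_1=(1-\delta-d)/\lambda$. By Lemma~\ref{RANGEDELTA} (and, for $\alpha=0$, the identity $\delta(\rho,0)=\delta(\rho,1)-d$, where $\delta(\cdot,1)$ is the function of \eqref{DELTA2PCM}) there is, for every $\alpha\in[0,1)$, a well-defined $\rho(\alpha):=\sup\{\rho\in(0,1):\delta(\rho,\alpha)\le\delta\}$ with $\delta\in[\delta(\rho(\alpha)^-,\alpha),\delta(\rho(\alpha),\alpha)]$; since $\delta(\rho,\alpha)$ is nondecreasing in $\rho$ and in $\alpha$, the map $\alpha\mapsto\rho(\alpha)$ is nonincreasing, so $C_\delta:=\{(\alpha,\rho(\alpha)):\alpha\in[0,1)\}$ is a monotone curve. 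Along $C_\delta$ put $I(\alpha):=[a(\delta,\rho(\alpha)^+,\alpha),a(\delta,\rho(\alpha),\alpha)]$, so that $(\delta,a)\in\mathcal P_{\rho(\alpha),\alpha}$ if and only if $a\in I(\alpha)$. By Proposition~\ref{RANGE}, item~$1(b)$, $0\in I(0)\subset(-\infty,\eta_1]$; by item~$1(a)$, whenever $\alpha<1-\rho(\alpha)$ one has $I(\alpha)\subset[0,\eta_1]$, and at the parameter $\alpha^\ast$ where $C_\delta$ first reaches $\alpha^\ast=1-\rho(\alpha^\ast)$, item~$3$ gives $\sup I(\alpha)\to\eta_1$ as $\alpha\uparrow\alpha^\ast$. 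Since, moreover, $a(\delta,\rho,\alpha)$ is (as for $\delta(\rho,\alpha)$, by monotonicity of the integer-part terms $k\mapsto\lfloor\alpha-k\rho\rfloor$) nondecreasing in $\alpha$ and nonincreasing in $\rho$, the function $\alpha\mapsto\sup I(\alpha)$ is nondecreasing along $C_\delta$; hence the intervals $I(\alpha)$ with $\alpha\in[0,\alpha^\ast)$ sweep out all of $[0,\eta_1)$, and our $a$ lies in $I(\alpha_0)$ for some $\alpha_0\in[0,1-\rho(\alpha_0))$. Then $(\delta,a)\in\mathcal P_{\rho(\alpha_0),\alpha_0}$ with $\alpha_0\in[0,1-\rho(\alpha_0)]$, which is what we need.

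The main obstacle is to make this sweeping argument rigorous, namely to prove $\bigcup_{\alpha\in[0,\alpha^\ast)}I(\alpha)\supset[0,\eta_1)$ with no uncovered hole. The curve $C_\delta$ is only piecewise continuous: where $\delta$ falls strictly inside a jump gap of $\rho\mapsto\delta(\rho,\alpha)$, $\rho(\alpha)$ is locally constant while $\alpha$ varies, whereas at the countably many $\alpha$ for which $\delta$ equals a one-sided limit $\delta(\rho^-,\alpha)$ or $\delta(\rho,\alpha)$, $\rho(\alpha)$ jumps. Across each such jump one must check that the endpoints of $I(\alpha)$ move compatibly, so that the union of the closed intervals $I(\alpha)$ stays an interval; this reduces to matching the sizes of the discontinuity gaps of $\rho\mapsto\delta(\rho,\alpha)$ with those of $\rho\mapsto a(\delta,\rho,\alpha)$, for which one invokes the explicit expressions of Lemmas~\ref{LINTIR} and \ref{LINTRAT} (the two-discontinuity analogue of the computation behind \eqref{DELTA2PCM}). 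One also handles separately the rational values of $\rho(\alpha)$, where $\mathcal P_{\rho,\alpha}$ has nonempty interior and neighbouring regions overlap, and the boundary line $a=\eta_1$ between $M_1$ and $M_2$; since $(\delta,a)\in M_1$ forces $a<\eta_1$ strictly, our point is captured in the interior of the sweep and these degeneracies cause no loss. Controlling this interaction between the two families of jumps is where the bulk of the work lies.
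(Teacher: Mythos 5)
Your architecture is essentially the paper's: invert $\rho\mapsto\delta(\rho,\alpha)$ to obtain a non-increasing $\rho(\alpha)$ with $\delta\in[\delta(\rho(\alpha)^-,\alpha),\delta(\rho(\alpha),\alpha)]$, then sweep in $\alpha$ and capture $a$ by a monotone intermediate-value argument applied to $\Phi_\delta(\alpha):=a(\delta,\rho(\alpha),\alpha)=\phi_{\delta,\rho(\alpha),\alpha}(\alpha)$; the deduction of \eqref{INTREC} and of the position of $\alpha$ from Theorem~\ref{THROT} and Proposition~\ref{RANGE} is also the paper's route. However, the step you yourself flag as ``where the bulk of the work lies'' is a genuine gap, and the mechanism you propose for closing it is not the right one. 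Concretely, taking $\alpha_0:=\min\{\alpha:\Phi_\delta(\alpha)\geq a\}$ (whose existence already requires the right-continuity of $\Phi_\delta$, itself a nontrivial lemma) only yields $a\geq\Phi_\delta(\alpha_0^-)=\lim_{\alpha\to\alpha_0^-}a(\delta,\rho(\alpha),\alpha)$, whereas what you need is $a\geq a(\delta,\rho(\alpha_0)^+,\alpha_0)=\phi_{\delta,\rho(\alpha_0),\alpha_0}(\alpha_0^-)$. These are limits of different objects: in the first, the parameters $(\rho,\alpha)$ of $\phi_{\delta,\rho,\alpha}$ move together with the evaluation point; in the second, only the evaluation point moves while the function is frozen at $(\rho(\alpha_0),\alpha_0)$. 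Comparing them is exactly the paper's Lemma~\ref{Prop:PHIDPHI} ($\Phi_\delta(\alpha^-)\geq\phi_{\delta,\rho_\delta(\alpha),\alpha}(\alpha^-)$), proved by a delicate term-by-term comparison of $\psi_{\alpha_n}(\alpha_n-k\rho(\alpha_n))$ with $\psi_{\alpha}(\alpha_n-k\rho(\alpha))$ that hinges on the continuity of $\alpha\mapsto\rho(\alpha)$. The gap sizes of Lemmas~\ref{LINTIR} and~\ref{LINTRAT} describe the jumps of $\rho\mapsto\delta(\rho,\alpha)$ and $\rho\mapsto a(\delta,\rho,\alpha)$ at a \emph{fixed} $\alpha$ and do not control this cross-$\alpha$ comparison.

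A related misconception: you assert that $\rho(\alpha)$ jumps at the countably many $\alpha$ where $\delta$ hits an endpoint $\delta(\rho^{\pm},\alpha)$. In fact $\rho_\delta$ is continuous on $[0,1]$ (Proposition~\ref{CONTRHO}); establishing this is one of the ingredients that makes the sweep close up, and it is not automatic --- it uses the one-sided continuity of $\alpha\mapsto\delta(\rho,\alpha)$ together with a choice of intermediate $\rho'$ avoiding the set where $\{k\rho'\}=1-\alpha$. Two minor further points: the separate renormalization treatment of $M_2$ is unnecessary (the sweep covers $M\setminus(\mathcal{F}_1\cup\mathcal{F}_2)$ uniformly, and $\alpha=1-\rho_f$ on $M_2$ is then forced by Proposition~\ref{RANGE}); and your parenthetical that the $a$-interval at $\alpha=1-\rho$ ``reduces to a single point'' when $a\notin\{\eta_1,\eta_2\}$ is false for rational $\rho$, where \eqref{GAPSARAT} shows it always has length at least $\frac{\lambda^{q-1}}{1-\lambda^{q}}d>0$.
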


A corollary of Theorem \ref{THROT} and its reciprocal Theorem \ref{RECTHROT} is the following.

\begin{corollary}\label{CORTH13}
Let $\lambda\in (0,1)$ and $d\in (1-\lambda,1)$. For any $\delta\in (1-\lambda-d,1)$ and $a\in [0,1]$ let $f_{\delta,a}$ be the map \eqref{PROJ}. 
\begin{enumerate}
\item For any $\rho\in(0,1)$, the rotation number of $f_{\delta,a}$ is $\rho$ if and only if $(\delta,a)\in\bigcup_{\alpha\in[0,1]}\mathcal{P}_{\rho,\alpha}$. 
\item The set $\mathcal{I}:=\{(\delta,a)\in (1-\lambda-d,1)\times [0,1]: \text{the rotation number of } f_{\delta,a} \text{ is irrational}\,\}$ is a Lebesgue measurable set.
\end{enumerate}
\end{corollary}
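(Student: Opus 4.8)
\emph{Item 1.} The plan is to combine Theorem \ref{THROT} and Theorem \ref{RECTHROT} directly. First, suppose $(\delta,a)\in\bigcup_{\alpha\in[0,1]}\mathcal{P}_{\rho,\alpha}$, so that \eqref{CONDTH} holds for some $\alpha\in[0,1]$; one has to check that $(\delta,a)\in M$, i.e.\ $\delta\in(1-\lambda-d,1)$ and $a\in[0,1]$. The inclusion $\delta\in(1-\lambda-d,1)$ is immediate from item 1) of Theorem \ref{THROT} (the interval $[\delta(\rho^-,\alpha),\delta(\rho,\alpha)]$ lies in $(1-\lambda-d,1)$), and the constraint $a\in[0,1]$ is built into the definition of $\mathcal{P}_{\rho,\alpha}$ via the intersection with $[0,1]$ implicit in $M$; when $a$ falls outside $[0,1]$ item 1) of Theorem \ref{THROT} still guarantees $[a(\delta,\rho^+,\alpha),a(\delta,\rho,\alpha)]\cap[0,1]\neq\emptyset$, so after intersecting with $M$ one still has a valid map. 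Then Theorem \ref{THROT} gives $\rho_{f_{\delta,a}}=\rho$. Conversely, suppose $\rho_{f_{\delta,a}}=\rho\in(0,1)$. If $(\delta,a)\notin\mathcal{F}_1\cup\mathcal{F}_2$, then $\rho\neq 0$ automatically and Theorem \ref{RECTHROT} produces an $\alpha\in[0,1]$ with \eqref{INTREC}, i.e.\ $(\delta,a)\in\mathcal{P}_{\rho,\alpha}$. If instead $(\delta,a)\in\mathcal{F}_1\cup\mathcal{F}_2$, then by the discussion following \eqref{F2} the map has a (possibly ghost) fixed point and $\rho_{f_{\delta,a}}=0$, contradicting $\rho\in(0,1)$; so this case is vacuous. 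This closes the equivalence.

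\emph{Item 2.} The plan is to exhibit $\mathcal{I}$ as a countable set-theoretic combination of the explicitly described regions $\mathcal{P}_{\rho,\alpha}$. By item 1, for each fixed $\rho\in(0,1)$ the set of $(\delta,a)\in M$ with rotation number exactly $\rho$ equals $M\cap\bigcup_{\alpha\in[0,1]}\mathcal{P}_{\rho,\alpha}$. Summing over the rationals,
\[
\mathcal{R}:=\{(\delta,a)\in M:\ \rho_{f_{\delta,a}}\in\Q\cap(0,1)\}=\bigcup_{\rho\in\Q\cap(0,1)}\ \bigcup_{\alpha\in[0,1]}\mathcal{P}_{\rho,\alpha}.
\]
By the remarks following Theorem \ref{THROT}, for $\rho=p/q$ with $p,q$ coprime there are only $2q+1$ distinct regions $\mathcal{P}_{\rho,\alpha}$ as $\alpha$ ranges over $[0,1]$, each a parallelogram (bounded by the two vertical lines $\delta=\delta(\rho^\pm,\alpha)$ and the two affine-in-$\delta$ graphs $a=a(\delta,\rho^+,\alpha)$, $a=a(\delta,\rho,\alpha)$), hence a Borel (indeed closed) subset of $\R^2$. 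Therefore $\mathcal{R}$ is a countable union of Borel sets, so Borel, hence Lebesgue measurable. Finally, the set of $(\delta,a)\in M$ with $\rho_{f_{\delta,a}}=0$ is exactly $\mathcal{F}_1\cup\mathcal{F}_2$ by the discussion after \eqref{F2}, which is a finite union of (closed, polygonal) sets, hence measurable. Since every $(\delta,a)\in M$ has a well-defined rotation number (see \eqref{DEFRN}), we get the partition $M=\mathcal{I}\sqcup\mathcal{R}\sqcup(\mathcal{F}_1\cup\mathcal{F}_2)$, and therefore $\mathcal{I}=M\setminus\bigl(\mathcal{R}\cup\mathcal{F}_1\cup\mathcal{F}_2\bigr)$ is Lebesgue measurable as well.

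\emph{Main obstacle.} The only genuine point requiring care is the measurability (indeed Borel-ness) of each region $\mathcal{P}_{\rho,\alpha}$ for rational $\rho$ and the finiteness of the family of distinct such regions; once Lemma \ref{DELT_ARAT} and item 3) of Theorem \ref{THROT} are invoked to express $\delta(\rho,\alpha)$, $\delta(\rho^-,\alpha)$, $a(\cdot,\rho,\alpha)$, $a(\cdot,\rho^+,\alpha)$ as explicit piecewise-constant (in $\alpha$) and affine (in $\delta$) quantities, the regions are manifestly closed polygons and the countable union is transparently Borel. One should also double-check that $\mathcal{R}$ and $\mathcal{I}$ are disjoint — this is immediate from uniqueness of the rotation number — and that $\mathcal{I}$ contains no point of $\mathcal{F}_1\cup\mathcal{F}_2$, which holds since those maps have rotation number $0$.
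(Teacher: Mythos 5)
Your proposal is correct and follows essentially the same route as the paper: item 1 is the direct combination of Theorem \ref{THROT} with Theorem \ref{RECTHROT} (plus the observation that zero rotation number occurs exactly on $\mathcal{F}_1\cup\mathcal{F}_2$), and item 2 writes $M\setminus\mathcal{I}$ as a countable union of the finitely many measurable regions $\mathcal{P}_{\rho,\alpha}$ over rational $\rho$ together with $\mathcal{F}_1\cup\mathcal{F}_2$, invoking Lemma \ref{DELT_ARAT} for the finiteness. The only difference is cosmetic: you additionally note that each $\mathcal{P}_{\rho,\alpha}$ is a closed parallelogram, which is a slightly more explicit justification of measurability than the paper gives.
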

\begin{proof} Item 1) follows directly from \eqref{INTREC} and Theorem \ref{THROT}. Now, let $\mathcal{P}_{0,\alpha}:=\mathcal{F}_1\cup\mathcal{F}_2$ for every $\alpha\in[0,1]$.
Then, from item 1) we deduce that
\[
M\setminus \mathcal{I}=\bigcup_{\rho\in\Q\cap[0,1)}\bigcup_{\alpha\in[0,1]}(\mathcal{P}_{\rho,\alpha}\cap M).
\]
For any $\rho\in\Q\cap[0,1)$ and $\alpha\in[0,1]$ the set $\mathcal{P}_{\rho,\alpha}$ is
Lebesgue measurable. On the other hand, for any $\rho\in\Q\cap[0,1)$ the set $\{\mathcal{P}_{\rho,\alpha}:\alpha\in[0,1]\}$ is finite, since $\delta(\rho,\alpha)$ and $a(\delta,\rho,\alpha)$ are piecewise constant functions of $\alpha$, see Lemma \ref{DELT_ARAT}. It follows that $\mathcal{I}$ is the complement in $M$ of a countable union of Lebesgue measurable sets, which proves item 2).  
\end{proof}

\begin{remark} Item 2) of Theorem \ref{THROT} shows that for $\rho$ irrational and $\alpha$ in the orbit of $\rho$ by the rotation of angle $\rho$, the region $\mathcal{P_{\rho,\alpha}}$ is a segment. It follows that the set $\mathcal I$ of the values of $(\delta,a)$ for which the rotation number of $f_{\delta,a}$ is irrational  has Hausdorff dimension at least equal to 1. On the other hand, item 2) of Corollary \ref{CORTH13} together with the results of \cite{NPR18} allow to prove that this set has zero Lebesgue measure, as shown in the following.
Since we have proved that $\mathcal{I}$ is a Lebesgue measurable set, we can apply Fubini's theorem to obtain that
\begin{equation}\label{FUB}
\ell_2(\mathcal I)=\int_{\R^2} \Ind_{\mathcal I}(\delta,a) \ d\ell_2(\delta,a)=\int_{\R}\left(\int_{\R} \Ind_{\mathcal I_a}(\delta) \ d \ell_1(\delta)\right) \ d\ell_1(a),
\end{equation}
where $\ell_n$ denotes the Lebesgue measure in $\R^n$ and 
\[
\mathcal I_a:=\{\delta\in (1-\lambda-d,1): (\delta,a)\in \mathcal I\}\qquad\forall a\in [0,1].
\]
According to Theorem 1.1 of \cite{NPR18}, for any $a\in [0,1]$ fixed, the set of the values of $\delta\in (1-\lambda-d,1)$ such that $f_{\delta,a}$ is not asymptotically periodic has zero  Lebesgue measure. As $f_{\delta,a}$ has no periodic orbit if its rotation number is irrational, Theorem 1.1 of \cite{NPR18} implies that $\ell_1(\mathcal I_a)=0$ for any $a\in [0,1]$ and we deduce from \eqref{FUB} that $\ell_2(\mathcal I)=0$.
\end{remark}

Now, we establish explicit relations between the piecewise contractions of the form \eqref{PROJ} and the rigid rotations. To that aim, let us introduce the 
function $\phi_{\delta, \rho,\alpha}:\R\to\R$ defined as follows.

\begin{definition} Let $\lambda\in(0,1)$ and $d\in(0,1-\lambda)$. For any $\alpha\in[0,1]$, let $\psi_{\alpha}:\R \to\R$ be defined by
\begin{equation}\label{PSI}
\psi_{\alpha}(z)=(1-\lambda)\lf z\rf+d\theta_{\alpha}(\{z\})\qquad\forall z\in\R.
\end{equation}
For any $\delta,\rho$ and $\alpha\in[0,1]$, let $\phi_{\delta, \rho,\alpha}:\R\to\R$ be defined by
\begin{equation}\label{DEFPHI}
\phi_{\delta, \rho,\alpha}(y)=\frac{\delta}{1-\lambda}
+\frac{1}{\lambda}\sum_{k=1}^{\infty}\lambda^k\psi_{\alpha}(y-k\rho) \quad\forall y\in\R.
\end{equation}
\end{definition}

\noindent The monotonicity and continuity properties of $\phi_{\delta,\rho,\alpha}$ are given in Proposition \ref{PHISCR}. It is right continuous and non-decreasing. Besides, if $\rho\in(0,1)\setminus\Q$, then it is increasing and  admits a continuous generalized inverse.

\begin{theorem}\label{CODDING2} Let $\lambda\in(0,1)$, $d\in(0,1-\lambda)$,  $\delta\in(1-\lambda-d,1)$, $a\in[0,1]$ and denote $f_{\delta, a}$ the map \eqref{PROJ}.
Let $\rho\in(0,1)$ and $R_\rho:[0,1)\to [0,1)$ be a rotation of angle $\rho$, that is, the map defined by $R_\rho(y)=\{y+\rho\}$ for every $y\in [0,1)$. Let $\alpha\in[0,1]$, then, the following assertions are equivalent:
\begin{enumerate}
\item The parameters $\delta$ and $a$ satisfy
\begin{equation}\label{CONDTHIR}
\delta\in[\delta(\rho^-,\alpha),\delta(\rho,\alpha)]
\quand
a\in[a(\delta,\rho^+,\alpha),a(\delta,\rho,\alpha)], 
\end{equation}
with $\delta\neq \delta(\rho,\alpha)$ and $a\neq a(\delta,\rho^+,\alpha)$
whenever $\rho\in\Q$. 

\item  $\phi_{\delta, \rho,\alpha}([0,1))\subset[0,1)$ and  
\begin{equation}\label{CONJUGf}
f_{\delta,a}\circ\phi_{\delta, \rho,\alpha}=\phi_{\delta, \rho,\alpha}\circ R_\rho.
\end{equation}

\item For every $y\in[0,1)$ we have
\belowdisplayshortskip=10pt
\begin{equation}\label{CODES}
\theta_a(f^n_{\delta,a}(\phi_{\delta, \rho,\alpha}(y)))=\theta_{\alpha}(R^n_\rho(y))
\quand
\theta_\eta(f^n_{\delta,a}(\phi_{\delta, \rho,\alpha}(y)))=\theta_{1-\rho}( R^n_\rho(y))
\quad\forall n\in\N,
\end{equation}
with $\alpha\in[0,1-\rho]$ and $\eta=\eta_1$ if $(\delta,a)\in M_1$, 
$\alpha=1-\rho$ and $\eta=a$ if $(\delta,a)\in M_2$ and with 
$\alpha\in[1-\rho,1]$ and $\eta=\eta_2$ if $(\delta,a)\in M_3$. 
\end{enumerate}
Also, if one of the above assertions holds, then
\begin{equation}\label{THATT}
\phi_{\delta, \rho,\alpha}([0,1))\subset\Lambda:=\bigcap_{n\in\N}f_{\delta,a}^n([0,1)).
\end{equation}
Moreover, if $\rho=p/q$, with $0<p<q$ co-prime, then $\phi_{\delta, \rho,\alpha}([0,1))$ is a $q$-periodic orbit of $f_{\delta,a}$ if $\alpha\in\{0,\tfrac{1}{q},\tfrac{2}{q},\dots,\tfrac{q-1}{q},1\}$ and otherwise $\phi_{\delta, \rho,\alpha}([0,1))=\Lambda$ and is the union of the two $q$-periodic orbits of $f_{\delta,a}$. If $\rho\in(0,1)\setminus\Q$, then $\phi_{\delta, \rho,\alpha}([0,1))=\Lambda$ and $\bar{\Lambda}$ is a Cantor set.  
\end{theorem}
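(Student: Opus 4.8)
The plan is to establish the cycle of implications $(1)\Rightarrow(2)\Rightarrow(3)\Rightarrow(1)$, and then to derive the attractor statements from $(2)$ together with the monotonicity/continuity of $\phi_{\delta,\rho,\alpha}$ given in Proposition \ref{PHISCR}. The central computational identity is the functional equation satisfied by $\phi:=\phi_{\delta,\rho,\alpha}$: from the definition \eqref{DEFPHI} and the definition \eqref{PSI} of $\psi_\alpha$ one checks directly, by shifting the summation index, that $\lambda\,\phi(y)=\lambda\cdot\frac{\delta}{1-\lambda}+\sum_{k\ge1}\lambda^k\psi_\alpha(y-k\rho)$ and hence
\begin{equation*}
\lambda\,\phi(R_\rho(y))+\delta+\psi_\alpha(\{y+\rho\}-\rho)\cdot(\text{correction})=\phi(y)+\text{integer part terms},
\end{equation*}
so that after regrouping one obtains $F(\phi(R_\rho(y)))=\phi(y)+(\text{an integer depending on }y)$, where $F$ is the lift \eqref{LIFT}. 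More precisely, the key algebraic fact is that, writing out $F(\phi(R_\rho(y)))=\lambda\phi(R_\rho(y))+\delta+(1-\lambda)\lfloor\phi(R_\rho(y))\rfloor+d\,\theta_a(\{\phi(R_\rho(y))\})$, the telescoping of the series forces $\lambda\phi(R_\rho(y))+\delta=\phi(y)-\psi_\alpha(y-\rho)+\text{(something)}$; the whole content of condition \eqref{CONDTHIR} is precisely what is needed to guarantee that the ``integer/step-function bookkeeping'' in $F$ matches $\psi_\alpha$ evaluated along the rotation orbit, i.e.\ that $\lfloor\phi(R_\rho(y))\rfloor$ and $\theta_a(\{\phi(R_\rho(y))\})$ take the values $\lfloor y-\rho\rfloor$-type and $\theta_{1-\rho},\theta_\alpha$-type terms dictated by the symbolic dynamics of $R_\rho$.

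For $(1)\Rightarrow(2)$: assuming \eqref{CONDTHIR}, I would first show $\phi([0,1))\subset[0,1)$. The lower bound $\phi(y)\ge 0$ (in fact $\phi(y)\ge\phi(0)$, and one computes $\phi(0)$ using $\delta\ge\delta(\rho^-,\alpha)$) and the upper bound $\phi(y)<1$ (using $\delta\le\delta(\rho,\alpha)$ and $a\le a(\delta,\rho,\alpha)$, together with $a\ge a(\delta,\rho^+,\alpha)$ to control where the step of $\theta_\alpha$ sits) are exactly the monotonicity computations; the extremal values $\phi(0)$ and $\sup\phi$ are, by design of the series defining $\delta(\cdot,\cdot)$ and $a(\cdot,\cdot,\cdot)$, equal to the endpoints $0$ and $1$ (or $\eta_1$, $\eta_2$, $a$) exactly when $\delta$ or $a$ sits at the corresponding endpoint of \eqref{CONDTHIR} — this is why the strict inequalities $\delta\neq\delta(\rho,\alpha)$, $a\neq a(\delta,\rho^+,\alpha)$ are imposed in the rational case, so that the image stays inside the half-open interval. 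Once $\phi([0,1))\subset[0,1)$ is known, \eqref{CONJUGf} follows from $F(\phi(R_\rho(y)))=\phi(y)$ modulo the integer corrections, which collapse to $f=F\bmod 1$ on $[0,1)$; here the relevant side computation is that $\lfloor\phi(R_\rho(y))\rfloor$ and $\theta_a(\{\phi(R_\rho(y))\})$ are exactly the terms removed by passing to $f$, for which one invokes the position of $a$ relative to $\eta_1,\eta_2$ in each case $M_1,M_2,M_3$ (Proposition \ref{RANGE} tells us which case we are in given where $\alpha$ lies relative to $1-\rho$).

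For $(2)\Rightarrow(3)$: iterating \eqref{CONJUGf} gives $f^n_{\delta,a}(\phi(y))=\phi(R^n_\rho(y))$ for all $n$, so $\theta_a(f^n(\phi(y)))=\theta_a(\phi(R^n_\rho(y)))$ and one only needs the ``static'' identities $\theta_a(\phi(y))=\theta_\alpha(y)$ and $\theta_\eta(\phi(y))=\theta_{1-\rho}(y)$ for $y\in[0,1)$; these say that $\phi$ maps the partition point $\alpha$ of the circle to the discontinuity $a$ (resp.\ $1-\rho$ to $\eta$), and follow from the monotonicity of $\phi$ together with the boundary computations of the previous paragraph. For $(3)\Rightarrow(1)$: the codes \eqref{CODES} for all $n\in\mathbb N$ pin down the itinerary of $\phi(0)$ (equivalently of the whole image) under $f_{\delta,a}$ to be the itinerary of $0$ under $R_\rho$ with respect to the partition by $1-\rho$; summing the geometric series that expresses the backward orbit of $\phi(0)$ under the inverse branches of $F$ (the map $F$ is increasing, so $\phi(y)=\lim_n F_{\text{(branch word of length }n)}^{-1}(\text{something})$ along the prescribed symbolic word) recovers exactly the formulas for $\delta(\rho^\pm,\alpha)$ and $a(\delta,\rho^\pm,\alpha)$, giving \eqref{CONDTHIR}; this is essentially the converse bookkeeping to $(1)\Rightarrow(2)$ and is where one must be careful that the codes being prescribed for \emph{all} $n$ — not just one $n$ — is what forces $\delta,a$ into the closed interval with the stated one-sided strictness.

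Finally, \eqref{THATT} follows because $\phi\circ R_\rho=f\circ\phi$ iterated backward gives $\phi([0,1))=\phi(R_\rho^n([0,1)))\subset f^n([0,1))$ for every $n$, hence $\phi([0,1))\subset\Lambda$. For the structure of $\Lambda$: when $\rho=p/q$, $R_\rho$ has image $\{0,\tfrac1q,\dots,\tfrac{q-1}{q}\}$ orbit structure, and $\phi$ restricted to this finite set gives a $q$-periodic orbit; whether $\phi([0,1))$ is one or two periodic orbits depends on whether $\phi$ is constant on the intervals complementary to $\{\alpha\}$'s orbit (case $\alpha\in\{0,\tfrac1q,\dots,1\}$, one orbit) or takes two distinct values there (generic $\alpha$, two orbits), and one must then argue that $\Lambda$ contains no further points — this uses injectivity of $f$ on each branch and the fact that $\Lambda=\bigcap_n f^n([0,1))$ is contained in the (at most two) periodic orbits by the bound on the number of periodic orbits from \cite{GN22} together with finiteness. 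When $\rho\notin\mathbb Q$, $\phi$ is increasing with a continuous generalized inverse (Proposition \ref{PHISCR}), so $\phi([0,1))$ is a ``fat Cantor complement'' whose closure is a Cantor set; $\phi([0,1))=\Lambda$ requires the reverse inclusion $\Lambda\subset\phi([0,1))$, which I would obtain by showing every point of $\Lambda$ has a well-defined $\theta_\eta\circ f^n$-itinerary that, by minimality of the irrational rotation, coincides with that of some $\phi(y)$, and then using injectivity of $\phi$ modulo the countable orbit of $1-\rho$. \textbf{The main obstacle} I expect is precisely this last step — proving the \emph{surjectivity} $\phi([0,1))=\Lambda$ (equivalently that there is no point of the attractor outside the image of the rotation), since the easy direction only gives $\subset$; handling it cleanly requires controlling the contraction of $f$ along itineraries and knowing that distinct points of $\Lambda$ have distinct itineraries, which in turn leans on the absence of wandering intervals / the structure theory of piecewise contractions referenced in the introduction.
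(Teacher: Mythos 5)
Your cycle $(1)\Rightarrow(2)\Rightarrow(3)\Rightarrow(1)$ and the supporting computations match the paper's route: the functional identity you describe is Lemma \ref{CONJUG1}, the reduction of \eqref{CONDTHIR} to the two conditions $0\in(\phi(0^-),\phi(0)]$ and $a\in(\phi(\alpha^-),\phi(\alpha)]$ is Lemmas \ref{PROPPHI0}--\ref{INTERVALS}, and the inclusion $\phi([0,1))\subset\Lambda$ via backward rotation orbits is exactly Lemma \ref{PHIINCLA}. One point you gloss over in $(2)\Rightarrow(3)$: the identity $\theta_\eta(\phi(y))=\theta_{1-\rho}(y)$ does not follow from the boundary values of $\phi$ at $0$ and $\alpha$ alone; it requires computing $\phi(1-\rho)$ and $\phi((1-\rho)^-)$ and showing they bracket $\eta_1$ or $\eta_2$ (Lemmas \ref{LPHI1MR} and \ref{ETARHO}), which is a separate calculation you would need to carry out.

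The genuine gap is the step you yourself flag: the surjectivity $\Lambda\subset\phi([0,1))$. Neither of your proposed remedies closes it. For rational $\rho$, the bound on the number of periodic orbits from \cite{GN22} controls how many periodic orbits exist but says nothing about whether $\Lambda$ is \emph{exhausted} by them; a priori $\bigcap_n f^n([0,1))$ could contain non-periodic points, and ruling this out is precisely the content of Proposition \ref{ATTRAT}, which shows that $f^q([0,1))$ is covered by images of four explicit intervals anchored at periodic points and that these intervals contract onto $P_1\cup P_2$. For irrational $\rho$, your itinerary-plus-minimality argument would at best place a point $x\in\Lambda$ in the \emph{closure} $\overline{\phi([0,1))}$: two points with the same forward itinerary are not equal, only asymptotic, and the limit of approximations lands you in the closure. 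But the theorem asserts $\Lambda=\phi([0,1))$ exactly, and $\overline{\phi([0,1))}\setminus\phi([0,1))$ is a countable set (the left endpoints $\phi(z^-)$ of the jumps of $\phi$) that must be excluded from $\Lambda$. The paper handles this by an exact matching of half-open gaps: $[0,1)\setminus\phi([0,1))$ is the union of the half-open intervals $[\phi(\{k\rho\}^-),\phi(\{k\rho\}))$ and $[\phi(\{k\rho+\alpha\}^-),\phi(\{k\rho+\alpha\}))$, while $[0,1)\setminus\Lambda$ is the union of forward images of the half-open gaps $[f(1^-),f(0))$ and $[f(a^-),f(a))$ of $f$, and Lemma \ref{DMDP} plus the case analysis of Proposition \ref{ATTIR} (according to whether $\alpha$ lies in the forward or backward $R_\rho$-orbit of $0$) identifies the two families gap by gap, endpoints included. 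Without an argument of this precision your plan proves only $\overline{\Lambda}=\overline{\phi([0,1))}$, which is strictly weaker than the stated conclusion.
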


Theorem \ref{RECTHROT} ensures that any map $f=f_{\delta,a}$ of the type \eqref{MAP2}, \eqref{MAP} or \eqref{MAP3} with no (ghost) fixed point satisfies \eqref{INTREC}. 
Therefore, if for instance $f_{\delta,a}$ is of the type $M_1$, then  it satisfies \eqref{CONDTHIR}   for $\rho=\rho_f$ and an $\alpha\in[0,1-\rho_f]$, unless $\rho_f\in\Q$ and $(\delta,a)$  is such that $\delta=\delta(\rho_f,\alpha)$ or $a=a(\delta,\rho_f^+,\alpha)$. 
We believe that the maps such that $\rho_f\in\Q$ and $\delta=\delta(\rho_f,\alpha)$ or $a=a(\delta,\rho_f^+,\alpha)$ have  a ghost periodic orbit and an empty attractor $\Lambda$, as it is the case for the contracted rotation \eqref{2PCM} when $\rho_f\in\Q$ and $\delta=\delta(\rho_f,1)$ (see Theorem 7.2 in \cite{JO19}). 
Let us suppose that $f_{\delta,a}$ satisfies the item 1) of Theorem \ref{CODDING2} for some
$\rho\in(0,1)$ and some $\alpha\in[0,1]$.
Then, the map $f_{\delta,a}$ has rotation number $\rho$ (by Theorem \ref{THROT}) and the  following additional properties:

By item 2) any orbit of the rotation $R_\rho$ of angle $\rho$ is sent by $\phi_{\delta, \rho,\alpha}$ to an orbit of $f_{\delta,a}$ contained in the image of $[0,1)$ by $\phi_{\delta, \rho,\alpha}$. The relation \eqref{THATT}  ensures that this image is a subset of the attractor ${\Lambda}$ of $f_{\delta,a}$. Actually, if $\rho\notin\Q$, or $\rho\in\Q$ and
$\{\alpha\}$ is not in the orbit of $\rho$ by the rotation $R_\rho$, then  $\phi_{\delta, \rho,\alpha}([0,1))$  is exactly the attractor of $f_{\delta,a}$. In the first case its closure is a Cantor set and in the second one it is the union of two periodic orbits with the same period. For $\rho\notin\Q$ the function $\phi_{\delta,\rho,\alpha}$ admits a continuous inverse and  \eqref{CONJUGf} implies that $f_{\delta,a}$ restricted to $\Lambda$ and $\overline{\Lambda}$ is semi-conjugate to $R_\rho$, see Appendix \ref{SEMICONJ}.

Now we give the interpretation of item 3) and of the parameter $\alpha$ with an example. Let us suppose  that $(\delta,a)\in M_1$ (and therefore $\alpha\in[0,1-\rho]$ and $\eta=\eta_1(\delta)$). Then, consider the natural partition  $P_{a,\eta}$  of the phase space of $f_{\delta,a}$ defined by $I_0=[0,a)$, $I_1=[a,\eta)$ and $I_2=[\eta,1)$, and on the other hand the partition  $P'_{\alpha,1-\rho}$  of the phase space  of $R_\rho$ defined by $Y_0=[0,\alpha)$, $Y_1=[\alpha,1-\rho)$ and $Y_2=[1-\rho,1)$ (for $\alpha\in\{0,1-\rho\}$, the interval $Y_0$ or $Y_1$ is empty). Now, let $y\in[0,1)$ and define the code of the orbit of $y$ by $R_\rho$ with respect to 
$P'_{\alpha,1-\rho}$, as the sequence $i=\{i_n\}_{n\in\N}$ such that $R^n_\rho(y)\in Y_{i_n}$ for every $n\in\N$.  Then,  \eqref{CODES} states that $ f_{\delta,a}^n(\phi_{\delta,\rho,\alpha}(y))\in I_{i_n}$ for every $n\in\N$, that is, the sequence $i$ is also the code of an orbit of $f_{\delta,a}$ with respect to  $P_{a,\eta}$. 
So,  the set of the codes of  $f_{\delta, a}$ restricted to the image of $\phi_{\delta,\rho,\alpha}$ is equal to the set of the codes of $R_{\rho}$ with respect to $P'_{\alpha,1-\rho}$. 
The same is also true for $(\delta,a)\in M_2$ but with $\eta=a$  and $\alpha=1-\rho$, as well as  for $(\delta,a)\in M_3$ but with $\eta=\eta_2$ and the partitions $P_{\eta,a}$ and $P'_{1-\rho,\alpha}$. 

\begin{remark} As shown in the right panel of Figure \ref{RHO13}, for $\rho=p/q$ rational the values of $\delta$ and $a$ satisfying \eqref{CONDTHIR} for $\alpha\in(l/q,(l+1)/q)$, where $l\in\{0,\dots,q-1\}$, also satisfy \eqref{CONDTHIR} for $\alpha'=l/q$. It follows that \eqref{THATT} holds with $\phi_{\delta,\rho,\alpha}$ and $\phi_{\delta,\rho,\alpha'}$. As
$\phi_{\delta,\rho,\alpha}([0,1))$ is the union of two periodic orbits and $\phi_{\delta,\rho,\alpha'}([0,1))$ is a unique
periodic orbit, we deduce that the inclusion \eqref{THATT} can indeed be strict.
\end{remark}

\begin{example} As a first application of Theorem \ref{CODDING2}, we show how to construct a map of the type \eqref{MAP2} with 2 periodic orbits of period $4$. Choose any $\lambda\in(0,1)$, $d\in(0,1-\lambda)$ and let $\rho=1/4$. Now, let 
$\alpha=5/16<1-\rho$ and choose $\delta$ and $a$ in the corresponding intervals \eqref{CONDTHIR}. Then $f_{\delta,a}$ is  of the type \eqref{MAP2}, has rotation number $\rho=1/4$, an attractor composed of two periodic orbits, since $\alpha\not\in\{0,1/4,1/2,3/4,1\}$, whose codes with respect to the partition $\{[0,a),[a,\eta_1),[\eta_1,1)\}$  are  those of the orbits of the rotation  with respect to the partition $\{[0,\alpha),[\alpha,1-\rho),[1-\rho,1)\}$. A numerical example is given in Figure \ref{CONSTRUCT}.
\end{example}

\begin{figure}[h]
\includegraphics[width=0.98\textwidth]{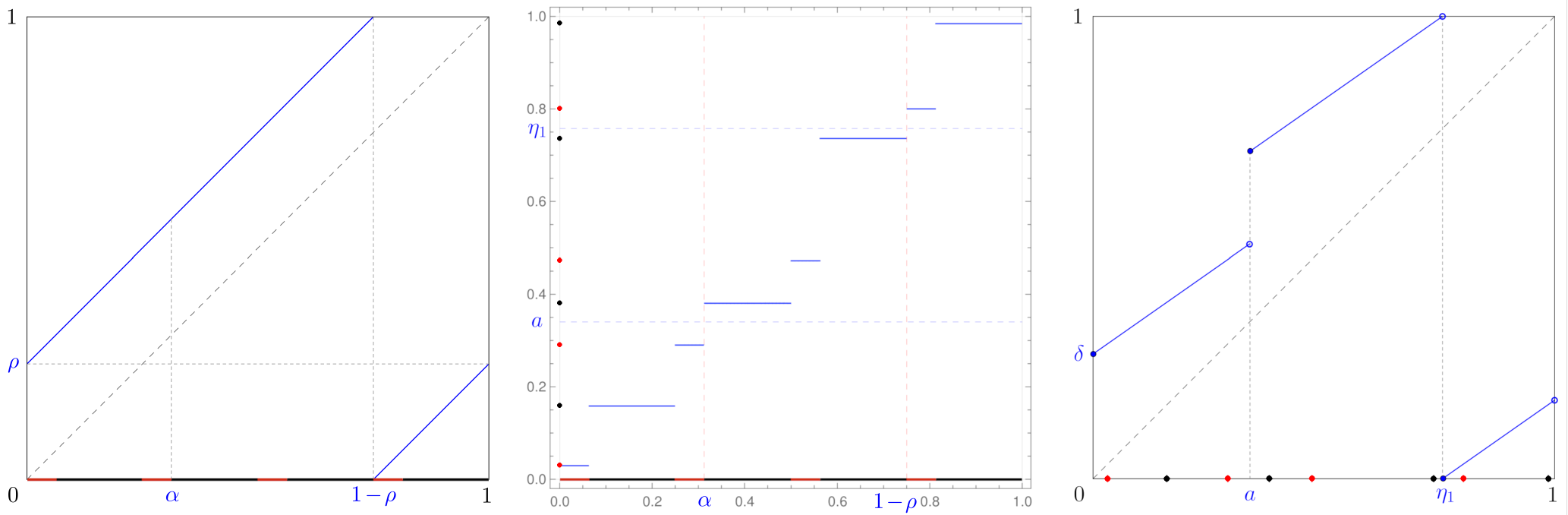} 
\centering
\caption{
Left: Rotation $R_\rho$ of angle $\rho=1/4$ with the partition $\{[0,\alpha), [\alpha,1-\rho),[1-\rho,1)\}$, where $\alpha=5/16$.
Right: The map $f_{\delta,a}$ for $\lambda=7/10$, $d=1/5$, $\delta=0.27\in[\delta(\rho^-,\alpha), \delta(\rho,\alpha))$ and $a=0.34\in(a(\delta,\rho^+,\alpha),a(\delta,\rho,\alpha)]$. The two periodic orbits composing its attractor are shown in red and black.
Center: The function $\phi_{\delta,\rho,\alpha}$.
}\label{CONSTRUCT}
\end{figure}

For any symbolic sequence $i\in\{0,1,2\}^\N$, the complexity function $p_i:\N\setminus\{0\}\to\N$ of $i$ is defined by $p_i(n)=\#\{i_ti_{t+1}\dots i_{t+n-1}:t\geq 0\}$ for every $n\geq 1$. That is, $p_i(n)$ is the number of distinct blocks of size $n$ of consecutive symbols appearing in $i$. The complexity of a periodic symbolic sequence is eventually equal to its period. The complexity of the codes of irrational rotations with respect to any given partition  can be computed thanks to  Theorem 10 of \cite{AB}. Applying this theorem, we obtain the following corollary of Theorem \ref{CODDING2}: 

\begin{corollary}\label{CORCOMP} Let $\lambda\in(0,1)$, $d\in(0,1-\lambda)$, $\rho\in(0,1)\setminus\Q$ and  $\alpha\in[0,1]$. Let $\delta\in(1-\lambda-d,1)$ and $a\in[0,1]$ satisfying \eqref{CONDTHIR} and denote $f_{\delta, a}$ the map \eqref{PROJ}. 
Let $\eta=\eta_1$ if $\alpha\in[0,1-\rho)$, $\eta=a$ if $\alpha=1-\rho$ and $\eta=\eta_2$ if $\alpha\in(1-\rho,1]$ and let $P$ be the partition of $[0,1)$ in two or three left closed intervals defined by the positions of $a$ and $\eta$ in $[0,1)$.
Let $x\in\Lambda$ and $p:\N\setminus\{0\}\to\N$ be the complexity function of the code of the orbit of $x$ with respect to $P$. Then,
\begin{enumerate}
\item $P$ is the partition given by the positions of the discontinuities of $f_{\delta,a}$ in $[0,1)$.
\item If $\{\alpha\}\neq R_\rho^k(0)$ for every $k\in\Z$, we have 
\[
p(n)=2n+1\qquad\forall n\geq 1.
\]
\item If $\{\alpha\}= R_\rho^k(0)$ for some $k\in\Z$, we have 
\[
p(n)=n+b \qquad\forall n\geq\max\{1,b-1\},  
\]
where $b=|k|$ if $k\leq -2$, $b=1$ if $k\in\{-1,0\}$ and $b=k+1$ if $k\geq 1$.
\end{enumerate}
\end{corollary}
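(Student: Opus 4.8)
The plan is to reduce everything to the complexity of the coding of the irrational rotation $R_\rho$ with respect to an explicit partition of $[0,1)$, and then invoke the known formula for that complexity (Theorem 10 of \cite{AB}). The starting point is Theorem \ref{CODDING2}: since $\delta$ and $a$ satisfy \eqref{CONDTHIR} and $\rho\notin\Q$, assertion 3) of that theorem holds, so for every $y\in[0,1)$ and $n\in\N$ the symbol of $f_{\delta,a}^n(\phi_{\delta,\rho,\alpha}(y))$ with respect to the partition $P$ (defined by the positions of $a$ and $\eta$) coincides with the symbol of $R_\rho^n(y)$ with respect to the corresponding partition $P'$ of $[0,1)$ given by the positions of $\alpha$ and $1-\rho$; moreover $\phi_{\delta,\rho,\alpha}([0,1))=\Lambda$. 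Since every point $x\in\Lambda$ is of the form $\phi_{\delta,\rho,\alpha}(y)$ for some $y\in[0,1)$, its $f_{\delta,a}$-code with respect to $P$ is exactly an $R_\rho$-code with respect to $P'$. The first step is therefore to record that the set of codes of orbits in $\Lambda$ equals the set of codes of orbits of $R_\rho$ with respect to $P'$, so the complexity function $p$ of the code of any $x\in\Lambda$ equals the complexity function of the corresponding $R_\rho$-code; and since $R_\rho$ is minimal, this complexity does not depend on the chosen orbit, only on the partition $P'$.

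Item 1) is a separate and essentially bookkeeping point: one must check that $P$ really is the partition cut out by the discontinuities of $f_{\delta,a}$ in $[0,1)$. This is read off from the explicit forms \eqref{MAP2}, \eqref{MAP3}, \eqref{MAP} together with Proposition \ref{RANGE}, which tells us that $\alpha\in[0,1-\rho)$ puts $(\delta,a)$ in $M_1$ (discontinuities at $a$ and $\eta_1$), $\alpha=1-\rho$ puts it in $M_2$ (single discontinuity at $a=\eta$), and $\alpha\in(1-\rho,1]$ puts it in $M_3$ (discontinuities at $\eta_2$ and $a$). In each case the chosen $\eta$ matches, so $P$ is indeed the discontinuity partition; for $\alpha\in\{0,1\}$ one branch degenerates and $P$ has only two atoms, consistent with the statement.

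For items 2) and 3) I would invoke Theorem 10 of \cite{AB}, which computes the complexity of the coding of $R_\rho$ (with $\rho$ irrational) with respect to a partition of $[0,1)$ into intervals with endpoints $0$ and finitely many other points: the complexity is eventually affine, its slope equals the number of partition points that do \emph{not} lie on the orbit of $0$, plus corrections from those that do. Here $P'$ has partition points $1-\rho$ and $\alpha$ (and the point $0$, which is always a partition point of a left-closed partition of $[0,1)$). Note $1-\rho=R_\rho^{-1}(0)$ always lies on the orbit of $0$; so the slope-$2$ case occurs exactly when $\alpha$ is \emph{not} on the orbit of $0$, giving two ``free'' discontinuity points ($\alpha$ and the one at $0$ induced by the circle structure) and hence $p(n)=2n+1$ — this is item 2). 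When $\{\alpha\}=R_\rho^k(0)$ for some $k\in\Z$, both partition points $1-\rho$ and $\alpha$ lie on the orbit of $0$, the slope drops to $1$, and the additive constant $b$ and the threshold $\max\{1,b-1\}$ are exactly what the AB formula produces in terms of the combinatorial position $k$ of $\alpha$ relative to $0$ along the orbit; the case split $b=|k|$ for $k\le-2$, $b=1$ for $k\in\{-1,0\}$, $b=k+1$ for $k\ge1$ is the translation of their general statement into our indexing (recalling that $1-\rho$ sits at ``position $-1$''). The main obstacle is purely expository: correctly matching our partition $P'$, its endpoints, and our sign/index conventions for the rotation to the hypotheses and the output format of Theorem 10 of \cite{AB}, so that the constants $b$ and the thresholds come out precisely as stated; once the dictionary is fixed, both items are immediate corollaries, and the degenerate cases $\alpha\in\{0,1\}$ (where $\alpha$ is trivially on the orbit of $0$, so item 3) applies with $k=0$, $b=1$, $p(n)=n+1$) need only a brief separate remark.
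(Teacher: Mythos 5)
Your proposal is correct and follows essentially the same route as the paper: identify the map's type via Proposition \ref{RANGE} to get item 1), transfer the coding to the rotation $R_\rho$ with respect to the partition given by $\alpha$ and $1-\rho$ via item 3) of Theorem \ref{CODDING2} together with $\phi_{\delta,\rho,\alpha}([0,1))=\Lambda$, and conclude with Theorem 10 of \cite{AB}. The extra remarks you make (that $1-\rho$ always lies on the orbit of $0$, and the degenerate cases $\alpha\in\{0,1\}$) are consistent with, and slightly more explicit than, the paper's proof.
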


\begin{proof} Suppose that $\delta\in(1-\lambda-d,1)$ and $a\in[0,1]$ satisfy \eqref{CONDTHIR}. As $\rho\in(0,1)\setminus\Q$, by Proposition \ref{RANGE}, the map $f_{\delta,a}$ 
is of the type \eqref{MAP2} if $\alpha\in[0,1-\rho)$, of the type \eqref{MAP3} if $\alpha=1-\rho$ and of the type \eqref{MAP} if $\alpha\in(1-\rho,1]$. Therefore, $P$ is the partition given by the positions of the discontinuities of $f_{\delta,a}$ in $[0,1)$.
Now, if $x\in\Lambda$, then $x= \phi_{\delta,\rho,\alpha}(y)$ for some $y\in[0,1)$, since 
$\phi_{\delta,\rho,\alpha}([0,1))=\Lambda$ for $\rho\in(0,1)\setminus\Q$. 
Then by item 3) of Theorem \ref{CODDING2}, the code of the orbit of $x$ by $f_{\delta,a}$ with respect to $P$ is the same as the code of the orbit of $y$ by $R_\rho$ with respect to the partition of $[0,1)$ in two or three left closed intervals defined by the positions of $\alpha$ and $1-\rho$ in $[0,1)$. 
Applying Theorem 10 of \cite{AB}, we obtain that the complexity function of such a rotation's code is given by item 2) or 3).
\end{proof}

\begin{example} Here we give an example of application of Corollary \ref{CORCOMP}. 
Choose any $\lambda\in(0,1)$, $d\in(0,1-\lambda)$ and let $\rho=\pi/4$. Now, by choosing 
a suitable $\alpha$, we can obtain maps whose symbolic dynamics has  a desired admissible complexity:   

\begin{itemize}
\item Let $\alpha=1/4\neq R^{k}_{\frac{\pi}{4}}(0)$ for every $k\in\Z$. Let $\delta=\delta(\pi/4,1/4)$ and $a=a(\delta,\pi/4,1/4)$. Then, $f_{\delta,a}$ is of the type \eqref{MAP} since $\alpha=1/4>1-\rho=(4-\pi)/4$. By items 1) and 2) of Corollary \ref{CORCOMP},  for any $x\in\Lambda$ the code of the orbit of $x$, with respect to the partition $P_{\eta_2,a}:=\{[0,\eta_2),[\eta_2,a),[a,1)\}$, has complexity  $p(n)=2n+1$ for all $n\geq 1$. 

\item Let $\alpha=R^{k}_{\frac{\pi}{4}}(0)$ for $k=100$, that is $\alpha=\{25\pi\}$. Let  $\delta=\delta(\pi/4,\{25\pi\})$ and 
\[
a\in[a(\delta,(\pi/4)^+,\{25\pi\}),a(\delta,\pi/4,\{25\pi\})]=[a(\delta,\pi/4,\{25\pi\})-\lambda^{99}(1-\lambda-d),a(\delta,\pi/4,\{25\pi\})].
\]
Then, $f_{\delta,a}$ is of the type \eqref{MAP} since $\alpha=\{25\pi\}>1-\rho=(4-\pi)/4$. By items 1) and 3) of Corollary \ref{CORCOMP}, for any $x\in\Lambda$ the code of the orbit of $x$, with respect to the partition $P_{\eta_2,a}$, has complexity  $p(n)=n+101$ for all $n\geq 100$.
\end{itemize}
\end{example}

\begin{remark} The function $\phi_{\delta,\rho,\alpha}$ allows to parametrize a large variety of Cantor sets. Indeed, choose any $\rho\in(0,1)\setminus\Q$ any $\alpha\in[0,1]$. Then the closure of $\phi_{\delta,\rho,\alpha}([0,1))$ is a Cantor set, whose extremities are $0$ and $1$ if $\delta=\delta(\rho,\alpha)$ and $\{\alpha+k\rho\}\neq 0$ for all $k\geq 1$, see Lemma \ref{PHICANT}. In such a case $\phi_{\delta,\rho,\alpha}(y)$ can be written as
\begin{align}\label{SUPERCANT}
\phi_{\delta, \rho,\alpha}(y)=\phi_{\delta(\rho,1),\rho,1}(y)+\frac{d}{\lambda}\sum_{k=1}^{\infty}\lambda^k(\theta_\alpha(\{y-k\rho\})-\theta_{\alpha}(\{-k\rho\}))\qquad \forall y\in[0,1).
\end{align}
The closure of $\phi_{\delta(\rho,1),\rho,1}([0,1))$ is the Cantor set $C_1$ corresponding to the attractor of the map $x\mapsto\lambda x+\delta\pmod 1$. In \cite{BuKLN}, it has been shown that  $C_1\setminus\{0,1\}$ contains only transcendental numbers if $\lambda=1/b$, where $b\geq 2$ is an integer. 
It would be interesting to study 
such a property for the Cantor set obtained from \eqref{SUPERCANT}  as a function of the values of $d\in(0,1-\lambda)$ and $\alpha\in(0,1)$. 
\end{remark}

In the sequel of the paper we prove the results presented above. As mentioned before, we always suppose $\lambda\in(0,1)$ and $d\in(0,1-\lambda)$, as stated in the definition $\eqref{LIFT}$ of $F$, and do not recall it from now on. Also, we will mostly write $\phi$ for $\phi_{\delta,\rho,\alpha}$ and $f$ for $f_{\delta,a}$.

\section{Construction and general properties of $\phi$}\label{CGPP}

To relate $f$ with a rotation our starting point is to construct a function $\phi:\R\to\R$ such that 
\begin{equation}\label{CONJFL}
F\circ\phi=\phi\circ L_\rho,\quad\text{where}\quad  
L_\rho(y)=y+\rho\quad\forall y\in \R
\end{equation}
is the lift of the rotation of angle $\rho$. In this section, we first show why the function $\phi=\phi_{\delta,\rho,\alpha}$ given in \eqref{DEFPHI} is a natural candidate and then we establish conditions on this function which ensure \eqref{CONJFL}.

\subsection{Construction of $\phi$}

Following the idea in \cite{Coutinho1999,LN21}, we look for an expression of the points of  $\cap_{n\in\N}F^n(\R)$. Indeed, if $\phi$ satisfies \eqref{CONJFL}  then for every $y\in\R$ we have
\[
\phi(y)=\phi(L^n_\rho(L^{-n}_\rho(y)))=F^n(\phi(L^{-n}_\rho(y)))\in F^n(\R)\quad\forall n\in\N.
\]
As $F$ is injective, the points of $\cap_{n\in\N}F^n(\R)$ are those that have a backward orbit by $F$.

\begin{lemma}\label{ORBLIFT} If $x\in\R$ has a backward orbit $(x_{-k})_{k\in\N}$ by $F$, then 
\[
x=\lf x\rf + \frac{\delta}{1-\lambda}
+\sum_{k=0}^{\infty}\lambda^k\left(\lf x_{-(k+1)}\rf-\lf x_{-k}\rf 
+d\theta_{a}(\{x_{-(k+1)}\})\right).
\] 
\end{lemma}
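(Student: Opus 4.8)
The statement is a closed-form expression for any point $x$ possessing a full backward orbit under $F$, so the natural approach is to iterate the defining relation \eqref{LIFT} and telescope. Writing $F(z)=\lambda z+\delta+\psi_\alpha$-type terms is not quite what we want here; instead I would start from the identity $F(x_{-k})=x_{-(k+1)}$'s inverse form, namely from \eqref{LIFT} applied at $x_{-(k+1)}$:
\[
x_{-k}=F(x_{-(k+1)})=\lambda x_{-(k+1)}+\delta+(1-\lambda)\lf x_{-(k+1)}\rf+d\,\theta_a(\{x_{-(k+1)}\})\qquad\forall k\in\N,
\]
which gives
\[
x_{-(k+1)}=\frac{1}{\lambda}\Big(x_{-k}-\delta-(1-\lambda)\lf x_{-(k+1)}\rf-d\,\theta_a(\{x_{-(k+1)}\})\Big).
\]
Rather than solve the recursion "downwards", I would rewrite the first displayed equation so as to express $x_{-k}$ in terms of $x_{-(k+1)}$ and isolate the pieces that will telescope. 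The cleanest route is to observe that
\[
x_{-k}-\lf x_{-k}\rf=\lambda\big(x_{-(k+1)}-\lf x_{-(k+1)}\rf\big)+\delta+d\,\theta_a(\{x_{-(k+1)}\})+\lambda\lf x_{-(k+1)}\rf-\lf x_{-k}\rf+(1-\lambda)\lf x_{-(k+1)}\rf,
\]
i.e. $\{x_{-k}\}=\lambda\{x_{-(k+1)}\}+\delta+d\,\theta_a(\{x_{-(k+1)}\})+\lf x_{-(k+1)}\rf-\lf x_{-k}\rf$. Hmm — better to just carry the $\lf\cdot\rf$ difference as a single term. The key algebraic step is: from $x_{-k}=\lambda x_{-(k+1)}+\delta+(1-\lambda)\lf x_{-(k+1)}\rf+d\theta_a(\{x_{-(k+1)}\})$ one gets
\[
x_{-k}=\lambda x_{-(k+1)}+\delta+\lf x_{-k}\rf-\lambda\lf x_{-(k+1)}\rf+\big(\lf x_{-(k+1)}\rf-\lf x_{-k}\rf\big)+(1-\lambda)\lf x_{-(k+1)}\rf+\dots
\]
which is going in circles; the honest thing is simply to set $u_k:=x_{-k}-\lf x_{-k}\rf-\tfrac{\delta}{1-\lambda}$ or similar and check that the recursion becomes $u_k = \lambda u_{k+1} + (\text{known terms})$.

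Concretely, the plan is: \textbf{(1)} define $c_k:=\lf x_{-(k+1)}\rf-\lf x_{-k}\rf+d\,\theta_a(\{x_{-(k+1)}\})$ (the summand appearing in the claimed formula), so that \eqref{LIFT} at $x_{-(k+1)}$ reads $x_{-k}=\lambda x_{-(k+1)}+\delta+\lf x_{-k}\rf-\lf x_{-(k+1)}\rf+\lf x_{-(k+1)}\rf+c_k-d\theta_a+\dots$ — I will just verify directly that $x_{-k}-\lf x_{-k}\rf=\lambda\big(x_{-(k+1)}-\lf x_{-(k+1)}\rf\big)+\delta+c_k$, which is a one-line rearrangement of \eqref{LIFT} using $(1-\lambda)\lf x_{-(k+1)}\rf=\lf x_{-(k+1)}\rf-\lambda\lf x_{-(k+1)}\rf$. \textbf{(2)} Iterate: setting $v_k:=x_{-k}-\lf x_{-k}\rf$, we have $v_k=\lambda v_{k+1}+\delta+c_k$, hence by induction $v_0=\lambda^{n+1}v_{n+1}+\delta\sum_{k=0}^{n}\lambda^k+\sum_{k=0}^{n}\lambda^k c_k$ for every $n$. \textbf{(3)} Let $n\to\infty$: since $v_{n+1}=\{x_{-(n+1)}\}\in[0,1)$ is bounded and $\lambda\in(0,1)$, the term $\lambda^{n+1}v_{n+1}\to0$; the geometric series gives $\delta\sum_{k\ge0}\lambda^k=\tfrac{\delta}{1-\lambda}$; and the series $\sum_k\lambda^k c_k$ converges absolutely because $c_k$ is bounded (indeed $|\lf x_{-(k+1)}\rf-\lf x_{-k}\rf|$ is bounded since $F$ moves integer parts by a controlled amount — more carefully, from \eqref{LIFT}, $\lf x_{-k}\rf-\lf x_{-(k+1)}\rf$ equals $\lf \lambda x_{-(k+1)}+\delta+d\theta_a(\{x_{-(k+1)}\})+(\text{integer})\rf$ minus an integer, and the fractional-scale part lies in a bounded interval, so $c_k$ takes finitely many values). \textbf{(4)} Conclude $v_0=\tfrac{\delta}{1-\lambda}+\sum_{k=0}^{\infty}\lambda^k c_k$, i.e. $x=\lf x\rf+\tfrac{\delta}{1-\lambda}+\sum_{k=0}^{\infty}\lambda^k\big(\lf x_{-(k+1)}\rf-\lf x_{-k}\rf+d\,\theta_a(\{x_{-(k+1)}\})\big)$, which is exactly the claim (noting $v_0=x-\lf x\rf$ and $x_0=x$).

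The only genuine subtlety — the \emph{main obstacle} — is step \textbf{(3)}, the convergence of $\sum_k\lambda^k c_k$ and the vanishing of the remainder $\lambda^{n+1}\{x_{-(n+1)}\}$. The latter is trivial ($0\le\{x_{-(n+1)}\}<1$). For the former, one must argue that $c_k$ stays bounded along the backward orbit; this follows because $x_{-k}-\lf x_{-k}\rf\in[0,1)$ forces, via the recursion $v_k=\lambda v_{k+1}+\delta+c_k$, the bound $c_k=v_k-\lambda v_{k+1}-\delta\in(-\lambda-\delta,\,1-\delta)$, so $\sum_k\lambda^k c_k$ is dominated by a convergent geometric series and hence converges absolutely. (One could alternatively note $c_k$ is integer-plus-$d\theta_a\in\{0,d\}$ and confined to a bounded set, so it ranges over a finite set of values, which also suffices.) Everything else is bookkeeping: the rearrangement of \eqref{LIFT} in step \textbf{(1)} and the finite induction in step \textbf{(2)} are routine, and no auxiliary result from earlier in the paper is needed beyond the injectivity and the explicit form of $F$.
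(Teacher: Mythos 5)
Your proof is correct and follows essentially the same route as the paper: the identity $\{x_{-k}\}=\lambda\{x_{-(k+1)}\}+\delta+c_k$ that you iterate is exactly the recursion the paper uses in its induction (its formula \eqref{SERIE} is your partial sum with the remainder $\lambda^n v_n$ folded into the $\delta/(1-\lambda)$ term), and both arguments finish by letting $n\to\infty$ using the boundedness of the summands. The only cosmetic difference is that your bound $c_k\in(-\lambda-\delta,1-\delta)$ replaces the paper's observation that $\lf x_{-(k+1)}\rf-\lf x_{-k}\rf\in\{0,-1\}$ up to sign and $d\theta_a\in\{0,d\}$; both suffice for absolute convergence.
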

\begin{proof} Let us suppose that $x=x_0\in\R$ has a backward orbit and let us show by induction that for
any $n\geq 1$, we have 
\begin{equation}\label{SERIE}
x=\lf x\rf + \lambda^n\left(\{x_{-n}\}-\frac{\delta}{1-\lambda}\right)+\frac{\delta}{1-\lambda}
+\sum_{k=0}^{n-1}\lambda^k\left(\lf x_{-(k+1)}\rf-\lf x_{-k}\rf 
+d\theta_{a}(\{x_{-(k+1)}\})\right).
\end{equation}
For $n=1$, the equality holds since its r.h.s.  writes
\[
 \lambda \{x_{-1}\}-\frac{\lambda\delta-\delta}{1-\lambda}
+\lf x_{-1}\rf +d\theta_{a}(\{x_{-1}\})=\lambda x_{-1}+\delta+(1-\lambda)\ent{x_{-1}}+d\theta_{a}(\{x_{-1}\})=F(x_{-1})=x.
\] 
Now, if \eqref{SERIE} holds for some arbitrary $n\geq 1$, then we can check that it holds also for $n+1$ using  
\[
\{x_{-n}\}=F(x_{-(n+1)})-\ent{x_{-n}}=\lambda\{x_{-(n+1)}\}+\delta+\ent{x_{-(n+1)}}-\ent{x_{-n}}
+d\theta_{a}(\{x_{-(n+1)}\}).
\]
As for every $k\in\N$ we have 
\[
|\lf x_{-(k+1)}\rf-\lf x_{-k}\rf|=|\lf x_{-(k+1)}\rf-\lf F(x_{-(k+1)})\rf|
\in\{0,1\}\quand d\theta_{a}(\{x_{-(k+1)}\})\in\{0,d\},
\]
the serie converges and we can take the limit $n\to\infty$ to end the proof. 
\end{proof}

Lemma \ref{ORBLIFT} suggests that a candidate for $\phi$ is given by
\begin{equation}\label{PHIPRIM}
\phi(y):=\lf y\rf + \frac{\delta}{1-\lambda}
+\sum_{k=0}^{\infty}\lambda^k\left(\lf y-(k+1)\rho\rf-\lf y-k\rho\rf 
+d\theta_{\alpha}(\{y-(k+1)\rho\})\right).
\end{equation}
Indeed, if there exist $\rho$ and $\alpha\in[0,1]$ such that for any $y\in\R$ there is a $x\in\cap_{n\in\N}F^n(\R)$ such that $\lf x_{-k}\rf=\lf y-k\rho\rf$ and $\theta_{a}(\{x_{-k}\})=\theta_{\alpha}(\{y-k\rho\})$ for every $k\geq 0$, then $\phi(y)\in\cap_{n\in\N}F^n(\R)$ for all $y\in\R$. The function \eqref{DEFPHI} is obtained writing \eqref{PHIPRIM} as
\begin{align*}
\phi(y)
&=\lf y\rf + \frac{\delta}{1-\lambda}
+\frac{1}{\lambda}\sum_{k=1}^{\infty}\lambda^k\lf y-k\rho\rf-\lf y\rf-\sum_{k=1}^\infty\lambda^k\lf y-k\rho\rf +\frac{d}{\lambda}\sum_{k=1}^\infty\lambda^{k}\theta_{\alpha}(\{y-k\rho\})\\
&=\frac{\delta}{1-\lambda}
+\frac{1-\lambda}{\lambda}\sum_{k=1}^{\infty}\lambda^k\lf y-k\rho\rf+\frac{d}{\lambda}\sum_{k=1}^\infty\lambda^{k}\theta_{\alpha}(\{y-k\rho\})\\
&=\frac{\delta}{1-\lambda}
+\frac{1}{\lambda}\sum_{k=1}^{\infty}\lambda^k\left((1-\lambda)\lf y-k\rho\rf+d\theta_{\alpha}(\{y-k\rho\})\right)\\
&=\frac{\delta}{1-\lambda}
+\frac{1}{\lambda}\sum_{k=1}^{\infty}\lambda^k\psi_{\alpha}(y-k\rho).
\end{align*}

\subsection{Necessary and sufficient condition for $F\circ\phi=\phi\circ L_\rho$}

In this section we give some conditions ensuring that \eqref{DEFPHI} satisfies \eqref{CONJFL}.

\begin{lemma} \label{CONJUG1} For every $\rho, \alpha$ and $a\in[0,1]$, we have 
\[
F(\phi(y))= (1-\lambda)(\lf \phi(y)\rf -\ent{y})+ d(\theta_a(\{\phi(y)\})-\theta_\alpha(\{y\}))  +\phi(y+\rho)\quad \forall y\in\R.
\]
\end{lemma}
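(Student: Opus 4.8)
The plan is to compute $F(\phi(y))$ directly from the series definition \eqref{DEFPHI} and recognize the result as a combination of $\phi(y+\rho)$ plus the correction terms coming from the ``integer-part'' and ``step-function'' pieces of the lift $F$. The starting point is the formula
\[
F(\phi(y))=\lambda\phi(y)+\delta+(1-\lambda)\lf\phi(y)\rf+d\,\theta_a(\{\phi(y)\}),
\]
which is just the definition \eqref{LIFT} evaluated at $\phi(y)$. So the core of the argument reduces to identifying $\lambda\phi(y)+\delta$ in terms of $\phi(y+\rho)$.

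\textbf{Key computation.} From \eqref{DEFPHI},
\[
\lambda\phi(y)=\frac{\lambda\delta}{1-\lambda}+\sum_{k=1}^{\infty}\lambda^{k+1}\psi_{\alpha}(y-k\rho)
=\frac{\lambda\delta}{1-\lambda}+\sum_{j=2}^{\infty}\lambda^{j}\psi_{\alpha}(y-(j-1)\rho),
\]
after the index shift $j=k+1$. On the other hand, writing out $\phi(y+\rho)$ and isolating its $k=1$ term gives
\[
\phi(y+\rho)=\frac{\delta}{1-\lambda}+\lambda\,\psi_{\alpha}(y)+\sum_{j=2}^{\infty}\lambda^{j}\psi_{\alpha}(y+\rho-j\rho)
=\frac{\delta}{1-\lambda}+\lambda\,\psi_{\alpha}(y)+\sum_{j=2}^{\infty}\lambda^{j}\psi_{\alpha}(y-(j-1)\rho).
\]
Subtracting, the tail sums cancel exactly, and one obtains
\[
\lambda\phi(y)=\phi(y+\rho)-\frac{\delta}{1-\lambda}+\frac{\lambda\delta}{1-\lambda}-\lambda\,\psi_{\alpha}(y)
=\phi(y+\rho)-\delta-\lambda\,\psi_{\alpha}(y).
\]
Hence $\lambda\phi(y)+\delta=\phi(y+\rho)-\lambda\,\psi_{\alpha}(y)$, and recalling $\psi_\alpha(y)=(1-\lambda)\lf y\rf+d\,\theta_\alpha(\{y\})$ we get
\[
F(\phi(y))=\phi(y+\rho)-\lambda\,\psi_{\alpha}(y)+(1-\lambda)\lf\phi(y)\rf+d\,\theta_a(\{\phi(y)\}).
\]
Grouping the $(1-\lambda)$-terms and the $d$-terms separately yields precisely
\[
F(\phi(y))=(1-\lambda)\bigl(\lf\phi(y)\rf-\lf y\rf\bigr)+d\bigl(\theta_a(\{\phi(y)\})-\theta_\alpha(\{y\})\bigr)+\phi(y+\rho),
\]
which is the claimed identity. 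One should note that the argument is valid for every $\rho$, $\alpha$ and $a\in[0,1]$, since convergence of the defining series of $\phi$ (hence legitimacy of the index shift and termwise manipulation) was already established when $\phi$ was introduced, and no restriction on $a$ entered the computation.

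\textbf{Main obstacle.} There is no genuine obstacle here: the lemma is a pure algebraic bookkeeping identity, and the only point requiring care is the index shift in the series — one must make sure the $k=1$ term of $\phi(y+\rho)$ is the piece $\lambda\psi_\alpha(y)$ that gets peeled off, and that the shifted tails match term by term. The reason the lemma is stated as an identity (rather than immediately as the conjugacy \eqref{CONJFL}) is that the two correction terms $(1-\lambda)(\lf\phi(y)\rf-\lf y\rf)$ and $d(\theta_a(\{\phi(y)\})-\theta_\alpha(\{y\}))$ need not vanish for arbitrary parameters; forcing them to vanish for all $y$ is exactly what will single out the parameter region \eqref{CONDTH}, and that is the work deferred to the subsequent lemmas rather than to this one.
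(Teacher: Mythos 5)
Your strategy is exactly the paper's: evaluate $F$ at $\phi(y)$, absorb $\lambda\phi(y)+\delta$ into $\phi(y+\rho)$ by an index shift, and peel off the $k=1$ term. However, your intermediate formulas drop the prefactor $\tfrac{1}{\lambda}$ from the definition \eqref{DEFPHI}: correctly, $\lambda\phi(y)=\tfrac{\lambda\delta}{1-\lambda}+\sum_{k\ge 1}\lambda^{k}\psi_\alpha(y-k\rho)$ (not $\lambda^{k+1}$), and the $k=1$ term of $\phi(y+\rho)$ is $\tfrac{1}{\lambda}\cdot\lambda\,\psi_\alpha(y)=\psi_\alpha(y)$ (not $\lambda\psi_\alpha(y)$). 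With your versions the subtraction yields $\lambda\phi(y)+\delta=\phi(y+\rho)-\lambda\psi_\alpha(y)$, and then the final "grouping" step is false as written, since $-\lambda\psi_\alpha(y)=-\lambda(1-\lambda)\lf y\rf-\lambda d\,\theta_\alpha(\{y\})$ carries an extra factor of $\lambda$ that cannot be absorbed into $(1-\lambda)(\lf\phi(y)\rf-\lf y\rf)+d(\theta_a(\{\phi(y)\})-\theta_\alpha(\{y\}))$.

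The fix is purely mechanical: with the correct prefactor the same cancellation of tails gives $\lambda\phi(y)+\delta=\phi(y+\rho)-\psi_\alpha(y)$, and since $\psi_\alpha(y)=(1-\lambda)\lf y\rf+d\,\theta_\alpha(\{y\})$ the claimed identity follows immediately, exactly as in the paper. So the argument is the right one and no idea is missing, but the two compensating slips must be corrected for the computation to be valid; as it stands the last displayed equality does not follow from the line before it.
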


\begin{proof} For any $y\in\R$ we have
\begin{align*}
F(\phi(y))
&=  \lambda \phi(y) + \delta + (1-\lambda)\lf \phi(y) \rf + d\theta_a(\{\phi(y)\})\\
&= (1-\lambda)\lf \phi(y)\rf + d\theta_a(\{\phi(y)\}) + \frac{\delta}{1-\lambda} +\sum_{k=1}^{\infty}\lambda^{k}\psi_{\alpha}(y-k\rho)\\
&= (1-\lambda)\lf \phi(y)\rf + d\theta_a(\{\phi(y)\}) + \frac{\delta}{1-\lambda} +\sum_{k=2}^{\infty}\lambda^{k-1}\psi_{\alpha}(y-(k-1)\rho)\\
&= (1-\lambda)\lf \phi(y)\rf + d\theta_a(\{\phi(y)\}) -\psi_{\alpha}(y)+ \frac{\delta}{1-\lambda} 
+\frac{1}{\lambda}\sum_{k=1}^{\infty}\lambda^{k}\psi_{\alpha}(y+\rho -k\rho)\\
&= (1-\lambda)(\lf \phi(y)\rf -\ent{y})+ d(\theta_a(\{\phi(y)\})-\theta_\alpha(\{y\}))  +\phi(y+\rho).
\end{align*}
\end{proof}

A property of $\phi$ that we will use frequently is that 
\begin{equation}\label{TPHI}
\phi(y+l)=\phi(y)+l\qquad\forall y\in\R,
\end{equation}
for any $l\in\Z$. In particular, it implies the following.
\begin{lemma} \label{PHIEF}The assertions
\begin{enumerate}
\item $\phi([0,1))\subset[0,1)$.
\item $\lf \phi(y)\rf = \lf y\rf$ for all $y\in\R$.
\item $\{\phi(y)\}=\phi(\{y\})$ for all $y\in\R$.
\end{enumerate}
are equivalent. 
\end{lemma}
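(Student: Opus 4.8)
The plan is to prove the chain of implications $(1)\Rightarrow(2)\Rightarrow(3)\Rightarrow(1)$, exploiting the translation relation \eqref{TPHI}, namely $\phi(y+l)=\phi(y)+l$ for all $l\in\Z$, which reduces everything to the behaviour of $\phi$ on $[0,1)$. The relation \eqref{TPHI} itself is immediate from the definition \eqref{DEFPHI}, since $\psi_\alpha(z+l)=\psi_\alpha(z)+(1-\lambda)l$ for $l\in\Z$ and $\sum_{k\ge 1}\lambda^k(1-\lambda)/\lambda = 1$, so I would record it first (or cite it as already stated).

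First I would show $(1)\Rightarrow(2)$. Given $y\in\R$, write $y=\lf y\rf+\{y\}$ with $\{y\}\in[0,1)$. By \eqref{TPHI}, $\phi(y)=\phi(\{y\})+\lf y\rf$. Assumption (1) gives $\phi(\{y\})\in[0,1)$, hence $\lf\phi(y)\rf=\lf y\rf$ and $\{\phi(y)\}=\phi(\{y\})$. Notice this last equality is already $(3)$ on the side, but I will keep the implications separate. For $(2)\Rightarrow(3)$: again write $\phi(y)=\phi(\{y\})+\lf y\rf$ by \eqref{TPHI}, so $\{\phi(y)\}=\{\phi(\{y\})+\lf y\rf\}=\{\phi(\{y\})\}$; on the other hand $\{y\}\in[0,1)$ so $\lf\{y\}\rf=0$, and applying (2) to the point $\{y\}$ yields $\lf\phi(\{y\})\rf=\lf\{y\}\rf=0$, i.e. $\phi(\{y\})\in[0,1)$, whence $\{\phi(\{y\})\}=\phi(\{y\})=\phi(\{y\})$. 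Combining, $\{\phi(y)\}=\phi(\{y\})$, which is $(3)$.

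Finally $(3)\Rightarrow(1)$: take $y\in[0,1)$, so $\{y\}=y$, and then $(3)$ reads $\{\phi(y)\}=\phi(\{y\})=\phi(y)$; but $\{\phi(y)\}\in[0,1)$ by definition of the fractional part, so $\phi(y)\in[0,1)$, which is exactly $(1)$. This closes the cycle and proves the three assertions equivalent.

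The only genuinely substantive point is the translation identity \eqref{TPHI}; once it is in hand every implication is a one-line manipulation of $\lf\cdot\rf$ and $\{\cdot\}$. The mild subtlety to watch is that $(2)$ is a statement about all $y\in\R$ whereas $(1)$ and $(3)$ are naturally statements about $[0,1)$ extended by periodicity, so in the implication $(2)\Rightarrow(3)$ I must be careful to instantiate $(2)$ at the fractional part $\{y\}$ rather than at $y$ itself — that is where the argument could go wrong if done carelessly. No other obstacle is expected.
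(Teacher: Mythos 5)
Your proof is correct and follows essentially the same route as the paper: both reduce everything to the translation identity \eqref{TPHI}, from which $\lf\phi(y)\rf=\lf\phi(\{y\})\rf+\lf y\rf$ and $\{\phi(y)\}=\phi(\{y\})+\lf y\rf-\lf\phi(y)\rf$, the only difference being that you close a cycle $(1)\Rightarrow(2)\Rightarrow(3)\Rightarrow(1)$ while the paper proves $(1)\Leftrightarrow(2)$ and $(2)\Leftrightarrow(3)$ directly. The point you flag about instantiating $(2)$ at $\{y\}$ is handled correctly.
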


\begin{proof} By \eqref{TPHI} we have,
\[
\lf \phi(y)\rf = \lf \phi(\lf y\rf+\{y\})\rf=\lf\phi(\{y\})+\lf y\rf\rf=\lf \phi(\{y\})\rf+\lf y\rf\qquad\forall y\in\R.
\]
It follows that 1) implies 2). If 2) holds then $\lf \phi(\{y\})\rf=0$ for every $y\in\R$. In particular, for $y\in[0,1)$, we have
$\lf \phi(\{y\})\rf=\lf \phi(y)\rf=0$ and $\phi(y)\in[0,1)$.

\noindent Now, 2) is equivalent to 3), since 
\[
\{\phi(y)\}=\phi(y)- \lf \phi(y)\rf =\phi(\{y\}) + \lf y\rf -\lf \phi(y)\rf, 
\]
for every $y\in\R$.
\end{proof}

From Lemma \ref{CONJUG1} and Lemma \ref{PHIEF} we deduce the following necessary and sufficient  conditions to have $F\circ\phi=\phi\circ L_\rho$.

\begin{proposition}\label{CNSCONJ} For any $\rho\in(0,1)$, $\alpha\in[0,1]$ and $a\in[0,1]$, we have $F\circ\phi=\phi\circ L_\rho$ if and only if 
\begin{equation}\label{CNSR}
\phi([0,1))\subset[0,1) \quand \theta_\alpha(y)=\theta_a(\phi(y))\quad\forall y\in[0,1).
\end{equation}
\end{proposition}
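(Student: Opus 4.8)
The plan is to combine the functional identity from Lemma \ref{CONJUG1} with the equivalences of Lemma \ref{PHIEF}, using the translation property \eqref{TPHI} to reduce everything to the fundamental domain $[0,1)$. First I would prove the ``if'' direction: assume \eqref{CNSR}. By Lemma \ref{PHIEF}, the inclusion $\phi([0,1))\subset[0,1)$ is equivalent to $\lf\phi(y)\rf=\lf y\rf$ and to $\{\phi(y)\}=\phi(\{y\})$ for all $y\in\R$. Plugging $\lf\phi(y)\rf=\lf y\rf$ into Lemma \ref{CONJUG1} kills the first term $(1-\lambda)(\lf\phi(y)\rf-\lf y\rf)$, leaving
\[
F(\phi(y))=d\bigl(\theta_a(\{\phi(y)\})-\theta_\alpha(\{y\})\bigr)+\phi(y+\rho)\qquad\forall y\in\R.
\]
It remains to see the middle term vanishes for every $y\in\R$, not just for $y\in[0,1)$. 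Writing $y=\lf y\rf+\{y\}$, the periodicity \eqref{TPHI} gives $\{\phi(y)\}=\phi(\{y\})$, hence $\theta_a(\{\phi(y)\})=\theta_a(\phi(\{y\}))$; and $\theta_\alpha(\{y\})=\theta_\alpha(y')$ where $y'=\{y\}\in[0,1)$. So the hypothesis $\theta_\alpha(y')=\theta_a(\phi(y'))$ for $y'\in[0,1)$ gives exactly $\theta_a(\{\phi(y)\})-\theta_\alpha(\{y\})=0$ for all $y\in\R$. Therefore $F(\phi(y))=\phi(y+\rho)=\phi(L_\rho(y))$ for all $y\in\R$, which is \eqref{CONJFL}.

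For the ``only if'' direction, assume $F\circ\phi=\phi\circ L_\rho$. Substituting into Lemma \ref{CONJUG1} yields, for all $y\in\R$,
\[
(1-\lambda)\bigl(\lf\phi(y)\rf-\lf y\rf\bigr)+d\bigl(\theta_a(\{\phi(y)\})-\theta_\alpha(\{y\})\bigr)=0.
\]
Now I would argue that each of the two bracketed quantities must vanish separately. The first bracket is an integer (a difference of integer parts), and in fact it lies in a small range: from the defining series \eqref{DEFPHI} one checks that $\phi(y)-\lf y\rf$ stays in a bounded interval, so $\lf\phi(y)\rf-\lf y\rf$ takes only finitely many integer values; combined with $|\lf\phi(y)\rf-\lf y\rf|$ being forced by the displayed identity to satisfy $(1-\lambda)|\lf\phi(y)\rf-\lf y\rf|\le d<1-\lambda$, we get $\lf\phi(y)\rf-\lf y\rf=0$ for all $y$. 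Then the second bracket also vanishes: $\theta_a(\{\phi(y)\})=\theta_\alpha(\{y\})$ for all $y\in\R$, and restricting to $y\in[0,1)$ (where $\{y\}=y$ and, by Lemma \ref{PHIEF}, $\{\phi(y)\}=\phi(y)$) gives $\theta_\alpha(y)=\theta_a(\phi(y))$ for all $y\in[0,1)$. Finally, $\lf\phi(y)\rf=\lf y\rf$ for all $y$ is equivalent by Lemma \ref{PHIEF} to $\phi([0,1))\subset[0,1)$. This establishes \eqref{CNSR}.

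The main obstacle is the separation argument in the ``only if'' direction: a priori the identity $(1-\lambda)n+d\varepsilon=0$ with $n\in\Z$ and $\varepsilon\in\{-1,0,1\}$ only forces $n=0$ once one knows $|n|$ is small, so I must first pin down that $\lf\phi(y)\rf-\lf y\rf$ cannot be large. Here is where the explicit form \eqref{DEFPHI} of $\phi$ is essential: since $0\le\psi_\alpha(z)<(1-\lambda)(\lf z\rf+1)+d$ and more precisely $\psi_\alpha(y-k\rho)-(1-\lambda)\lf y\rf$ can be bounded uniformly, the quantity $\phi(y)-\lf y\rf$ lies in a fixed bounded interval independent of $y$, forcing $|\lf\phi(y)\rf-\lf y\rf|$ to be uniformly bounded; then the constraint $(1-\lambda)|\lf\phi(y)\rf-\lf y\rf|\le d<1-\lambda$ from the displayed identity forces it to be $0$. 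Once this is in hand, everything else is a direct application of Lemmas \ref{CONJUG1} and \ref{PHIEF} together with the periodicity \eqref{TPHI}.
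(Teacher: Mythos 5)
Your proof is correct and follows essentially the same route as the paper: reduce via Lemma \ref{CONJUG1} to the pointwise identity $(1-\lambda)(\lf\phi(y)\rf-\lf y\rf)=d(\theta_\alpha(\{y\})-\theta_a(\{\phi(y)\}))$, separate the two terms using $d<1-\lambda$, and translate back and forth between $\R$ and $[0,1)$ with Lemma \ref{PHIEF} and \eqref{TPHI}. The only remark is that your preliminary bound on $\phi(y)-\lf y\rf$ is superfluous: the inequality $(1-\lambda)\,|\lf\phi(y)\rf-\lf y\rf|\le d<1-\lambda$, which you already write down, forces $|\lf\phi(y)\rf-\lf y\rf|<1$ and hence equal to $0$ for an integer, with no a priori boundedness needed — this is exactly the paper's one-line argument.
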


\begin{proof} For any $y\in\R$ we have $F\circ\phi(y)=\phi\circ L_\rho(y)$ if and only if 
\begin{equation}\label{CNS}
\lf \phi(y)\rf = \lf y\rf \quand \theta_\alpha(\{y\})=\theta_a(\{\phi(y)\}).
\end{equation}
Indeed,  from Lemma \ref{CONJUG1} we deduce that $F(\phi(y))=\phi(y+\rho)$ if and only if 
\[
\frac{1-\lambda}{d}(\lf \phi(y)\rf - \lf y\rf) = \theta_\alpha(\{y\})-\theta_a(\{\phi(y)\}).
\]
This equality holds if and only if  $y$ satisfies \eqref{CNS}, since  $\lf \phi(y)\rf - \lf y\rf\in\Z$,  $\theta_\alpha(\{y\})-\theta_a(\{\phi(y)\})\in\{-1,0,1\}$ and $(1-\lambda)/d>1$.

Now, if $F\circ\phi=\phi\circ L_\rho$, then \eqref{CNS} holds for every $y\in\R$, which together with Lemma \ref{PHIEF} implies that 
\[
\phi(y)\in[0,1)\quand 
\theta_\alpha(y)=\theta_\alpha(\{y\})=\theta_a(\{\phi(y)\})=\theta_a(\phi(y)) \quad\forall y\in[0,1).
\] 
Conversely, \eqref{CNSR} together with Lemma \ref{PHIEF} implies that 
\[
 \lf \phi(y)\rf=\lf y\rf \quand
\theta_\alpha(\{y\})=\theta_a(\phi(\{y\}))=\theta_a(\{\phi(y)\})  \quad\forall y\in\R
\]
and therefore $F\circ\phi=\phi\circ L_\rho$.
\end{proof}

\subsection{Monotonicity properties of $\phi$}

Here, we study the monotonicity and continuity properties of $\phi$. Especially, we show that it   is non-decreasing, which will allow us to replace \eqref{CNSR} by equivalent conditions on $\phi(0),\phi(0^-),\phi(\alpha)$ and $\phi(\alpha^{-})$.

\begin{lemma}\label{MONPSIALP} For every $\alpha\in [0,1]$, the function $\psi_{\alpha}:\R\to\R$  is right continuous and non-decreasing. Moreover, 
\begin{enumerate}
\item  For every $z_1<z_2$ in $\R$, we have $\psi_\alpha(z_2) - \psi_\alpha(z_1)\in\{0,d\}$ if $\ent{z_2}=\ent{z_1}$ and $\psi_\alpha(z_2)-\psi_\alpha(z_1)\geq 1-\lambda-d>0$, otherwise.
\item For every $z\in\R$, we have
\begin{equation}\label{JUMPPSI}
\psi_{\alpha}(z)-\psi_{\alpha}(z^-)=(1-\lambda-d)\Ind_{\Z}(z)
+d\Ind_{\Z+\alpha}(z).
\end{equation}
\end{enumerate}
\end{lemma}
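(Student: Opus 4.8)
The idea is to split $\psi_\alpha$ into the two elementary pieces it is built from and analyse each separately before recombining. Write $\psi_\alpha=g+h_\alpha$, where $g(z)=(1-\lambda)\ent{z}$ and $h_\alpha(z)=d\,\theta_\alpha(\{z\})$. The piece $g$ is the easy one: it is non-decreasing, right continuous, equal to the constant $(1-\lambda)n$ on each interval $[n,n+1)$, and satisfies $g(z)-g(z^-)=(1-\lambda)\Ind_{\Z}(z)$.

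First I would describe $h_\alpha$ explicitly. It is $1$-periodic and takes only the values $0$ and $d$: for $\alpha\in(0,1)$ it equals $0$ on $[n,n+\alpha)$ and $d$ on $[n+\alpha,n+1)$, for $\alpha=0$ it is identically $d$, and for $\alpha=1$ it is identically $0$. From this description I would read off two things. (i) $h_\alpha$ is right continuous: the only candidate jump points lie in $\Z\cup(\Z+\alpha)$, and there $\theta_\alpha$ takes its upper value (by definition $\theta_\alpha(\alpha)=1$) while $z\mapsto\{z\}$ is right continuous at integers, so in each case the right limit coincides with the value. (ii) the left jumps of $h_\alpha$ are $h_\alpha(z)-h_\alpha(z^-)=d\,\Ind_{\Z+\alpha}(z)-d\,\Ind_{\Z}(z)$: at a point of $(\Z+\alpha)\setminus\Z$ it jumps up by $d$, at an integer it drops by $d$ when $\alpha\in(0,1)$, and it is continuous elsewhere; the degenerate values $\alpha\in\{0,1\}$ are covered by observing that then $\Z+\alpha=\Z$, so the would-be up-jump and down-jump at integers cancel and $h_\alpha$ has no jumps, consistently with the formula.

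Item 2 then follows by adding the two jump identities: $\psi_\alpha(z)-\psi_\alpha(z^-)=(1-\lambda)\Ind_{\Z}(z)+d\,\Ind_{\Z+\alpha}(z)-d\,\Ind_{\Z}(z)=(1-\lambda-d)\Ind_{\Z}(z)+d\,\Ind_{\Z+\alpha}(z)$, and right continuity of $\psi_\alpha$ is immediate since $g$ and $h_\alpha$ are both right continuous. For monotonicity and item 1, I would split on the position of $z_1<z_2$. If $\ent{z_1}=\ent{z_2}$, then $g$ takes the same value at both points, hence $\psi_\alpha(z_2)-\psi_\alpha(z_1)=d(\theta_\alpha(\{z_2\})-\theta_\alpha(\{z_1\}))$, which lies in $\{0,d\}$ because $\{z_1\}<\{z_2\}$ and $\theta_\alpha$ is non-decreasing and $\{0,1\}$-valued. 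If $\ent{z_1}<\ent{z_2}$, then $g(z_2)-g(z_1)\geq 1-\lambda$ while $h_\alpha(z_2)-h_\alpha(z_1)\geq-d$, so $\psi_\alpha(z_2)-\psi_\alpha(z_1)\geq 1-\lambda-d>0$, using the standing assumption $d\in(0,1-\lambda)$. These two cases together show $\psi_\alpha$ is non-decreasing and establish item 1.

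There is no deep obstacle here; the only point that needs genuine care is the bookkeeping at the endpoint values $\alpha=0$ and $\alpha=1$, where $\Z+\alpha$ collapses onto $\Z$ and one must check that the uniform formulas for right continuity and for the left jumps of $h_\alpha$ still hold — which they do, precisely because the two opposite jumps of size $d$ at integers cancel in that case.
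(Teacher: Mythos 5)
Your proof is correct and follows essentially the same route as the paper: right continuity from the constituent functions, the two-case analysis on whether $\ent{z_1}=\ent{z_2}$ for item 1, and the jump formula by locating the discontinuities in $\Z\cup(\Z+\alpha)$ with the degenerate cases $\alpha\in\{0,1\}$ handled via the collapse $\Z+\alpha=\Z$. The additive split $\psi_\alpha=g+h_\alpha$ is only a bookkeeping variation on the paper's direct case analysis, and all the details check out.
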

\begin{proof} The function $\psi_{\alpha}$ is right continuous, by right continuity of the integer part, the fractional part and $\theta_\alpha$. Now, let $z_1<z_2$, then
\[
\psi_{\alpha}(z_2)-\psi_{\alpha}(z_1)=(1-\lambda)(\lf z_2\rf-\lf z_1\rf)+d(\theta_{\alpha}(\{z_2\})-\theta_{\alpha}(\{z_1\})).
\]
If $\lf z_1\rf=\lf z_2\rf$, then $\{z_2\}-\{z_1\}=z_2-z_1>0$. It follows that 
\[
\theta_{\alpha}(\{z_2\})-\theta_{\alpha}(\{z_1\})\in\{0,1\}\quand \psi_{\alpha}(z_2)-\psi_{\alpha}(z_1)\in\{0,d\}. 
\]
If $\lf z_1\rf\neq\lf z_2\rf$, then
\[
\psi_{\alpha}(z_2)-\psi_{\alpha}(z_1)\geq(1-\lambda)+d(\theta_{\alpha}(\{z_2\})-\theta_{\alpha}(\{z_1\}))\geq 1-\lambda-d>0.
\]
We have proved that $\psi_\alpha$ is non-decreasing and item $1)$. Finally, if $z\notin\Z\cup(\Z+\alpha)$ then, the integer part and the fractional part are continuous at $z$ and $\theta_\alpha$ is continuous at $\{z\}$, which implies that $\psi_\alpha(z)=\psi_\alpha(z^-)$. Suppose  $z\in\Z\cup(\Z+\alpha)$. If $\alpha\in(0,1)$, then either $z\in\Z$ and $\psi_\alpha(z)-\psi_\alpha(z^-)=1-\lambda-d$ or 
$z\in\Z+\alpha$ and $\psi_\alpha(z)-\psi_\alpha(z^-)=d$.  If $\alpha\in\{0,1\}$, then $z\in\Z\cap(\Z+\alpha)$ and $\psi_\alpha(z)-\psi_\alpha(z^-)=1-\lambda$. This ends the proof  of \eqref{JUMPPSI}.
\end{proof}

\begin{proposition}\label{PHISCR}  For any $\alpha\in[0,1]$ and $\rho\in(0,1)$, the function $\phi:\R\to\R$ is right continuous, continuous at every $y\in\R$ such that $y\notin(\Z+k\rho)\cup(\Z+\alpha+k\rho)$ for all $k\geq 1$, non-decreasing  and  increasing if $\rho\in(0,1)\setminus\Q$. 
\end{proposition}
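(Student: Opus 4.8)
\textbf{Proof proposal for Proposition \ref{PHISCR}.}

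The plan is to derive all stated properties of $\phi$ from the corresponding properties of $\psi_\alpha$ established in Lemma \ref{MONPSIALP}, exploiting the uniform convergence of the defining series \eqref{DEFPHI}. First I would write $\phi(y) = \frac{\delta}{1-\lambda} + \frac{1}{\lambda}\sum_{k=1}^\infty \lambda^k \psi_\alpha(y-k\rho)$ and note that each summand $y\mapsto \psi_\alpha(y-k\rho)$ is right continuous and non-decreasing by Lemma \ref{MONPSIALP}. Since $\psi_\alpha(z)-\psi_\alpha(z') \le (1-\lambda)(\lf z\rf - \lf z'\rf) + d$ for $z'<z$, the partial sums are dominated by a geometric series and the convergence is uniform on bounded sets (as already observed in the text after the definition); hence $\phi$, being a uniform limit of right continuous non-decreasing functions on each bounded interval, is itself right continuous and non-decreasing. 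For right continuity one argues directly: for $y_n \downarrow y$, $\psi_\alpha(y_n - k\rho) \to \psi_\alpha(y-k\rho)$ for each $k$, and dominated convergence for series (the tail bound being uniform) gives $\phi(y_n)\to\phi(y)$.

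Next I would handle continuity at points outside the exceptional set. By Lemma \ref{MONPSIALP}, item 2), the function $z\mapsto\psi_\alpha(z-k\rho)$ is continuous at $y$ precisely when $y-k\rho \notin \Z \cup (\Z+\alpha)$, i.e. when $y \notin (\Z + k\rho) \cup (\Z + \alpha + k\rho)$. So if $y$ avoids $\bigcup_{k\ge 1}\big((\Z+k\rho)\cup(\Z+\alpha+k\rho)\big)$, every summand is continuous at $y$; combined with the uniform convergence of the series, $\phi$ is continuous at $y$. (Concretely: $\phi(y)-\phi(y^-) = \frac1\lambda\sum_{k\ge1}\lambda^k(\psi_\alpha(y-k\rho)-\psi_\alpha((y-k\rho)^-))$, each term vanishing at such $y$; interchanging limit and sum is justified by the uniform tail bound.)

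The main obstacle is the strict monotonicity when $\rho\in(0,1)\setminus\Q$. Here I would take $y_1<y_2$ and show $\phi(y_1)<\phi(y_2)$ by producing at least one index $k\ge1$ for which $\psi_\alpha(y_1-k\rho) < \psi_\alpha(y_2-k\rho)$ strictly; since all terms are non-decreasing, one strict term forces $\phi(y_1)<\phi(y_2)$. By item 1) of Lemma \ref{MONPSIALP}, $\psi_\alpha(y_2-k\rho)-\psi_\alpha(y_1-k\rho)>0$ as soon as $\lf y_2-k\rho\rf \ne \lf y_1-k\rho\rf$, that is, as soon as the half-open interval $[y_1-k\rho, y_2-k\rho)$ — equivalently, the interval $(y_1,y_2]$ translated — contains an integer, which happens iff there is an integer $m$ with $y_1 - k\rho < m \le y_2 - k\rho$, i.e. $k\rho \in [\,y_1 - m,\; y_2 - m\,)$ for some $m\in\Z$... more carefully, iff $\{k\rho\}$ lands in a certain arc of length $y_2-y_1$ modulo $1$ (when $y_2-y_1<1$; if $y_2-y_1\ge1$ the claim is immediate from \eqref{TPHI}). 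Since $\rho$ is irrational, the sequence $(\{k\rho\})_{k\ge1}$ is dense in $[0,1)$ (equidistribution / three-distance theorem), so such a $k$ exists. One must be slightly careful with the boundary of the arc because of the floor being left-discontinuous from the perspective of the half-open intervals $\lf\cdot\rf$; but density of $\{k\rho\}$ in the open arc suffices, so the half-open versus open distinction causes no trouble. This gives the strict increase and completes the proof. $\qed$
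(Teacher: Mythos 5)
Your proposal is correct and follows essentially the same route as the paper: right continuity and continuity off the exceptional set are pulled term-by-term from Lemma \ref{MONPSIALP} via the uniformly convergent series, monotonicity comes from each summand being non-decreasing, and strict monotonicity for irrational $\rho$ is obtained by using density of the orbit $(\{k\rho\})_{k\ge 1}$ (equivalently $(\{y'-k\rho\})_{k\ge 1}$) to find an index where the floors differ, forcing a strictly positive term. The minor half-open/open endpoint issue you flag is handled the same way in the paper and causes no trouble.
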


\begin{proof} The continuity properties of $\phi$ follow from those  of $\psi_\alpha$ stated in Lemma \ref{MONPSIALP}. Now, if $y'>y\in\R$, then by $1)$ of Lemma \ref{MONPSIALP}
\begin{equation}\label{DIFPSIK}
\psi_{\alpha}(y'-k\rho)-\psi_{\alpha}(y-k\rho)\geq 0\quad\forall k\in\N
\end{equation}
and
\begin{align*}
\phi(y')-\phi(y)=\frac{1}{\lambda}\sum_{k=1}^{\infty}\lambda^k(\psi_{\alpha}(y'-k\rho)-\psi_{\alpha}(y-k\rho))\geq 0,
\end{align*}
which proves that $\phi$ is a non-decreasing function. Also, by \eqref{DIFPSIK}, $\phi(y')-\phi(y)=0$ if and only if $\psi_{\alpha}(y'-k\rho)-\psi_{\alpha}(y-k\rho)=0$ for every $k\geq 1$. By $1)$ of Lemma \ref{MONPSIALP}, this can hold only if 
\[
\lf y'-k\rho\rf=\lf y-k\rho\rf \quad\forall k\geq 1.
\]
If $\rho\in(0,1)\setminus\Q$, then the sequence $(\{y'-k\rho\})_{k\in\N}$ is dense in $[0,1]$. Therefore, there exists $k\in\N$ such that $0\leq\{y'-k\rho\}<y'-y=y'-k\rho-(y-k\rho)$ and
\[
\ent{y-k\rho}\leq y-k\rho<\ent{y'-k\rho}.
\]
It follows that $\phi$ is  increasing if $\rho\in(0,1)\setminus\Q$. 
\end{proof}

For $\rho\in\Q$, the sum defining the function $\phi$ became finite and finer properties of $\phi$ can be deduced from those of the function $\psi_\alpha$, as shown by the following Lemma.

\begin{lemma}\label{RATPHI} For any $\alpha\in[0,1]$, $q\geq 2$, $p\in\Z$ and $y\in\R$ we have
\begin{equation}\label{SPSIRAT}
\sum_{k=1}^{\infty}\lambda^k\psi_{\alpha}\left(y-k\frac{p}{q}\right)=\frac{1}{1-\lambda^q}\left(\lambda^q(\psi_{\alpha}(y)-p)+\sum_{r=1}^{q-1}\lambda^{r}\psi_{\alpha}\left(y-r\frac{p}{q}\right)\right).
\end{equation}
As a consequence, the function $\phi$ is piecewise constant if $\rho\in\Q$ and for $\rho=p/q$, with $0<p<q$ co-prime, the set of its jump discontinuities in the interval $[0,1)$ is given by 
\[
\{0,\tfrac{1}{q},...,\tfrac{q-1}{q}\}\cup \{\alpha_0,...,\alpha_{q-1}\},
\]
where $\alpha_i:=\{\alpha+\frac{i}{q}\}$ for all $i\in\{0,...,q-1\}$.
\end{lemma}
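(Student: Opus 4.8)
The plan is to first establish the summation identity \eqref{SPSIRAT} by a direct geometric-series manipulation, and then deduce the structure of the jump set of $\phi$. For the identity, observe that the map $k\mapsto \psi_\alpha(y-kp/q)$ is not periodic in $k$, but the \emph{incremented} sequence is: from the translation property $\psi_\alpha(z+1)=\psi_\alpha(z)+(1-\lambda)$... more precisely, since $\lf z+m\rf=\lf z\rf+m$ and $\theta_\alpha(\{z+m\})=\theta_\alpha(\{z\})$ for $m\in\Z$, we get $\psi_\alpha(z+m)=\psi_\alpha(z)+(1-\lambda)m$, hence $\psi_\alpha(y-(k+q)p/q)=\psi_\alpha(y-kp/q-p)=\psi_\alpha(y-kp/q)-(1-\lambda)p$. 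Wait—let me recheck the claimed formula: it has $\psi_\alpha(y)-p$, not $\psi_\alpha(y)-(1-\lambda)p$. Indeed $\psi_\alpha(z-p)=\psi_\alpha(z)-(1-\lambda)p$, so the correct quasi-periodicity produces a factor $(1-\lambda)p$ inside; I would carry out the computation carefully and match the statement's normalization. Granting the right form, I split $\sum_{k\ge1}\lambda^k\psi_\alpha(y-kp/q)=\sum_{j\ge0}\sum_{r=1}^q \lambda^{jq+r}\psi_\alpha(y-(jq+r)p/q)$, use the quasi-periodicity to pull out the $j$-dependence, and sum the resulting geometric series $\sum_{j\ge0}\lambda^{jq}=1/(1-\lambda^q)$; collecting terms and re-indexing $r=q$ back to $r=0$ (using $\psi_\alpha(y-qp/q)=\psi_\alpha(y-p)=\psi_\alpha(y)-(1-\lambda)p$) yields \eqref{SPSIRAT}.

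Next, from \eqref{SPSIRAT} and the definition \eqref{DEFPHI}, for $\rho=p/q$ the function $\phi$ is, up to an additive constant, a finite nonnegative linear combination $\frac{1}{\lambda(1-\lambda^q)}\sum_{r=0}^{q-1}\lambda^r\psi_\alpha(y-rp/q)$ (absorbing the $\lambda^q(\psi_\alpha(y)-p)$ term into the $r=0$ summand). Each summand $y\mapsto\psi_\alpha(y-rp/q)$ is right continuous, non-decreasing, and piecewise constant by Lemma \ref{MONPSIALP}, so $\phi$ is piecewise constant; and since all coefficients $\lambda^r/(\lambda(1-\lambda^q))>0$ are strictly positive and all jumps of each $\psi_\alpha(\cdot-rp/q)$ are strictly positive (they lie in $\{1-\lambda-d,\,d,\,1-\lambda\}$ by \eqref{JUMPPSI}), there can be no cancellation: $\phi$ has a jump at $y$ if and only if at least one summand does. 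By \eqref{JUMPPSI}, $\psi_\alpha(\cdot-rp/q)$ jumps exactly at $y\in(\Z+rp/q)\cup(\Z+\alpha+rp/q)$, i.e. within $[0,1)$ at $y=\{rp/q\}$ and at $y=\{\alpha+rp/q\}$. As $p,q$ are coprime, $\{\{rp/q\}:r=0,\dots,q-1\}=\{0,1/q,\dots,(q-1)/q\}$, and $\{\{\alpha+rp/q\}:r=0,\dots,q-1\}=\{\alpha_0,\dots,\alpha_{q-1}\}$ with $\alpha_i=\{\alpha+i/q\}$ (again by coprimality, reindexing $rp\bmod q$ as $i$). Taking the union over $r$ gives exactly the claimed jump set $\{0,1/q,\dots,(q-1)/q\}\cup\{\alpha_0,\dots,\alpha_{q-1}\}$.

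The only genuinely delicate point is the no-cancellation argument: one must be sure that when two different summands jump at the same point $y$ (which happens, e.g. when $\alpha\in\{0,1\}$ so that $\{rp/q\}$ and $\{\alpha+r'p/q\}$ coincide, or when $\alpha=l/q$), the jumps reinforce rather than cancel. This is immediate here because \emph{every} jump of \emph{every} summand is strictly positive and all combining coefficients are strictly positive, so the total jump is a sum of strictly positive numbers. I would state this explicitly to justify that the jump set of $\phi$ is precisely (not just contained in) the displayed union. Everything else is routine bookkeeping with $\lf\cdot\rf$, $\{\cdot\}$, and finite geometric sums.
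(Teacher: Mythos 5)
Your proposal is correct and follows essentially the same route as the paper: split the sum over residues modulo $q$, use $\psi_\alpha(z+m)=\psi_\alpha(z)+m(1-\lambda)$ for $m\in\Z$, sum the geometric series, and then read off piecewise constancy and the jump set from the jumps of $\psi_\alpha$ recorded in \eqref{JUMPPSI} together with coprimality of $p$ and $q$ (your explicit no-cancellation remark is a point the paper leaves implicit, but it is the same observation that all jumps and all coefficients are positive). The normalization you flagged does come out to $-p$ rather than $-(1-\lambda)p$: the quasi-periodicity contributes $-p(1-\lambda)\bigl(\sum_{j\ge 0}j\lambda^{jq}\bigr)\bigl(\sum_{r=1}^{q}\lambda^{r}\bigr)=-p\lambda^{q+1}/(1-\lambda^q)$, which combined with the boundary term $\lambda^{q}\psi_\alpha(y-p)=\lambda^{q}\bigl(\psi_\alpha(y)-p(1-\lambda)\bigr)$ gives $\lambda^{q}\bigl(\psi_\alpha(y)-p(1-\lambda)-p\lambda\bigr)=\lambda^{q}\bigl(\psi_\alpha(y)-p\bigr)$, exactly as in \eqref{SPSIRAT}.
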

\begin{proof} First note that the function $\psi_{\alpha}$ has the following property
\begin{equation}\label{PSIPROPERTY}
\psi_{\alpha}(z+l)=\psi_{\alpha}(z)+l(1-\lambda) \quad\forall z\in\R \quad\text{if}\quad l\in\Z.
\end{equation}
Now, let $\alpha\in[0,1]$, $q\geq 2$, $p\in\Z$ and $y\in\R$, then 
\begin{align*}
\sum_{k=0}^{\infty}\lambda^k\psi_{\alpha}\left(y-k\frac{p}{q}\right)
&=\sum_{l=0}^{\infty}\sum_{r=0}^{q-1}\lambda^{ql+r}\psi_{\alpha}\left(y-lp-r\frac{p}{q}\right)    \\
&=\sum_{l=0}^{\infty}\sum_{r=0}^{q-1}\lambda^{ql+r}\psi_{\alpha}\left(y-r\frac{p}{q}\right)  - p(1-\lambda)\sum_{l=0}^{\infty}\sum_{r=0}^{q-1}l\lambda^{ql+r}   \\
&=\frac{1}{1-\lambda^q}\sum_{r=0}^{q-1}\lambda^{r}\psi_{\alpha}\left(y-r\frac{p}{q}\right)  - p(1-\lambda^q)\sum_{l=0}^{\infty}l\lambda^{ql}    \\
&=\frac{1}{1-\lambda^q}\sum_{r=0}^{q-1}\lambda^{r}\psi_{\alpha}\left(y-r\frac{p}{q}\right)  - p\frac{\lambda^q}{1-\lambda^q}.
\end{align*}
So, we obtain that
\begin{equation}
\sum_{k=1}^{\infty}\lambda^k\psi_{\alpha}\left(y-k\frac{p}{q}\right)=\frac{1}{1-\lambda^q}\left(-p\lambda^q+\sum_{r=0}^{q-1}\lambda^{r}\psi_{\alpha}\left(y-r\frac{p}{q}\right) \right)-\psi_{\alpha}(y),
\end{equation}
which leads to \eqref{SPSIRAT}. It follows that for $\rho=p/q$ we have that
\[
\phi(y)=\frac{\delta}{1-\lambda}+\frac{1}{\lambda(1-\lambda^q)}\left(\lambda^q(\psi_{\alpha}(y)-p)+\sum_{r=1}^{q-1}\lambda^{r}\psi_{\alpha}\left(y-r\frac{p}{q}\right) \right)\quad\forall y\in\R.
\]
As $\psi_\alpha$ is a piecewise constant function and $z\in\R$ is a jump discontinuity of $\psi_\alpha$ if and only if $z\in\Z$ or $\{z\}=\alpha$,  the function $\phi$ is piecewise constant and 
$y\in[0,1)$ is a jump discontinuity of $\phi$ if and only if 
\[
y=\left\{r\frac{p}{q}\right\}\quad\text{or}\quad y=\left\{\alpha+r\frac{p}{q}\right\},
\]
for some $r\in\{0,1,\dots,q-1\}$. For any $r\in\{0,1,\dots,q-1\}$, we have that
\[
\left\{r\frac{p}{q}\right\}=\frac{i_r}{q}\quand
\left\{\alpha+r\frac{p}{q}\right\}=\left\{\alpha+\frac{i_r}{q}\right\},
\]
for some $i_r\in\{0,1,\dots,q-1\}$. Now, if $0<p<q$ are co-prime, then  $i_r\neq i_r'$ if $r\neq r'$, which ends the proof. 
\end{proof}

\begin{lemma}\label{LIMINFIM} If $\rho\in\Q$, then for any $z\in(0,1]$, there exists $y\in[0,z)$ such that $\phi(y)=\phi(z^-)$. 
\end{lemma}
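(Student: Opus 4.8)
The plan is to read off the statement from two structural facts about $\phi$ that are already available once $\rho$ is rational: $\phi$ is non-decreasing (Proposition~\ref{PHISCR}), and $\phi$ is piecewise constant with only finitely many jump discontinuities in $[0,1)$ (Lemma~\ref{RATPHI}). Fix $z\in(0,1]$. Since $\phi$ is non-decreasing and bounded on $[0,1]$, its left-sided limit at $z$ is the finite supremum
\[
\phi(z^-)=\sup\{\phi(y):y\in[0,z)\}.
\]
So it suffices to prove that this supremum is attained at some point of $[0,z)$.

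This is immediate from piecewise-constancy. By Lemma~\ref{RATPHI}, for $\rho=p/q$ with $0<p<q$ co-prime the jump discontinuities of $\phi$ lying in $[0,1)$ form the finite set $\{0,\tfrac1q,\dots,\tfrac{q-1}{q}\}\cup\{\alpha_0,\dots,\alpha_{q-1}\}$, and $\phi$ is constant between consecutive jump points; hence $\phi$ takes only finitely many values on $[0,1)$, in particular on $[0,z)\subset[0,1)$. Thus $\phi([0,z))$ is a finite subset of $\R$, its supremum is a maximum, and there exists $y\in[0,z)$ with $\phi(y)=\sup\phi([0,z))=\phi(z^-)$, which is the claim.

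Equivalently, one may exhibit $y$ explicitly: since $0$ is a jump discontinuity of $\phi$ (by \eqref{JUMPPSI} the $k=q$ term of the jump series of $\phi$ at $0$ is at least $\lambda^{q-1}(1-\lambda-d)>0$), the set of jump points of $\phi$ in $[0,z)$ is non-empty, and one can take $y$ to be its maximum; right continuity of $\phi$ (Proposition~\ref{PHISCR}) together with the absence of jump points strictly between $y$ and $z$ forces $\phi$ to be constant on $[y,z)$, so again $\phi(y)=\phi(z^-)$. There is no real difficulty in this lemma; the only point worth checking carefully is that ``piecewise constant'' in Lemma~\ref{RATPHI} genuinely means \emph{finitely many} pieces on $[0,1)$ --- which it does, by the explicit description of the jump set --- so that the supremum defining $\phi(z^-)$ is attained and not merely approached.
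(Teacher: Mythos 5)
Your proof is correct and follows essentially the same route as the paper: both arguments reduce the claim to the finiteness of $\phi([0,z))$, which comes from the piecewise constancy of $\phi$ for rational $\rho$ (Lemma~\ref{RATPHI}), so that the left limit $\phi(z^-)$ is actually attained on $[0,z)$. The paper phrases this via a sequence $y_n\to z^-$ whose images must eventually equal $\phi(z^-)$ in the finite image set, while you phrase it via the supremum of a finite set being a maximum; the difference is cosmetic.
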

\begin{proof}
As $\phi$ is piecewise constant for $\rho\in\Q$, the image of any bounded set by $\phi$ is finite, which implies that $\overline{\phi([0,1])}=\phi([0,1])$. Therefore, $\phi(z^-)\in\phi([0,1])$ and is an isolated point of $\phi([0,1])$. Now, let $\{y_n\}_{n\in\N}$  be a sequence of $[0,z)$ such that $y_n\to z$. Then $\phi(y_n)\to\phi(z^-)$ and for any $n$ large enough 
we have $y_n<z$ and $\phi(y_n)=\phi(z^-)$.
\end{proof}

Now, we can start to restate the conditions \eqref{CNSR} of Proposition \ref{CNSCONJ}.
 
\begin{lemma}\label{INTERVALCERO} Let $\alpha\in[0,1]$. If $\rho\in(0,1)\cap\Q$, then $\phi([0,1))\subset[0,1)$ if and only if $0\in(\phi(0^{-}),\phi(0)]$. If $\rho\in(0,1)\setminus\Q$, then the same holds replacing $(\phi(0^{-}),\phi(0)]$ with $[\phi(0^{-}),\phi(0)]$.
\end{lemma}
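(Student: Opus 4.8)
The plan is to exploit the monotonicity of $\phi$ established in Proposition \ref{PHISCR}, which reduces the inclusion $\phi([0,1))\subset[0,1)$ to controlling the values of $\phi$ at the two ends of the interval, together with the behaviour of $\phi$ just to the left of $0$ (equivalently, just to the left of $1$, via the translation identity \eqref{TPHI}).

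\medskip

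\noindent\textbf{Approach.} First I would record that $\phi$ is non-decreasing (Proposition \ref{PHISCR}) and right continuous, so on $[0,1)$ its infimum is $\phi(0)$ and its supremum is $\sup_{y\in[0,1)}\phi(y)=\phi(1^-)$. By the translation property \eqref{TPHI}, $\phi(1^-)=\phi(0^-)+1$. Hence $\phi([0,1))\subset[0,1)$ is equivalent to the two inequalities $\phi(0)\geq 0$ and $\phi(1^-)<1$, i.e. $\phi(0)\geq 0$ and $\phi(0^-)<0$. This already gives $0\in(\phi(0^-),\phi(0)]$ as a necessary condition, regardless of whether $\rho$ is rational.

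\medskip

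\noindent\textbf{Rational case.} For $\rho\in(0,1)\cap\Q$, Lemma \ref{RATPHI} tells us $\phi$ is piecewise constant, so its image on any bounded set is finite and the supremum $\phi(1^-)$ is actually attained as a value $\phi(y_0)$ for some $y_0<1$ (cf.\ Lemma \ref{LIMINFIM}): there is $y_0\in[0,1)$ with $\phi(y_0)=\phi(1^-)=\phi(0^-)+1$. Therefore $\phi([0,1))\subset[0,1)$ forces $\phi(0^-)+1<1$, i.e. $\phi(0^-)<0$, which combined with $\phi(0)\geq 0$ gives $0\in(\phi(0^-),\phi(0)]$. Conversely, if $0\in(\phi(0^-),\phi(0)]$ then $\phi(0)\geq 0$ and $\phi(0^-)<0$; by monotonicity, for every $y\in[0,1)$ we have $\phi(0)\leq\phi(y)\leq\phi(1^-)=\phi(0^-)+1<1$, so $\phi(y)\in[0,1)$.

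\medskip

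\noindent\textbf{Irrational case.} For $\rho\in(0,1)\setminus\Q$, Proposition \ref{PHISCR} says $\phi$ is strictly increasing, but the supremum $\phi(1^-)$ need not be attained on $[0,1)$; it is only a limit. So the condition becomes $\phi(0)\geq 0$ and $\phi(1^-)\leq 1$, i.e. $\phi(0^-)\leq 0\leq\phi(0)$, which is $0\in[\phi(0^-),\phi(0)]$. The forward direction: if $\phi([0,1))\subset[0,1)$ then $\phi(0)\in[0,1)$ gives $\phi(0)\geq 0$, and $\phi(y)<1$ for all $y<1$ forces $\phi(1^-)=\sup_{y<1}\phi(y)\leq 1$, hence $\phi(0^-)\leq 0$. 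The converse direction uses monotonicity as before: $\phi(0)\leq\phi(y)\leq\phi(1^-)=\phi(0^-)+1\leq 1$ for $y\in[0,1)$, but now the upper bound $\leq 1$ is not automatically strict, so one must argue that $\phi(y)=1$ is impossible for $y\in[0,1)$; since $\phi$ is strictly increasing, $\phi(y)<\phi(1^-)\leq 1$ for every $y<1$, giving $\phi(y)<1$.

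\medskip

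\noindent\textbf{Main obstacle.} The only delicate point is the distinction between the half-open and closed endpoint conditions, which hinges on whether the supremum of $\phi$ over $[0,1)$ is attained. In the rational case it is (piecewise constant, finite image), forcing the strict inequality $\phi(0^-)<0$; in the irrational case it is a genuine limit and strict monotonicity supplies the needed strict inequality $\phi(y)<1$ for $y<1$ even when $\phi(1^-)=1$. Getting the quantifiers and the attained-versus-limit supremum exactly right across the two cases is the crux; the rest is a routine application of \eqref{TPHI} and the monotonicity from Proposition \ref{PHISCR}.
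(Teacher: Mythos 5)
Your proof is correct and follows essentially the same route as the paper's: monotonicity and right continuity of $\phi$ (Proposition \ref{PHISCR}) plus the translation identity \eqref{TPHI} reduce everything to the endpoints, Lemma \ref{LIMINFIM} supplies the attained supremum in the rational case, and strict monotonicity handles the irrational case. One caveat: the sentence in your opening paragraph asserting that $\phi([0,1))\subset[0,1)$ is equivalent to $\phi(1^-)<1$, so that $0\in(\phi(0^{-}),\phi(0)]$ is necessary ``regardless of whether $\rho$ is rational,'' is false --- it contradicts the irrational half of the lemma (e.g.\ when $\phi$ is continuous at $0$ one can have $\phi(0^-)=\phi(0)=0$ with the inclusion still holding) --- but your subsequent case-by-case analysis correctly replaces it with $\phi(1^-)\le 1$ and the non-attainment of the supremum, so the argument as a whole stands.
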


\begin{proof}
Suppose that $\phi([0,1))\subset[0,1)$. Then $0\leq\phi(0)$ and $\phi(1^{-})\leq 1$, that is, $\phi(0^{-})=\phi(1^{-})-1\leq 0$. So, for any $\rho\in(0,1)$, we have $0\in[\phi(0^{-}),\phi(0)]$. If $\rho\in(0,1)\cap\Q$, by Lemma \ref{LIMINFIM}, there exists $y\in[0,1)$ such that $\phi(y)=\phi(1^-)$, which implies that $\phi(1^-)<1$ and $\phi(0^{-})< 0$. 

Now, let $y\in[0,1)$. By Proposition \ref{PHISCR}  we have  $\phi(0)\leq\phi(y)\leq\phi(1^{-})$ with $\phi(y)<\phi(1^{-})$ if $\rho\in(0,1)\setminus\Q$. We deduce that $\phi([0,1))\subset[0,1)$ holds if $0\in(\phi(0^{-}),\phi(0)]$ whenever $\rho\in(0,1)\cap\Q$ and if $0\in[\phi(0^{-}),\phi(0)]$ for $\rho\in(0,1)\setminus\Q$.
\end{proof}

\begin{lemma} Let $\alpha, a\in [0,1]$. Suppose $\rho\in (0,1)\cap \Q$, then 
\begin{enumerate}
\item  If $\alpha\in (0,1)$, we have $\theta_\alpha(y)=\theta_a(\phi(y))$ for all $y\in[0,1)$ if and only if $a\in(\phi(\alpha^{-}),\phi(\alpha)]$.
\item If $\alpha\in \{0,1\}$ and $\phi([0,1))\subset [0,1)$, we have
$\theta_\alpha(y)=\theta_a(\phi(y))$ for all $y\in [0,1)$ if and only if $a\in (\phi(\alpha^-),\phi(\alpha)]$.
\end{enumerate}
For $\rho\in (0,1)\setminus \Q$,  1) and 2) hold  replacing $(\phi(\alpha^{-}),\phi(\alpha)]$ with $[\phi(\alpha^{-}),\phi(\alpha)]$. 
\end{lemma}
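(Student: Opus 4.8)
The plan is to reduce the statement about $\theta_\alpha$ matching $\theta_a\circ\phi$ on $[0,1)$ to a pair of inequalities involving $\phi$ at the discontinuity point $\alpha$ and its left limit, exactly as Lemma \ref{INTERVALCERO} did for the discontinuity at $0$. The key observation is that $\theta_a(\phi(y))=1$ iff $\phi(y)\geq a$, and since $\phi$ is non-decreasing (Proposition \ref{PHISCR}) and $\theta_\alpha$ is the step function that jumps from $0$ to $1$ precisely at $\alpha$, requiring the equality $\theta_\alpha(y)=\theta_a(\phi(y))$ for all $y\in[0,1)$ is the same as requiring that the set $\{y\in[0,1):\phi(y)\geq a\}$ equals $[\alpha,1)$. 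The "hard part" is controlling the endpoint behaviour at $y=\alpha$, which is why the rational and irrational cases differ by whether the lower end of the interval for $a$ is open or closed.

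\medskip
First I would split into cases according to whether $\alpha\in(0,1)$ or $\alpha\in\{0,1\}$. Suppose $\alpha\in(0,1)$ and $\rho\in(0,1)\cap\Q$. If $\theta_\alpha=\theta_a\circ\phi$ on $[0,1)$, then evaluating at $y=\alpha$ gives $1=\theta_a(\phi(\alpha))$, i.e. $a\leq\phi(\alpha)$; and for $y$ slightly less than $\alpha$ we have $\theta_\alpha(y)=0$, so $\phi(y)<a$ for all $y<\alpha$, hence $\phi(\alpha^-)\leq a$. To get the strict inequality $\phi(\alpha^-)<a$ I would invoke Lemma \ref{LIMINFIM}: since $\rho\in\Q$, there is $y_0\in[0,\alpha)$ with $\phi(y_0)=\phi(\alpha^-)$, and $\theta_\alpha(y_0)=0$ forces $\phi(y_0)<a$, so $\phi(\alpha^-)<a$. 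Conversely, if $a\in(\phi(\alpha^-),\phi(\alpha)]$, then for $y\in[\alpha,1)$ monotonicity gives $\phi(y)\geq\phi(\alpha)\geq a$, so $\theta_a(\phi(y))=1=\theta_\alpha(y)$; and for $y\in[0,\alpha)$ monotonicity gives $\phi(y)\leq\phi(\alpha^-)<a$, so $\theta_a(\phi(y))=0=\theta_\alpha(y)$. This handles item 1).

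\medskip
For item 2), $\alpha\in\{0,1\}$, the subtlety is that $\theta_\alpha$ on $[0,1)$ is then either identically $1$ (if $\alpha=0$) or identically $0$ (if $\alpha=1$), so the condition $\theta_\alpha(y)=\theta_a(\phi(y))$ for all $y\in[0,1)$ says either $\phi([0,1))\subset[a,1)$ or $\phi([0,1))\cap[a,\infty)=\emptyset$. Under the standing hypothesis $\phi([0,1))\subset[0,1)$, I would use that $\phi(0)=\phi(\alpha)$ when $\alpha=0$ (and $\phi(0^-)=\phi(1^-)-1=\phi(\alpha^-)$ in the convention of the lemma), reducing the claim to $a\in(\phi(0^-),\phi(0)]=(\phi(1^-)-1,\phi(0)]$: when $\alpha=0$ this is $a\leq\phi(0)=\min\phi([0,1))$, which together with $\phi([0,1))\subset[0,1)$ gives $\phi(y)\geq a$ for all $y$, hence $\theta_a\circ\phi\equiv1$; when $\alpha=1$ the analogous reading of $(\phi(1^-),\phi(1)]$ together with Lemma \ref{LIMINFIM} (for the rational case) or density (for the irrational case) gives $\phi(y)<a$ for all $y\in[0,1)$, hence $\theta_a\circ\phi\equiv0$. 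The converse directions are immediate from monotonicity and $\phi([0,1))\subset[0,1)$.

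\medskip
Finally, for $\rho\in(0,1)\setminus\Q$ the only change is that Lemma \ref{LIMINFIM} is no longer available — indeed $\phi$ is then continuous at $\alpha$ generically, and when it does jump there the left limit $\phi(\alpha^-)$ is not attained on $[0,\alpha)$ — so the value $a=\phi(\alpha^-)$ becomes admissible: choosing $a=\phi(\alpha^-)$ still gives $\phi(y)<\phi(\alpha^-)=a$ for $y<\alpha$ (strictly, since $\phi$ is increasing by Proposition \ref{PHISCR} in the irrational case, so $\phi(y)<\phi(\alpha^-)$ is automatic once $y<\alpha$) and $\phi(y)\geq\phi(\alpha)\geq\phi(\alpha^-)=a$ for $y\geq\alpha$. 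Hence the interval for $a$ becomes the closed-on-the-left interval $[\phi(\alpha^-),\phi(\alpha)]$, and the rest of the argument is identical. I expect the main obstacle to be bookkeeping the endpoint cases cleanly — in particular making sure the boundary value $y=\alpha$ and the "approaching from the left" analysis are handled correctly in the degenerate cases $\alpha\in\{0,1\}$, where one of the partition intervals $Y_0$ or $Y_1$ is empty — rather than any deep difficulty.
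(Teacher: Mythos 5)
Your proof is correct and follows essentially the same route as the paper's: reduce the matching of $\theta_\alpha$ and $\theta_a\circ\phi$ to the two-sided inequality $\phi(\alpha^-)\le a\le\phi(\alpha)$ via monotonicity of $\phi$ (Proposition \ref{PHISCR}), sharpen the left inequality to a strict one in the rational case using Lemma \ref{LIMINFIM}, and handle $\alpha\in\{0,1\}$ by noting $\theta_\alpha$ is constant on $[0,1)$ and invoking $\phi([0,1))\subset[0,1)$ together with Lemma \ref{INTERVALCERO} and the relation $\phi(y+1)=\phi(y)+1$. The only cosmetic remark is that in the converse direction for $\alpha=1$ with $\rho$ rational, the strictness $\phi(1^-)<a$ again comes from Lemma \ref{LIMINFIM} rather than from monotonicity alone, a point you make explicitly in item 1) but only implicitly here.
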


\begin{proof}
$1)$ Let $\alpha\in (0,1)$. Suppose that $\theta_\alpha(y)=\theta_a(\phi(y))$ for all $y\in[0,1)$. So, for every $y\in[0,1)$ we have, $\phi(y)<a$ if and only $y<\alpha$. This implies that $\phi(\alpha^{-})\leq a\leq \phi(\alpha)$. In the case $\rho\in(0,1)\cap\Q$, by Lemma \ref{LIMINFIM}, there exists $y\in[0,\alpha)$ such that $\phi(y)=\phi(\alpha^{-})$ and therefore  $\phi(\alpha^-)<a$. 

Now, by Proposition \ref{PHISCR}, we have $\phi(\alpha)\leq\phi(y)$  if $\alpha\leq y$, and $\phi(y)\leq\phi(\alpha^{-})$ if $y<\alpha$, with   
$\phi(y)<\phi(\alpha^-)$ if $\rho\in(0,1)\setminus\Q$. So, $\theta_\alpha(y)=\theta_a(\phi(y))$ if $\phi(\alpha^{-})<a\leq\phi(\alpha)$ for $\rho\in(0,1)\cap\Q$ and if $\phi(\alpha^{-})\leq a\leq\phi(\alpha)$ for $\rho\in(0,1)\setminus\Q$.

\noindent $2)$ Let $\alpha=0$. If $\phi([0,1))\subset [0,1)$, by Lemma \ref{INTERVALCERO} we have $\phi(\alpha^-)=\phi(0^-)<0\leq a$ if $\rho\in (0,1)\cap \Q$ and 
$\phi(\alpha^-)=\phi(0^-)\leq 0\leq a$ if $\rho\in (0,1)\setminus \Q$.
Suppose $1=\theta_0(y)=\theta_a(\phi(y))$ for every $y\in [0,1)$. Then, 
$\phi(\alpha)=\phi(0)\geq a$ and it follows that $a\in[\phi(\alpha^-),\phi(\alpha)]$ with left open interval if $\rho\in (0,1)\cap \Q$. Conversely, if $a\leq\phi(\alpha)=\phi(0)$, then, by monotonicity of $\phi$, $a\leq\phi(y)$ and $1=\theta_a(\phi(y))=\theta_0(y)$ for every 
$y\in [0,1)$.

Let $\alpha=1$. If $\phi([0,1))\subset [0,1)$, by Lemma \ref{INTERVALCERO} we have $\phi(\alpha)=\phi(1)=\phi(0)+1\geq 1\geq a$. 
Suppose $0=\theta_1(y)=\theta_a(\phi(y))$ for every $y\in [0,1)$. Then, 
$\phi(y)<a$ for every $y\in [0,1)$ and $\phi(\alpha^-)=\phi(1^-)\leq a$, with strict inequality if $\rho\in\Q$ since $\phi(1^-)=\phi(y)$ for some $y\in[0,1)$. It follows that $a\in[\phi(\alpha^-),\phi(\alpha)]$ with left open interval if $\rho\in (0,1)\cap \Q$. Conversely, if $\phi(\alpha^-)=\phi(1^-)< a$ then $\phi(y)\leq\phi(1^-)<a$ and $0=\theta_a(\phi(y))=\theta_0(y)$ for every $y\in [0,1)$. If $\phi(\alpha^-)=\phi(1^-)\leq a$ and $\rho\in(0,1)\setminus\Q$ then, by strict monotonicity of $\phi$, we have $\phi(y)<\phi(1^-)\leq a$ and $0=\theta_a(\phi(y))=\theta_0(y)$ for every $y\in [0,1)$. 
\end{proof}

\begin{lemma}\label{INTERVALS} For any $\alpha, a\in[0,1]$ and $\rho\in(0,1)$ the following are equivalent
\begin{enumerate}
\item $\phi([0,1))\subset[0,1)$ and $\theta_{a}(\phi(y))=\theta_{\alpha}(y)$
for every $y\in[0,1)$.
\item $0\in(\phi(0^-), \phi(0)]$ and $a\in(\phi(\alpha^-), \phi(\alpha)]$,
with closed intervals if $\rho\in(0,1)\setminus\Q$.
\end{enumerate}
\end{lemma}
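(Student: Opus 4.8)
The plan is to assemble Lemma \ref{INTERVALS} directly from the two results immediately preceding it: Lemma \ref{INTERVALCERO}, which characterizes $\phi([0,1))\subset[0,1)$ by the condition $0\in(\phi(0^-),\phi(0)]$ (resp. $0\in[\phi(0^-),\phi(0)]$ when $\rho$ is irrational), and the lemma immediately before this one, which characterizes the coincidence $\theta_a\circ\phi=\theta_\alpha$ on $[0,1)$ by the condition $a\in(\phi(\alpha^-),\phi(\alpha)]$ (resp. closed), splitting into the cases $\alpha\in(0,1)$ and $\alpha\in\{0,1\}$. So essentially no new estimate is needed; the work is bookkeeping, and the only subtlety is that the statement for $\alpha\in\{0,1\}$ in that lemma is conditional on $\phi([0,1))\subset[0,1)$, so one must make sure to produce that inclusion before invoking it.

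First I would prove the implication 1) $\Rightarrow$ 2). Assuming 1), the inclusion $\phi([0,1))\subset[0,1)$ immediately gives $0\in(\phi(0^-),\phi(0)]$ (resp.\ the closed interval) by Lemma \ref{INTERVALCERO}. For the second half of 2): if $\alpha\in(0,1)$, item 1) of the preceding lemma applied to the identity $\theta_\alpha(y)=\theta_a(\phi(y))$ yields $a\in(\phi(\alpha^-),\phi(\alpha)]$ (resp.\ closed); if $\alpha\in\{0,1\}$, item 2) of that lemma applies since $\phi([0,1))\subset[0,1)$ is already known, and again gives $a\in(\phi(\alpha^-),\phi(\alpha)]$ (resp.\ closed). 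Hence 2) holds.

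Then I would prove 2) $\Rightarrow$ 1). Assuming 2), the membership $0\in(\phi(0^-),\phi(0)]$ (resp.\ closed) yields $\phi([0,1))\subset[0,1)$ by the converse direction of Lemma \ref{INTERVALCERO}. With this inclusion in hand, the condition $a\in(\phi(\alpha^-),\phi(\alpha)]$ (resp.\ closed) implies, via item 1) of the preceding lemma when $\alpha\in(0,1)$ and via item 2) when $\alpha\in\{0,1\}$ (whose hypothesis $\phi([0,1))\subset[0,1)$ is now available), that $\theta_a(\phi(y))=\theta_\alpha(y)$ for every $y\in[0,1)$. This gives 1).

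The only genuinely delicate point, and thus the main obstacle, is the logical ordering in the boundary case $\alpha\in\{0,1\}$: one must derive $\phi([0,1))\subset[0,1)$ before appealing to item 2) of the preceding lemma. In the direction 1) $\Rightarrow$ 2) this inclusion is part of the hypothesis; in the direction 2) $\Rightarrow$ 1) it is extracted first from the $0\in(\phi(0^-),\phi(0)]$ half of 2) through Lemma \ref{INTERVALCERO}. Everything else follows from the monotonicity and right-continuity of $\phi$ recorded in Proposition \ref{PHISCR} and already used in the cited lemmas.
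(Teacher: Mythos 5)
Your proposal is correct and follows exactly the paper's route: the paper's proof of this lemma is literally ``It follows from the two former lemmata,'' i.e.\ the same assembly of Lemma \ref{INTERVALCERO} with the unnamed lemma characterizing $\theta_\alpha(y)=\theta_a(\phi(y))$. Your extra care about first extracting $\phi([0,1))\subset[0,1)$ before invoking the $\alpha\in\{0,1\}$ case is the right (implicit) bookkeeping.
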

\begin{proof} It follows from the two former lemmata.
\end{proof}

\section{Proof of Theorem \ref{THROT}}\label{PTH1}

Here, using Lemma \ref{INTERVALS}, we express the conditions \eqref{CNSR} as a condition of the form \eqref{CONDTH} on the values of $\delta$ and $a$. Then, we study the sizes of the intervals \eqref{CONDTH} and give their expressions for $\rho$ rational.

\begin{lemma}\label{SYM} For any $\alpha\in[0,1]$ and $y\in\R$, we have   
\begin{equation}\label{PHYSYM}
\phi(y)=\frac{\delta+d}{1-\lambda}-1-\frac{1}{\lambda}\sum_{k=1}^{\infty}\lambda^k\psi_{1-\alpha}(k\rho^{-}-y)\quand
\phi(y^-)=\frac{\delta+d}{1-\lambda}-1-\frac{1}{\lambda}\sum_{k=1}^{\infty}\lambda^k\psi_{1-\alpha}(k\rho-y).
\end{equation}
\end{lemma}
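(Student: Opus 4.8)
The plan is to reduce Lemma~\ref{SYM} to a single pointwise \emph{reflection identity} for the step function $\psi_\alpha$, namely
\[
\psi_\alpha(z)+\psi_{1-\alpha}\big((-z)^-\big)=\lambda+d-1\qquad\forall z\in\R,\ \forall\alpha\in[0,1],
\]
where $\psi_{1-\alpha}((-z)^-)$ is the left-sided limit of $\psi_{1-\alpha}$ at the point $-z$; this is exactly the quantity written $\psi_{1-\alpha}(k\rho^- - y)$ in the statement once one reads $k\rho^- - y$ as the point $k\rho-y$ approached from the left and recalls that $-z=k\rho-y$ when $z=y-k\rho$. Granting this identity, both equalities follow by one line of manipulation of the geometric series. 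Indeed, taking $z=y-k\rho$ gives $\psi_\alpha(y-k\rho)=\lambda+d-1-\psi_{1-\alpha}\big((k\rho-y)^-\big)$, and since $\tfrac1\lambda\sum_{k\ge1}\lambda^k(\lambda+d-1)=\tfrac{\lambda+d-1}{1-\lambda}=\tfrac{d}{1-\lambda}-1$, substituting into \eqref{DEFPHI} yields the first formula. For the second one, I first note that because the series in \eqref{DEFPHI} converges uniformly on $[0,1)$ (hence, via \eqref{TPHI}, locally uniformly on $\R$) and each summand $y\mapsto\lambda^{k-1}\psi_\alpha(y-k\rho)$ is non-decreasing by Lemma~\ref{MONPSIALP}, the left limit commutes with the sum: $\phi(y^-)=\tfrac{\delta}{1-\lambda}+\tfrac1\lambda\sum_{k\ge1}\lambda^k\psi_\alpha\big((y-k\rho)^-\big)$. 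Then I apply the identity in the form $\psi_\alpha\big((y-k\rho)^-\big)=\lambda+d-1-\psi_{1-\alpha}(k\rho-y)$, obtained from the displayed identity by replacing $\alpha$ with $1-\alpha$ and $z$ with $k\rho-y$, and sum the constant term exactly as before.

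To prove the reflection identity I would combine three elementary facts. First, $\lfloor z\rfloor+\lfloor -z\rfloor=\Ind_{\Z}(z)-1$ and $\{z\}+\{-z\}=1-\Ind_{\Z}(z)$. Second, a short check gives $\theta_\alpha(\{z\})+\theta_{1-\alpha}(\{-z\})=1-\Ind_{\Z}(z)+\Ind_{\Z+\alpha}(z)$ for all $\alpha\in[0,1]$: when $z\notin\Z$ one uses $\{-z\}=1-\{z\}$, so that $\theta_\alpha(\{z\})+\theta_{1-\alpha}(\{-z\})=\Ind_{\{z\}\ge\alpha}+\Ind_{\{z\}\le\alpha}=1+\Ind_{\{z\}=\alpha}=1+\Ind_{\Z+\alpha}(z)$, while when $z\in\Z$ both fractional parts are $0$ and only $\alpha\in\{0,1\}$ contributes, giving $\Ind_{\Z+\alpha}(z)$. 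Third, the jump formula \eqref{JUMPPSI} of Lemma~\ref{MONPSIALP} applied to $1-\alpha$, together with $\Ind_{\Z+(1-\alpha)}(-z)=\Ind_{\Z+\alpha}(z)$, gives $\psi_{1-\alpha}\big((-z)^-\big)=\psi_{1-\alpha}(-z)-(1-\lambda-d)\Ind_{\Z}(z)-d\,\Ind_{\Z+\alpha}(z)$. Substituting the definition \eqref{PSI} of $\psi$ and inserting the first two facts, the coefficient of $\Ind_{\Z}(z)$ becomes $(1-\lambda)-d-(1-\lambda-d)=0$ and the coefficient of $\Ind_{\Z+\alpha}(z)$ becomes $d-d=0$, leaving only the constant $-(1-\lambda)+d=\lambda+d-1$, as claimed.

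The series manipulation and the passage to the limit are routine; the only place that demands care is the bookkeeping at the discontinuity points, that is, checking the left-limit identity when $z\in\Z$ or $\{z\}=\alpha$ and, in particular, when $\alpha\in\{0,1\}$, in which case $\Z$ and $\Z+\alpha$ coincide and the two jump contributions of \eqref{JUMPPSI} must be added. Organising the argument around \eqref{JUMPPSI} rather than evaluating $\psi_{1-\alpha}((-z)^-)$ directly is precisely what lets one avoid treating these cases by hand, so I expect the main obstacle to be essentially notational: fixing the left-limit conventions cleanly and verifying $\Ind_{\Z+(1-\alpha)}(-z)=\Ind_{\Z+\alpha}(z)$ uniformly, including the degenerate endpoints $\alpha\in\{0,1\}$ and the integer points $z\in\Z$.
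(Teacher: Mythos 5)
Your proposal is correct and follows essentially the same route as the paper: the reflection identity $\psi_\alpha(z)+\psi_{1-\alpha}((-z)^-)=\lambda+d-1$ is exactly the paper's identity \eqref{PSIMZ} (written with $z$ replaced by $-z$), and both proofs then substitute it termwise into \eqref{DEFPHI} and sum the geometric series. The only difference is cosmetic: you verify the pointwise identity by combining \eqref{JUMPPSI} with the reflection formulas for $\lfloor\cdot\rfloor$, $\{\cdot\}$ and $\theta$, whereas the paper does a direct two-case computation ($z\notin\Z$ and $z\in\Z$); your explicit justification of interchanging the left limit with the series for $\phi(y^-)$ is a small point the paper leaves implicit.
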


\begin{proof} Let us show that for any $\alpha\in[0,1]$ and $z\in\R$, we have
\begin{equation}\label{PSIMZ}
\psi_\alpha(-z)=-(\psi_{1-\alpha}(z^-)+1-\lambda-d)\quand \psi_\alpha(z^-)=-(\psi_{1-\alpha}(-z)+1-\lambda-d).
\end{equation}
Let us first suppose  that $z\in\R\setminus\Z$. Then, $\ent{-z}=-\ent{z}-1$ and $\{-z\}=1-\{z\}$. Therefore, 
\[
\psi_\alpha(-z)=-(1-\lambda)\ent{z}-(1-\lambda)+d\theta_{\alpha}(1-\{z\}).
\]
Since, 
\[
\theta_{\alpha}(1-z)=\left\{\begin{array}{lcr}
1-\theta_{1-\alpha}(z) &\text{if}& z\neq 1-\alpha\\
1-\theta_{1-\alpha}(z)+1 &\text{if}& z=1-\alpha
\end{array}
\right.\quad \forall z\in\R,
\]
we can write
\begin{align*}
\psi_\alpha(-z)
&=-(1-\lambda)\ent{z}-(1-\lambda)+d(1-\theta_{1-\alpha}(\{z\})+\Ind_{\Z+1-\alpha}(z))\\
&=-(\psi_{1-\alpha}(z)-d\Ind_{\Z+1-\alpha}(z)+ 1-\lambda-d)\\
&=-(\psi_{1-\alpha}(z^-)+1-\lambda-d)
\end{align*}
and we deduce that the left hand side equality of \eqref{PSIMZ} holds for $z\not\in\Z$. 

Now if $z\in\Z$, then,  
\begin{align*}
\psi_{1-\alpha}(z^-)
&=\psi_{1-\alpha}(z)-(1-\lambda-d)\Ind_\Z(z) -d\Ind_{\Z+1-\alpha}(z)\\
&=(1-\lambda)\ent{z}+d\theta_{1-\alpha}(0)-(1-\lambda-d)-d\Ind_{\Z+1-\alpha}(z).
\end{align*}
But, on the other hand

\begin{align*}
\psi_{\alpha}(-z)
&=-((1-\lambda)\ent{z}-d\theta_{\alpha}(0))\\
&=-(\psi_{1-\alpha}(z^-)+1-\lambda-d-d\theta_{1-\alpha}(0)+d\Ind_{\Z+1-\alpha}(z)-d\theta_{\alpha}(0))\\
&=-(\psi_{1-\alpha}(z^-)+1-\lambda-d).
\end{align*}
Therefore, we have proved that for any $\alpha\in[0,1]$ and $z\in\R$,  
\[
\psi_\alpha(-z)=-(\psi_{1-\alpha}(z^-)+1-\lambda-d).
\]
As $1-\alpha\in[0,1]$ if $\alpha\in[0,1]$, we also have
\[
\psi_{1-\alpha}(-z)=-(\psi_{\alpha}(z^-)+1-\lambda-d),
\]
which gives the right hand side equality of \eqref{PSIMZ}. Then, \eqref{PHYSYM} follows directly from \eqref{PSIMZ} and the definition of $\phi$.
\end{proof}

\begin{lemma}\label{PROPPHI0} For every $\rho\in(0,1)$ and $\alpha\in[0,1]$, we have
\begin{equation}\label{PHI0PHI0M}
\phi(0^{-})=\frac{1}{1-\lambda}(\delta-\delta(\rho,\alpha))\quand
\phi(0)=\frac{1}{1-\lambda}(\delta-\delta(\rho^{-},\alpha)).
\end{equation}
Also,
\begin{equation}\label{DELTAM}
\delta(\rho^{-},\alpha)=\delta(\rho,\alpha)-\frac{1-\lambda}{\lambda}
\sum_{k=1}^{\infty}\lambda^k\left((1-\lambda-d)\Ind_{\Z}(k\rho) +d\Ind_{\Z+\alpha}(-k\rho)\right).
\end{equation}
\end{lemma}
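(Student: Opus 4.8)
The plan is to read off the two formulas \eqref{PHI0PHI0M} directly from the definitions, using the symmetry relation \eqref{PHYSYM} of Lemma \ref{SYM} to handle the left limit $\phi(0^-)$. First I would evaluate $\phi$ at $y=0$: by definition \eqref{DEFPHI},
\[
\phi(0)=\frac{\delta}{1-\lambda}+\frac{1}{\lambda}\sum_{k=1}^\infty\lambda^k\psi_\alpha(-k\rho)
=\frac{\delta}{1-\lambda}+\frac{1}{\lambda}\sum_{k=1}^\infty\lambda^k\bigl((1-\lambda)\lf -k\rho\rf+d\theta_\alpha(\{-k\rho\})\bigr).
\]
Comparing with the definition of $\delta(\rho,\alpha)$, which involves $(1-\lambda)\lf k\rho\rf+d\theta_{1-\alpha}(\{k\rho\})$, I want to rewrite $\lf -k\rho\rf$ and $\theta_\alpha(\{-k\rho\})$ in terms of $\lf k\rho\rf$ and $\theta_{1-\alpha}(\{k\rho\})$. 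This is exactly the content of the reflection identity \eqref{PSIMZ} established inside the proof of Lemma \ref{SYM}, namely $\psi_\alpha(-z)=-(\psi_{1-\alpha}(z^-)+1-\lambda-d)$; applying it with $z=k\rho$ gives $\psi_\alpha(-k\rho)=-\psi_{1-\alpha}((k\rho)^-)-(1-\lambda-d)$. Summing, the constant $-(1-\lambda-d)\sum_k\lambda^k=-(1-\lambda-d)\lambda/(1-\lambda)$ combines with the leading $\delta/(1-\lambda)$ to produce the $1-\lambda-d$ term sitting in front of the series in $\delta(\rho,\alpha)$, while $-\tfrac{1}{\lambda}\sum_k\lambda^k\psi_{1-\alpha}((k\rho)^-)$ reproduces, after recognizing $\psi_{1-\alpha}((k\rho)^-)=(1-\lambda)\lf k\rho\rf+d\theta_{1-\alpha}(\{k\rho\})-d\Ind_{\Z+1-\alpha}(k\rho)$ off the integers and treating the $k\rho\in\Z$ case by hand, the series defining $\delta(\rho^-,\alpha)$ rather than $\delta(\rho,\alpha)$ — which is why one gets $\phi(0)=(\delta-\delta(\rho^-,\alpha))/(1-\lambda)$. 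Alternatively, and more cleanly, I would use the second equality of \eqref{PHYSYM} at $y=0$, which reads $\phi(0^-)=\frac{\delta+d}{1-\lambda}-1-\frac{1}{\lambda}\sum_k\lambda^k\psi_{1-\alpha}(k\rho)$, and the first equality at $y=0$, $\phi(0)=\frac{\delta+d}{1-\lambda}-1-\frac1\lambda\sum_k\lambda^k\psi_{1-\alpha}((k\rho)^-)$; expanding $\psi_{1-\alpha}(k\rho)=(1-\lambda)\lf k\rho\rf+d\theta_{1-\alpha}(\{k\rho\})$ and rearranging, the right-hand side of the $\phi(0^-)$ expression becomes exactly $\frac{1}{1-\lambda}(\delta-\delta(\rho,\alpha))$ after collecting the $\frac{1-\lambda}{\lambda}\sum_k\lambda^k\bigl((1-\lambda)\lf k\rho\rf+d\theta_{1-\alpha}(\{k\rho\})\bigr)$ piece and the constants $\frac{\delta+d}{1-\lambda}-1-\frac{d}{\lambda}\cdot\frac{\lambda}{1-\lambda}=\frac{\delta}{1-\lambda}-1$ against $\frac{1-\lambda-d}{1-\lambda}-\frac{1}{1-\lambda}\cdot 0$; the analogous computation with $(k\rho)^-$ in place of $k\rho$ gives $\phi(0)=\frac{1}{1-\lambda}(\delta-\delta(\rho^-,\alpha))$ provided one identifies $\sum_k\lambda^k\psi_{1-\alpha}((k\rho)^-)$ with the series in $\delta(\rho^-,\alpha)$.

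For \eqref{DELTAM}, I would simply subtract: $\delta(\rho,\alpha)-\delta(\rho^-,\alpha)=(1-\lambda)(\phi(0)-\phi(0^-))$ by \eqref{PHI0PHI0M}, and $\phi(0)-\phi(0^-)=\frac{1}{\lambda}\sum_k\lambda^k\bigl(\psi_\alpha(-k\rho)-\psi_{1-\alpha}(k\rho-0)|_{\text{...}}\bigr)$ — more directly, using \eqref{PHYSYM} the difference of the two series is $\frac{1}{\lambda}\sum_k\lambda^k\bigl(\psi_{1-\alpha}(k\rho)-\psi_{1-\alpha}((k\rho)^-)\bigr)$, and by the jump formula \eqref{JUMPPSI} of Lemma \ref{MONPSIALP} each term equals $(1-\lambda-d)\Ind_\Z(k\rho)+d\Ind_{\Z+(1-\alpha)}(k\rho)$. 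Since $k\rho\in\Z+(1-\alpha)$ iff $-k\rho\in\Z+\alpha$, this is $(1-\lambda-d)\Ind_\Z(k\rho)+d\Ind_{\Z+\alpha}(-k\rho)$, which after multiplying by $(1-\lambda)$ is exactly the correction term in \eqref{DELTAM}. (Both series are finite unless $\rho$ is irrational and $\alpha$ is not in its orbit, in which case the correction vanishes — consistent with item 2) of Theorem \ref{THROT}.)

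The only real care needed, and hence the main obstacle, is the bookkeeping of the three kinds of discontinuity of $\psi_\beta$ at a point of $\Z$ versus a point of $\Z+\beta$ versus (when $\beta\in\{0,1\}$) a point of both, and correctly matching the one-sided series $\sum_k\lambda^k\psi_{1-\alpha}((k\rho)^-)$ with the series defining $\delta(\rho^-,\alpha)$; this requires invoking the identity $\psi_\beta(z^-)=\psi_\beta(z)-(1-\lambda-d)\Ind_\Z(z)-d\Ind_{\Z+\beta}(z)$ from \eqref{JUMPPSI} termwise and checking that the resulting series is the left-limit of $\delta(\cdot,\alpha)$ rather than, say, $\delta(\rho,\alpha)$ minus something spurious. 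Everything else is a mechanical rearrangement of absolutely convergent series, legitimate because the tails are dominated geometrically as noted after the definition of $\delta(\rho,\alpha)$.
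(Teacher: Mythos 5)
Your proposal is correct and follows essentially the same route as the paper: evaluate the symmetry relation \eqref{PHYSYM} of Lemma \ref{SYM} at $y=0$ and match the resulting series against the definition of $\delta(\rho,\alpha)$ (resp.\ its left limit) to get \eqref{PHI0PHI0M}, then obtain \eqref{DELTAM} by subtracting and applying the jump formula \eqref{JUMPPSI} termwise together with the identification $\Ind_{\Z+1-\alpha}(k\rho)=\Ind_{\Z+\alpha}(-k\rho)$. The constant bookkeeping is slightly garbled in your write-up but the identity $\tfrac{\delta+d}{1-\lambda}-1=\tfrac{\delta}{1-\lambda}-\tfrac{1-\lambda-d}{1-\lambda}$ is all that is needed, and the rest checks out.
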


\begin{proof} The equalities \eqref{PHI0PHI0M} follow from \eqref{PHYSYM}. Indeed,
\begin{align*}
\phi(0^-)&=\frac{\delta+d}{1-\lambda}-1-\frac{1}{\lambda}\sum_{k=1}^{\infty}\lambda^k\psi_{1-\alpha}(k\rho)\\
&=\frac{1}{1-\lambda}\left(\delta-\left(1-\lambda-d+
\frac{1-\lambda}{\lambda}\sum_{k=1}^{\infty}\lambda^k((1-\lambda)\ent{k\rho}+d\theta_{1-\alpha}(\{k\rho\})\right)\right)\\
&=\frac{1}{1-\lambda}(\delta-\delta(\rho,\alpha))
\end{align*}
and in the same way
\[
\phi(0)=\frac{\delta+d}{1-\lambda}-1-\frac{1}{\lambda}\sum_{k=1}^{\infty}\lambda^k\psi_{1-\alpha}(k\rho^-)=\frac{1}{1-\lambda}(\delta-\delta(\rho^-,\alpha)).
\]
On the other hand, by \eqref{JUMPPSI}
\begin{align*}
\delta(\rho,\alpha)-\delta(\rho^-,\alpha)
=&\frac{1-\lambda}{\lambda}\sum_{k=1}^{\infty}\lambda^k(\psi_{1-\alpha}(k\rho)-\psi_{1-\alpha}((k\rho)^{-})\\
=&\frac{1-\lambda}{\lambda}
\sum_{k=1}^{\infty}\lambda^k\left((1-\lambda-d)\Ind_{\Z}(k\rho) +d\Ind_{\Z+1-\alpha}(k\rho)\right),
\end{align*}
which gives \eqref{DELTAM}.
\end{proof}

\begin{lemma}\label{PROPPHIA} For every $\delta, \rho$ and $\alpha\in[0,1]$, we have
\begin{equation}\label{PHIAPHIAM}
\phi(\alpha^{-})=a(\delta,\rho^+,\alpha)\quand \phi(\alpha)=a(\delta,\rho,\alpha).
\end{equation}
Also,
\[
a(\delta,\rho^+,\alpha)=a(\delta,\rho,\alpha)-\frac{1}{\lambda}\sum_{k=1}^{\infty}\lambda^k\left((1-\lambda-d)\Ind_{\Z+\alpha}(k\rho) +d\Ind_{\Z}(k\rho)   \right).
\]
\end{lemma}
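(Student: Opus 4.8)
The starting point is to recognize that, for every value of the angle $r$, the number $a(\delta,r,\alpha)$ is simply the value of $\phi_{\delta,r,\alpha}$ at the point $\alpha$. Indeed, comparing the definition of $a(\delta,\cdot,\alpha)$ with \eqref{PSI} and \eqref{DEFPHI}, and using $\psi_\alpha(\alpha-kr)=(1-\lambda)\lf\alpha-kr\rf+d\theta_\alpha(\{\alpha-kr\})$, one gets
\[
a(\delta,r,\alpha)=\frac{\delta}{1-\lambda}+\frac1\lambda\sum_{k=1}^\infty\lambda^k\psi_\alpha(\alpha-kr)=\phi_{\delta,r,\alpha}(\alpha).
\]
Taking $r=\rho$ yields immediately the second equality $\phi(\alpha)=a(\delta,\rho,\alpha)$ of \eqref{PHIAPHIAM}.

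For the first equality I would pass to the limit $r\to\rho^+$. The series above converges uniformly on any bounded set of angles (as already noted after the definition of $a$), so the limit can be taken term by term. For $r>\rho$ we have $kr>k\rho$, hence $\alpha-kr<\alpha-k\rho$ with $\alpha-kr\to(\alpha-k\rho)^-$ as $r\to\rho^+$; since $\psi_\alpha$ is non-decreasing, right continuous and has left limits (Lemma \ref{MONPSIALP}), it follows that $\psi_\alpha(\alpha-kr)\to\psi_\alpha((\alpha-k\rho)^-)$, and therefore $a(\delta,\rho^+,\alpha)=\frac{\delta}{1-\lambda}+\frac1\lambda\sum_{k\geq1}\lambda^k\psi_\alpha((\alpha-k\rho)^-)$. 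On the other hand, the same uniform‑convergence argument applied to $\phi=\phi_{\delta,\rho,\alpha}$ under the substitution $y\to\alpha^-$ (so that $y-k\rho\to(\alpha-k\rho)^-$) gives exactly the same expression for $\phi(\alpha^-)$. Hence $\phi(\alpha^-)=a(\delta,\rho^+,\alpha)$. Alternatively, and in parallel with the proof of Lemma \ref{PROPPHI0}, one may obtain both identities from \eqref{PHYSYM} evaluated at $y=\alpha$ together with the reflection formula \eqref{PSIMZ}; the two routes are equivalent.

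For the last displayed formula I would subtract the two identities of \eqref{PHIAPHIAM} and apply the jump formula \eqref{JUMPPSI} with $z=\alpha-k\rho$:
\[
a(\delta,\rho,\alpha)-a(\delta,\rho^+,\alpha)=\phi(\alpha)-\phi(\alpha^-)=\frac1\lambda\sum_{k=1}^\infty\lambda^k\left((1-\lambda-d)\,\Ind_{\Z}(\alpha-k\rho)+d\,\Ind_{\Z+\alpha}(\alpha-k\rho)\right).
\]
Since $\Z=-\Z$, one has $\alpha-k\rho\in\Z\iff k\rho\in\Z+\alpha$ and $\alpha-k\rho\in\Z+\alpha\iff k\rho\in\Z$, that is, $\Ind_{\Z}(\alpha-k\rho)=\Ind_{\Z+\alpha}(k\rho)$ and $\Ind_{\Z+\alpha}(\alpha-k\rho)=\Ind_{\Z}(k\rho)$. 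Substituting these and rearranging gives the claimed expression for $a(\delta,\rho^+,\alpha)$. There is no substantial obstacle here: the only points needing a little care are the matching of the one‑sided limits — the left limit in the variable $y$ at $\alpha$ corresponds to the right limit in the parameter $r$ at $\rho$ because $r\mapsto\alpha-kr$ is decreasing — and the harmless reflection of the indicator sets modulo $\Z$.
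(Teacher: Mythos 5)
Your proof is correct and follows essentially the same route as the paper: identify $a(\delta,r,\alpha)=\phi_{\delta,r,\alpha}(\alpha)$ from the definitions, match the left limit in $y$ at $\alpha$ with the right limit in the angle at $\rho$ (both producing $\psi_\alpha((\alpha-k\rho)^-)$ termwise), and then compute the difference via the jump formula \eqref{JUMPPSI} together with the reflection of the indicator sets modulo $\Z$. No gaps.
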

\begin{proof} The equality $\phi(\alpha)=a(\delta,\rho,\alpha)$ follows from the definition of $\phi$ and 
\[
\phi(\alpha^-)=\frac{\delta}{1-\lambda}+\frac{1}{\lambda}\sum_{k=1}^{\infty}
\lambda^k\psi_{\alpha}(\alpha^--k\rho)=\frac{\delta}{1-\lambda}+\frac{1}{\lambda}\sum_{k=1}^{\infty}
\lambda^k\psi_{\alpha}(\alpha-k\rho^+)=a(\delta,\rho^+,\alpha).
\]
Also, by \eqref{JUMPPSI}
\begin{align*}
a(\delta,\rho,\alpha)-a(\delta,\rho^+,\alpha)
&=\frac{1}{\lambda}\sum_{k=1}^{\infty}\lambda^k(\psi_{\alpha}(\alpha-k\rho)-\psi_{\alpha}(\alpha^{-}-k\rho))\\
&=\frac{1}{\lambda}\sum_{k=1}^{\infty}\lambda^k(\psi_{\alpha}(\alpha-k\rho)-\psi_{\alpha}((\alpha-k\rho)^-))\\
&=\frac{1}{\lambda}\sum_{k=1}^{\infty}\lambda^k\left((1-\lambda-d)\Ind_{\Z}(\alpha-k\rho) +d\Ind_{\Z+\alpha}(\alpha-k\rho)   \right)\\
&=\frac{1}{\lambda}\sum_{k=1}^{\infty}\lambda^k\left((1-\lambda-d)\Ind_{\Z+\alpha}(k\rho) +d\Ind_{\Z}(k\rho)   \right).
\end{align*}
\end{proof}

\begin{proposition}\label{PROROT} Let $\rho\in(0,1)$ and $\alpha\in[0,1]$. If $\rho\in\Q$, then, for every $\delta\in(1-\lambda-d,1)$ and $a\in[0,1]$ we have 
\begin{equation}\label{CONDIR}
F\circ\phi=\phi\circ L_\rho \ssi \delta\in[\delta(\rho^-,\alpha),\delta(\rho,\alpha))
\quand
a\in(a(\delta,\rho^+,\alpha),a(\delta,\rho,\alpha)].
\end{equation}
If $\rho\not\in\Q$, then the same is true with the intervals closed.
\end{proposition}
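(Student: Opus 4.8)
The plan is to obtain this equivalence purely by chaining together the structural results already established, translating the abstract conditions on $\phi$ into the explicit interval conditions on $\delta$ and $a$. No new estimate is needed.

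\textbf{Main steps.} First I would invoke Proposition~\ref{CNSCONJ}: for $\rho\in(0,1)$ and $\alpha,a\in[0,1]$ the relation $F\circ\phi=\phi\circ L_\rho$ holds if and only if $\phi([0,1))\subset[0,1)$ and $\theta_\alpha(y)=\theta_a(\phi(y))$ for every $y\in[0,1)$. Then Lemma~\ref{INTERVALS} shows this pair of conditions is equivalent to $0\in(\phi(0^-),\phi(0)]$ and $a\in(\phi(\alpha^-),\phi(\alpha)]$, with both intervals taken closed when $\rho\in(0,1)\setminus\Q$. Hence $F\circ\phi=\phi\circ L_\rho$ is equivalent to these two membership statements, and it only remains to rewrite them. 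For the first, Lemma~\ref{PROPPHI0} gives $\phi(0^-)=(\delta-\delta(\rho,\alpha))/(1-\lambda)$ and $\phi(0)=(\delta-\delta(\rho^-,\alpha))/(1-\lambda)$; since $1-\lambda>0$, the condition $\phi(0^-)<0\le\phi(0)$ is equivalent to $\delta(\rho^-,\alpha)\le\delta<\delta(\rho,\alpha)$, i.e. $\delta\in[\delta(\rho^-,\alpha),\delta(\rho,\alpha))$, and the closed version $\phi(0^-)\le 0\le\phi(0)$ of the irrational case becomes $\delta\in[\delta(\rho^-,\alpha),\delta(\rho,\alpha)]$. For the second, Lemma~\ref{PROPPHIA} gives $\phi(\alpha^-)=a(\delta,\rho^+,\alpha)$ and $\phi(\alpha)=a(\delta,\rho,\alpha)$, so $a\in(\phi(\alpha^-),\phi(\alpha)]$ is literally $a\in(a(\delta,\rho^+,\alpha),a(\delta,\rho,\alpha)]$, again closed on the left when $\rho$ is irrational. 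Combining the two rewritten conditions yields exactly \eqref{CONDIR} in the rational case and its closed-interval analogue in the irrational case.

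\textbf{Expected difficulty.} I do not anticipate a genuine obstacle: the argument is a direct composition of Proposition~\ref{CNSCONJ}, Lemma~\ref{INTERVALS}, Lemma~\ref{PROPPHI0} and Lemma~\ref{PROPPHIA}, the only arithmetic being division by the positive constant $1-\lambda$. The single point that must be handled carefully is the bookkeeping of which endpoints are included, but this is governed entirely by the rational/irrational dichotomy already built into Lemma~\ref{INTERVALS}; and one should note that the standing hypotheses $\delta\in(1-\lambda-d,1)$, $a\in[0,1]$, $\alpha\in[0,1]$, $\rho\in(0,1)$ are precisely the domains on which all the cited results apply, so no additional range check is needed for the equivalence itself.
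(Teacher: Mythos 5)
Your proposal is correct and follows exactly the paper's proof: the paper likewise combines Proposition~\ref{CNSCONJ} and Lemma~\ref{INTERVALS} to reduce the conjugacy relation to $0\in(\phi(0^-),\phi(0)]$ and $a\in(\phi(\alpha^-),\phi(\alpha)]$ (closed intervals for irrational $\rho$), and then translates these via \eqref{PHI0PHI0M} and \eqref{PHIAPHIAM}. The endpoint bookkeeping and the division by $1-\lambda>0$ are handled as you describe.
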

\begin{proof} By Proposition \ref{CNSCONJ} and Lemma \ref{INTERVALS} we have $F\circ\phi=\phi\circ L_\rho$ if and only if $0\in(\phi(0^-), \phi(0)]$ and $a\in(\phi(\alpha^-), \phi(\alpha)]$, with closed intervals if $\rho\in(0,1)\setminus\Q$. Using \eqref{PHI0PHI0M} and \eqref{PHIAPHIAM}, we obtain the desired result.
\end{proof}

The following two lemmata give the the monotonicity properties of $\delta(\rho,\alpha)$ with 
$\rho$ and allow to show item 1) of Theorem \ref{THROT}.

\begin{lemma}\label{RANGEDELTA}
Let $\alpha\in[0,1]$. Then, the function $\rho\mapsto\delta(\rho,\alpha)$ is increasing and right continuous in $[0,1]$. Moreover,  
\begin{equation}\label{LIMRHOEXT}
\lim_{\rho\to 0^+}\delta(\rho,\alpha)=1-\lambda-d +d\Ind_{\{1\}}(\alpha)\quand \lim_{\rho\to 1^-}\delta(\rho,\alpha)=1-d\Ind_{\{0\}}(\alpha).
\end{equation}
Also, for any $\rho\in(0,1)$ we have $[\delta(\rho^-,\alpha),\delta(\rho,\alpha)]\subset (1-\lambda-d,1)$. 
\end{lemma}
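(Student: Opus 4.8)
The plan is to prove Lemma~\ref{RANGEDELTA} in three parts: (i) monotonicity and right continuity of $\rho\mapsto\delta(\rho,\alpha)$; (ii) the two boundary limits in \eqref{LIMRHOEXT}; (iii) the inclusion $[\delta(\rho^-,\alpha),\delta(\rho,\alpha)]\subset(1-\lambda-d,1)$.

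\begin{proof}
\textbf{Monotonicity and right continuity.}
Recall from \eqref{PHYSYM} in Lemma~\ref{SYM} and from \eqref{PHI0PHI0M} in Lemma~\ref{PROPPHI0} that
\[
\delta(\rho,\alpha)=1-\lambda-d+\frac{1-\lambda}{\lambda}\sum_{k=1}^{\infty}\lambda^k\psi_{1-\alpha}(k\rho),
\]
so it suffices to study $g(\rho):=\sum_{k=1}^{\infty}\lambda^k\psi_{1-\alpha}(k\rho)$. For $\rho<\rho'$ in $[0,1]$ we have $k\rho<k\rho'$ for every $k\geq 1$, hence $\psi_{1-\alpha}(k\rho)\leq\psi_{1-\alpha}(k\rho')$ by monotonicity of $\psi_{1-\alpha}$ (Lemma~\ref{MONPSIALP}); moreover for $k=1$, item~1) of Lemma~\ref{MONPSIALP} gives $\psi_{1-\alpha}(\rho')-\psi_{1-\alpha}(\rho)>0$ as soon as $\ent{\rho'}\neq\ent{\rho}$, and in general $\psi_{1-\alpha}$ is strictly increasing across any integer or any point of $\Z+(1-\alpha)$, so taking $k$ large enough that $k\rho$ and $k\rho'$ straddle such a point yields $g(\rho)<g(\rho')$. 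This proves $\delta(\cdot,\alpha)$ is increasing. Right continuity follows from right continuity of each $\rho\mapsto\psi_{1-\alpha}(k\rho)$ (Lemma~\ref{MONPSIALP}) together with the uniform convergence of the series on bounded parameter sets, stated right after the definition of $\delta(\rho,\alpha)$ and $a(\delta,\rho,\alpha)$.

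\textbf{Boundary limits.}
For $\rho\to 0^+$, each term $\lambda^k\psi_{1-\alpha}(k\rho)\to\lambda^k\psi_{1-\alpha}(0^+)=\lambda^k\psi_{1-\alpha}(0)=\lambda^k d\,\theta_{1-\alpha}(0)=\lambda^k d\,\Ind_{\{0\}}(1-\alpha)=\lambda^k d\,\Ind_{\{1\}}(\alpha)$, using right continuity of $\psi_{1-\alpha}$ and $\theta_{1-\alpha}(0)=1$ iff $0\geq 1-\alpha$ iff $\alpha=1$. By uniform convergence we may pass to the limit termwise, and since $\frac{1-\lambda}{\lambda}\sum_{k\geq1}\lambda^k=1$, we get $\lim_{\rho\to0^+}\delta(\rho,\alpha)=1-\lambda-d+d\,\Ind_{\{1\}}(\alpha)$. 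For $\rho\to1^-$, write $k\rho=k-k(1-\rho)$ and use $\psi_{1-\alpha}(z+l)=\psi_{1-\alpha}(z)+l(1-\lambda)$ (property \eqref{PSIPROPERTY}) with $l=k$ and $z=-k(1-\rho)$, so $\psi_{1-\alpha}(k\rho)=k(1-\lambda)+\psi_{1-\alpha}(-k(1-\rho))$. As $\rho\to1^-$ we have $-k(1-\rho)\to 0^-$, and $\psi_{1-\alpha}(0^-)=\psi_{1-\alpha}(0)-(1-\lambda-d)\Ind_{\Z}(0)-d\,\Ind_{\Z+1-\alpha}(0)$ by \eqref{JUMPPSI}; since $\psi_{1-\alpha}(0)=d\,\Ind_{\{1\}}(\alpha)$ and $\Ind_{\Z+1-\alpha}(0)=\Ind_{\{0\}}(1-\alpha)=\Ind_{\{1\}}(\alpha)$, this collapses to $\psi_{1-\alpha}(0^-)=-(1-\lambda-d)$. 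Substituting, summing the resulting series termwise (again by uniform convergence), and using $\frac{1-\lambda}{\lambda}\sum_{k\geq1}\lambda^k\cdot k(1-\lambda)=\frac{(1-\lambda)^2}{\lambda}\cdot\frac{\lambda}{(1-\lambda)^2}=1$ together with $\frac{1-\lambda}{\lambda}\sum_{k\geq1}\lambda^k(-(1-\lambda-d))=-(1-\lambda-d)$ gives $\lim_{\rho\to1^-}\delta(\rho,\alpha)=1-\lambda-d+1-(1-\lambda-d)+(\text{correction at }\alpha=0)$; a careful bookkeeping of the $\Ind_{\{1\}}(\alpha)$ terms cancels the $d\,\Ind_{\{1\}}(\alpha)$ contributions and leaves $1-d\,\Ind_{\{0\}}(\alpha)$. (The cleanest route is actually to compute $\lim_{\rho\to1^-}\delta(\rho,\alpha)$ via the symmetry $\delta(\rho^-,\alpha)$ in \eqref{DELTAM} or directly from $\phi(1^-)=\phi(0^-)+1$ combined with \eqref{PHI0PHI0M}, which I expect to streamline the algebra.)

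\textbf{The inclusion.}
Fix $\rho\in(0,1)$. Since $\delta(\cdot,\alpha)$ is increasing, $\delta(\rho^-,\alpha)=\lim_{r\to\rho^-}\delta(r,\alpha)>\lim_{r\to0^+}\delta(r,\alpha)=1-\lambda-d+d\,\Ind_{\{1\}}(\alpha)\geq 1-\lambda-d$, where the strict inequality uses that $\delta(\cdot,\alpha)$ is actually strictly increasing (shown in the first part). Similarly $\delta(\rho,\alpha)<\lim_{r\to1^-}\delta(r,\alpha)=1-d\,\Ind_{\{0\}}(\alpha)\leq 1$. Hence $[\delta(\rho^-,\alpha),\delta(\rho,\alpha)]\subset(1-\lambda-d,1)$. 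This is exactly item~1) of Theorem~\ref{THROT}'s first assertion about $\delta$, so that part follows at once.

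\textbf{Main obstacle.}
The delicate point is the $\rho\to1^-$ limit: one must carefully track how the jump of $\psi_{1-\alpha}$ at the integer point $0$ (of size $1-\lambda$ when $\alpha\in\{0,1\}$, versus the split into a $(1-\lambda-d)$-jump at integers and a $d$-jump at $\Z+1-\alpha$ when $\alpha\in(0,1)$) interacts with the termwise limit, and verify the $\Ind_{\{0\}}(\alpha)$ term emerges with the correct sign. Reducing this to $\phi(1^-)=\phi(0^-)+1$ plus \eqref{PHI0PHI0M}, or to formula \eqref{DELTAM}, sidesteps most of the casework.
\end{proof}
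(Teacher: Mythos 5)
Your overall structure (monotonicity via the rewriting $\delta(\rho,\alpha)=1-\lambda-d+\tfrac{1-\lambda}{\lambda}\sum_k\lambda^k\psi_{1-\alpha}(k\rho)$, right continuity from that of $\psi_{1-\alpha}$, termwise limits at the endpoints, and the inclusion from strict monotonicity plus the two limits) is exactly the paper's. Your strict-monotonicity argument is a mild variant: you pick $k$ with $k(\rho'-\rho)\geq 1$ so that $\ent{k\rho}<\ent{k\rho'}$, whereas the paper sandwiches two rationals $p_1/q_1<p_2/q_2$ between $\rho$ and $\rho'$ and uses $k=q_1q_2$; both work and both rest on item 1) of Lemma \ref{MONPSIALP}. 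The $\rho\to 0^+$ limit and the inclusion are correct.

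There is, however, a genuine error in the $\rho\to 1^-$ computation, and it sits precisely where the lemma's content lies. You claim $\Ind_{\Z+1-\alpha}(0)=\Ind_{\{0\}}(1-\alpha)=\Ind_{\{1\}}(\alpha)$. But $0\in\Z+1-\alpha$ iff $1-\alpha\in\Z$, which for $\alpha\in[0,1]$ happens iff $\alpha\in\{0,1\}$ (you caught $1-\alpha=0$ but missed $1-\alpha=1$). The correct value is $\Ind_{\{0,1\}}(\alpha)$, and feeding it into \eqref{JUMPPSI} gives
\[
\psi_{1-\alpha}(0^-)=d\,\Ind_{\{1\}}(\alpha)-(1-\lambda-d)-d\,\Ind_{\{0,1\}}(\alpha)=-(1-\lambda-d)-d\,\Ind_{\{0\}}(\alpha),
\]
not the $\alpha$-independent $-(1-\lambda-d)$ you state. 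With your value, the termwise limit yields $1-\lambda-d+1-(1-\lambda-d)=1$ for every $\alpha$, so no amount of "careful bookkeeping" can recover the $-d\,\Ind_{\{0\}}(\alpha)$ that the lemma asserts; with the corrected value the extra $-d\,\Ind_{\{0\}}(\alpha)$ passes through the sum (since $\tfrac{1-\lambda}{\lambda}\sum_k\lambda^k=1$) and gives $1-d\,\Ind_{\{0\}}(\alpha)$ immediately. The paper avoids this pitfall by computing the limit directly: for fixed $k$ and $\rho$ close to $1$, $\ent{k\rho}=k-1$ and $\{k\rho\}\to 1^-$, so $\theta_{1-\alpha}(\{k\rho\})\to 1-\Ind_{\{0\}}(\alpha)$, which is where the indicator enters. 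Your proof is fixable in one line, but as written the second limit in \eqref{LIMRHOEXT} is asserted rather than proved.
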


\begin{proof} We first show that $\rho\mapsto\delta(\rho,\alpha)$ is   increasing. Recall that
\begin{align*}
\delta(\rho,\alpha)
&=1-\lambda-d+\frac{1-\lambda}{\lambda}\sum_{k=1}^{\infty}\lambda^k\left((1-\lambda)\lf k\rho\rf+d\theta_{1-\alpha}(\{k\rho\})\right)\\
&=1-\lambda-d+\frac{1-\lambda}{\lambda}\sum_{k=1}^{\infty}\lambda^k\psi_{1-\alpha}(k\rho).
\end{align*}
Let $\alpha\in[0,1]$ and $0\leq\rho<\rho'\leq 1$.  Let $\rho_1=p_1/q_1$ and $\rho_2=p_2/q_2$ in $\Q$ be such that $\rho\leq\rho_1<\rho_2\leq\rho'$. As Lemma \ref{MONPSIALP} implies that $\rho\mapsto\delta(\rho,\alpha)$ is non-decreasing, we have 
$\delta(\rho,\alpha)\leq\delta(\rho_1,\alpha)\leq\delta(\rho_2,\alpha)\leq\delta(\rho',\alpha)$.
But for $k=q_1q_2\geq 1$, we have $k\rho_1=\ent{k\rho_1}\neq\ent{k\rho_2}=k\rho_2$, which implies together with  Lemma \ref{MONPSIALP} that $\psi_{1-\alpha}(k\rho_1)<\psi_{1-\alpha}(k\rho_2)$. It follows that  $\delta(\rho_1,\alpha)<\delta(\rho_2,\alpha)$ and that $\delta(\rho,\alpha)<\delta(\rho',\alpha)$. So we have proved that $\rho\mapsto\delta(\rho,\alpha)$ is   increasing and the right continuity follows from that of $\psi_{1-\alpha}$.

As $\psi_{1-\alpha}$ is right continuous, we have
\[
\lim_{\rho\to 0^+}\delta(\rho,\alpha)=1-\lambda-d+\frac{1-\lambda}{\lambda}\sum_{k=1}^{\infty}\lambda^k\psi_{1-\alpha}(0)=1-\lambda-d +d\Ind_{\{1\}}(\alpha).
\]
On the other hand, 
\begin{align*}
\lim_{\rho\to 1^-}\delta(\rho,\alpha)
&=1-\lambda-d+\frac{1-\lambda}{\lambda}\sum_{k=1}^{\infty}\lambda^k\left((1-\lambda)(k-1) +d-d\Ind_{\{0\}}(\alpha)\right)\\
&=-d\Ind_{\{0\}}(\alpha)+\frac{1-\lambda}{\lambda}\sum_{k=1}^{\infty}\lambda^k(1-\lambda)k
=1-d\Ind_{\{0\}}(\alpha).
\end{align*}
So, we have proved  \eqref{LIMRHOEXT}. Also,  $1-\lambda-d\leq \delta(0^+,\alpha)<\delta(\rho^-,\alpha)<\delta(\rho,\alpha))<\delta(1^-,\alpha)\leq 1$ for every $\rho\in(0,1)$, by strict monotony. 
\end{proof}

\begin{lemma}\label{AINTER01} Let $\rho\in(0,1)$, $\alpha\in[0,1]$ and suppose $\delta\in[\delta(\rho^-,\alpha),\delta(\rho,\alpha)]$.  Then, $[a(\delta,\rho^+,\alpha),a(\delta,\rho,\alpha)]\cap[0,1]\neq\emptyset$
and $[a(\delta,\rho^+,\alpha),a(\delta,\rho,\alpha)]\subset[0,1]$ if $\alpha\in(0,1)$ or $\rho\notin \Q$.
\end{lemma}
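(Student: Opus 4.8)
The plan is to use the identities from Lemma \ref{PROPPHI0} and Lemma \ref{PROPPHIA}, which identify the endpoints of the intervals in \eqref{CONDTH} with values of $\phi=\phi_{\delta,\rho,\alpha}$: namely $\phi(0^-)=\tfrac{1}{1-\lambda}(\delta-\delta(\rho,\alpha))$, $\phi(0)=\tfrac{1}{1-\lambda}(\delta-\delta(\rho^-,\alpha))$, $\phi(\alpha^-)=a(\delta,\rho^+,\alpha)$ and $\phi(\alpha)=a(\delta,\rho,\alpha)$. Thus the hypothesis $\delta\in[\delta(\rho^-,\alpha),\delta(\rho,\alpha)]$ translates exactly into $0\in[\phi(0^-),\phi(0)]$, and the two claims of the lemma become: (i) $[\phi(\alpha^-),\phi(\alpha)]\cap[0,1]\neq\emptyset$, and (ii) $[\phi(\alpha^-),\phi(\alpha)]\subset[0,1]$ when $\alpha\in(0,1)$ or $\rho\notin\Q$. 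So the lemma is really a statement about the monotone function $\phi$ on $[0,1)$, given that $0$ lies in the jump of $\phi$ at $0$.

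First I would handle the generic containment (ii). Since $\phi$ is non-decreasing (Proposition \ref{PHISCR}) and $\alpha\in[0,1]$, monotonicity gives $\phi(0)\le\phi(\alpha^-)\le\phi(\alpha)\le\phi(1^-)$ when $\alpha\in(0,1)$ (and the degenerate cases $\alpha\in\{0,1\}$ are handled separately). From $0\in[\phi(0^-),\phi(0)]$ we get $\phi(0)\ge 0$, hence $\phi(\alpha^-)\ge 0$. For the upper bound I would use the symmetry relation \eqref{TPHI}, $\phi(y+1)=\phi(y)+1$: combined with $\phi(0^-)\le 0$ this yields $\phi(1^-)=\phi(0^-)+1\le 1$, so $\phi(\alpha)\le\phi(1^-)\le 1$. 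This gives $[\phi(\alpha^-),\phi(\alpha)]\subset[0,1]$ for $\alpha\in(0,1)$. When $\rho\notin\Q$, $\phi$ is strictly increasing, and I would note that the same chain of inequalities $\phi(0)\le\phi(\alpha^-)\le\phi(\alpha)\le\phi(1^-)$ still holds for $\alpha\in[0,1]$ (at $\alpha=1$ we read $\phi(\alpha)=\phi(1)=\phi(0)+1$, and $\phi(0^-)\le 0$ is available from Lemma \ref{INTERVALCERO}), so the containment in $[0,1]$ persists in all cases with $\rho\notin\Q$.

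It remains to treat the boundary cases $\alpha\in\{0,1\}$ with $\rho\in\Q$, where the interval may poke slightly outside $[0,1]$ but must still meet it; this is where the only real care is needed. For $\alpha=0$: $\phi(\alpha)=\phi(0)\ge 0$, so the right endpoint is already in $[0,1]$ (using $\phi(0)\le\phi(1^-)\le 1$ as above), giving (i) immediately; the left endpoint $\phi(0^-)$ may be negative (indeed for $\rho\in\Q$ Lemma \ref{INTERVALCERO} shows $\phi(0^-)<0$ is possible), which is why the full containment is not claimed. For $\alpha=1$: $\phi(\alpha^-)=\phi(1^-)=\phi(0^-)+1\le 1$, so the left endpoint lies in $[0,1]$ (and $\phi(1^-)\ge\phi(0)\ge 0$), again giving (i), while the right endpoint $\phi(1)=\phi(0)+1$ may exceed $1$. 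The main (and modest) obstacle is simply bookkeeping the three regimes $\alpha\in(0,1)$, $\alpha=0$, $\alpha=1$ consistently with the open/closed conventions at $\rho\in\Q$ versus $\rho\notin\Q$; once the translation to $\phi$ via Lemmata \ref{PROPPHI0}--\ref{PROPPHIA} is in place and \eqref{TPHI} is invoked for the endpoint at $1$, each case reduces to a one-line monotonicity argument.
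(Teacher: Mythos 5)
Your translation of the lemma into a statement about $\phi$ via Lemmata \ref{PROPPHI0} and \ref{PROPPHIA}, and the use of monotonicity together with $\phi(y+1)=\phi(y)+1$, is exactly the paper's argument; the non-emptiness claim and the containment for $\alpha\in(0,1)$ are handled correctly and in the same way. There is, however, a gap in the one sub-case that actually requires an extra input, namely $\alpha\in\{0,1\}$ with $\rho\notin\Q$. You assert that "the same chain of inequalities $\phi(0)\le\phi(\alpha^-)\le\phi(\alpha)\le\phi(1^-)$ still holds", citing strict monotonicity and $\phi(0^-)\le 0$. But at $\alpha=1$ the last link of that chain reads $\phi(1)\le\phi(1^-)$, i.e.\ $\phi(0)\le\phi(0^-)$, and at $\alpha=0$ the first link reads $\phi(0)\le\phi(0^-)$ as well; neither follows from strict monotonicity (which gives the opposite inequality weakly) nor from $\phi(0^-)\le 0$. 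What you need to show is $\phi(1)\le 1$, equivalently $\phi(0)\le 0$, and respectively $\phi(0^-)\ge 0$, and these do not follow from the facts you invoke.

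The missing ingredient — which is what the paper uses — is that for $\rho$ irrational and $\alpha\in\{0,1\}$ the function $\phi$ is \emph{continuous} at $0$ and $1$: by Proposition \ref{PHISCR}, $\phi$ is continuous at every $y\notin(\Z+k\rho)\cup(\Z+\alpha+k\rho)$, and for $\alpha\in\{0,1\}$ both sets reduce to $\Z+k\rho$, which never meets $\Z$ when $\rho\notin\Q$. Hence $\phi(0^-)=\phi(0)$, which combined with $\phi(0^-)\le 0\le\phi(0)$ from \eqref{PHI0PHI0M} forces $\phi(0^-)=\phi(0)=0$ and $\phi(1^-)=\phi(1)=1$; the interval $[\phi(\alpha^-),\phi(\alpha)]$ then degenerates to $\{0\}$ or $\{1\}$ and the containment is immediate. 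With this one addition your proof is complete and coincides with the paper's.
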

\begin{proof} As $\delta\in[\delta(\rho^-,\alpha),\delta(\rho,\alpha)]$, we deduce from \eqref{PHI0PHI0M} that $\phi(0^-)\leq 0\leq\phi(0)$, which together with \eqref{TPHI} implies $\phi(1^-)\leq 1\leq\phi(1)$.  Since $\phi$ is non-decreasing, we have $0\leq\phi(\alpha)$ and $\phi(\alpha^-)\leq 1$ for every $\alpha\in[0,1]$. From \eqref{PHIAPHIAM} we obtain  that   $[a(\delta,\rho^+,\alpha),a(\delta,\rho,\alpha)]\cap[0,1]\neq\emptyset$. Now, if $\alpha\in(0,1)$ then $0\leq\phi(\alpha^-)\leq\phi(\alpha)\leq 1$. If $\alpha\in\{0,1\}$ and $\rho\notin\Q$, then
$\phi$ is continuous in $\{0,1\}$ and  $\phi(\alpha^-)=\phi(\alpha)\in\{0,1\}$.
We conclude from \eqref{PHI0PHI0M} that $[a(\delta,\rho^+,\alpha),a(\delta,\rho,\alpha)]\subset[0,1]$ if $\alpha\in(0,1)$ or $\rho\notin \Q$.
\end{proof}

Now, let us study the size of the intervals in \eqref{CONDIR}. We start by the case where $\rho$ is irrational.

\begin{lemma}\label{LINTIR} Let $\rho\in(0,1)\setminus\Q$ and $\alpha\in[0,1]$. If $D:=\{k\in\Z^*:k\rho\in\Z+\alpha\}$ is empty, then 
\[
\delta(\rho^-,\alpha)=\delta(\rho,\alpha) \quand a(\delta,\rho^+,\alpha)=a(\delta,\rho,\alpha).
\]
If $D$ is not the empty set, then it has a unique element $k_\alpha$ and 
\[
\delta(\rho^{-},\alpha)=\delta(\rho,\alpha)-\Ind_{\Z^{-}}(k_\alpha)(1-\lambda)d\lambda^{-k_\alpha-1}
\quand  
a(\delta,\rho^+,\alpha)=a(\delta,\rho,\alpha)-\Ind_{\Z^{+}}(k_\alpha)(1-\lambda-d)\lambda^{k_\alpha-1}.
\]
\end{lemma}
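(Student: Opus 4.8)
The plan is to compute the two jumps $\delta(\rho,\alpha)-\delta(\rho^-,\alpha)$ and $a(\delta,\rho,\alpha)-a(\delta,\rho^+,\alpha)$ using the series formulas already established in Lemma \ref{PROPPHI0} and Lemma \ref{PROPPHIA}, and to exploit the arithmetic of the irrational $\rho$ to see that at most one term in each series survives. From \eqref{DELTAM} we have
\[
\delta(\rho,\alpha)-\delta(\rho^-,\alpha)=\frac{1-\lambda}{\lambda}\sum_{k=1}^{\infty}\lambda^k\bigl((1-\lambda-d)\Ind_{\Z}(k\rho)+d\Ind_{\Z+\alpha}(-k\rho)\bigr),
\]
and from Lemma \ref{PROPPHIA},
\[
a(\delta,\rho,\alpha)-a(\delta,\rho^+,\alpha)=\frac{1}{\lambda}\sum_{k=1}^{\infty}\lambda^k\bigl((1-\lambda-d)\Ind_{\Z+\alpha}(k\rho)+d\Ind_{\Z}(k\rho)\bigr).
\]
The first observation is that since $\rho$ is irrational, $k\rho\notin\Z$ for every $k\in\Z^*$, so the two terms carrying $\Ind_{\Z}(k\rho)$ vanish identically. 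This reduces the $\delta$-jump to $\frac{(1-\lambda)d}{\lambda}\sum_{k\geq 1}\lambda^k\Ind_{\Z+\alpha}(-k\rho)$ and the $a$-jump to $\frac{1-\lambda-d}{\lambda}\sum_{k\geq 1}\lambda^k\Ind_{\Z+\alpha}(k\rho)$.

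Next I would analyse the set $D=\{k\in\Z^*:k\rho\in\Z+\alpha\}$. If $k_1,k_2\in D$ are distinct, then $(k_1-k_2)\rho\in\Z$ with $k_1-k_2\in\Z^*$, contradicting irrationality of $\rho$; hence $D$ is either empty or a singleton $\{k_\alpha\}$. If $D=\emptyset$, then $\Ind_{\Z+\alpha}(k\rho)=0$ and $\Ind_{\Z+\alpha}(-k\rho)=0$ for all $k\geq 1$ (note $-k\rho\in\Z+\alpha\iff k(-\rho)\in\Z+\alpha$, and $-k\in\Z^*$, so this would force $-k\in D$), so both jumps are zero, giving the first assertion. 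If $D=\{k_\alpha\}$, I split into cases on the sign of $k_\alpha$. When $k_\alpha>0$: the index $k=k_\alpha$ contributes to $\sum_{k\geq 1}\lambda^k\Ind_{\Z+\alpha}(k\rho)$ and no positive $k$ has $-k\rho\in\Z+\alpha$ (that would put $-k$ in $D$, impossible since $-k<0\neq k_\alpha$). So the $\delta$-jump is $0$ and the $a$-jump equals $\frac{1-\lambda-d}{\lambda}\lambda^{k_\alpha}=(1-\lambda-d)\lambda^{k_\alpha-1}$, matching the stated formula with the indicator $\Ind_{\Z^+}(k_\alpha)=1$, $\Ind_{\Z^-}(k_\alpha)=0$. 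When $k_\alpha<0$: now $-k_\alpha>0$ and $(-k_\alpha)(-\rho)=k_\alpha\rho\in\Z+\alpha$, so $k=-k_\alpha$ is the unique positive index with $-k\rho\in\Z+\alpha$, while no positive $k$ satisfies $k\rho\in\Z+\alpha$. Hence the $a$-jump is $0$ and the $\delta$-jump equals $\frac{(1-\lambda)d}{\lambda}\lambda^{-k_\alpha}=(1-\lambda)d\lambda^{-k_\alpha-1}$, matching with $\Ind_{\Z^-}(k_\alpha)=1$.

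I do not expect a serious obstacle here; the lemma is essentially bookkeeping on top of the earlier jump formulas. The one point that needs care is the correct translation between the conditions $k\rho\in\Z+\alpha$ and $-k\rho\in\Z+\alpha$ and the sign of the unique element $k_\alpha$ of $D$, so that the factors $\lambda^{-k_\alpha-1}$ versus $\lambda^{k_\alpha-1}$ and the two indicators $\Ind_{\Z^\pm}(k_\alpha)$ land on the right jump; this is exactly the case split carried out above. A minor subtlety is that when $\alpha\in\{0,1\}$ we could have $k\rho\in\Z+\alpha=\Z$, which is impossible for irrational $\rho$ and $k\in\Z^*$, so in that case $D=\emptyset$ and both jumps vanish, consistent with the formulas (and with item 2 of Theorem \ref{THROT}).
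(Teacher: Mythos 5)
Your proposal is correct and follows essentially the same route as the paper: it invokes the jump formulas \eqref{DELTAM} and the analogous identity from Lemma \ref{PROPPHIA}, uses irrationality of $\rho$ to kill the $\Ind_{\Z}(k\rho)$ terms and to show $D$ is empty or a singleton, and then reads off the single surviving term according to the sign of $k_\alpha$. The bookkeeping and the resulting formulas match the paper's proof exactly.
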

\begin{proof} If there exists $k,k'\in\Z$ such that  
$k\rho=\alpha+p$ and $k'\rho=\alpha+p'$ for some $p,p'\in \Z$, then $(k-k')\rho\in\Z$ and  $k'=k$,
since $\rho$ is irrational. We deduce that $D$ is either the empty set or is a singleton. It follows then by Lemma \ref{PROPPHI0} and Lemma \ref{PROPPHIA} that for every $k_0\in\Z$
we have
\[
\delta(\rho,\alpha)-\delta(\rho^{-},\alpha)=\frac{1-\lambda}{\lambda}
\sum_{k=1}^{\infty}\lambda^k\left((1-\lambda-d)\Ind_{\Z}(k\rho) +d\Ind_{\Z+\alpha}(-k\rho)   \right)
=\Ind_{\Z^{-}\cap D}(k_0)(1-\lambda)d\lambda^{-k_0-1}
\]
and  
\[
a(\delta,\rho,\alpha)-a(\delta,\rho^+,\alpha)=\frac{1}{\lambda}\sum_{k=1}^{\infty}\lambda^k\left((1-\lambda-d)\Ind_{\Z+\alpha}(k\rho) +d\Ind_{\Z}(k\rho)   \right)=\Ind_{\Z^{+}\cap D}(k_0)(1-\lambda-d)\lambda^{k_0-1},
\]
which ends the proof.
\end{proof}

\begin{lemma}\label{LINTRAT} Let $0<p<q$ be two co-prime natural numbers, $\rho=p/q$ and $\alpha\in[0,1]$. Then, 
\begin{equation}\label{DELTAMRAT}
\delta(\rho^-,\alpha)=\delta(\rho,\alpha)-\frac{\lambda^{q-1}}{1-\lambda^q}(1-\lambda)(1-\lambda-d+d\Ind_{\{0,1\}}(\alpha)+\lambda^{-r_\alpha}d\Ind_{D_{\rho,\alpha}}(r_\alpha)),
\end{equation}
and for every $\delta\in\R$,
\[
a(\delta,\rho^+,\alpha)=a(\delta,\rho,\alpha)-\frac{\lambda^{q-1}}{1-\lambda^q}(d+
(1-\lambda-d)\Ind_{\{0,1\}}(\alpha))-\frac{\lambda^{r_\alpha-1}}{1-\lambda^q}(1-\lambda-d)\Ind_{D_{\rho,\alpha}}(r_\alpha),
\]
where $D_{\rho,\alpha}:=\{r\in\{1,\dots,q-1\}: rp/q\in\Z+\alpha\}$ and $r_\alpha$ is the unique element of $D_{\rho,\alpha}$ if not empty and $r_\alpha=0$ otherwise.

\end{lemma}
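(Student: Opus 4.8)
The plan is to evaluate, for $\rho=p/q$ with $0<p<q$ coprime, the two series that express the gaps, which Lemma~\ref{PROPPHI0} and Lemma~\ref{PROPPHIA} already make available: from \eqref{DELTAM},
\[
\delta(\rho,\alpha)-\delta(\rho^-,\alpha)=\frac{1-\lambda}{\lambda}\sum_{k=1}^{\infty}\lambda^k\left((1-\lambda-d)\Ind_{\Z}(k\rho)+d\,\Ind_{\Z+\alpha}(-k\rho)\right),
\]
and from Lemma~\ref{PROPPHIA},
\[
a(\delta,\rho,\alpha)-a(\delta,\rho^+,\alpha)=\frac{1}{\lambda}\sum_{k=1}^{\infty}\lambda^k\left((1-\lambda-d)\Ind_{\Z+\alpha}(k\rho)+d\,\Ind_{\Z}(k\rho)\right).
\]
So I would first reduce the statement to identifying the positive integers $k$ on which each indicator equals $1$, and then summing the resulting geometric subseries.

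The combinatorial input I would use is that, $p$ and $q$ being coprime, $r\mapsto\{rp/q\}$ is a bijection from $\{0,1,\dots,q-1\}$ onto $\{0,\tfrac1q,\dots,\tfrac{q-1}q\}$, and that writing $k=qj+r$ with $j\ge 0$, $r\in\{0,\dots,q-1\}$, one has $\{k\rho\}=\{rp/q\}$. From this: $k\rho\in\Z$ iff $q\mid k$, so $\sum_{k\ge 1,\;q\mid k}\lambda^k=\lambda^q/(1-\lambda^q)$; $k\rho\in\Z+\alpha$ for some $k\ge 1$ iff some $r\in\{1,\dots,q-1\}$ satisfies $\{rp/q\}=\{\alpha\}$, i.e. iff $D_{\rho,\alpha}\neq\emptyset$, in which case the relevant $k$ are exactly $k=r_\alpha+qj$, $j\ge 0$, contributing $\lambda^{r_\alpha}/(1-\lambda^q)$; and $-k\rho\in\Z+\alpha$ for some $k\ge 1$ iff some $r$ satisfies $\{rp/q\}=\{-\alpha\}$. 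When $\alpha\in(0,1)$ one has $\{-\alpha\}=1-\alpha$, and since $\{\tfrac1q,\dots,\tfrac{q-1}q\}$ is invariant under $x\mapsto 1-x$, such an $r$ exists precisely when $r_\alpha$ does; calling it $s_\alpha$, the relation $\{r_\alpha p/q\}+\{s_\alpha p/q\}=1$ and coprimality force $r_\alpha+s_\alpha=q$, so this subseries sums to $\lambda^{q-r_\alpha}/(1-\lambda^q)$. When $\alpha\in\{0,1\}$ one has $\{\alpha\}=\{-\alpha\}=0$, $D_{\rho,\alpha}=\emptyset$, and $\Ind_{\Z+\alpha}(\pm k\rho)=\Ind_{\Z}(k\rho)$ on the arguments in play.

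Assembling: for $\alpha\in\{0,1\}$ both gaps collapse to a single geometric sum over $q\mid k$ with combined coefficient $(1-\lambda-d)+d=1-\lambda$, which I expect to give $\delta(\rho,\alpha)-\delta(\rho^-,\alpha)=\tfrac{(1-\lambda)^2\lambda^{q-1}}{1-\lambda^q}$ and $a(\delta,\rho,\alpha)-a(\delta,\rho^+,\alpha)=\tfrac{(1-\lambda)\lambda^{q-1}}{1-\lambda^q}$, matching the claimed formulas once $\Ind_{\{0,1\}}(\alpha)=1$ and $\Ind_{D_{\rho,\alpha}}(r_\alpha)=0$ are inserted. For $\alpha\in(0,1)$, the $\Ind_{\Z}$-term contributes $\lambda^q/(1-\lambda^q)$, the $\Ind_{\Z+\alpha}(k\rho)$-term contributes $\lambda^{r_\alpha}/(1-\lambda^q)$ to the $a$-gap, and the $\Ind_{\Z+\alpha}(-k\rho)$-term contributes $\lambda^{q-r_\alpha}/(1-\lambda^q)=\lambda^{-r_\alpha}\cdot\lambda^q/(1-\lambda^q)$ to the $\delta$-gap, the last two each weighted by $\Ind_{D_{\rho,\alpha}}(r_\alpha)$; pulling out $\tfrac{1-\lambda}{\lambda}$, resp. $\tfrac1\lambda$, together with the common factor $\tfrac{\lambda^q}{1-\lambda^q}=\lambda\cdot\tfrac{\lambda^{q-1}}{1-\lambda^q}$, should produce exactly \eqref{DELTAMRAT} and the displayed expression for $a(\delta,\rho^+,\alpha)$, the case $D_{\rho,\alpha}=\emptyset$ being subsumed since then $\Ind_{D_{\rho,\alpha}}(r_\alpha)=\Ind_{D_{\rho,\alpha}}(0)=0$. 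I do not anticipate a genuine obstacle here; the one point to handle carefully is that the $a$-gap is controlled by the condition $\{rp/q\}=\alpha$ (hence by the index $r_\alpha$ of the statement) while the $\delta$-gap is controlled by $\{rp/q\}=1-\alpha$, and the identity $s_\alpha=q-r_\alpha$ is exactly what rewrites the latter in terms of $r_\alpha$. Beyond that the computation is a routine resummation of geometric series, together with keeping the degenerate cases $\alpha\in\{0,1\}$ and $D_{\rho,\alpha}=\emptyset$ consistent with the convention $r_\alpha=0$.
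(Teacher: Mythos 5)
Your proposal is correct and follows essentially the same route as the paper: starting from the jump series of Lemmas \ref{PROPPHI0} and \ref{PROPPHIA}, decomposing $k$ modulo $q$, identifying the unique residues $r_\alpha$ and $q-r_\alpha$ via coprimality, and resumming the geometric subseries. The arithmetic checks out in all cases, including the degenerate ones $\alpha\in\{0,1\}$ and $D_{\rho,\alpha}=\emptyset$.
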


\begin{proof}  From Lemma \ref{PROPPHI0}, we have that
\[
\delta(\rho,\alpha)-\delta(\rho^{-},\alpha)
=\frac{1-\lambda}{\lambda}
\sum_{k=1}^{\infty}\lambda^k\left((1-\lambda-d)\Ind_{\Z}(k\rho)+d\Ind_{\Z+\alpha}(-k\rho)    \right)
\]
But, for $\rho=p/q$,
\begin{align*}
\sum_{k=0}^{\infty}\lambda^k\left((1-\lambda-d)\Ind_{\Z}(k\rho) +d\Ind_{\Z+\alpha}(-k\rho) \right)
=&\sum_{l=0}^{\infty}\sum_{r=0}^{q-1}\lambda^{ql+r}\left((1-\lambda-d)\Ind_{\Z}\left(lp+r\frac{p}{q}\right)+d\Ind_{\Z+\alpha}\left(-lp-r\frac{p}{q}\right) \right) \\
=&\frac{1}{1-\lambda^q}\sum_{r=0}^{q-1}\lambda^{r}\left((1-\lambda-d)\Ind_{\Z}\left(r\frac{p}{q}\right)+d\Ind_{\Z+\alpha}\left(-r\frac{p}{q}\right) \right)
\end{align*}
First, $rp/q\in\Z$ only if $r=0$, since $p$ and $q$ are co-prime. Second,  the set $D_{\rho,\alpha}$ is either empty or contains a unique element $r_\alpha$. Indeed, if there exist $r_0$ and $r_1$ in this set, then $|r_1-r_0|/q\in\Z$ because $p$ and $q$ are co-prime, and it follows that $r_0=r_1$ since $0\leq |r_1-r_0|/q<1$. Also, the set $\{r\in\{1,\dots,q-1\}:-rp/q\in\Z+\alpha\}$ is empty if and only if $D_{\rho,\alpha}$ is empty and, if  it is not empty, then it contains a unique element $r'_\alpha=-(r_\alpha-q)$. It follows that, 
\begin{align*}
\delta(\rho,\alpha)-\delta(\rho^{-},\alpha)
=&\frac{1-\lambda}{\lambda}\left(\frac{1}{1-\lambda^q}\sum_{r=0}^{q-1}\lambda^r\left((1-\lambda-d)\Ind_{\Z}\left(r\frac{p}{q}\right)+d\Ind_{\Z+\alpha}\left(-r\frac{p}{q}\right) \right)-  (1-\lambda-d)\Ind_{\Z}(0)-d\Ind_{\Z+\alpha}(0)\right)\\
=&\frac{1-\lambda}{\lambda}\left(\frac{1}{1-\lambda^q}\left(
1-\lambda-d+d\Ind_{\Z+\alpha}(0)+\Ind_{D_{\rho,\alpha}}(r_\alpha)d\lambda^{q-r_\alpha}\right)-  (1-\lambda-d)-d\Ind_{\Z+\alpha}(0))\right)\\
=&\frac{1-\lambda}{\lambda}\left(\left(\frac{1}{1-\lambda^q}-1\right)(
1-\lambda-d+d\Ind_{\{0,1\}}(\alpha))+\frac{1}{1-\lambda^q}\Ind_{D_{\rho,\alpha}}(r_\alpha)d\lambda^{q-r_\alpha}\right)\\
=&\frac{\lambda^{q-1}}{1-\lambda^q}(1-\lambda)(1-\lambda-d+d\Ind_{\{0,1\}}(\alpha)+\lambda^{-r_\alpha}d\Ind_{D_{\rho,\alpha}}(r_\alpha)).
\end{align*}

\noindent In a same way, using Lemma \ref{PROPPHIA}, we obtain 
\begin{align*}
a(\delta,\rho,\alpha)-a(\delta,\rho^+,\alpha)
=&\frac{1}{\lambda}\left(\frac{1}{1-\lambda^q}\sum_{r=0}^{q-1}\lambda^r\left((1-\lambda-d)\Ind_{\Z+\alpha}\left(r\frac{p}{q}\right)+d\Ind_{\Z}\left(r\frac{p}{q}\right) \right)-(1-\lambda-d)\Ind_{\Z+\alpha}(0)-d\Ind_{\Z}(0)\right)\\
=&\frac{1}{\lambda}\left(\frac{1}{1-\lambda^q}\left((1-\lambda-d)\Ind_{\{0,1\}}(\alpha)+d+\Ind_{D_{\rho,\alpha}}(r_\alpha)(1-\lambda-d)\lambda^{r_\alpha}\right)-(1-\lambda-d)\Ind_{\{0,1\}}(\alpha)-d\right)\\
=&\frac{1}{\lambda}\left(\left(\frac{1}{1-\lambda^q}-1\right)(
(1-\lambda-d)\Ind_{\{0,1\}}(\alpha)+d)+\frac{1}{1-\lambda^q}\Ind_{D_{\rho,\alpha}}(r_\alpha)(1-\lambda-d)\lambda^{r_\alpha}\right)\\
=&\frac{\lambda^{q-1}}{1-\lambda^q}(d+
(1-\lambda-d)\Ind_{\{0,1\}}(\alpha))+\frac{\lambda^{r_\alpha-1}}{1-\lambda^q}(1-\lambda-d)\Ind_{D_{\rho,\alpha}}(r_\alpha).
\end{align*}

\end{proof}

Now, we give in the following lemma the expressions of $\delta(\rho,\alpha)$
and $a(\delta,\rho,\alpha)$ for $\rho\in\Q$.

\begin{lemma}\label{DELT_ARAT} Let $0<p<q$ be two co-prime natural numbers, $\rho=p/q$ and $\alpha\in[0,1]$. Then,  
\[
\delta(\rho,\alpha)=1-\lambda-d+\frac{1-\lambda}{\lambda(1-\lambda^q)}\left(\lambda^q(p+d\Ind_{\{1\}}(\alpha)) + \sum_{r=1}^{q-1}\lambda^{r}\psi_{1-\alpha}\left(r\frac{p}{q}\right) \right),
\]
and
\[
a(\delta,\rho,\alpha)=\frac{\delta}{1-\lambda}-\frac{\lambda^{q-1}}{1-\lambda^q}(p-d-(1-\lambda-d)\Ind_{\{1\}}(\alpha))+\frac{1}{\lambda(1-\lambda^q)}\sum_{r=1}^{q-1}\lambda^{r}\psi_{\alpha}\left(\alpha-r\frac{p}{q}\right).
\]
\end{lemma}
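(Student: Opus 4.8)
The plan is to obtain both identities by substituting the defining series of $\delta(\rho,\alpha)$ and $a(\delta,\rho,\alpha)$ into the summation formula \eqref{SPSIRAT} of Lemma \ref{RATPHI}, which collapses a geometrically weighted sum of $\psi_{\beta}$ along an arithmetic progression of step $p/q$ into a finite sum of $q-1$ terms plus an explicit remainder. In both cases the computation is essentially bookkeeping; the one point needing care is the evaluation of the boundary value $\psi_{\beta}(y_0)$ coming from the $r=0$ term, together with the way the right-continuous convention in \eqref{THETAA} and the degenerate case $\alpha=1$ create the indicator $\Ind_{\{1\}}(\alpha)$.

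For $\delta(\rho,\alpha)$: writing $\psi_{1-\alpha}(z)=(1-\lambda)\lf z\rf+d\theta_{1-\alpha}(\{z\})$, its definition reads $\delta(\rho,\alpha)=1-\lambda-d+\frac{1-\lambda}{\lambda}\sum_{k=1}^{\infty}\lambda^k\psi_{1-\alpha}(kp/q)$. I would apply \eqref{SPSIRAT} with $y=0$ and with $p$ replaced by $-p$ (allowed, since Lemma \ref{RATPHI} holds for every integer), which gives
\[
\sum_{k=1}^{\infty}\lambda^k\psi_{1-\alpha}\!\left(k\tfrac{p}{q}\right)=\frac{1}{1-\lambda^q}\left(\lambda^q\bigl(\psi_{1-\alpha}(0)+p\bigr)+\sum_{r=1}^{q-1}\lambda^{r}\psi_{1-\alpha}\!\left(r\tfrac{p}{q}\right)\right).
\]
Since $\psi_{1-\alpha}(0)=d\,\theta_{1-\alpha}(0)=d\,\Ind_{\{1\}}(\alpha)$ (because $0\geq 1-\alpha$ holds exactly when $\alpha=1$), substituting back and multiplying by $\frac{1-\lambda}{\lambda}$ yields the stated formula.

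For $a(\delta,\rho,\alpha)=\frac{\delta}{1-\lambda}+\frac{1}{\lambda}\sum_{k=1}^{\infty}\lambda^k\psi_{\alpha}(\alpha-k\rho)$, I would apply \eqref{SPSIRAT} directly with $y=\alpha$ and step $p/q=\rho$, obtaining
\[
\sum_{k=1}^{\infty}\lambda^k\psi_{\alpha}\!\left(\alpha-k\tfrac{p}{q}\right)=\frac{1}{1-\lambda^q}\left(\lambda^q\bigl(\psi_{\alpha}(\alpha)-p\bigr)+\sum_{r=1}^{q-1}\lambda^{r}\psi_{\alpha}\!\left(\alpha-r\tfrac{p}{q}\right)\right).
\]
Here $\psi_{\alpha}(\alpha)=d+(1-\lambda-d)\Ind_{\{1\}}(\alpha)$: for $\alpha\in[0,1)$ one has $\lf\alpha\rf=0$, $\{\alpha\}=\alpha$, $\theta_\alpha(\alpha)=1$, so $\psi_\alpha(\alpha)=d$, whereas for $\alpha=1$ one has $\lf1\rf=1$, $\{1\}=0$, $\theta_1(0)=0$, so $\psi_1(1)=1-\lambda$. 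Dividing by $\lambda$, using $\lambda^{q}/(\lambda(1-\lambda^q))=\lambda^{q-1}/(1-\lambda^q)$, rewriting $\psi_\alpha(\alpha)-p=-\bigl(p-d-(1-\lambda-d)\Ind_{\{1\}}(\alpha)\bigr)$, and adding $\frac{\delta}{1-\lambda}$ gives the claimed expression. (Alternatively, one may quote the closed form of $\phi$ established at the end of the proof of Lemma \ref{RATPHI} together with $a(\delta,\rho,\alpha)=\phi(\alpha)$ from \eqref{PHIAPHIAM}.)

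I do not expect a genuine obstacle: the difficulty, such as it is, lies entirely in the consistent evaluation of the two boundary terms $\psi_{1-\alpha}(0)$ and $\psi_\alpha(\alpha)$ under the right-continuous definition of $\theta_{\cdot}$, which is precisely where the $\Ind_{\{1\}}(\alpha)$ correction enters here (and, when combined with the geometric-series tail, where the $\Ind_{\{0,1\}}(\alpha)$ terms of the companion Lemma \ref{LINTRAT} originate).
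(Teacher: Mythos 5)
Your proposal is correct and follows essentially the same route as the paper: the paper likewise applies \eqref{SPSIRAT} (with $p$ replaced by $-p$ and $y=0$ for the $\delta$-formula, and with $y=\alpha$ for the $a$-formula) and evaluates the boundary terms $\psi_{1-\alpha}(0)=d\Ind_{\{1\}}(\alpha)$ and $\psi_{\alpha}(\alpha)=d+(1-\lambda-d)\Ind_{\{1\}}(\alpha)$ exactly as you do.
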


\begin{proof} Let compute $\delta(\rho,\alpha)$, we note that \eqref{SPSIRAT} holds for any $\alpha\in[0,1]$, $q\geq 1$, $p\in\Z$ and $y\in\R$. Therefore, 
\[
\sum_{k=1}^{\infty}\lambda^k\psi_{1-\alpha}\left(k\frac{p}{q}-y\right)
=\frac{1}{1-\lambda^q}\left(\lambda^q(\psi_{1-\alpha}(-y)+p) + \sum_{r=1}^{q-1}\lambda^{r}\psi_{1-\alpha}\left(r\frac{p}{q}-y\right)  \right) \qquad\forall y \in\R.
\]
It follows that, 
\begin{align*}
\delta\left(\frac{p}{q}, \alpha\right)
&=1-\lambda-d+\frac{1-\lambda}{\lambda}\sum_{k=1}^{\infty}\lambda^{k}\psi_{1-\alpha}\left(k\frac{p}{q}\right)\\
&=1-\lambda-d+\frac{1-\lambda}{\lambda(1-\lambda^q)}\left(\lambda^q(p+d\Ind_{\{1\}}(\alpha)) + \sum_{r=1}^{q-1}\lambda^{r}\psi_{1-\alpha}\left(r\frac{p}{q}\right) \right),
\end{align*}
since $\psi_{1-\alpha}(0)=d\Ind_{\{1\}}(\alpha)$. On the other hand, using \eqref{SPSIRAT}  with $y=\alpha$ and noting that
$\psi_{\alpha}(\alpha)=d+(1-\lambda-d)\Ind_{\{1\}}(\alpha)$,
 we obtain
\begin{align*}
a(\delta,\rho,\alpha)&=\frac{\delta}{1-\lambda}+\frac{1}{\lambda}
\sum_{k=1}^{\infty}\lambda^k\psi_{\alpha}\left(\alpha-k\frac{p}{q}\right)\\
&=\frac{\delta}{1-\lambda}+\frac{1}{\lambda(1-\lambda^q)}\left(\lambda^q(\psi_{\alpha}(\alpha)-p)+\sum_{r=1}^{q-1}\lambda^{r}\psi_{\alpha}\left(\alpha-r\frac{p}{q}\right)\right)\\
&=\frac{\delta}{1-\lambda}+\frac{1}{\lambda(1-\lambda^q)}\left(\lambda^q(d+(1-\lambda-d)\Ind_{\{1\}}(\alpha)-p)+\sum_{r=1}^{q-1}\lambda^{r}\psi_{\alpha}\left(\alpha-r\frac{p}{q}\right)\right)\\
&=\frac{\delta}{1-\lambda}-\frac{\lambda^{q-1}}{1-\lambda^q}(p-d-(1-\lambda-d)\Ind_{\{1\}}(\alpha))+\frac{1}{\lambda(1-\lambda^q)}\sum_{r=1}^{q-1}\lambda^{r}\psi_{\alpha}\left(\alpha-r\frac{p}{q}\right).
\end{align*}
\end{proof}

\begin{proof}[Proof of Theorem \ref{THROT}] Let $\rho\in(0,1)$ and $\alpha\in[0,1]$.  The item 1) of Theorem \ref{THROT} is deduced from Lemma \ref{RANGEDELTA} and Lemma \ref{AINTER01}. The items 2) and 3)  follow from Lemma \ref{LINTIR} and Lemma \ref{LINTRAT}.  Now, suppose that $\delta\in(1-\lambda-d,1)$ and $a\in[0,1]$ satisfy 
\begin{equation}\label{CONDTHAB}
\delta\in[\delta(\rho^-,\alpha),\delta(\rho,\alpha))
\quand
a\in(a(\delta,\rho^+,\alpha),a(\delta,\rho,\alpha)], 
\end{equation}
with closed intervals if  $\rho\in(0,1)\setminus\Q$. Then, by Proposition \ref{PROROT}, we have 
\[
F\circ\phi(y)=\phi\circ L_\rho(y)\quad\forall y\in\R.
\]
Also, $\phi([0,1))\subset[0,1)$ by Proposition \ref{CNSCONJ}, which implies that $\ent{\phi(y)}=\ent{y}$ for all $y\in\R$ by $2)$ of Lemma \ref{PHIEF}. Let $x=\phi(0)$, then
\[
\ent{F^n(x)}=\ent{\phi(n\rho)}=\ent{n\rho}=n\rho-\{n\rho\}
\]
and
\[
\lim_{n\to\infty}\frac{F^n(x)}{n}=\lim_{n\to\infty}\frac{\ent{F^n(x)}}{n}=\rho.
\]
To end the proof of Theorem \ref{THROT}, let us suppose that $\rho\in\Q$ and that $(\delta,a)$ satisfies \eqref{CONDTH} with $\delta=\delta(\rho,\alpha)$ or $a=a(\delta,\rho^+,\alpha)$. Using continuity arguments of the rotation number given in \cite{Rhodes1991} and adapted to our two parameters context in Appendix \ref{ACONT}, let us show that the rotation number of $F$ is $\rho$. 
As $\rho$ is a rational number, by Lemma \ref{LINTRAT}, the intervals $[\delta(\rho^-,\alpha),\delta(\rho,\alpha))$ and $(a(\delta,\rho^+,\alpha),a(\delta,\rho,\alpha)]$ are non-empty. So, let $(\delta_n)_{n\in \N}\subset [\delta(\rho^-,\alpha),\delta(\rho,\alpha))$ be a sequence converging to $\delta$.

\medskip
\noindent
{\bf Case 1:} If $a(\delta,\rho^+,\alpha)<1$ and $a(\delta,\rho,\alpha)>0$, then there exist a sequence  $(a_m)_{m\in \N}$ contained in $(a(\delta,\rho^+,\alpha),a(\delta,\rho,\alpha))\cap [0,1]$ and converging to $a$, since $a\in[0,1]$ and satisfies \eqref{CONDTH}. Let $m\in \N$ be fixed. Since  $\delta'\mapsto a(\delta',\rho^+,\alpha)$ and $\delta'\mapsto a(\delta',\rho,\alpha)$ are continuous, there exists $n_0\in \N$ such that 
\[
a_m\in (a(\delta_n,\rho^+,\alpha),a(\delta_n,\rho,\alpha)],\quad \forall n\ge n_0.
\] Therefore, $\rho(F_{\delta_n,a_m})=\rho$ for all $n\ge n_0$, since $\delta_n$ and $a_m$ satisfy  \eqref{CONDTHAB} for all $n\ge n_0$. It follows that, $\rho(F_{\delta,a_m})=\lim\limits_{n\to \infty} \rho(F_{\delta_n,a_m})=\rho$, by Lemma \ref{ContRot}. We have proved that $\rho(F_{\delta,a_m})=\rho$ for all $m\in \N$, which implies that $\rho(F_{\delta,a})=\rho$, by Lemma \ref{ContRot}.

\medskip
\noindent
{\bf Case 2:} Suppose $a(\delta,\rho^+,\alpha)\geq 1$. 
As $a\in[0,1]$ satisfies \eqref{CONDTH},  we have $a(\delta,\rho^+,\alpha)=1=a$. If $\delta\in [\delta(\rho^-,\alpha),\delta(\rho,\alpha))$, then $\phi(1^-)=\phi(0^-)+1<1$  by Lemma \ref{PROPPHI0}, which implies $\phi(\alpha^-)\leq \phi(1^-)<1$ by Proposition \ref{PHISCR} and $a(\delta,\rho^+,\alpha)<1$ by Lemma \ref{PROPPHIA}, which is a contradiction. It follows that $\delta=\delta(\rho,\alpha)$
and $\delta_n<\delta$ for all $n\in\N$. As  $\delta'\mapsto a(\delta',\rho^+,\alpha)$ is increasing and continuous, there exists $n_0\in\N$ such that  $0<a(\delta_n,\rho^+,\alpha)<a(\delta,\rho^+,\alpha)=1
$ for all $n\geq n_0$. Let $(a_n)_{n\in\N}$ be such that $a_n\in (a(\delta_n,\rho^+,\alpha),a(\delta_n,\rho,\alpha)]\cap [0,1]$ for every $n\geq n_0$. Then, $(a_n)_{n\in\N}$ converges to $a$, since $a(\delta_n,\rho^+,\alpha)$ converges to  $a(\delta,\rho^+,\alpha)=a=1$. As $(\delta_n,a_n)$ satisfies \eqref{CONDTHAB} for all $n\geq n_0$, we have $\rho(F_{\delta_n,a_n})=\rho$ for all $n\geq n_0$ and $\rho(F_{\delta,a})=\rho$, by Lemma \ref{ContRot}. 

\medskip
\noindent
{\bf Case 3:} Suppose $a(\delta,\rho,\alpha)\leq 0$. 
As $a\in[0,1]$ satisfies \eqref{CONDTH},  we have $a(\delta,\rho,\alpha)= 0=a$. 
If $\delta\in (\delta(\rho^-,\alpha),\delta(\rho,\alpha)]$, then $\phi(0)> 0$  by Lemma \ref{PROPPHI0}, which implies $\phi(\alpha)\geq \phi(0)>0$ by Proposition \ref{PHISCR} and $a(\delta,\rho,\alpha)>0$ by Lemma \ref{PROPPHIA}, which is a contradiction.  It follows that $\delta=\delta(\rho^-,\alpha)$. So, we have $\rho(F_{\delta,a})=\rho$, since $\delta$ and $a$ satisfy \eqref{CONDTHAB}.
\end{proof}

\section{Proof of Proposition \ref{RANGE}}\label{PP}

We start with studying  the position of interval $[\delta(\rho^-,\alpha),\delta(\rho,\alpha)]$ according to the position of $\alpha$ with respect to $1-\rho$. 

\begin{lemma}\label{INTERVALDELTA} For any $\rho\in(0,1)$ we have $[\delta(\rho^-,\alpha),\delta(\rho,\alpha)]\subset (1-\lambda-d,1-d)$ if $\alpha\in[0,1-\rho)$ and  $[\delta(\rho^-,\alpha),\delta(\rho,\alpha)]\subset (1-\lambda,1)$ if  $\alpha\in(1-\rho,1]$. 
\end{lemma}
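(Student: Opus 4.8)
The plan is to treat the two inclusions separately, observing that in each case one of the two bounds is already furnished by Lemma~\ref{RANGEDELTA}: for $\alpha\in[0,1-\rho)$ only $\delta(\rho,\alpha)<1-d$ has to be shown (since $\delta(\rho^-,\alpha)>1-\lambda-d$ is immediate), and for $\alpha\in(1-\rho,1]$ only $\delta(\rho^-,\alpha)>1-\lambda$ has to be shown (since $\delta(\rho,\alpha)<1$ is immediate). The common tool is the elementary carry identity $\lfloor k\rho\rfloor=\sum_{m=1}^{k-1}\theta_{1-\rho}(\{m\rho\})$, valid for $\rho\in(0,1)$, which follows from $\lfloor k\rho\rfloor-\lfloor(k-1)\rho\rfloor=\theta_{1-\rho}(\{(k-1)\rho\})$. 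Summing it against $\lambda^k$ gives $\tfrac{(1-\lambda)^2}{\lambda}\sum_{k\ge1}\lambda^k\lfloor k\rho\rfloor=(1-\lambda)\sum_{m\ge1}\lambda^m\theta_{1-\rho}(\{m\rho\})=:A$, and I would record once and for all that $0<A<\lambda$ for every $\rho\in(0,1)$: the lower bound because $\theta_{1-\rho}(\{m\rho\})=1$ for at least one $m$ (any $m$ with $\lfloor(m+1)\rho\rfloor>\lfloor m\rho\rfloor$), and the upper bound because $\lfloor k\rho\rfloor\le k-1$ with strict inequality for some $k$, or equivalently because $A=\delta(\rho,1)-(1-\lambda)$ and $\delta(\rho,1)<1$ by Lemma~\ref{RANGEDELTA}.

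For $\alpha\in[0,1-\rho)$ I would work directly on $\delta(\rho,\alpha)=1-\lambda-d+\tfrac{1-\lambda}{\lambda}\sum_{k\ge1}\lambda^k\big((1-\lambda)\lfloor k\rho\rfloor+d\,\theta_{1-\alpha}(\{k\rho\})\big)$. Since $1-\alpha>\rho$, the $k=1$ term $\theta_{1-\alpha}(\rho)$ equals $0$, and whenever step $k-1$ carries (i.e.\ $\theta_{1-\rho}(\{(k-1)\rho\})=1$) one has $\{k\rho\}=\{(k-1)\rho\}+\rho-1<\rho<1-\alpha$, so $\theta_{1-\alpha}(\{k\rho\})=0$; hence $\theta_{1-\alpha}(\{k\rho\})\le 1-\theta_{1-\rho}(\{(k-1)\rho\})$ for all $k\ge1$. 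Summing against $\lambda^k$, using $\theta_{1-\alpha}(\rho)=0$ for the $k=1$ term and this inequality for $k\ge2$, and invoking the carry identity, yields $\sum_{k\ge1}\lambda^k\theta_{1-\alpha}(\{k\rho\})\le\tfrac{\lambda(\lambda-A)}{1-\lambda}$; substituting gives $\delta(\rho,\alpha)\le 1-\lambda-d+A(1-d)+d\lambda$, which is $<1-d$ since $A<\lambda$ and $1-d>0$.

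For $\alpha\in(1-\rho,1]$ I would use the left-limit formula $\delta(\rho^-,\alpha)=1-\lambda-d+\tfrac{1-\lambda}{\lambda}\sum_{k\ge1}\lambda^k\psi_{1-\alpha}(k\rho^-)$, which is immediate from the expression for $\phi(0)$ in the proof of Lemma~\ref{PROPPHI0} (equivalently from \eqref{PHYSYM} and \eqref{PHI0PHI0M}). With $\gamma:=1-\alpha\in[0,\rho)$: for $k\rho\notin\Z$, $\psi_{1-\alpha}(k\rho^-)$ equals $(1-\lambda)\lfloor k\rho\rfloor+d$ if $\{k\rho\}>\gamma$ and $(1-\lambda)\lfloor k\rho\rfloor$ otherwise, and $\{k\rho\}\ge\rho>\gamma$ holds whenever step $k-1$ does not carry; at the isolated indices with $k\rho\in\Z$ (which occur only when $\rho\in\Q$) one has $\psi_{1-\alpha}(k\rho^-)=(1-\lambda)(\lfloor k\rho\rfloor-1)+d$. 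Hence $\psi_{1-\alpha}(k\rho^-)\ge(1-\lambda)\lfloor k\rho\rfloor+d\big(1-\theta_{1-\rho}(\{(k-1)\rho\})\big)-(1-\lambda-d)\,\Ind_{\Z}(k\rho)$ for every $k\ge1$; summing, using the carry identity and $\sum_{k\ge1}\Ind_{\Z}(k\rho)\lambda^k=\tfrac{\lambda^q}{1-\lambda^q}$ for $\rho=p/q$ in lowest terms, gives $\delta(\rho^-,\alpha)\ge 1-\lambda+A(1-d)-(1-\lambda)(1-\lambda-d)\tfrac{\lambda^{q-1}}{1-\lambda^q}$ (the last term absent, and the bound reading $\delta(\rho^-,\alpha)\ge 1-\lambda+A(1-d)$, when $\rho\notin\Q$). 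For $\rho$ irrational this exceeds $1-\lambda$ because $A>0$. For $\rho=p/q$ I would use that $\{(q-1)\rho\}=1-\rho$, so $\theta_{1-\rho}(\{(q-1)\rho\})=1$, together with the $q$-periodicity of $m\mapsto\theta_{1-\rho}(\{m\rho\})$, to get $A=(1-\lambda)\sum_{m\ge1}\lambda^m\theta_{1-\rho}(\{m\rho\})\ge(1-\lambda)\tfrac{\lambda^{q-1}}{1-\lambda^q}$; since $1-d>1-\lambda-d$, the term $A(1-d)$ strictly dominates $(1-\lambda)(1-\lambda-d)\tfrac{\lambda^{q-1}}{1-\lambda^q}$, so $\delta(\rho^-,\alpha)>1-\lambda$ here too. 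Together with Lemma~\ref{RANGEDELTA} this is the second inclusion.

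The hard part is that the bounds have to be strict and uniform in $\rho\in(0,1)$: discarding the indicator ($\theta_{1-\alpha}\le1$) only yields the useless $\delta(\rho,\alpha)<1$, and the saving factor $d$ can be recovered solely by exploiting the correlation between the carry set $\{m:\{m\rho\}\ge1-\rho\}$ (which controls $A$) and the set where $\theta_{1-\alpha}(\{k\rho\})=1$. The most delicate point is the rational sub-case of the second inclusion, where the left-limit formula introduces the $-(1-\lambda-d)$ corrections at the multiples of the period $q$, and one has to check that the periodicity lower bound $A\ge(1-\lambda)\lambda^{q-1}/(1-\lambda^q)$ outweighs them.
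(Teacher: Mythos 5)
Your proof is correct, but it is organized quite differently from the paper's. For the first inclusion the paper splits the sum at $k_1=\lfloor 1/(1-\rho)\rfloor$: for $k\le k_1$ it has $\lfloor k\rho\rfloor=k-1$ and shows directly that $\{k\rho\}<1-\alpha$, while for $k>k_1$ it uses $\lfloor k\rho\rfloor\le k-2$ and the trivial bound $\theta_{1-\alpha}\le 1$; the savings from the two ranges telescope to give exactly $1-d$. You instead encode $\lfloor k\rho\rfloor$ via the carry identity and exploit the anti-correlation between the carry set and $\{k:\theta_{1-\alpha}(\{k\rho\})=1\}$ — the same phenomenon, repackaged so that the whole estimate reduces to the single inequality $A<\lambda$; this is arguably more transparent about \emph{why} the bound is $1-d$ and not merely $1$. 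The more significant divergence is in the second inclusion: the paper only ever bounds $\delta(\rho,\alpha)$ from below (showing $\delta(\rho,\alpha)>1-\lambda$ for all $\alpha\in(1-\rho,1]$) and then obtains $\delta(\rho^-,\alpha)>1-\lambda$ for free by choosing $\rho'\in(1-\alpha,\rho)$ and invoking the monotonicity of $\rho\mapsto\delta(\rho,\alpha)$ from Lemma \ref{RANGEDELTA}, which completely sidesteps the left-limit formula and the rational/irrational dichotomy. You attack $\delta(\rho^-,\alpha)$ head-on, which is why you must carry the $-(1-\lambda-d)$ jump corrections at multiples of $q$ and offset them with the periodicity bound $A\ge(1-\lambda)\lambda^{q-1}/(1-\lambda^q)$; I checked this bookkeeping (in particular the case analysis behind $\psi_{1-\alpha}(k\rho^-)\ge(1-\lambda)\lfloor k\rho\rfloor+d(1-\theta_{1-\rho}(\{(k-1)\rho\}))-(1-\lambda-d)\Ind_{\Z}(k\rho)$ and the final comparison using $1-d>1-\lambda-d$) and it is sound, but the paper's monotonicity shortcut would have spared you the entire "most delicate point" you flag at the end. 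Your direct estimate does buy something the paper's argument does not: an explicit quantitative lower bound on $\delta(\rho^-,\alpha)$ consistent with Lemma \ref{LINTRAT}.
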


\begin{proof} Let $\rho\in(0,1)$ and $\alpha\in[0,1-\rho)$.  By Lemma \ref{RANGEDELTA} we have  that $\delta(\rho^{-},\alpha)>1-\lambda-d$.
Now, let $k_1=\ent{\frac{1}{1-\rho}}$. Then $\ent{k\rho}=k-1$ for all $k\in \N$ such that $1\le k\le k_1$ and $\ent{k\rho}\le k-2$ for all $k\ge k_1+1$. Since $\rho<1-\alpha$, we have
\[
\{k\rho\}=k\rho-\ent{k\rho}=k\rho-k+1<1-k\alpha\le 1-\alpha\qquad \forall k\in\{1,\dots,k_1\},
\] 
and using  $d<1-\lambda$,  we obtain
\begin{align*}
		\delta(\rho,\alpha)&=1-\lambda-d+\frac{1-\lambda}{\lambda}\left(\sum_{k=1}^{\infty} \lambda^k((1-\lambda)\ent{k\rho}+d\theta_{1-\alpha}(\{k\rho\})) \right)\\
		&<1-\lambda-d+\frac{1-\lambda}{\lambda}\left(
		\sum_{k=1}^{k_1}\lambda^k(1-\lambda)(k-1) + \sum_{k=k_1+1}^{\infty}\lambda^k(1-\lambda)(k-2)+(1-\lambda)\sum_{k=k_1+1}^{\infty} \lambda^k \right)\\
		&=1-\lambda-d+\frac{1-\lambda}{\lambda}\left((1-\lambda)\sum_{k=1}^{\infty}\lambda^k(k-1)-(1-\lambda)\sum_{k=k_1+1}^{\infty}\lambda^k + (1-\lambda)\sum_{k=k_1+1}^{\infty}\lambda^k \right)\\
		&=1-\lambda-d+\frac{1-\lambda}{\lambda}\left((1-\lambda)\lambda\sum_{k=1}^{\infty}\lambda^{k-1} k-(1-\lambda)\sum_{k=1}^{\infty}\lambda^k\right)\\
		&=1-\lambda-d+\frac{1-\lambda}{\lambda}\left(\frac{\lambda}{1-\lambda}-\lambda\right)\\
		&=1-d.
	\end{align*}

Let $\rho\in(0,1)$ and $\alpha\in(1-\rho,1]$. By Lemma \ref{RANGEDELTA} we have $\delta(\rho,\alpha)<1$.
Now, let $k_1:=\max\{k\geq 1 : \ent{k\rho}=0\}$, then we can write
\[
\delta(\rho,\alpha)
=(1-\lambda)\left(1-\frac{d}{1-\lambda}+
\frac{1}{\lambda}\sum_{k=1}^{k_1}\lambda^kd\theta_{1-\alpha}(k\rho)
+\frac{1}{\lambda}\sum_{k=k_1+1}^{\infty}\lambda^k\left((1-\lambda)\lf k\rho\rf+d\theta_{1-\alpha}(\{k\rho\})\right)\right).
\]
As $\alpha>1-\rho$, we have $\{k\rho\}=k\rho>1-\alpha$ for every $1\leq k\leq k_1$. Also, for $k\geq k_1+1$ we have $\ent{k\rho}\geq 1$, $\theta_{1-\alpha}(\{k\rho\})\geq 0$ and using $1-\lambda>d$ we obtain 
\[
\delta(\rho,\alpha)>(1-\lambda)\left(1-\frac{d}{1-\lambda}+
\frac{d}{\lambda}\sum_{k=1}^{k_1}\lambda^k
+\frac{d}{\lambda}\sum_{k=k_1+1}^{\infty}\lambda^k\right)=1-\lambda.
\]
We have proved that $\delta(\rho,\alpha)>1-\lambda$ for every $\rho\in(0,1)$ and $\alpha\in(1-\rho,1)$. Now, let $\rho'\in(0,1)$ be such that $1-\alpha<\rho'<\rho$. Then, $1-\lambda<\delta(\rho',\alpha)$ and $\delta(\rho',\alpha)\leq \delta(\rho^{-},\alpha)$ by monotony, see Lemma \ref{RANGEDELTA}. It follows that $\delta(\rho^{-},\alpha)>1-\lambda$.
\end{proof}

The next two lemmata will help us to study in the forthcoming Proposition \ref{INTERVALA} the position of $\phi(\alpha^-)$ and $\phi(\alpha)$ with respect to $\eta_1$ and $\eta_2$ as a function of the position of $\alpha$ with respect to $1-\rho$.  
\begin{lemma}\label{LPHI1MR} Let $\rho\in(0,1)$, $\alpha\in[0,1]$ and $\delta\in(1-\lambda-d,1)$.  For $\alpha\neq 1-\rho$, we have
\begin{equation}\label{ETAPHIDIF}
\phi((1-\rho)^-)
=\eta+\frac{\delta-\delta(\rho,\alpha)}{\lambda(1-\lambda)}
\quand
\phi(1-\rho)=\eta+\frac{\delta-\delta(\rho^-,\alpha)}{\lambda(1-\lambda)},
\end{equation}
where
\begin{equation}\label{ETAALPHA}
\eta:=\left\{
\begin{array}{ccl}
\eta_1 &\text{if} &  \alpha\in[0,1-\rho)\\
\eta_2 &\text{if} &  \alpha\in(1-\rho,1]   
\end{array}
\right..
\end{equation}
 For $\alpha=1-\rho$, we have
\begin{equation}\label{ETAPHIEQUAL}
\phi((1-\rho)^-)=\eta_2+\frac{\delta-\delta(\rho,\alpha)}{\lambda(1-\lambda)}
\quand
\phi(1-\rho)=\eta_1+\frac{\delta-\delta(\rho^-,\alpha)}{\lambda(1-\lambda)}.
\end{equation}

\end{lemma}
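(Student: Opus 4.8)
The plan is to deduce both pairs of identities from a single one-step functional relation for $\phi$. Combining Lemma \ref{CONJUG1} with the explicit form of $F$ in \eqref{LIFT}, the terms involving $\theta_a$ cancel and one is left with the $a$-free relation
\[
\phi(y+\rho)=\lambda\phi(y)+\delta+\psi_{\alpha}(y)\qquad\forall y\in\R .
\]
Since $\phi$ and $\psi_\alpha$ admit left-sided limits everywhere (by Proposition \ref{PHISCR} and Lemma \ref{MONPSIALP}, being monotone), and since $z\mapsto\phi(z+\rho)$ is a translate of $\phi$, letting $z\to y^-$ in this relation gives the companion identity
\[
\phi\big((y+\rho)^-\big)=\lambda\,\phi(y^-)+\delta+\psi_{\alpha}(y^-)\qquad\forall y\in\R .
\]

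Next I would specialize both identities to $y=-\rho$. Using the translation property \eqref{TPHI}, i.e. $\phi(z+1)=\phi(z)+1$ (hence also $\phi((z+1)^-)=\phi(z^-)+1$), I write $\phi(-\rho)=\phi(1-\rho)-1$ and $\phi((-\rho)^-)=\phi((1-\rho)^-)-1$, and solve for the quantities of interest:
\[
\phi(1-\rho)=\frac{\phi(0)}{\lambda}+1-\frac{\delta+\psi_{\alpha}(-\rho)}{\lambda},
\qquad
\phi\big((1-\rho)^-\big)=\frac{\phi(0^-)}{\lambda}+1-\frac{\delta+\psi_{\alpha}\big((-\rho)^-\big)}{\lambda}.
\]
Substituting the values $\phi(0)=\frac{1}{1-\lambda}(\delta-\delta(\rho^-,\alpha))$ and $\phi(0^-)=\frac{1}{1-\lambda}(\delta-\delta(\rho,\alpha))$ from \eqref{PHI0PHI0M} already produces the additive terms $\frac{\delta-\delta(\rho^-,\alpha)}{\lambda(1-\lambda)}$ and $\frac{\delta-\delta(\rho,\alpha)}{\lambda(1-\lambda)}$ that appear in the statement, so it only remains to identify the constant part with $\eta$.

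That constant part is governed by $\psi_\alpha(-\rho)$ and $\psi_\alpha((-\rho)^-)$. Since $\rho\in(0,1)$ one has $\lf-\rho\rf=-1$ and $\{-\rho\}=1-\rho$, so $\psi_\alpha(-\rho)=-(1-\lambda)+d\,\theta_\alpha(1-\rho)$; and by the jump formula \eqref{JUMPPSI}, $\psi_\alpha((-\rho)^-)=\psi_\alpha(-\rho)-d\,\Ind_{\Z+\alpha}(-\rho)$, where $-\rho\in\Z+\alpha$ precisely when $\rho+\alpha=1$, i.e. $\alpha=1-\rho$, because $\rho+\alpha\in(0,2)$. Now $\theta_\alpha(1-\rho)=1$ if $\alpha\le1-\rho$ and $=0$ if $\alpha>1-\rho$, and a short computation gives that $1-\frac{\delta+\psi_\alpha(-\rho)}{\lambda}$ equals $\frac{1-\delta-d}{\lambda}=\eta_1$ when $\psi_\alpha(-\rho)=-(1-\lambda)+d$, and equals $\frac{1-\delta}{\lambda}=\eta_2$ when $\psi_\alpha(-\rho)=-(1-\lambda)$. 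Thus for $\alpha\in[0,1-\rho)$ both $\psi_\alpha(-\rho)$ and $\psi_\alpha((-\rho)^-)$ equal $-(1-\lambda)+d$, giving $\eta=\eta_1$ in both formulas; for $\alpha\in(1-\rho,1]$ both equal $-(1-\lambda)$, giving $\eta=\eta_2$; this yields \eqref{ETAPHIDIF} with $\eta$ as in \eqref{ETAALPHA}. For $\alpha=1-\rho$ one has $\psi_\alpha(-\rho)=-(1-\lambda)+d$ but $\psi_\alpha((-\rho)^-)=-(1-\lambda)$, so the formula for $\phi(1-\rho)$ carries $\eta_1$ while the one for $\phi((1-\rho)^-)$ carries $\eta_2$, which is exactly \eqref{ETAPHIEQUAL}.

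The only delicate point is the left-limit bookkeeping: one must take the left limit through the functional relation rather than substituting naively, and one must use \eqref{JUMPPSI} to capture the jump of $\psi_\alpha$ at $-\rho$ in the boundary case $\alpha=1-\rho$ — it is precisely this jump that makes $\eta$ switch from $\eta_1$ to $\eta_2$ between the two equalities in \eqref{ETAPHIEQUAL}. Everything else is routine algebra.
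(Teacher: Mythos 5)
Your proof is correct. The identity $\phi(y+\rho)=\lambda\phi(y)+\delta+\psi_\alpha(y)$ does follow from Lemma \ref{CONJUG1} after cancelling the $(1-\lambda)\lf\phi(y)\rf$ and $d\theta_a(\{\phi(y)\})$ terms against the explicit form of $F(\phi(y))$ (it is also immediate from a one-step index shift in the series \eqref{DEFPHI}); the left-limit version is legitimate because $\phi$ and $\psi_\alpha$ are monotone; and your evaluation of $\psi_\alpha(-\rho)=-(1-\lambda)+d\theta_\alpha(1-\rho)$ together with the jump $\psi_\alpha(-\rho)-\psi_\alpha((-\rho)^-)=d$ exactly when $\alpha=1-\rho$ reproduces \eqref{ETAPHIDIF} and \eqref{ETAPHIEQUAL}.

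The route differs from the paper's in one organizational respect. The paper carries out the same one-step shift in-line twice: for $\phi(1-\rho)$ it shifts the defining series of $\phi(-\rho)$ and extracts $\psi_\alpha(-\rho)$, while for the left limit $\phi((1-\rho)^-)$ it first invokes the symmetry formula \eqref{PHYSYM} of Lemma \ref{SYM} (rewriting $\phi(y^-)$ as a series in $\psi_{1-\alpha}$) and then shifts, extracting $\psi_{1-\alpha}(\rho)$. You avoid Lemma \ref{SYM} entirely by instead passing to the left limit in the functional equation and accounting for the discontinuity of $\psi_\alpha$ at $-\rho$ through \eqref{JUMPPSI}. Both arguments hinge on the same two inputs, namely \eqref{PHI0PHI0M} for $\phi(0),\phi(0^-)$ and the value of a single $\psi$ at one point; your packaging is slightly leaner and makes it transparent that the switch from $\eta_1$ to $\eta_2$ in \eqref{ETAPHIEQUAL} is precisely the jump of $\psi_\alpha$ at $-\rho$. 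The paper's use of the symmetry lemma has the advantage of reusing machinery already needed elsewhere (e.g.\ in Lemma \ref{PROPPHI0}), but nothing in your argument depends on it.
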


\begin{proof}  Suppose $\rho\in(0,1)$, $\alpha\in[0,1]$ and $\delta\in(1-\lambda-d,1)$. Using \eqref{TPHI}, \eqref{PHYSYM} and \eqref{PHI0PHI0M}, we obtain on one hand  
\begin{align*}
\phi((1-\rho)^-)&=1+\phi((-\rho)^-)\\
&=1+\frac{\delta+d}{1-\lambda}-1-\frac{1}{\lambda}\sum_{k=1}^{\infty}\lambda^k\psi_{1-\alpha}(k\rho+\rho)\\
&=\frac{\delta+d}{1-\lambda}-\frac{1}{\lambda}\left(\frac{1}{\lambda}\sum_{k=1}^{\infty}\lambda^k\psi_{1-\alpha}(k\rho)-\frac{\delta+d}{1-\lambda}+1-\psi_{1-\alpha}(\rho)+\frac{\delta+d}{1-\lambda}-1
\right)\\
&=\frac{\delta+d}{1-\lambda}-\frac{1}{\lambda}\left(-\phi(0^{-})-\psi_{1-\alpha}(\rho)+\frac{\delta+d}{1-\lambda}-1
\right)\\
&=\frac{1-\delta-d}{\lambda}+\frac{1}{\lambda}\left(\phi(0^{-})+\psi_{1-\alpha}(\rho)\right)\\
&=\frac{1-\delta-d}{\lambda}+\frac{1}{\lambda}\left(\phi(0^{-})+(1-\lambda)\ent{\rho}+d\theta_{1-\alpha}(\{\rho\})
\right)\\
&=\frac{1-\delta-d+d\theta_{1-\alpha}(\rho)}{\lambda}+\frac{\delta-\delta(\rho,\alpha)}{\lambda(1-\lambda)}
\end{align*}
and on the other hand 
\begin{align*}
\phi(1-\rho)
&=1+\phi(-\rho)\\
&=1+\frac{\delta}{1-\lambda}
+\frac{1}{\lambda}\sum_{k=1}^{\infty}\lambda^k\psi_{\alpha}(-(k+1)\rho)\\
&=1+\frac{\delta}{1-\lambda}+\frac{1}{\lambda}\left(\frac{1}{\lambda}\sum_{k=1}^{\infty}\lambda^k\psi_{\alpha}(-k\rho)+\frac{\delta}{1-\lambda}-\psi_{\alpha}(-\rho)-\frac{\delta}{1-\lambda}
\right)\\
&=1+\frac{\delta}{1-\lambda}+\frac{1}{\lambda}\left(\phi(0)-\psi_{\alpha}(-\rho)-\frac{\delta}{1-\lambda}
\right)\\
&=1+\frac{\delta}{1-\lambda}+\frac{1}{\lambda}\left(\phi(0)-(1-\lambda)\ent{-\rho}-d\theta_{\alpha}(\{-\rho\})-\frac{\delta}{1-\lambda}
\right)\\
&=1+\frac{\delta}{1-\lambda}+\frac{1-\lambda}{\lambda}-\frac{\delta}{\lambda(1-\lambda)}+
\frac{1}{\lambda}\left(\phi(0)-d\theta_{\alpha}(\{-\rho\})\right)\\
&=\frac{1-\delta}{\lambda}+
\frac{1}{\lambda}\left(\phi(0)-d\theta_{\alpha}(1-\rho)\right)\\
&=\frac{1-\delta-d\theta_{\alpha}(1-\rho)}{\lambda}+\frac{\delta-\delta(\rho^-,\alpha)}{\lambda(1-\lambda)}.
\end{align*}
Then, the result follows from \eqref{DETA1ETA2} and \eqref{ETAALPHA}.
\end{proof}

\begin{lemma}\label{ETARHO} Let $\rho\in[0,1]$, $\alpha\in[0,1]$ and $\delta\in(1-\lambda-d,1)$. Let $\eta$ be defined as in \eqref{ETAALPHA}. 
\begin{enumerate}
\item\label{ETARHO1} If $\delta\in[\delta(\rho^-,\alpha), \delta(\rho,\alpha)]$, then 
\begin{enumerate}
\item for $\alpha\neq 1-\rho$, we have
\[
\phi(y)\leq\eta\quad\forall y\in[0,1-\rho)\quand \eta\leq\phi(y)\quad\forall y\in[1-\rho,1),
\]
\item and for $\alpha=1-\rho$, we have
\[
\phi(y)\leq\eta_2\quad\forall y\in[0,1-\rho)\quand \eta_1\leq\phi(y)\quad\forall y\in[1-\rho,1),
\]
\end{enumerate}
where the first inequality of {\it (a)} and {\it (b)} are strict if $\rho\in(0,1)\setminus\Q$ or $\delta\neq\delta(\rho,\alpha)$.
\item Conversely, if $\alpha\neq 1-\rho$ and
\[
\phi(y)<\eta\quad\forall y\in[0,1-\rho)\quand \eta\leq\phi(y)\quad\forall y\in[1-\rho,1),
\]
or if $\alpha=1-\rho$ and
\[
\phi(y)<\eta_2\quad\forall y\in[0,1-\rho)\quand \phi(y)\geq\eta_1\quad\forall y\in[1-\rho,1),
\]
then, $\delta\in[\delta(\rho^-,\alpha), \delta(\rho,\alpha)]$ with  $\delta\neq\delta(\rho,\alpha)$
if $\rho\in(0,1)\cap\Q$.
\end{enumerate}
\end{lemma}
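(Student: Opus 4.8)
The plan is to reduce everything to the single point $y=1-\rho$, where Lemma~\ref{LPHI1MR} supplies closed formulas for $\phi((1-\rho)^-)$ and $\phi(1-\rho)$, and then to propagate to all $y\in[0,1)$ using the monotonicity of $\phi$ from Proposition~\ref{PHISCR}. The crucial observation is that these formulas write $\phi((1-\rho)^-)$ and $\phi(1-\rho)$ as $\eta$ (and, when $\alpha=1-\rho$, as $\eta_2$ respectively $\eta_1$, by the crossed formula \eqref{ETAPHIEQUAL}) plus the correction terms $\tfrac{\delta-\delta(\rho,\alpha)}{\lambda(1-\lambda)}$ and $\tfrac{\delta-\delta(\rho^-,\alpha)}{\lambda(1-\lambda)}$; since $\lambda(1-\lambda)>0$, the signs of these terms are governed exactly by the two-sided bound $\delta\in[\delta(\rho^-,\alpha),\delta(\rho,\alpha)]$. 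I would treat $\rho\in(0,1)$ throughout, remarking that for $\rho\in\{0,1\}$ one of the intervals $[0,1-\rho)$, $[1-\rho,1)$ is empty and the corresponding assertion is vacuous.

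For item~1, assuming $\delta\in[\delta(\rho^-,\alpha),\delta(\rho,\alpha)]$ I would first read off from Lemma~\ref{LPHI1MR} that $\phi((1-\rho)^-)\le\eta$ and $\phi(1-\rho)\ge\eta$ (with the left $\eta$ replaced by $\eta_2$ and the right $\eta$ by $\eta_1$ if $\alpha=1-\rho$). Since $\phi$ is non-decreasing, $\phi(y)\le\phi((1-\rho)^-)$ for every $y\in[0,1-\rho)$ and $\phi(y)\ge\phi(1-\rho)$ for every $y\in[1-\rho,1)$, which gives all the stated inequalities. For the strictness of the left inequality I would split cases: if $\delta\ne\delta(\rho,\alpha)$ then in fact $\delta<\delta(\rho,\alpha)$, so $\phi((1-\rho)^-)<\eta$ and hence $\phi(y)\le\phi((1-\rho)^-)<\eta$ for all $y<1-\rho$; if instead $\rho$ is irrational, then $\phi$ is strictly increasing by Proposition~\ref{PHISCR}, so choosing $y<y'<1-\rho$ yields $\phi(y)<\phi(y')\le\phi((1-\rho)^-)\le\eta$.

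For the converse, item~2, I would evaluate the two hypotheses at $y=1-\rho$: from $\eta\le\phi(1-\rho)$ (or $\eta_1\le\phi(1-\rho)$ when $\alpha=1-\rho$) and Lemma~\ref{LPHI1MR} one gets $\delta-\delta(\rho^-,\alpha)\ge 0$, i.e.\ $\delta\ge\delta(\rho^-,\alpha)$; and since $\phi$ is non-decreasing, the hypothesis $\phi(y)<\eta$ (or $<\eta_2$) for all $y\in[0,1-\rho)$ forces $\phi((1-\rho)^-)=\lim_{y\uparrow 1-\rho}\phi(y)\le\eta$ (or $\le\eta_2$), whence $\delta\le\delta(\rho,\alpha)$ again by Lemma~\ref{LPHI1MR}. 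This gives $\delta\in[\delta(\rho^-,\alpha),\delta(\rho,\alpha)]$. Finally, to obtain $\delta\ne\delta(\rho,\alpha)$ when $\rho\in(0,1)\cap\Q$, I would argue by contradiction: if $\delta=\delta(\rho,\alpha)$ then $\phi((1-\rho)^-)=\eta$ (or $\eta_2$), and Lemma~\ref{LIMINFIM} produces some $y\in[0,1-\rho)$ with $\phi(y)=\phi((1-\rho)^-)$, contradicting the strict hypothesis.

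The inequalities themselves are entirely routine once Lemma~\ref{LPHI1MR} is in hand; the only real care needed — and the place I expect the argument to be most error-prone — is the bookkeeping of the three cases $\alpha<1-\rho$, $\alpha>1-\rho$, $\alpha=1-\rho$, the last using the crossed formula \eqref{ETAPHIEQUAL} in which $\phi((1-\rho)^-)$ is compared with $\eta_2$ but $\phi(1-\rho)$ with $\eta_1$, together with keeping track of which of the two strictness hypotheses ($\delta\ne\delta(\rho,\alpha)$, or $\rho$ irrational) is being invoked at each step.
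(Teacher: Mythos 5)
Your proposal is correct and follows essentially the same route as the paper: both reduce to the point $y=1-\rho$ via the formulas of Lemma~\ref{LPHI1MR}, propagate with the monotonicity of $\phi$ from Proposition~\ref{PHISCR} (strict monotonicity for irrational $\rho$), and invoke Lemma~\ref{LIMINFIM} to upgrade $\phi((1-\rho)^-)\le\eta$ to a strict inequality in the rational case of the converse. The only difference is cosmetic: you spell out the strictness argument for irrational $\rho$ and the vacuity of the endpoint cases $\rho\in\{0,1\}$ slightly more explicitly than the paper does.
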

\begin{proof} 
The proof relies on the expressions of $\phi((1-\rho)^-)$ and $\phi(1-\rho)$ given in Lemma \ref{LPHI1MR}.

\noindent $1)$ Suppose $\alpha\neq 1-\rho$ and let $y\in[0,1)$. As $\delta\leq\delta(\rho,\alpha)$, we have $\phi((1-\rho)^-)\leq\eta$ with strict inequality for $\delta\neq\delta(\rho,\alpha)$. Therefore, it follows from Proposition \ref{PHISCR} that $\phi(y)\leq\eta$ if $y<1-\rho$ and the inequality is strict if $\delta\neq\delta(\rho,\alpha)$ or $\rho\in(0,1)\setminus\Q$. On the other hand, as $\delta\geq\delta(\rho^-,\alpha)$, we have $\phi(1-\rho)\geq\eta$ and $\phi(y)\geq \eta$ for $y\geq 1-\rho$. The proof of the case $\alpha=1-\rho$ is analogous.  

\noindent $2)$ Suppose $\alpha\neq 1-\rho$. If $\phi(y)<\eta$ for every 
$y\in[0,1-\rho)$, then $\phi((1-\rho)^-)\leq \eta$. If $\rho\in(0,1)\cap\Q$, then there exists $y\in[0,1-\rho)$ such that $\phi(y)=\phi((1-\rho)^-)$, by Lemma \ref{LIMINFIM}. It follows that $\phi((1-\rho)^-)\leq \eta$ and that 
$\phi((1-\rho)^-)<\eta$ if $\rho\in(0,1)\cap\Q$. We deduce that $\delta\leq\delta(\rho,\alpha)$ with  $\delta\neq\delta(\rho,\alpha)$ if $\rho\in(0,1)\cap\Q$. On the other hand, if $\phi(y)\geq\eta$ for every 
$y\in[1-\rho,1)$, then $\phi((1-\rho))\geq \eta$ and $\delta\geq\delta(\rho^-,\alpha)$. The proof of the case $\alpha=1-\rho$ is analogous.
\end{proof}

\begin{corollary}\label{01COD} Let $\rho\in[0,1]$ and $\alpha\in[0,1]$. If $\alpha\neq 1-\rho$, then
$\phi([0,1))\subset [0,1)$ if and only if $\theta_\eta(\phi(y))=\theta_{1-\rho}(y)$ for all $y\in[0,1)$, where $\eta=\eta_1$ if $\alpha\in[0,1-\rho)$ and $\eta=\eta_2$ if $\alpha\in[1-\rho,1)$. If $\alpha= 1-\rho$, then $\phi([0,1))\subset [0,1)$ if and only if $\phi(y)<\eta_2$ for all $y\in[0,1-\rho)$ and $\phi(y)\geq\eta_1$ for all $y\in[1-\rho,1)$.
\end{corollary}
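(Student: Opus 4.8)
The plan is to derive Corollary~\ref{01COD} as a formal consequence of three earlier facts, with essentially no new computation. First I would use Lemma~\ref{INTERVALCERO} to replace the inclusion $\phi([0,1))\subset[0,1)$ by the condition $0\in(\phi(0^-),\phi(0)]$ when $\rho\in(0,1)\cap\Q$ and $0\in[\phi(0^-),\phi(0)]$ when $\rho\in(0,1)\setminus\Q$. Then, substituting the explicit values of $\phi(0^-)$ and $\phi(0)$ from \eqref{PHI0PHI0M} in Lemma~\ref{PROPPHI0}, this condition becomes $\delta\in[\delta(\rho^-,\alpha),\delta(\rho,\alpha)]$, with the extra requirement $\delta\neq\delta(\rho,\alpha)$ precisely when $\rho\in\Q$. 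So the whole task reduces to showing that $\delta$ lying in this interval is equivalent to the right-hand side of the corollary, and that is exactly what Lemma~\ref{ETARHO} provides. (Throughout I take $\delta\in(1-\lambda-d,1)$, as in Lemma~\ref{ETARHO}, and I handle $\rho\in(0,1)$ first.)

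The one preliminary step is to rewrite the right-hand side of the corollary as a pair of inequalities. Since $\theta_\eta(z)=0$ if and only if $z<\eta$, for $\alpha\neq 1-\rho$ the requirement ``$\theta_\eta(\phi(y))=\theta_{1-\rho}(y)$ for all $y\in[0,1)$'' says exactly that $\phi(y)<\eta$ for $y\in[0,1-\rho)$ and $\phi(y)\geq\eta$ for $y\in[1-\rho,1)$, with $\eta=\eta_1$ when $\alpha\in[0,1-\rho)$ and $\eta=\eta_2$ when $\alpha\in(1-\rho,1)$; and for $\alpha=1-\rho$ the stated condition is already of this form (with $\eta_2$ on $[0,1-\rho)$ and $\eta_1$ on $[1-\rho,1)$). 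This is verbatim the pair of assertions compared in Lemma~\ref{ETARHO}, the case split on the position of $\alpha$ relative to $1-\rho$ matching \eqref{ETAALPHA} and the sub-cases there.

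With this in place, the forward implication runs as follows: assuming $\phi([0,1))\subset[0,1)$, the two reductions above give $\delta\in[\delta(\rho^-,\alpha),\delta(\rho,\alpha)]$ with $\delta\neq\delta(\rho,\alpha)$ when $\rho\in\Q$, so ``$\rho\notin\Q$ or $\delta\neq\delta(\rho,\alpha)$'' holds and part~1) of Lemma~\ref{ETARHO} yields the \emph{strict} inequality on $[0,1-\rho)$ together with the weak inequality on $[1-\rho,1)$, which is the rewritten right-hand side. For the converse, part~2) of Lemma~\ref{ETARHO} turns the right-hand side inequalities back into $\delta\in[\delta(\rho^-,\alpha),\delta(\rho,\alpha)]$ with $\delta\neq\delta(\rho,\alpha)$ if $\rho\in\Q$, and then reading \eqref{PHI0PHI0M} and Lemma~\ref{INTERVALCERO} in the other direction recovers $\phi([0,1))\subset[0,1)$. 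I do not expect a genuine obstacle: the only delicate point is to keep the rational/irrational dichotomy (half-open versus closed intervals in Lemmas~\ref{INTERVALCERO} and~\ref{ETARHO}) in sync with the fact that part~1) of Lemma~\ref{ETARHO} produces a \emph{strict} inequality on $[0,1-\rho)$ exactly when $\rho\notin\Q$ or $\delta\neq\delta(\rho,\alpha)$, which is what makes the two sides coincide. Finally, as Lemmas~\ref{INTERVALCERO} and~\ref{PROPPHI0} are stated for $\rho\in(0,1)$, I would settle the endpoints $\rho\in\{0,1\}$ by a direct evaluation of $\phi$: there the series defining $\phi$ collapses and $\phi$ is constant on $[0,\alpha)$ and on $[\alpha,1)$ (up to an integer translation when $\rho=1$), so for $\delta\in(1-\lambda-d,1)$ both the inclusion $\phi([0,1))\subset[0,1)$ and the stated inequalities reduce to one and the same explicit constraint on $\delta$, hence are equivalent.
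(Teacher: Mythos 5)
Your proposal is correct and follows essentially the same route as the paper's own proof: reduce $\phi([0,1))\subset[0,1)$ to $\delta\in[\delta(\rho^-,\alpha),\delta(\rho,\alpha)]$ (with $\delta\neq\delta(\rho,\alpha)$ when $\rho\in\Q$) via Lemmas \ref{INTERVALCERO} and \ref{PROPPHI0}, then invoke both directions of Lemma \ref{ETARHO}. Your extra care with the endpoints $\rho\in\{0,1\}$ (where the cited lemmas are stated only for $\rho\in(0,1)$) is a reasonable addition that the paper's terse proof leaves implicit.
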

\begin{proof} By Lemma \ref{PROPPHI0} and \ref{INTERVALCERO} we have  $\phi([0,1))\subset [0,1)$ if and only if $\delta\in[\delta(\rho^-,\alpha),\delta(\rho,\alpha)]$, with $\delta\neq\delta(\rho,\alpha)$ if $\rho\in\Q$. The corollary follows then by applying Lemma \ref{ETARHO}.
\end{proof}

\begin{proposition}\label{INTERVALA} Let $\rho\in (0,1)$ and $\alpha\in[0,1]$ and $\delta\in[\delta(\rho^-,\alpha), \delta(\rho,\alpha)]$. Denote $\phi=\phi_{\delta,\rho,\alpha}$, then
\begin{enumerate}
\item If $\alpha=0$, then 
\[
\phi(\alpha^-)=\phi(\alpha)=0 \quif \rho\notin\Q \quand \phi(\alpha^-)\leq 0\leq \phi(\alpha)\leq\eta_1\quif \rho\in\Q,
\]
where the first and the last inequalities are strict if $\delta\neq\delta(\rho,\alpha)$.

\item If $\alpha\in(0,1-\rho)$, then 
\begin{equation}\label{INTALPHALESS}
[\phi(\alpha^-), \phi(\alpha)]\subset (0, \eta_1) \quif \rho\notin\Q
\quand
[\phi(\alpha^-), \phi(\alpha)]\subset [0, \eta_1]
\quif  \rho\in\Q. 
\end{equation}
In the case $\rho\in\Q$, we have  $\phi(\alpha)< \eta_1$  if $\delta\neq\delta(\rho,\alpha)$.

\item If $\alpha\in(1-\rho,1)$, then 
\begin{equation}\label{INTALPHAGREATER}
[\phi(\alpha^-), \phi(\alpha)]\subset (\eta_2,1) \quif \rho\notin\Q
\quand
[\phi(\alpha^-), \phi(\alpha)]\subset [\eta_2,1]
\quif  \rho\in\Q. 
\end{equation}
In the case $\rho\in\Q$, we have $\eta_2<\phi(\alpha^-)$ if $\delta\neq\delta(\rho^-,\alpha)$ and $\phi(\alpha)< 1$ if $\delta\neq\delta(\rho,\alpha)$.

\item If $\alpha=1$, then 
\[
\phi(\alpha^-)=\phi(\alpha)=1 \quif \rho\notin\Q \quand \eta_2\leq\phi(\alpha^-)\leq1 \leq\phi(\alpha)\quif \rho\in\Q,
\]
where the first inequality is strict if $\delta\neq\delta(\rho^-,\alpha)$ the second one  if $\delta\neq\delta(\rho,\alpha)$.

\item If $\alpha=1-\rho$, then 
\[
\phi(\alpha^-)=\phi(\alpha)\in[\eta_1,\eta_2] \quif \rho\notin\Q \quand [\phi(\alpha^-),\phi(\alpha)]\cap[\eta_1,\eta_2]\neq\emptyset\quif \rho\in\Q,
\]
with $\phi(\alpha^-)=\eta_2$  if $\delta=\delta(\rho,\alpha)$ and $\phi(\alpha)= \eta_1$  if $\delta=\delta(\rho^-,\alpha)$.
\end{enumerate}
\end{proposition}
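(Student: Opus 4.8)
The plan is to read off everything from the explicit values of $\phi=\phi_{\delta,\rho,\alpha}$ at the three reference points $0$, $1-\rho$ and $1$, combined with the monotonicity of $\phi$. The inputs are: Lemma \ref{PROPPHI0}, giving $\phi(0^-)=\frac{1}{1-\lambda}(\delta-\delta(\rho,\alpha))$ and $\phi(0)=\frac{1}{1-\lambda}(\delta-\delta(\rho^-,\alpha))$; Lemma \ref{LPHI1MR}, giving $\phi((1-\rho)^-)=\eta+\frac{\delta-\delta(\rho,\alpha)}{\lambda(1-\lambda)}$ and $\phi(1-\rho)=\eta+\frac{\delta-\delta(\rho^-,\alpha)}{\lambda(1-\lambda)}$ for $\alpha\neq 1-\rho$ (with $\eta=\eta_1$ if $\alpha<1-\rho$ and $\eta=\eta_2$ if $\alpha>1-\rho$), while for $\alpha=1-\rho$ the value $\eta$ is $\eta_2$ in the first formula and $\eta_1$ in the second; the translation identity \eqref{TPHI}, giving $\phi(1^-)=\phi(0^-)+1$ and $\phi(1)=\phi(0)+1$; and Lemma \ref{PROPPHIA}, identifying $\phi(\alpha^-)=a(\delta,\rho^+,\alpha)$ and $\phi(\alpha)=a(\delta,\rho,\alpha)$, so that the conclusions of the proposition are literally statements about $\phi(\alpha^-)$ and $\phi(\alpha)$.

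First I would unpack the hypothesis. Since $\delta\in[\delta(\rho^-,\alpha),\delta(\rho,\alpha)]$ we have $\delta-\delta(\rho,\alpha)\le 0\le\delta-\delta(\rho^-,\alpha)$, so the formulas above give at once $\phi(0^-)\le 0\le\phi(0)$, hence $\phi(1^-)\le 1\le\phi(1)$, and $\phi((1-\rho)^-)\le\eta\le\phi(1-\rho)$ for $\alpha\neq 1-\rho$, while $\phi((1-\rho)^-)\le\eta_2$ and $\phi(1-\rho)\ge\eta_1$ for $\alpha=1-\rho$; moreover each of these inequalities is strict exactly when the corresponding comparison of $\delta$ with $\delta(\rho,\alpha)$ or $\delta(\rho^-,\alpha)$ is strict, which will account for all the ``strict if $\delta\neq\cdots$'' clauses. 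Then I would use that $\phi$ is non-decreasing (Proposition \ref{PHISCR}) to squeeze $\phi(\alpha^-),\phi(\alpha)$ between these values. For $\alpha\in(0,1-\rho)$, picking $y\in(0,\alpha)$ and $z\in(\alpha,1-\rho)$ yields $0\le\phi(0)\le\phi(y)\le\phi(\alpha^-)\le\phi(\alpha)\le\phi(z)\le\phi((1-\rho)^-)\le\eta_1$, which is \eqref{INTALPHALESS} in the rational regime; for $\alpha\in(1-\rho,1)$ the same reasoning with $(0,\alpha,1-\rho)$ replaced by $(1-\rho,\alpha,1)$, using $\phi(1-\rho)\ge\eta_2$ and $\phi(1^-)\le 1$, gives \eqref{INTALPHAGREATER}; for $\alpha=0$ (resp. $\alpha=1$) one has $\phi(\alpha^-)=\phi(0^-)$, $\phi(\alpha)=\phi(0)$ (resp. $\phi(\alpha^-)=\phi(1^-)$, $\phi(\alpha)=\phi(1)$) directly, supplemented by $\phi(0)\le\phi((1-\rho)^-)\le\eta_1$ (resp. $\phi(1^-)\ge\phi(1-\rho)\ge\eta_2$); and for $\alpha=1-\rho$ one has $\phi(\alpha^-)\le\eta_2$, $\phi(\alpha)\ge\eta_1$ and $\phi(\alpha^-)\le\phi(\alpha)$, so since $\eta_1\le\eta_2$ the interval $[\phi(\alpha^-),\phi(\alpha)]$ meets $[\eta_1,\eta_2]$ (it contains $\phi(\alpha^-)$ if $\phi(\alpha^-)\ge\eta_1$, and $\eta_1$ otherwise), while the endpoint identities $\phi(\alpha^-)=\eta_2$ at $\delta=\delta(\rho,\alpha)$ and $\phi(\alpha)=\eta_1$ at $\delta=\delta(\rho^-,\alpha)$ are read off the Lemma \ref{LPHI1MR} formulas.

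For the irrational refinements ($\rho\notin\Q$) I would argue as follows. In items 2 and 3, replace ``non-decreasing'' by ``increasing'' (Proposition \ref{PHISCR}): with the same auxiliary points $y,z$, strict monotonicity upgrades the outer inequalities to $0<\phi(\alpha^-)$, $\phi(\alpha)<\eta_1$ (resp. $\eta_2<\phi(\alpha^-)$, $\phi(\alpha)<1$), so $[\phi(\alpha^-),\phi(\alpha)]\subset(0,\eta_1)$ resp. $\subset(\eta_2,1)$. In items 1, 4 and 5 I instead need $\phi(\alpha^-)=\phi(\alpha)$: the set $\{k\in\Z^*:k\rho\in\Z+\alpha\}$ is empty when $\alpha\in\{0,1\}$ and equals $\{-1\}$ when $\alpha=1-\rho$, so in each case Lemma \ref{LINTIR} gives $a(\delta,\rho^+,\alpha)=a(\delta,\rho,\alpha)$, i.e. $\phi(\alpha^-)=\phi(\alpha)$; combined with the bounds from the first step this forces the common value to be $0$ (item 1), $1$ (item 4), or a point of $[\eta_1,\eta_2]$ (item 5).

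The step I expect to be the main obstacle is item 5, $\alpha=1-\rho$. Here Lemma \ref{LPHI1MR} pairs $\phi((1-\rho)^-)$ with $\eta_2$ but $\phi(1-\rho)$ with $\eta_1$, so the clean equivalence that drives the other four cases --- namely ``$\phi((1-\rho)^-)\le\phi(1-\rho)$ if and only if $\delta\le\delta(\rho,\alpha)$'' --- is no longer available, and one only learns that the jump interval of $\phi$ at $1-\rho$ straddles, or lies inside, $[\eta_1,\eta_2]$. Making this precise, and checking that in the irrational case this jump is in fact trivial --- equivalently that only the $k=1$ term of \eqref{DELTAM} survives, so that $\delta(\rho,\alpha)-\delta(\rho^-,\alpha)=(1-\lambda)d=\lambda(1-\lambda)(\eta_2-\eta_1)$ --- is where the bookkeeping needs the most care; everything else is a routine application of the monotonicity of $\phi$ to the three known evaluation points.
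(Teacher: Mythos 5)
Your proposal is correct and follows essentially the same route as the paper: the paper likewise reads everything off Lemma \ref{PROPPHI0}, Lemma \ref{LPHI1MR} (packaged as Lemma \ref{ETARHO}) and the monotonicity/continuity statements of Proposition \ref{PHISCR}, with Lemma \ref{PROPPHIA} translating the conclusions into statements about $a(\delta,\rho^{\pm},\alpha)$. The only (harmless) variation is that for the irrational cases of items 1, 4 and 5 you invoke the $a$-gap formula of Lemma \ref{LINTIR} uniformly to get $\phi(\alpha^-)=\phi(\alpha)$, whereas the paper uses continuity of $\phi$ at $\alpha\in\{0,1\}$ and, for $\alpha=1-\rho$, the explicit computation $\delta(\rho,\alpha)-\delta(\rho^-,\alpha)=d(1-\lambda)$ — these are equivalent bookkeeping choices.
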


\begin{proof} As $\delta\in [\delta(\rho^-,\alpha),\delta(\rho,\alpha)]$, part of the proof consists in applying Item 1 of Lemma \ref{ETARHO} to $y=\alpha$, to obtain the positions of $\phi(\alpha^-)$ and $\phi(\alpha)$ with respect to $\eta_1$ and $\eta_2$.  Also, by Lemma \ref{PROPPHI0} we have $\phi(0^-)\le 0\le \phi(0)$ and $\phi(0^-)< 0$ if $\delta\neq\delta(\rho,\alpha)$. Keeping this in mind, let us prove the different items:

\noindent 1) Suppose $\alpha=0$. If $\rho\notin\Q$, then $\phi$ is continuous at $\alpha$, by Proposition \ref{PHISCR}. Since $\phi(0^-)\le 0\le \phi(0)$, it follows that $\phi(0)=0$. If $\rho\in\Q$, it remains to prove that $\phi(0)\leq\eta_1$,
with strict inequality if $\delta\neq\delta(\rho,\alpha)$, which  follows directly from Lemma \ref{ETARHO}.

\noindent 2) Suppose that $\alpha\in (0,1-\rho)$. If $\rho\notin \Q$, then $\phi$ is increasing and we obtain 
\[
0\leq\phi(0)<\phi(\alpha^-)\le \phi(\alpha)<\eta_1.
\] 
If $\rho\in \Q$, then  $\phi$ is non-decreasing and discontinuous at $\alpha$, therefore   
\[
0\le \phi(0)\leq\phi(\alpha^-)<\phi(\alpha)\leq\eta_1,
\]
where the last inequality is strict if  $\delta\neq\delta(\rho,\alpha)$ by Lemma \ref{ETARHO}.

\noindent 3) For $\alpha\in (1-\rho,1)$, using the same arguments that in the previous case, for $\rho\notin \Q$ we obtain that 
\[
\eta_2\le \phi(1-\rho)< \phi(\alpha^-)
\leq \phi(\alpha)<\phi(1^-)=\phi(0^-)+1\leq 1
\]
and for $\rho\in \Q$  
\[
\eta_2\le \phi(1-\rho)\leq \phi(\alpha^-)
< \phi(\alpha)\leq\phi(1^-)\leq 1,
\]
By \eqref{ETAPHIDIF} the first inequality is strict if $\delta\neq\delta(\rho^-,\alpha)$. The last one is strict if $\phi(0^-)\neq 0$, that is, if $\delta\neq\delta(\rho,\alpha)$.

\noindent 4) Suppose $\alpha=1$. Since, $\phi(1)=\phi(0)+1$ and $\phi(1^-)=\phi(0^-)+1$, we always have $\phi(1^-)\leq 1 \leq\phi (1)$, with $\phi(1^-)<1$ for $\delta\neq\delta(\rho,\alpha)$. If $\rho\notin\Q$, then $\phi$ is continuous at $\alpha$, by Proposition \ref{PHISCR}. It follows that $\phi(1^-)=\phi(1)=1$. If $\rho\in\Q$, it remains to prove that $\phi(1^-)\geq\eta_2$, which  follows directly from $\phi(1^-)\geq\phi(1-\rho)\geq\eta_2$ of Lemma \ref{ETARHO}. Also, by \eqref{ETAPHIDIF}, the inequality is strict if $\delta\neq\delta(\rho^-,\alpha)$.

\noindent 5) Suppose $\alpha=1-\rho$ with $\rho\in(0,1)$. Then, \eqref{ETAPHIEQUAL} gives
\[
\phi(\alpha^-)=\eta_2+\frac{1}{\lambda(1-\lambda)}
(\delta-\delta(\rho,\alpha))\leq\eta_2
\quand
\phi(\alpha)=\eta_1+\frac{1}{\lambda(1-\lambda)}
(\delta-\delta(\rho^-,\alpha))\geq\eta_1.
\]
Therefore, we always have $[\phi(\alpha^-),\phi(\alpha)]\cap[\eta_1,\eta_2]\neq\emptyset$, since $\phi(\alpha^-)\leq\phi(\alpha)$ by monotonicity of $\phi$.
Also, 
\[
\phi(\alpha)-\phi(\alpha^-)=\eta_1-\eta_2+\frac{1}{\lambda(1-\lambda)}
(\delta(\rho,\alpha)-\delta(\rho^-,\alpha))=-\frac{d}{\lambda}+\frac{1}{\lambda(1-\lambda)}
(\delta(\rho,\alpha)-\delta(\rho^-,\alpha)).
\]
If $\rho\in(0,1)\setminus\Q$, then the set $D$ of Lemma \ref{LINTIR} is equal to $\{-1\}$, since 
$\alpha=1-\rho$. From the same lemma we obtain that 
\[
\delta(\rho,\alpha)-\delta(\rho^-,\alpha)=d(1-\lambda).
\]
It follows that $\eta_1\leq\phi(\alpha)=\phi(\alpha^-)\leq\eta_2$.
\end{proof}

\begin{proof}[Proof of Proposition \ref{RANGE}] Let  $\rho\in(0,1)$, $\alpha\in[0,1]$ and choose $\delta\in[\delta(\rho^-,\alpha),\delta(\rho,\alpha)]$. Suppose $\alpha\in[0,1-\rho)$. Then, $\delta\in(1-\lambda-d,1-d)$, by Lemma \ref{INTERVALDELTA}. Also, Items $1(a)$ and $1(b)$ are respectively Items 2) and 1) of Proposition \ref{INTERVALA}, recalling that $\phi(\alpha^-)=a(\delta,\rho^+,\alpha)$ and $\phi(\alpha)=a(\delta,\rho,\alpha)$, see Lemma \ref{PROPPHIA}.
Suppose $\alpha\in(1-\rho,1]$. Then, $\delta\in(1-\lambda,1)$, by Lemma \ref{INTERVALDELTA}. Also, Items $2(a)$ and $2(b)$ are respectively Items 3) and 4) of Proposition \ref{INTERVALA}.
Finally, for $\alpha=1-\rho$, we have $\delta\in(1-\lambda-d,1)$, by Lemma \ref{RANGEDELTA}
and Item 3) is Item 5) of Proposition \ref{INTERVALA}.
\end{proof}

\section{Proof of Theorem \ref{CODDING2}}\label{PTH2}

We prove Theorem \ref{CODDING2} in the next three subsections.

\subsection{Proof of the equivalences}

We start with a remark which gathers some results obtained in the previous sections  that will help us to prove  the equivalence of the assertions 1), 2) and 3) of Theorem \ref{CODDING2}.

\begin{remark}\label{REMEQU} Until now, we have proved in Proposition \ref{CNSCONJ} and Proposition \ref{PROROT}, that for any $\rho\in(0,1)$, $\alpha\in[0,1]$, $\delta\in(0,1)$ and $a\in[0,1]$ the following are equivalent:
\begin{enumerate}[(a)]
\item $F\circ\phi=\phi\circ L_\rho$.
\item $\phi([0,1))\subset[0,1)$ and $\theta_{a}(\phi(y))=\theta_{\alpha}(y)$
for every $y\in[0,1)$.
\item $\delta\in[\delta(\rho^-,\alpha),\delta(\rho,\alpha))$ and
$a\in(a(\delta,\rho^+,\alpha),a(\delta,\rho,\alpha)]$, with closed intervals if $\rho\in(0,1)\setminus\Q$.
\end{enumerate}
\end{remark}

Now, we start the proof. In the following $1)$, $2)$ and $3)$ refer to the items of Theorem \ref{CODDING2}.

\medskip
\noindent {\bf $1)\Rightarrow 2)$}. If \eqref{CONDTHIR} holds,  then by (a) and (c) of Remark \ref{REMEQU}, we have 
\[
F\circ\phi(y)=\phi\circ L_\rho(y)\quad\forall y\in[0,1).
\]
Also, $\phi([0,1))\subset[0,1)$  by (b) and (c) of Remark \ref{REMEQU}. It follows that $\{\phi(y)\}=\phi(\{y\})$ for all $y\in\R$ by  Lemma \ref{PHIEF}.
So, we have that
\[
f\circ\phi(y)=\{F\circ\phi(y)\}=\phi(\{L_\rho(y)\})=\phi\circ R_\rho(y)\quad\forall y\in[0,1),
\]
which proves \eqref{CONJUGf} and item $2)$.

\noindent $2)\Rightarrow 3).$ Recall that for any $\alpha\in[0,1]$ we have
\[
F(\phi(y))= (1-\lambda)(\lf \phi(y)\rf -\ent{y})+ d(\theta_a(\{\phi(y)\})-\theta_\alpha(\{y\}))  +\phi(y+\rho)\quad\forall y\in\R,
\]
see Lemma \ref{CONJUG1}. As $\phi([0,1))\subset [0,1)$ when $2)$ holds, we have that $\{\phi(y)\}=\phi(\{y\})$ and $\ent{\phi(y)}=\ent{y}$, therefore \eqref{CONJUGf} implies that
\[
\{F(\phi(y))\}=\{d(\theta_a(\phi(y))-\theta_\alpha(y))  +\phi(y+\rho)\} =\phi(\{y+\rho\})=\{\phi(y+\rho)\}\quad\forall y\in[0,1).
\]
It follows that $d(\theta_a(\phi(y))-\theta_\alpha(y))\in\Z$ for every $y\in[0,1)$, which occurs only if 
\begin{equation}\label{CODDEM}
\theta_a(\phi(y))=\theta_\alpha(y)\quad\forall y\in[0,1),
\end{equation}
since $d\notin\Z$. 

Now, still because $\phi([0,1)\subset[0,1)$, by Corollary \ref{01COD}, if $\alpha\neq 1-\rho$ we have

\begin{equation}\label{CODET}
\theta_{\eta}(\phi(y))=\theta_{1-\rho}(y)\quad\forall y\in[0,1)\quad \text{with}\quad
\eta=\begin{cases}
\eta_1&\text{if}\quad\alpha\in[0,1-\rho)\\
\eta_2&\text{if}\quad \alpha\in(1-\rho,1]
\end{cases}
\end{equation}
and if $\alpha= 1-\rho$ we have
\begin{equation}\label{COD1MR}
\phi(y)<\eta_2\quad\forall y\in[0,1-\rho)\quand \phi(y)\geq\eta_1\quad\forall y\in[1-\rho,1).
\end{equation}
Suppose $(\delta,a)\in M_1$. If $\alpha\in(1-\rho,1]$, then, for 
$y\in[1-\rho,\alpha)$ we have $\theta_{\eta_2}(\phi(y))=\theta_{1-\rho}(y)=1$, by \eqref{CODET}, and on the other hand $\theta_{a}(\phi(y))=\theta_{\alpha}(y)=0$, by \eqref{CODDEM}. It follows that $\eta_2\leq \phi(y)<a$, which is a contradiction, since $a<\eta_1$ when $(\delta,a)\in M_1$ and $\eta_1<\eta_2$.
We deduce that $\alpha\in[0,1-\rho]$. If $\alpha\in[0,1-\rho)$, then
\begin{equation}\label{CODDEMETA}
\theta_{\eta}(\phi(y))=\theta_{1-\rho}(y)\quad\forall y\in[0,1)\quad \text{with}\quad
\eta=\eta_1,
\end{equation}
by \eqref{CODET}. If $\alpha= 1-\rho$,  by \eqref{CODDEM} and \eqref{COD1MR}, we have that
\[
\phi(y)<a<\eta_1\quad\forall y\in[0,1-\rho)\quand \phi(y)\geq\eta_1\quad\forall y\in[1-\rho,1).
\]
It follows that \eqref{CODDEMETA} holds also if $\alpha=1-\rho$.
We conclude that if $(\delta,a)\in M_1$, then \eqref{CODDEM} and 
\eqref{CODDEMETA}  hold with $\eta=\eta_1$ and $\alpha\in[0,1-\rho]$. Together with \eqref{CONJUGf}, this proves \eqref{CODES} in the case where $(\delta,a)\in M_1$. The proof of \eqref{CODES} in the case where $(\delta,a)\in M_2$ or
$(\delta,a)\in M_3$ is analogous.

\medskip
\noindent $3)\Rightarrow 1).$ By hypothesis we have
\[
\theta_{a}(\phi(y))=\theta_{\alpha}(y)\quad\forall y\in[0,1).
\]
So, by (b) and (c) of Remark \ref{REMEQU}, we only have to prove that $\phi([0,1))\subset[0,1)$.

Suppose $(\delta,a)\in M_1$. Then,  by hypothesis, we have $\alpha\in[0,1-\rho]$ and
\[
\theta_{\eta_1}(\phi(y))=\theta_{1-\rho}(y)\quad\forall y\in[0,1),
\]
that is, 
\[
\phi(y)<\eta_1<\eta_2\quad\forall y\in[0,1-\rho)\quand \phi(y)\geq\eta_1\quad\forall y\in[1-\rho,1).
\]
Applying Corollary \ref{01COD} we obtain that $\phi([0,1))\subset[0,1)$ as desired. We have proved that
$3)\Rightarrow 1)$ for  $(\delta,a)\in M_1$. The same can be shown with an analogous proof  for $(\delta,a)\in M_3$. Finally, if $(\delta,a)\in M_2$, then
$\alpha=1-\rho$ and 
\[
\theta_{a}(\phi(y))=\theta_{\alpha}(y)=\theta_{1-\rho}(y)\quad\forall y\in[0,1),
\]
that is, 
\[
\phi(y)<a\leq\eta_2\quad\forall y\in[0,1-\rho)\quand \phi(y)\geq a\geq \eta_1\quad\forall y\in[1-\rho,1),
\]
since $a\in[\eta_1,\eta_2]$. From Corollary \ref{01COD} we deduce that $\phi([0,1))\subset[0,1)$ and we conclude that $3)\Rightarrow 1)$ also for $(\delta,a)\in M_2$.

\subsection{Image of $[0,1)$ by $\phi$}

When \eqref{CONDTHIR} holds, the relations \eqref{CONJUGf} and \eqref{CODES} hold on the set $\phi([0,1))$. In this section we first give the structure of this set. Then, supposing that $\delta$ and $a$ satisfy \eqref{CONDTHIR} we show how it is related to 
the asymptotic set $\Lambda=\bigcap_{n\in\N}f^n_{\delta,a}([0,1))$. 

By Lemma \ref{RATPHI}, we already know that $\phi([0,1))$ is a finite set if $\rho$ is rational. The following lemma shows that its closure is a Cantor set if $\rho$ is irrational.

\begin{lemma}\label{PHICANT} Let $\delta\in[0,1]$, $\rho\in(0,1)\setminus\Q$, $\alpha\in[0,1]$ and denote $\phi=\phi_{\delta,\rho,\alpha}$. Then, the closure of $\phi([0,1))$ is a Cantor set. Moreover, if $\{\alpha+k\rho\}\neq 0$ for every $k\geq 1$ and $\delta$ satisfy \eqref{CONDTHIR}, then the Cantor set extremities are the points $0$ and $1$. 
\end{lemma}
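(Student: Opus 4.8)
The plan is to use the monotonicity and jump structure of $\phi=\phi_{\delta,\rho,\alpha}$, established in Proposition \ref{PHISCR} and Lemma \ref{MONPSIALP}, to show that $\overline{\phi([0,1))}$ is a compact set with empty interior and no isolated points. First I would recall that by Proposition \ref{PHISCR}, since $\rho\in(0,1)\setminus\Q$, the function $\phi$ is increasing and right continuous on $\R$, and it has a jump exactly at every point $y$ of the form $y\in(\Z+k\rho)\cup(\Z+\alpha+k\rho)$ for some $k\geq 1$; the jump size at such a point is a positive combination of the quantities $(1-\lambda-d)\lambda^{k-1}$ and $d\lambda^{k-1}$ coming from \eqref{JUMPPSI}, hence is positive and summable in $k$. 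Since $\rho$ is irrational the set of jump points of $\phi$ restricted to $[0,1)$ is countable and dense (it contains $\{k\rho\}$ for all $k\geq 1$, which is dense in $[0,1)$). Let $C:=\overline{\phi([0,1))}$. Because $\phi$ is monotone, $C$ is an interval $[\phi(0),\phi(1^-)]$ with a countable family of open ``gaps'' removed, one gap of length equal to the jump size at each discontinuity point of $\phi$ in $[0,1)$. Thus $C$ is closed and bounded.

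Next I would check that $C$ has empty interior, i.e.\ is totally disconnected. Suppose $C$ contained a nondegenerate interval $J$. Then $J$ would lie in the image and would contain no gap, so every point of $\phi^{-1}(J)\cap[0,1)$ would be a point of continuity of $\phi$; but $\phi$ restricted to the (non-empty, relatively open) preimage of the interior of $J$ would then be a continuous injective monotone map onto an interval, forcing its domain to be an interval on which $\phi$ has no jump. This contradicts density of the jump set of $\phi$ in $[0,1)$. (Equivalently: the sum of the gap lengths equals $\phi(1^-)-\phi(0)$ minus the ``Cantor part'', and one computes directly from \eqref{SERIE}-type telescoping that the jumps account for the entire increment, so the complement of the gaps in $[\phi(0),\phi(1^-)]$ has measure zero and hence empty interior.) Then I would check $C$ has no isolated points: any point $c\in C$ is either in $\phi([0,1))$ or a limit of such points; if $c=\phi(y)$ for some $y$, approximate $y$ from the left and right by points $y_n$ that are not jump points of $\phi$ (possible since jump points are countable), so $\phi(y_n)\to\phi(y^-)$ and $\phi(y_n)\to\phi(y)$ give points of $C$ arbitrarily close to $c$ on one or both sides; the endpoints of gaps are handled since each gap is a limit of images from the other side by density of jump points near any point of $[0,1)$. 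Hence $C$ is a non-empty compact set that is perfect and totally disconnected, i.e.\ a Cantor set.

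For the last claim, I would use \eqref{PHI0PHI0M}: $\phi(0)=\frac{1}{1-\lambda}(\delta-\delta(\rho^-,\alpha))$ and $\phi(0^-)=\frac{1}{1-\lambda}(\delta-\delta(\rho,\alpha))$, together with $\phi(1^-)=\phi(0^-)+1$ from \eqref{TPHI}. When $\delta$ satisfies \eqref{CONDTHIR} and $\rho\notin\Q$, item 2) of Theorem \ref{THROT} gives $\delta(\rho^-,\alpha)=\delta(\rho,\alpha)$ whenever $k\rho\notin\Z+\alpha$ for all $k\in\Z^*$, and the condition $\{\alpha+k\rho\}\neq 0$ for all $k\geq 1$ is exactly the statement that $-k\rho\notin\Z+\alpha$, i.e.\ $k\rho\notin\Z-\alpha$; combined with the fact that $\delta$ ranges over the (singleton) interval $[\delta(\rho^-,\alpha),\delta(\rho,\alpha)]$, I would pin down that $\delta=\delta(\rho,\alpha)$, so $\phi(0^-)=0$ and therefore $\phi(1^-)=1$, while $\phi(0)\geq 0$; together with $\phi([0,1))\subset[0,1)$ (from the equivalence $1)\Leftrightarrow 2)$ of Theorem \ref{CODDING2}, already available) this forces $\phi(0)=0$ as well. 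Hence $C=[\,0,1\,]$ minus its gaps, so the extremities of the Cantor set are $0$ and $1$. The main obstacle is the bookkeeping in the second step — verifying rigorously that the removed gaps fill up the full increment of $\phi$ so that $C$ has empty interior — which I expect to handle cleanly via the telescoping identity already used to derive \eqref{PHIPRIM} from \eqref{SERIE}, rather than by a measure computation.
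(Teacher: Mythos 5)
Your proposal is correct and follows essentially the same route as the paper's proof: perfectness from strict monotonicity plus right continuity, total disconnectedness from the density of the jump points $\{l\rho\}$ (each of which opens a genuine gap $(\phi(\{l\rho\}^-),\phi(\{l\rho\}))$ inside any purported interval of the closure), and the extremities from \eqref{PHI0PHI0M} once the hypothesis $\{\alpha+k\rho\}\neq 0$ for all $k\geq 1$ collapses $[\delta(\rho^-,\alpha),\delta(\rho,\alpha)]$ to a point. Two spots deserve tightening but do not affect the argument: a nondegenerate interval $J\subset\overline{\phi([0,1))}$ need not lie in the image itself (only $\phi([0,1))\cap J$ is dense in $J$, which suffices), and the collapse of the $\delta$-interval should be read off the one-sided identity \eqref{DELTAM} (or Lemma \ref{LINTIR}) rather than the two-sided hypothesis of item 2) of Theorem \ref{THROT}, since your assumption only excludes $k\rho\in\Z+\alpha$ for negative $k$.
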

\begin{proof} Let us show that $\phi([0,1))$ has no isolated point. Let $x_0\in \phi([0,1))$ and $y_0\in[0,1)$ be such that $x_0=\phi(y_0)$. As $\phi$ is right continuous at $y_0$ and increasing, for every $\epsilon>0$ there exists $\delta>0$ and $y\in(y_0,y_0+\delta)$ such that $\phi(y)\neq\phi(y_0)=x$ and $|x_0-\phi(y)|<\epsilon$. It follows that $\overline{\phi([0,1))}$ has no isolated point.

It remains to show that $\overline{\phi([0,1))}$ is totally disconnected. Let $A\subset\overline{\phi([0,1))}$ be a non-empty connected set and suppose that  $a:=\inf A<b:=\sup A$.   Since $A$ is connected $A\neq\{a,b\}$. Thus, there exists $x\in(a,b)\subset A$ and $y\in[0,1)$ such that $\phi(y)=x$. As the sequence $(\{l\rho\})_{l\in\N}$ is dense in $[0,1)$ and $\phi$ is right continuous
and increasing, there exists $l\geq 1$ such that $y<\{l\rho\}$ and $\phi(\{l\rho\}^-),\phi(\{l\rho\})\in(a,b)$. Thus,
\[
A\subset[a,\phi(\{l\rho\}^-)]\cup(\phi(\{l\rho\}^-),\phi(\{l\rho\}))\cup[\phi(\{l\rho\}),b].
\]
Also, by \eqref{JUMPPSI}
\[
\phi(\{l\rho\})-\phi(\{l\rho\}^-)=\frac{1}{\lambda}
\sum_{k=1}^\infty\lambda^k((1-\lambda-d)\Ind_{\Z}(\{l\rho\}-k\rho)+d\Ind_{\Z+\alpha}(\{l\rho\}-k\rho))>0,
\]
since $\{l\rho\}-k\rho\in\Z$ for $k= l$. Therefore, $(\phi(\{l\rho\}^-),\phi(\{l\rho\}))$ is a gap of $\phi$ and 
\[
A=(A\cap[a,\phi(\{l\rho\}^-)])\cup(A\cap[\phi(\{l\rho\}),b])
\]
is a not a connected set. It follows that if $A$ is connected, then  $\inf A=\sup A$.

Finally, if $\{\alpha+k\rho\}\neq 0$ for every $k\geq 1$, then $\delta(\rho,\alpha)=\delta(\rho^-,\alpha)$ by Lemma \ref{PROPPHI0}. If $\delta$ satisfies \eqref{CONDTHIR} for such an $\alpha$, the same lemma implies that $\phi(0)=0=\phi(0^-)$ and $\phi(1^-)=\phi(0^-)+1=1$.
\end{proof}

Now, let us suppose that \eqref{CONDTHIR} holds. Then, as expected from the construction of $\phi$, we can verify that $\phi([0,1))\subset\Lambda$ in the following lemma.

\begin{lemma}\label{PHIINCLA} Let $(\delta,a)\in M$ and denote $f=f_{\delta, a}$ the map \eqref{PROJ}. Let $\rho\in(0,1)$, $\alpha\in[0,1]$ and denote $\phi=\phi_{\delta,\rho,\alpha}$. If $\delta$ and $a$ satisfy \eqref{CONDTHIR}, then 
\[
\phi([0,1))\subset  \bigcap_{n\ge 1} f^n([0,1)).
\]
\end{lemma}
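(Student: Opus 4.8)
## Proof plan for Lemma \ref{PHIINCLA}

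The plan is to exploit the conjugacy relation established earlier together with the surjectivity of the rotation. First I would invoke the equivalence $1)\Leftrightarrow 2)$ of Theorem \ref{CODDING2}: since $\delta$ and $a$ satisfy \eqref{CONDTHIR}, assertion $2)$ holds, so $\phi([0,1))\subset[0,1)$ and the relation $f\circ\phi=\phi\circ R_\rho$ is valid on $[0,1)$. (Alternatively one can go directly through Remark \ref{REMEQU} and Lemma \ref{PHIEF}, but citing Theorem \ref{CODDING2} is cleanest.) By iterating this relation one gets $f^n\circ\phi=\phi\circ R_\rho^n$ on $[0,1)$ for every $n\in\N$, which is the key identity.

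Next I would fix $n\geq 1$ and an arbitrary $x\in\phi([0,1))$, say $x=\phi(y)$ with $y\in[0,1)$. Since $R_\rho$ is a bijection of $[0,1)$ onto itself, there is $y'\in[0,1)$ with $R_\rho^n(y')=y$ (explicitly $y'=R_\rho^{-n}(y)=\{y-n\rho\}$). Then
\[
x=\phi(y)=\phi(R_\rho^n(y'))=f^n(\phi(y'))\in f^n([0,1)),
\]
using that $\phi(y')\in[0,1)$. As $n$ was arbitrary, $x\in\bigcap_{n\geq 1}f^n([0,1))$, and since $x\in\phi([0,1))$ was arbitrary this gives the desired inclusion.

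The only genuinely delicate point is making sure the conjugacy iterates correctly: $f\circ\phi=\phi\circ R_\rho$ as maps $[0,1)\to[0,1)$ requires that $\phi$ actually lands in $[0,1)$ (so that composing with $f$ makes sense) and that $R_\rho$ maps $[0,1)$ into $[0,1)$ — both are guaranteed, the former by assertion $2)$ of Theorem \ref{CODDING2} and the latter by definition of $R_\rho(y)=\{y+\rho\}$. Once these domain issues are checked, the induction $f^{n+1}\circ\phi = f\circ(f^n\circ\phi)=f\circ(\phi\circ R_\rho^n)=(f\circ\phi)\circ R_\rho^n=\phi\circ R_\rho^{n+1}$ is routine. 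No estimates or analytic subtleties are needed here; the content is purely the algebra of the semiconjugacy plus surjectivity of $R_\rho$, so this lemma is short. I would expect essentially no obstacle beyond bookkeeping the domains.
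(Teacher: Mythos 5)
Your proposal is correct and follows essentially the same route as the paper: the paper's proof also fixes $x=\phi(y)$, sets $y_n=\{y-n\rho\}$, and uses the iterated conjugacy $f^n(\phi(y_n))=\phi(R_\rho^n(y_n))=\phi(y)=x$ derived from \eqref{CONJUGf} (i.e., the implication $1)\Rightarrow 2)$ proved earlier in the same section, so there is no circularity). Your extra care about domains and the induction step is exactly the right bookkeeping, though the paper leaves it implicit.
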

\begin{proof} Let $x\in \phi([0,1))$ and $y\in[0,1)$ be such that $x=\phi(y)$. Consider the
sequence $(y_n)_{n\in\N}$ defined by $y_n=\{y-n\rho\}$ for every $n\in\N$.
As $\delta$ and $a$ satisfy \eqref{CONDTHIR}, by \eqref{CONJUGf} we have
\[
f^n(\phi(y_n))=\phi(R^n_\rho(y_n))=\phi(y)=x\quad\forall n\in \N.
\]
We have proved that for every $n\in\N$ there exists $x_n:=\phi(y_n)\in[0,1)$ such that 
$x=f^n(x_n)$. 
\end{proof}

In the next subsections, we show that under some conditions on $\rho$ and $\alpha$ it also holds that $\Lambda\subset\phi([0,1))$, and therefore $\phi([0,1))=\Lambda$.

\subsubsection{Image of $\phi$ and attractor of $f$ for $\rho$ irrational}

\begin{lemma}\label{DMDP} Let $(\delta,a)\in M$ and denote $f=f_{\delta, a}$ the map \eqref{PROJ}. Let $\rho\in(0,1)\setminus\Q$, $\alpha\in[0,1]$ and denote $\phi=\phi_{\delta,\rho,\alpha}$. Suppose that $\delta$ and $a$ satisfy \eqref{CONDTHIR} and define the set 
\[
D^-_{\alpha,\rho}:=\{k\geq 1:\{\alpha-k\rho\}=0\} 
\quand D^+_{\alpha,\rho}:=\{k\geq 1:\{\alpha+k\rho\}=0\}.
\]
Then, $D^-_{\alpha,\rho}$ and $D^+_{\alpha,\rho}$ contain at most one element and 
$D^-_{\alpha,\rho}=\emptyset$ or $D^+_{\alpha,\rho}=\emptyset$. Moreover,
\begin{enumerate}
\item For every $k\geq 1$ such that $k\notin D^-_{\alpha,\rho}$ we have
\begin{equation}\label{H1}
f([\phi_{\alpha}(\{k\rho\}^-),\phi_{\alpha}(\{k\rho\}))=[\phi_{\alpha}(\{(k+1)\rho\}^-),\phi_{\alpha}(\{(k+1)\rho\})).
\end{equation}

\item For every $k\geq 1$ such that $k\notin D^+_{\alpha,\rho}$ and $k+1\notin D^+_{\alpha,\rho}$ we have
\begin{equation}\label{H2}
f([\phi_{\alpha}(\{k\rho+\alpha\}^-),\phi_{\alpha}(\{k\rho+\alpha\}))=[\phi_{\alpha}(\{(k+1)\rho+\alpha\}^-),\phi_{\alpha}(\{(k+1)\rho+\alpha\})).
\end{equation}
\item If $D^-_{\alpha,\rho}=\emptyset$, then $\phi(\alpha^-)=\phi(\alpha)=a$, with $\alpha\in(0,1)$ if and only if $a\in(0,1)$. Also, for any $a\in(0,1)$ we have
\begin{equation}\label{G2}
[f(a^-),f(a))=[\phi_{\alpha}(\{\rho+\alpha\}^-)),\phi_{\alpha}(\{\rho+\alpha\})
 \quif \alpha\neq 1-\rho
\end{equation}
and
\begin{equation}\label{GH} 
[0,f(a))\cup[f(a^-),1)=[0,\phi(0))\cup[\phi(1^-),1) \quif \alpha=1-\rho.
\end{equation}
\item If $D^+_{\alpha,\rho}=\emptyset$, then $\phi(0^-)=\phi(0)=0$ and $[f(1^-),f(0))=[\phi_{\alpha}(\rho^-),\phi_{\alpha}(\rho))$.
\end{enumerate}
\end{lemma}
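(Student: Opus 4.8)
The statement is essentially a bookkeeping lemma that tracks how $f$ acts on the ``gap intervals'' $[\phi_\alpha(z^-),\phi_\alpha(z))$ associated to the discontinuity points $z$ of $\phi_\alpha$, which by Lemma \ref{RATPHI}-type reasoning (here adapted to the irrational case via \eqref{JUMPPSI}) are exactly the points in $\{\{k\rho\}:k\in\Z\}\cup\{\{\alpha+k\rho\}:k\in\Z\}$. Since $\rho$ is irrational, for each fixed value there is at most one $k$ with $\{k\rho\}=\alpha$ (two distinct such $k$ would force a rational $\rho$); this immediately yields that $D^-_{\alpha,\rho}$ and $D^+_{\alpha,\rho}$ each have at most one element and, by the same argument applied to $\alpha-k\rho$ versus $\alpha+k'\rho$, that they cannot both be nonempty. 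First I would record this, together with the key identity $\phi_{\delta,\rho,\alpha}(y+\rho)=F(\phi_{\delta,\rho,\alpha}(y))$ (which holds here because \eqref{CONDTHIR} is assumed, invoking Remark \ref{REMEQU} / Proposition \ref{PROROT}), and its mod-$1$ consequence $f(\phi(y))=\phi(R_\rho(y))$ from the already-proven implication $1)\Rightarrow 2)$ of Theorem \ref{CODDING2}.

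\textbf{Key steps.} The core computation for items 1) and 2) is to push an interval $[\phi(z^-),\phi(z))$ forward by $f$. Because $\phi$ is increasing and right-continuous (Proposition \ref{PHISCR}) and $\phi([0,1))\subset[0,1)$, one has $\phi(R_\rho(z^-))=\lim_{y\to z^-}\phi(\{y+\rho\})$; the delicate point is whether $\{y+\rho\}$ approaches $\{z+\rho\}$ from the left or whether a wrap-around at an integer occurs, i.e.\ whether $z+\rho$ crosses an integer, equivalently whether $z\in\Z-\rho+\Z$, i.e.\ $z=1-\rho$ modulo the relevant shift. For $z=\{k\rho\}$ this exceptional case is $\{k\rho\}=1-\rho$, i.e.\ $\{(k+1)\rho\}=0$, i.e.\ $k+1\in D^+$ — but one checks directly that this does not obstruct \eqref{H1}, only the hypothesis $k\notin D^-_{\alpha,\rho}$ is needed to guarantee $\{k\rho\}$ is genuinely a jump point of $\phi_\alpha$ (if $k\in D^-$ then $\{k\rho-\alpha\}=0$ might coincide with a different structure). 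So step one: show that for $z$ a jump point of $\phi$ with $z\neq$ the wrap point, $f$ maps $[\phi(z^-),\phi(z))$ bijectively and affinely onto $[\phi(R_\rho z^-),\phi(R_\rho z))$, using $f\circ\phi=\phi\circ R_\rho$ on the dense orbit and continuity/monotonicity to extend to the one-sided limits; this gives \eqref{H1} and \eqref{H2} after verifying $\{k\rho\}$ resp.\ $\{k\rho+\alpha\}$ remains a jump point under the stated index restrictions (which exclude exactly the cases where $\phi$ is continuous there). For item 3): if $D^-_{\alpha,\rho}=\emptyset$ then $\alpha\notin\{k\rho:k\geq1\}$-type orbit on the relevant side, so by Lemma \ref{LINTIR} (with the set $D$ there) $\phi$ is continuous at $\alpha$, giving $\phi(\alpha^-)=\phi(\alpha)$, and this common value equals $a$ by Lemma \ref{PROPPHIA} combined with \eqref{CODDEM} / the equivalence $2)$ of Theorem \ref{CODDING2} (which forces $a\in[\phi(\alpha^-),\phi(\alpha)]$, a singleton). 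The $\alpha\in(0,1)\Leftrightarrow a\in(0,1)$ statement then follows from Proposition \ref{INTERVALA} items 1) and 4). Formula \eqref{G2} is \eqref{H2} specialized to the pair $(k,k+1)=(0,1)$ but with $k=0$ handled separately: $[f(a^-),f(a))=[f(\phi(\alpha^-)),f(\phi(\alpha^-)))$... no — rather one computes $f([\phi(\alpha^-),\phi(\alpha)))$ with $\phi(\alpha^-)=\phi(\alpha)=a$, which by $f\circ\phi=\phi\circ R_\rho$ gives the interval $[\phi(\{\alpha+\rho\}^-),\phi(\{\alpha+\rho\}))$ provided $\alpha+\rho$ is not the wrap point; when $\alpha=1-\rho$ it is precisely the wrap point, and one gets instead the ``split'' interval in \eqref{GH}, obtained by noting $R_\rho(\alpha^-)\to 1^-$ and $R_\rho(\alpha)=0$. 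Item 4) is the mirror image with $\alpha$, $0$, $1$ and the signs reversed, using $D^+_{\alpha,\rho}=\emptyset$ $\Rightarrow$ $\phi$ continuous at $0$ (by Lemma \ref{PROPPHI0}, $\delta(\rho,\alpha)=\delta(\rho^-,\alpha)$ hence $\phi(0^-)=\phi(0)$; combined with \eqref{CONDTHIR} and Lemma \ref{INTERVALCERO}, $\phi(0)=0$) and then pushing $[\phi(1^-),\phi(1))=[f(1^-)$'s preimage structure$]$ forward.

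\textbf{Main obstacle.} The genuinely fiddly part is the careful treatment of the one-sided limits under the mod-$1$ reduction: the identity $f\circ\phi=\phi\circ R_\rho$ holds pointwise but to get the interval statements \eqref{H1}--\eqref{GH} I must control $\lim_{y\to z^-}f(\phi(y))$ and know whether it equals $f(\phi(z^-))$ or jumps, which depends on whether $\phi(z^-)$ lies at a discontinuity of $f$ (i.e.\ equals $a$, $\eta_1$, or $\eta_2$). Resolving this requires Corollary \ref{01COD} and Proposition \ref{INTERVALA} to locate $\phi(z^-),\phi(z)$ relative to the discontinuities of $f$ for each of the three parameter regimes $M_1,M_2,M_3$, and the bookkeeping of which of $D^-,D^+$ is empty dictates which regime and which formula applies. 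I expect the bulk of the written proof to be this case analysis; the underlying mechanism (conjugacy on a dense orbit plus monotone right-continuous extension) is straightforward. I would organize the write-up by first disposing of the cardinality/disjointness claims for $D^\pm$, then proving a single master claim ``$f$ affinely carries $[\phi(z^-),\phi(z))$ to $[\phi(R_\rho z^-),\phi(R_\rho z))$ unless $z$ is the wrap point, in which case it carries it to $[\phi(0^-),\phi(0))\cup[\phi(1^-),1)$-type split,'' and finally reading off items 1)--4) as instances, with the regime-dependent identification of endpoints deferred to the specific parameter cases.
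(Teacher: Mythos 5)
Your plan follows essentially the same route as the paper: the cardinality/disjointness of $D^{\pm}_{\alpha,\rho}$ from irrationality of $\rho$, a single master claim that $f$ carries a gap $[\phi(z^-),\phi(z))$ onto $[\phi(R_\rho(z)^-),\phi(R_\rho(z)))$ whenever that interval avoids the discontinuities $a$ and $\eta$ of $f$ and $z$ is not the wrap point (the paper proves exactly this for $z=\{k\rho+\beta\}$, $\beta\in\{0,\alpha\}$, using \eqref{CODES} to place $a$ in $[\phi(\alpha^-),\phi(\alpha)]$ and $\eta$ in $[\phi((1-\rho)^-),\phi(1-\rho)]$), and then items 1)--4) read off as instances, with 3) and 4) handled through one-sided limits and Lemmas \ref{PROPPHIA}, \ref{PROPPHI0}. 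Two small corrections to your "Key steps": the hypothesis $k\notin D^-_{\alpha,\rho}$ in item 1) is needed not because $\{k\rho\}$ could fail to be a jump point of $\phi$ (it always is), but --- as you correctly say in your final paragraph --- to guarantee $\{k\rho\}\neq\alpha$ so that $a\notin(\phi(\{k\rho\}^-),\phi(\{k\rho\}))$ and $f$ is monotone and continuous there (and the case $\{(k+1)\rho\}=0$ never occurs for irrational $\rho$, so it is not "$k+1\in D^+$"); likewise in item 3) the set $[\phi(\alpha^-),\phi(\alpha))=[a,a)$ is empty, so \eqref{G2} is not the image of an interval under $f$ but the computation of the two one-sided values $f(a^-)=\lim_n f(\phi(\alpha-\epsilon_n))=\phi(\{\alpha+\rho\}^-)$ and $f(a)=\phi(\{\alpha+\rho\})$ via the conjugacy, which is what the paper does.
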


\begin{proof} As $\rho\in(0,1)\setminus\Q$, the set $D^-_{\rho,\alpha}\cup
D^+_{\rho,\alpha}=\{k\in\Z^*:\{k\rho+\alpha\}=0\}$ contains at most one element. Therefore, $D^-_{\rho,\alpha}=\emptyset$ if $D^+_{\rho,\alpha}\neq\emptyset$ and $D^+_{\rho,\alpha}=\emptyset$ if $D^-_{\rho,\alpha}\neq\emptyset$.

Let $\beta\in[0,1]$ and $k\geq 1$. Let us suppose that $\{k\rho+\beta\}\notin\{0,1-\rho,\alpha\}$ and that $\{(k+1)\rho+\beta\}\neq 0$. Let us show that 
\begin{equation}\label{FPB}
f([\phi_{\alpha}(\{k\rho+\beta\}^-),\phi_{\alpha}(\{k\rho+\beta\}))=[\phi_{\alpha}(\{(k+1)\rho+\beta\}^-),\phi_{\alpha}(\{(k+1)\rho+\beta\})).
\end{equation}
Since\eqref{CONDTHIR} holds, we have also \eqref{CODES} which implies that
\[
\phi(\alpha^-)\leq a \leq\phi(\alpha)\quand \phi((1-\rho)^-)\leq \eta \leq\phi(1-\rho),
\]
$\eta=\eta_1$ if $\alpha\in[0,1-\rho)$, $\eta=\eta_2$ if $\alpha\in(1-\rho,1]$ and $\eta=a$ if $\alpha=1-\rho$. As $\phi$ is increasing and $\{k\rho+\beta\}\notin\{\alpha,1-\rho\}$, the interval $[\phi_{\alpha}(\{k\rho+\beta\}^-),\phi_{\alpha}(\{k\rho+\beta\})$ does not contain $\eta$ or $a$. Therefore, $f$
is increasing and continuous in this interval. We deduce that
\[
f([\phi_{\alpha}(\{k\rho+\beta\}^-),\phi_{\alpha}(\{k\rho+\beta\}))=[f(\phi_{\alpha}(\{k\rho+\beta\}^-)),f(\phi_{\alpha}(\{k\rho+\beta\})).
\] 
Now, as $\{k\rho+\beta\}\neq 0$, there exists a positive sequence $(\epsilon_n)_{n\in\N}$ converging to zero and such that $0\leq\{k\rho+\beta\}-\epsilon_n<1$ for all $n\in\N$.
Then, using the continuity of $f$ and \eqref{CONJUGf}, we obtain
\[
f(\phi_{\alpha}(\{k\rho+\beta\}^-))=\lim_{n\to\infty}f(\phi_{\alpha}(\{k\rho+\beta\}-\epsilon_n))=\lim_{n\to\infty}\phi_{\alpha}(R_\rho(\{k\rho+\beta\}-\epsilon_n))=\lim_{n\to\infty}\phi_{\alpha}(\{(k+1)\rho+\beta-\epsilon_n\}).
\]
As $(k+1)\rho+\beta\notin\Z$, for $n$ large enough, we have $\ent{(k+1)\rho+\beta-\epsilon_n}=\ent{(k+1)\rho+\beta}$. It follows that 
\[
f(\phi_{\alpha}(\{k\rho+\beta\}^-))=\lim_{n\to\infty}\phi_{\alpha}(\{(k+1)\rho+\beta\}-\epsilon_n)=\phi_{\alpha}(\{(k+1)\rho+\beta\}^-),
\]
which ends to prove \eqref{FPB}, since $f(\phi_{\alpha}(\{k\rho+\beta\}))=
\phi_{\alpha}(\{(k+1)\rho+\beta\})$ follows directly from \eqref{CONJUGf}.
Now, let us prove the items of the lemma.

\medskip
\noindent 1) Let $k\geq 1$ be such that $k\notin D_{\alpha,\rho}^-$. Then $\{k\rho\}\notin\{0,1-\rho\}$ and 
$\{(k+1)\rho\}\neq 0$ since $\rho\in(0,1)\setminus\Q$. Also, if $\alpha\in\{0,1\}$, then $\{k\rho\}\neq\alpha$. If $\alpha\in(0,1)$, then 
$\{k\rho\}\neq \alpha$ since $\{\alpha-k\rho\}\neq 0$. We deduce that 
\eqref{FPB} holds for $\beta=0$. 

\medskip
\noindent 2) Let $k\geq 1$ be such that $k$ and $k+1\notin D_{\alpha,\rho}^+$.
Then, $\{(k+1)\rho+\alpha\}\neq 0$ and $\{k\rho+\alpha\}\notin\{0,1-\rho\}$. Also,
$\{k\rho+\alpha\}\neq\alpha$ since $\rho\in(0,1)\setminus\Q$. We deduce that 
\eqref{FPB} holds for $\beta=\alpha$. 

\medskip
\noindent 3) If $D_{\alpha,\rho}^-=\emptyset$, then $k\rho\notin\Z+\alpha$ for every $k\geq 1$. It follows from Lemma \ref{PROPPHIA} and \eqref{CONDTHIR} that $\phi(\alpha^-)=\phi(\alpha)=a$. 
Also, by Proposition \ref{RANGE}, we have $\alpha\in(0,1)$ if and only if $a\in(0,1)$, since $\rho\in(0,1)\setminus\Q$. Now, suppose $a\in(0,1)$ and let $(\epsilon_n)_{n\in\N}$ be a positive sequence converging to zero and such that $0\leq\alpha-\epsilon_n<1$ for all $n\in\N$. Then, $\phi(\alpha-\epsilon_n)\in[0,a)$ for every $n\in\N$ and $\phi(\alpha-\epsilon_n)\to a$. Therefore,
\[
f(a^-)=\lim_{n\to \infty} f(\phi(\alpha-\epsilon_n))=\lim_{n\to \infty}\phi(\{\alpha+\rho-\epsilon_n\})=\begin{cases}
\phi(\{\alpha+\rho\}^-)& \quif \alpha\neq 1-\rho\\
\phi(1^-)& \quif \alpha=1-\rho
\end{cases}.
\]
On the other hand, $f(a)=f(\phi(\alpha))=\phi(\{\alpha+\rho\})$, which allows to conclude.

\medskip
\noindent 4) If $D_{\alpha,\rho}^+=\emptyset$, then $-k\rho\notin\Z+\alpha$ for every $k\geq 1$. It follows from Lemma \ref{PROPPHI0} that $\delta(\rho^-,\alpha)=\delta(\rho,\alpha)$ which implies that $\phi(0^-)=\phi(0)$ and  $\phi(0)=0$ by \eqref{CONDTHIR}. Now, let $(\epsilon_n)_{n\in\N}$ be a positive sequence converging to zero and such that $0\leq 1-\epsilon_n<1$ for all $n\in\N$. Then, $\phi(1-\epsilon_n)\in[0,1)$ for every $n\in\N$ and $\phi(1-\epsilon_n)\to 1$, since $\phi(1^-)=\phi(0^-)+1=\phi(1)$. Therefore,
\[
f(1^-)=\lim_{n\to \infty} f(\phi(1-\epsilon_n))=\lim_{n\to \infty}\phi(\{1-\epsilon_n+\rho\})=\lim_{n\to \infty}\phi(\{1+\rho\}-\epsilon_n)=\phi(\rho^-).
\]
As $f(0)=f(\phi(0))=\phi(\rho)$, we obtain the desired result.
\end{proof}

\begin{proposition}\label{ATTIR} Let $(\delta,a)\in M$ and denote $f=f_{\delta, a}$ the map \eqref{PROJ}. Let $\rho\in(0,1)\setminus\Q$, $\alpha\in[0,1]$ and denote $\phi=\phi_{\delta,\rho,\alpha}$. If $\delta$ and $a$ satisfy \eqref{CONDTHIR}, then,
\begin{equation}\label{CantorSet}
\bigcap_{n\ge 1} f^n([0,1))=\phi([0,1)).
\end{equation} 
\end{proposition}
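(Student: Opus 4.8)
The plan is to use that $\phi([0,1))\subseteq\Lambda:=\bigcap_{n\ge1}f^n([0,1))$ — which is precisely Lemma \ref{PHIINCLA} — and to prove the reverse inclusion by showing that the increasing sequence of complements $G_n:=[0,1)\setminus f^n([0,1))$ exhausts $[0,1)\setminus\phi([0,1))$. Since $\rho$ is irrational, Lemma \ref{PHICANT} already tells us that $[0,1)\setminus\phi([0,1))$ is dense with Cantor closure; the point is to check that its ``gaps'' are exactly the intervals created by iterating $f$.

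First I would note that $f$ is injective on $[0,1)$: in each of the forms \eqref{MAP2},\eqref{MAP3},\eqref{MAP} the images of the branches are pairwise disjoint, using $\lambda+d<1$, the definitions \eqref{DETA1ETA2} of $\eta_1,\eta_2$, and, in the $M_2$ case, $a\ge\eta_1$. Injectivity gives $f^{n+1}([0,1))=f([0,1))\setminus f(G_n)$, hence $G_{n+1}=G_1\cup f(G_n)$ and, by induction, $\bigcup_{n\ge1}G_n=\bigcup_{j\ge0}f^j(G_1)$, where $G_1=[0,1)\setminus f([0,1))$. On the other side, since $\phi$ is strictly increasing and right continuous (Proposition \ref{PHISCR}) and satisfies $\phi(0^-)\le 0\le\phi(0)$ and $\phi(1^-)\le 1$ by \eqref{CONDTHIR} and \eqref{PHI0PHI0M}, a routine monotonicity argument gives
\[
[0,1)\setminus\phi([0,1))=[0,\phi(0))\cup[\phi(1^-),1)\cup\bigcup_{y}\big[\phi(y^-),\phi(y)\big),
\]
the union running over the jump points $y\in(0,1)$ of $\phi$, which by \eqref{JUMPPSI} are exactly the points $\{k\rho\}$ and $\{k\rho+\alpha\}$, $k\ge1$, lying in $(0,1)$. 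So it suffices to show that $\bigcup_{j\ge0}f^j(G_1)$ equals this union of gaps.

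For this I would first identify $G_1$ with the first gaps of $\phi$: combining the explicit branches of $f$ with items 3) and 4) of Lemma \ref{DMDP}, when $D^-_{\alpha,\rho}=\emptyset$ the interval $[f(a^-),f(a))$ is the gap $[\phi(\{\rho+\alpha\}^-),\phi(\{\rho+\alpha\}))$ (or the ``wrap'' gap $[0,\phi(0))\cup[\phi(1^-),1)$ if $\alpha=1-\rho$), and when $D^+_{\alpha,\rho}=\emptyset$ the interval $[f(1^-),f(0))$ is the first gap $[\phi(\rho^-),\phi(\rho))$; in the exceptional cases the relevant piece of $G_1$ is instead the half-open interval(s) into which $f$ splits the exceptional gap. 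Then, since $D^-_{\alpha,\rho}$ and $D^+_{\alpha,\rho}$ each contain at most one element and at least one of them is empty (Lemma \ref{DMDP}), items 1) and 2) of Lemma \ref{DMDP} say that $f$ carries the gap at $\{k\rho\}$ onto the gap at $\{(k+1)\rho\}$ for every $k$ outside $D^-_{\alpha,\rho}$, and likewise for the gaps at $\{k\rho+\alpha\}$ outside $D^+_{\alpha,\rho}$. An induction on $j$ then shows that $\{f^j(G_1)\}_{j\ge0}$ runs through every interval appearing in the displayed description of $[0,1)\setminus\phi([0,1))$, so the two sets coincide and $\Lambda=\phi([0,1))$.

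The main obstacle is this last induction in the two exceptional cases, i.e. when $\alpha$ lies on the $R_\rho$-orbit of $0$ ($D^-_{\alpha,\rho}\ne\emptyset$ or $D^+_{\alpha,\rho}\ne\emptyset$). There the unique exceptional gap of $\phi$ straddles a discontinuity of $f$ — the point $a=\phi(\alpha)$ when $D^-_{\alpha,\rho}\ne\emptyset$, the point $0$ when $D^+_{\alpha,\rho}\ne\emptyset$ — so $f$ is not a homeomorphism on it and the clean gap-to-gap transport of items 1) and 2) of Lemma \ref{DMDP} fails at exactly one index of the chain. One then has to check by hand, using \eqref{CONJUGf} and the precise locations of $\phi(\alpha^-),\phi(\alpha)$ relative to $a$ (resp. of $\phi(0^-),\phi(0)$ relative to $0$) furnished by Proposition \ref{INTERVALA}, that the two half-open pieces into which this gap is cut, together with the ``extra'' interval of $G_1$ coming from the other discontinuity, recombine to produce exactly the missing gap further along the corresponding chain; once that single correction is made, all remaining gaps of $\phi$ are again produced one by one by iterating $f$ as in the generic case.
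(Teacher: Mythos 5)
Your proposal is correct and follows essentially the same route as the paper: the inclusion $\phi([0,1))\subset\Lambda$ via Lemma \ref{PHIINCLA}, the decomposition of $[0,1)\setminus\Lambda$ as $\bigcup_{j\ge 0}f^j(G_1)$ with $G_1$ the union of the two intervals omitted by the branches of $f$, the matching description of $[0,1)\setminus\phi([0,1))$ as the gaps of $\phi$ at $\{k\rho\}$ and $\{k\rho+\alpha\}$, and the gap-to-gap transport via Lemma \ref{DMDP} with the same three-case analysis according to whether $D^{\pm}_{\alpha,\rho}$ is empty. The "by hand" recombination you describe in the exceptional cases is exactly what the paper carries out in its Cases 2 and 3.
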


\begin{proof} We already know by Lemma \ref{PHIINCLA} that $\phi([0,1))\subset\Lambda$. So it remains to show that $\Lambda\subset\phi([0,1))$. We note that
for $a\in(0,1)$, we have
\begin{equation}\label{DECOMPAT}
\Lambda=[0,1)\setminus\bigcup_{j\geq 0}G_1^j\cup G_2^j,
\quad\text{where}\quad
G_1^j=f^j([f(1^-),f(0)))\quand G_2^j=f^j(G_2^0),
\end{equation}
with $G_2^0=[f(a^-),f(a))$ if $f$ is of the type \eqref{MAP2} or \eqref{MAP} and $G_2^0=[0,f(a))\cup[f(a^-),1)$ if $f$ is of the type \eqref{MAP3}. 
Now, if $a\in\{0,1\}$, then \eqref{DECOMPAT} still holds but with $G_2^0=\emptyset$. Also, by  Proposition \ref{RANGE}, we have $a\in\{0,1\}$ if and only if $\alpha\in\{0,1\}$. 

On the other hand,  $\phi([0,1))\subset[0,1)$ and the gaps of $\phi$ can only be $H_0=[0,\phi(0))\cup[\phi(1^-),1)$ and the sets
\[
H_1^k=[\phi(\{k\rho\}^-),\phi(\{k\rho\}))
\quand H_2^k=[\phi(\{k\rho+\alpha\}^-),\phi(\{k\rho+\alpha\}))\quad\forall k\geq 1,
\]
by Proposition \ref{PHISCR}.
It follows that
\[
\phi([0,1))=[0,1)\setminus\bigcup_{k\geq 1}H_1^k\cup H_2^k\cup H_0.
\]
Also, for any $\alpha\in[0,1]$, we have always $\phi(\alpha^-)\leq a \leq\phi(\alpha)$ and $\phi((1-\rho)^-)\leq \eta \leq\phi(1-\rho)$, with $\eta=\eta_1$ if $\alpha\in[0,1-\rho)$, $\eta=\eta_2$ if $\alpha\in(1-\rho,1]$ and $\eta=a$ if $\alpha=1-\rho$.

So, we have to show that all the sets $H$ are contained in the union of the sets $G$. We distinguish three cases using the sets $D^-_{\rho,\alpha}$ and $D^{+}_{\rho,\alpha}$ defined in Lemma \ref{DMDP}. In the following 1), 2), 3) and 4) refer to the items of Lemma \ref{DMDP}.

\medskip
\noindent
{\bf Case 1:} $D^-_{\rho,\alpha}=D^{+}_{\rho,\alpha}=\emptyset$. Then,  $\phi(0^-)=\phi(0)=0$ and
$\phi(1^-)=\phi(0^-)+1=1$ by 4). It follows that $H_0=\emptyset$. Also, by 4) and 1) we have
\[
H_1^1=G_1^0\quand H_1^{k}=f(H_1^{k-1})=f^{k-1}(G_1^0)=G_1^{k-1}\quad\forall k\geq 1.
\]
If $a\in\{0,1\}$, then $\alpha\in\{0,1\}$ and $H_2^k=H_1^k$ for every $k\geq 1$. Thus, there is nothing more to prove. If $a\in(0,1)$, then $\alpha\in(0,1)$. Also, $D^{+}_{\rho,\alpha}=\emptyset$ implies $\alpha\neq 1-\rho$. It follows from 3) and 2) that 
\[
H_2^1=G_2^0\quand H_2^{k}=f(H_2^{k-1})=f^{k-1}(G_2^0)=G_2^{k-1}\quad\forall k\geq 1.
\]

\medskip
\noindent
{\bf Case 2:} $D^-_{\rho,\alpha}\neq\emptyset$. Then, $D^+_{\rho,\alpha}=\emptyset$ and there exists a unique $l\geq 1$ such that $\{\alpha-l\rho\}=0$. This implies $\alpha\notin\{0,1-\rho,1\}$ and $\{k\rho\}=\{(k-l)\rho+\alpha\}$ for every $k\geq 1$. So, we have
\[
\bigcup_{k\geq 1}H_1^k\cup H_2^k\cup H_0=\left(\bigcup_{k= 1}^{l}H_1^k\right)
\cup\left(\bigcup_{m\geq 1}H_2^m\cup H_0\right)\quad\text{with}\quad H_1^l=[\phi(\alpha^-),\phi(\alpha)).
\]
Now, as $D^+_{\rho,\alpha}=\emptyset$, we have  $\phi(0^-)=\phi(0)=0$ and $\phi(1^-)=\phi(0^-)+1=1$, by 4). It follows that $H_0=\emptyset$. On the other hand, as $\phi(\alpha^-)\leq a\leq\phi(\alpha)$, we have
\[
H_1^{l}=[\phi(\alpha^-),a)\cup[a, \phi(\alpha)).
\]
As $\alpha\neq 1-\rho$ and $\phi((1-\rho)^-)\leq\eta\leq\phi(1-\rho)$, we have $\eta\notin H_1^{l}$ since  $\phi$ is increasing. Therefore, the map $f$ is increasing and continuous in both intervals. It follows that
\[
f(H_1^{l})=[f(\phi(\alpha^-)), f(a^-))\cup[f(a),f(\phi(\alpha)))=[\phi(\{\rho+\alpha\}^-), f(a^-))\cup[f(a),\phi(\{\rho+\alpha\})=H_2^1\setminus G_2^0.
\]
and $H_2^1=f(H_1^{l})\cup G_2^0$. On the other hand,  item 2) ensures that $H_2^{m+1}=f(H_2^{m})$ for all $ m\geq 1$, since $D^+_{\rho,\alpha}=\emptyset$. So, we obtain   
\[
H_2^m=f^m(H_1^l)\cup G_2^{m-1}\quad\forall m\geq 1.
\]
Still because $D^+_{\rho,\alpha}=\emptyset$, item 4) gives 
\[
H_1^1=G_1^0.
\]
So, if $l=1$, then 
\[
H_1^1=G_1^0=H_1^{l},
\quand H_2^{m}=G_1^{m}\cup G_2^{m-1}\quad\forall m\geq 1,
\]
and we are done. Now, if $l\geq2$, then $k\notin D^-_{\alpha,\rho}$ and $H^{k+1}_1=f(H_1^{k})$ for all $k\in\{1,\dots,l-2\}$, by 1). This implies
\[
H_1^k=f^{k-1}(H_1^1)=G_1^{k-1}\quad \forall k\in\{2,\dots,l\}.
\]
In particular $H^l_1=G_1^{l-1}$ and we deduce that $H_2^{m}=G_1^{l-1+m}\cup G_2^{m-1}$ for all $m\geq 1$.

\medskip
\noindent
{\bf Case 3:} $D^+_{\rho,\alpha}\neq\emptyset$. Then, $D^-_{\rho,\alpha}=\emptyset$ and there exists a unique $l\geq 1$ such that $\{l\rho+\alpha\}=0$. This implies $\alpha\notin\{0,1\}$ and $\{k\rho+\alpha\}=\{(k-l)\rho\}$ for every $k\geq 1$. Therefore,  
\[
\bigcup_{k\geq 1}H_1^k\cup H_2^k\cup H_0=\left(\bigcup_{k= 1}^{l}H_2^k\cup H_0\right)
\cup\left(\bigcup_{m\geq 1}H_1^m\right)\quad\text{with}\quad H_2^l\cap[0,1)=[0,\phi(0))\subset H_0.
\]
As $\alpha$ and $1-\rho\notin\{0,1\}$, we have 
$\eta,a\in(\phi(0),\phi(1^-))$ and in both intervals defining $H_0$ the map $f$ is increasing and continuous. Therefore, 
\[
f(H_0)=[f(\phi(1^-)), f(1^-))\cup[f(0),f(\phi(0)))=[\phi(\rho^-), f(1^-))\cup[f(0),\phi(\rho))=H_1\setminus G_1^0.
\]
So we have $H_1^1=f(H_0)\cup G_1^0$. On the other hand,  by 1) we have $H_1^{m+1}=f(H_1^{m})$ for all $ m\geq 1$, since $D^-_{\rho,\alpha}=\emptyset$. So we obtain,   
\[
H_1^m=f^m(H_0)\cup G_1^{m-1}\quad\forall m\geq 1.
\]
Still because $D^-_{\rho,\alpha}=\emptyset$ and $\alpha\notin\{0,1\}$, item 3) ensures 
\[
H_2^1=G_2^0\quif \alpha\neq 1-\rho\quand H_0= G_2^0\quif \alpha=1-\rho.
\]
If $l=1$, that is $\alpha=1-\rho$, we have 
\[
H_2^l\cap[0,1)\subset H_0=G_2^0 \quand H_1^m=G_2^m\cup G_1^{m-1}\quad\forall m\geq 1,
\]
and we are done. Suppose $l\geq 2$, that is $\alpha\neq1-\rho$. Then, $\{\alpha-l\rho\}=0$ implies $\{(l-1)\rho+\alpha\}=\{1-\rho\}$ and 
\[
H_2^{l-1}=[\phi((1-\rho)^-), \phi(1-\rho))=[\phi((1-\rho)^-), \eta)\cup[\eta,\phi(1-\rho)),
\]
since $\phi((1-\rho)^-)\leq\eta\leq\phi((1-\rho))$.  Also $a=\phi(\alpha)$ and  we have $a\notin H_2^{l-1}$, since $\alpha\neq 1-\rho$ and $\phi$ is increasing. Therefore, the map $f$ is increasing and continuous in both intervals and
\[
f(H_2^{l-1})=[f(\phi((1-\rho)^-)), f(\eta^-))\cup[f(\eta),f(\phi(1-\rho)))=[\phi(1^-), 1)\cup[0,\phi(0))=H_0.
\]
So, if $l=2$, then $H_0=f(H_2^1)=G_2^1$ and we conclude as before. Finally, if $l>2$, then
$H^{k+1}_2=f(H_2^{k})$ for all $k\in\{1,\dots,l-2\}$, by 2). This implies
\[
H_2^k=f^{k-1}(H_2^1)=G_2^{k-1}\quad \forall k\in\{2,\dots,l-1\}.
\]
It follows that we have also
\[
H_2^l\cap[0,1)\subset H_0=f(H_2^{l-1})=G_2^{l-1} \quand H_1^m=G_2^{m+l-1}\cup G_1^{m-1}\quad\forall m\geq 1,
\]
which ends the proof.
\end{proof}

\subsubsection{Image of $\phi$ and attractor of $f$ for $\rho$ rational}

\begin{lemma}\label{IMAGEPHIRAT} Let $(\delta,a)\in M$ and denote $f=f_{\delta, a}$ the map \eqref{PROJ}. Let $\rho=p/q\in (0,1)$ with $0<p<q$ co-prime natural numbers, $\alpha\in[0,1]$ and denote $\phi=\phi_{\delta,\rho,\alpha}$. Then, $\phi([0,1))$ is the union of the two sets 
\[
P_1:=\{\sigma_0,\dots,\sigma_{q-1}\} \quand P_2:=\{\gamma_0,\dots,\gamma_{q-1}\},
\]
which have the following properties whenever $\delta$ and $a$ satisfy \eqref{CONDTHIR}.
\begin{enumerate}
\item If $\alpha\notin \{i/q: i=0,\dots,q\}$, the sets $P_1$ and $P_2$ are two $q$-periodic orbits of  $f$ such that
\begin{equation}\label{RATPART}
0\le \sigma_0<\gamma_0<\cdots<\sigma_{q-1}<\gamma_{q-1}<1,
\end{equation}
with $\eta\in (\gamma_l,\sigma_{l+1}]$ for some  $l\in\{0,\dots,q-2\}$  and $a\in (\sigma_s,\gamma_s]$ for some $s\in\{0,\dots,q-1\}$. Also, for each $i\in \{0,\dots,q-1\}$ there exists  $j_i\in \{0,\dots,q-1\}$ such that
\begin{equation}\label{RATINC}
f([\gamma_i,\sigma_{i+1}))=[\gamma_{j_i},\sigma_{j_i+1}) \quif i\neq l,\quad f([\sigma_i,\gamma_i))=[\sigma_{j_i},\gamma_{j_i}) \quif i\neq s 
\end{equation}
and
\begin{equation}\label{RATINC1}
f([\sigma_s,\gamma_s))\subset[\sigma_{j_s},\gamma_{j_s}).
\end{equation}
On the other hand, 
\begin{equation}\label{RATINC2}
f([\gamma_l,\sigma_{l+1}))=[0,\sigma_0)\cup [\gamma_{q-1},1)\quand f([0,\sigma_0)\cup [\gamma_{q-1},1))\subset [\gamma_{p-1},\sigma_p).
\end{equation}
\item If $\alpha\in \{i/q: i=0,\dots,q\}$, then $P_1=P_2$ and is a $q$-periodic orbit of $f$.
\end{enumerate}
\end{lemma}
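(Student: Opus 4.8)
The plan rests on two inputs already available: for rational $\rho=p/q$ the function $\phi$ is a step function with an explicitly describable jump set (Lemma~\ref{RATPHI}), and, under \eqref{CONDTHIR}, one has $f\circ\phi=\phi\circ R_\rho$ on $[0,1)$ together with $\phi([0,1))\subset[0,1)$ and the coding identities \eqref{CODES} (all furnished by the equivalences in Remark~\ref{REMEQU} and Proposition~\ref{CNSCONJ}). Everything else is a transcription of the endpoint computations from the proof of Lemma~\ref{DMDP} to this finite setting.

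\textbf{Labelling and the two orbits.} By Lemma~\ref{RATPHI} the jump set of $\phi$ in $[0,1)$ is $\{0,\tfrac1q,\dots,\tfrac{q-1}q\}\cup\{\alpha_i:i\}$ with $\alpha_i=\{\alpha+\tfrac iq\}$. Writing $\alpha=\tfrac mq+\beta$ with $m=\ent{q\alpha}$ and $\beta=\tfrac1q\{q\alpha\}\in[0,\tfrac1q)$, a short computation gives $\{\alpha_i:i\}=\{\tfrac kq+\beta:k=0,\dots,q-1\}$, so the jump points listed increasingly are $0,\beta,\tfrac1q,\tfrac1q+\beta,\dots$ when $\beta\neq0$ and just $0,\tfrac1q,\dots$ when $\beta=0$. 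Since $p,q$ are coprime, at each jump point exactly one summand of the finite sum representing $\phi$ is discontinuous, with a coefficient of constant sign, so every jump of $\phi$ is strictly positive; hence $\phi([0,1))$ has $2q$ points if $\beta\neq0$ and $q$ points if $\beta=0$. Set $\sigma_i:=\phi(\tfrac iq)$ and, for $\beta\neq0$, $\gamma_i:=\phi(\tfrac iq+\beta)$ (take $\gamma_i=\sigma_i$ when $\beta=0$); then $\phi([0,1))=P_1\cup P_2$. Positivity of the jumps together with $\sigma_0=\phi(0)\ge0$ and $\gamma_{q-1}=\phi(1^-)<1$ (Lemma~\ref{INTERVALCERO}) yields the interlacing \eqref{RATPART}. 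From $f\circ\phi=\phi\circ R_\rho$, $R_\rho(\tfrac iq)=\tfrac{(i+p)\bmod q}q$ and $R_\rho(\tfrac iq+\beta)=\tfrac{(i+p)\bmod q}q+\beta$, one reads $f(\sigma_i)=\sigma_{(i+p)\bmod q}$ and $f(\gamma_i)=\gamma_{(i+p)\bmod q}$; as $\gcd(p,q)=1$ the shift $i\mapsto(i+p)\bmod q$ is a $q$-cycle, so $P_1$ and $P_2$ are $q$-periodic orbits — disjoint when $\beta\neq0$ (item~1) and coinciding when $\beta=0$ (item~2).

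\textbf{Positions of $a,\eta$ and the interval identities.} Evaluating \eqref{CODES} just to the left and right of the jump point $\alpha=\tfrac mq+\beta$ gives $\sigma_m<a\le\gamma_m$, i.e.\ $a\in(\sigma_s,\gamma_s]$ with $s=\ent{q\alpha}$; the same at the jump point $1-\rho=\tfrac{q-p}q$ gives $\eta\in(\gamma_l,\sigma_{l+1}]$ with $l=q-p-1\in\{0,\dots,q-2\}$. The $2q$ arcs complementary to $\phi([0,1))$ on the circle are exactly the sets $[\phi(c^-),\phi(c))$ as $c$ runs over the jump points, the point $c=0$ producing the wrap-around arc $[\gamma_{q-1},1)\cup[0,\sigma_0)$ (Lemma~\ref{LIMINFIM} makes $\phi(c^-)\in\phi([0,1))$): $c=\tfrac iq$ ($i\ge1$) gives $[\gamma_{i-1},\sigma_i)$ and $c=\tfrac iq+\beta$ gives $[\sigma_i,\gamma_i)$. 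Exactly two of these arcs meet the discontinuity set of $f$, namely $[\sigma_s,\gamma_s)\ni a$ and $[\gamma_l,\sigma_{l+1})\ni\eta$. On every other arc $f$ is affine of slope $\lambda$ and continuous, so $f([\phi(c^-),\phi(c)))=[f(\phi(c^-)),f(\phi(c)))$; computing the left endpoint by $f(\phi(c^-))=\phi(\{c+\rho\}^-)$ (the rule from the proof of Lemma~\ref{DMDP}, valid as $c+\rho\notin\Z$ for these $c$) and locating the plateau of $\phi$ containing $\{c+\rho\}$ yields \eqref{RATINC} with $j_i=(i+p)\bmod q$, the same index serving both $[\gamma_i,\sigma_{i+1})$ and $[\sigma_i,\gamma_i)$ since the shift acts identically on both jump-point families. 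On $[\sigma_s,\gamma_s)$ the map $f$ jumps upward by $d$ at $a$ (branches $\lambda x+\delta,\ \lambda x+\delta+d$ for $M_1$ and $\lambda x+\delta-1,\ \lambda x+\delta-1+d$ for $M_3$), so its image is a union of two subintervals of $[\sigma_{j_s},\gamma_{j_s})$ separated by a gap of length $d$: this is \eqref{RATINC1}. On $[\gamma_l,\sigma_{l+1})$ one has $c=1-\rho$, hence $\{c+\rho\}=0$, and $f$ jumps from $1$ down to $0$ at $\eta$ (uniformly in $M_1,M_3$, since $f(\eta^-)=1$ and $f(\eta)=0$), so $f([\gamma_l,\eta))=[\gamma_{q-1},1)$ and $f([\eta,\sigma_{l+1}))=[0,\sigma_0)$, the first half of \eqref{RATINC2}. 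For the second half, on $[0,\sigma_0)$ and $[\gamma_{q-1},1)$ the map $f$ equals $\lambda x+\delta$ and $\lambda x+\delta+d-1$ respectively, with $f(0)=\delta$, $f(\sigma_0^-)=f(\sigma_0)=\sigma_p$, $f(\gamma_{q-1})=\gamma_{p-1}$ (again by the left-endpoint rule) and $f(1^-)=\lambda+\delta+d-1$; since $d<1-\lambda$ one gets $\gamma_{p-1}=f(\gamma_{q-1})<\delta=f(0)$ and $\lambda+\delta+d-1<\delta$, so the image is $[\gamma_{p-1},\lambda+\delta+d-1)\cup[\delta,\sigma_p)\subset[\gamma_{p-1},\sigma_p)$.

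\textbf{Main obstacle.} The delicate part is not a single estimate but the index bookkeeping: checking that $i\mapsto(i+p)\bmod q$ governs both jump-point families simultaneously so that one $j_i$ works throughout \eqref{RATINC}, and that the boundary-degenerate subcases remain consistent with the stated identities — for instance when $\delta=\delta(\rho^-,\alpha)$ one has $\sigma_0=0$ and $\phi(1-\rho)=\eta$, so $[0,\sigma_0)$ and $[\eta,\sigma_{l+1})$ are empty and the equalities in \eqref{RATINC2} still hold. The left-limit manipulations needed along the way are direct copies of those in the proof of Lemma~\ref{DMDP}.
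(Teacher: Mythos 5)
Your proof is correct and takes essentially the same route as the paper's: the same labelling of $\sigma_i=\phi(i/q)$ and the interlaced $\gamma_i$, the same use of Lemma \ref{RATPHI} and the conjugacy \eqref{CONJUGf} to identify the two $q$-periodic orbits, the same localization of $a$ and $\eta$ via \eqref{CODES}, and the same endpoint computations for the images of the complementary arcs, including the degenerate subcases $\sigma_0=0$ and $\eta=\sigma_{l+1}$. The only difference is cosmetic: you write out the affine branch formulas of $f$ explicitly where the paper argues from the abstract inequalities $f(a^-)<f(a)$ and $f(1^-)<f(0)$.
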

\begin{proof} For every $i\in \{0,\dots,q-1\}$ let $\sigma_i:=\phi(i/q)$,  $\gamma_i:=\phi(\alpha_i)$ and $\alpha_i=\{\alpha+i/q\}$. Without loss of generality,  let us assume  that $\alpha_0\in [0,1/q)$ and $\alpha_i=\alpha_0+i/q$ for all $i\in \{1,...,q-1\}$. From Proposition \ref{PHISCR} we know that $\phi$ is non-decreasing and from Lemma \ref{RATPHI} we know that $\phi$ is piecewise constant in the interval $[0,1)$, with a jump at each of the points $0,1/q,\dots(q-1)/q$ and $\alpha_0,\alpha_1,\dots,\alpha_{q-1}$. It follows that $\phi([0,1))=P_1\cup P_2$, with $P_1=P_2$ if
$\alpha\in \{i/q: i=0,\dots,q\}$ and $P_1\cap P_2=\emptyset$ otherwise. Therefore, if  $\alpha\notin \{i/q: i=0,\dots,q\}$ then $\sigma_0<\gamma_0<\cdots<\sigma_{q-1}<\gamma_{q-1}$.

Since $\delta$ and $a$ satisfy \eqref{CONDTHIR}, we have $\phi([0,1))\subset[0,1)$ and the relation \eqref{CONJUGf} holds. On the one hand, we deduce that $0\leq\sigma_0$ and $\gamma_{q-1}<1$, which ends to prove \eqref{RATPART}. On the other hand, for any $i\in \{0,...,q-1\}$
\begin{equation}\label{CONJRAT}
f^k(\sigma_i)=\phi(\{i/q+kp/q\}) \quand f^k(\gamma_i)=\phi(\{\alpha_0+i/q+kp/q\})\quad\forall k\in\N.
\end{equation}
In particular,  $f^q(\sigma_i)=\sigma_i$ and $f^q(\gamma_i)=\gamma_i$. As  $k\mapsto \{\tfrac{i}{q}+k\frac{p}{q}\}$ is a bijection from $\{0,...,q-1\}$ onto $\{0,\frac{1}{q},...,\frac{q-1}{q}\}$ and  $k\mapsto \{\alpha_i+k\frac{p}{q}\}$ is a bijection from $\{0,...,q-1\}$ onto $\{\alpha_0,\alpha_1,...,\alpha_{q-1}\}$, we deduce that the sets $P_1=\{\sigma_0,\dots,\sigma_{q-1}\}$ and $P_2=\{\gamma_0,\dots,\gamma_{q-1}\}$ are $q$-periodic orbits of $f$. In the case where $\alpha\in \{i/q: i=0,\dots,q\}$, we have $P_1=P_2$, which proves item 2).

To prove item 1), let us suppose that $\alpha\notin \{i/q: i=0,\dots,q\}$. Then, $\alpha\neq 1-\rho$ and as \eqref{CONDTHIR} holds we deduce from \eqref{CODES} that
\[
\theta_a(\phi(y))=\theta_{\alpha}(y)
\quand
\theta_\eta(\phi(y))=\theta_{1-\rho}( y)
\quad\forall y\in[0,1).
\]
This implies that $a\in (\phi(\alpha^-),\phi(\alpha)]$ and $\eta\in (\phi((1-\rho)^-),\phi(1-\rho)]$. Therefore $a\in (\sigma_s,\gamma_s]$, for the $s\in \{0,\dots,q-1\}$ such that $\alpha=\alpha_s$. Let $l:=q-p-1\in\{0,\dots,q-2\}$, then $(l+1)/q=1-\rho$, so $\phi(1-\rho)=\phi((l+1)/q)=\sigma_{l+1}$ and $\eta\in (\gamma_l,\sigma_{l+1}]$. 

Let $i\in \{0,\dots,q-1\}$ and $j_i\in \{0,\dots,q-1\}$ be such that $j_i=i+p \pmod{q}$. 
Then, from \eqref{CONJRAT} it follows that
\begin{equation}\label{RATPERMUT0}
f(\sigma_i)=\phi(j_i/q)=\sigma_{j_i} \quand f(\gamma_i)=\phi(\{\alpha_0+j_i/q\})=\phi(\alpha_{j_i})=\gamma_{j_i} \quand f(\sigma_{i+1})=\sigma_{j_{i+1}},
\end{equation}
where $j_{i+1}=i+1+p \pmod{q}$, so $j_{i+1}=j_i+1$ if $i\neq l$ and $(j_0,j_l,j_{l+1},j_{q-1})=(p,q-1,0,p-1)$. 

Now, for any interval of the form $[b,c)\subset[0,1)$ such that $\{\eta,a\}\cap(b,c)=\emptyset$, we have $f([b,c))=[f(b),f(c^-))$ with $f(c^-)=f(c)$ if $c\notin\{\eta,a\}$. 
 Also, on one hand, either $\eta\in(\gamma_l,\sigma_{l+1})$ or $\eta=\sigma_{l+1}$ and on the other hand, either $a\in(\sigma_s,\gamma_s)$ or 
$a=\gamma_s$. Then, from \eqref{RATPART} and \eqref{RATPERMUT0}, we deduce that
\[
f([\gamma_i,\sigma_{i+1}))=[f(\gamma_i),f(\sigma_{i+1}))=[\gamma_{j_i},\sigma_{j_{i+1}})=[\gamma_{j_i},\sigma_{j_i+1}) \quad \text{for all}\quad  i\in  \{0,...,q-2\}\setminus\{l\},
\]
since $j_{i+1}=j_i+1$ if $i\neq l$ and
\[
f([\sigma_i,\gamma_i))= [f(\sigma_{i}),f(\gamma_i))=[\sigma_{j_i},\gamma_{j_i}) \quad \text{for all}\quad  i\in \{0,...,q-1\}\setminus\{s\}, 
\]
which proves \eqref{RATINC}.
Also, $[\sigma_s,\gamma_s)=[\sigma_s,a)\cup[a,\gamma_{s})$ and 
$ f([\sigma_s,\gamma_s))=[\sigma_{j_s},f(a^-))\cup[f(a),\gamma_{j_s})\subset [\sigma_{j_s},\gamma_{j_s})$, since $f(a^{-})<f(a)$. Analogously, 
\[
f([\gamma_l,\sigma_{l+1}))=[f(\gamma_l),f(\eta^-))\cup[f(\eta),f(\sigma_{l+1}))=[\gamma_{j_l},1)\cup [0, \sigma_{j_{l+1}})=[\gamma_{q-1},1)\cup[0,\sigma_0),
\]
as $j_l=q-1$ and $j_{l+1}=0$. Finally, as $a$ and $\eta\notin[0,\sigma_0]$, from \eqref{RATPERMUT0} we obtain
\[
f([0,\sigma_0)\cup [\gamma_{q-1},1))= [f(0),f(\sigma_0))\cup [f(\gamma_{q-1}),f(1^-))=[\gamma_{j_{q-1}},f(1^-))\cup [f(0),\sigma_{j_0})\subset [\gamma_{p-1},\sigma_p),
\] 
since $f(1^-)<f(0)$, $j_{q-1}=p-1$ and $j_0=p$.
\end{proof}

\begin{proposition}\label{ATTRAT} Let $(\delta,a)\in M$ and denote $f=f_{\delta, a}$ the map \eqref{PROJ}. Let $\rho=p/q\in (0,1)$ with $0<p<q$ co-prime natural numbers, $\alpha\in [0,1]\setminus \{\tfrac{i}{q}:i=0,...,q\}$ and denote 
$\phi=\phi_{\delta,\rho,\alpha}$. If $\delta$ and $a$ satisfy \eqref{CONDTHIR}, then 
\[
\bigcap\limits_{n\ge 1} f^n([0,1))=\phi([0,1)).
\]
\end{proposition}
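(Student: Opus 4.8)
The plan is to establish the two inclusions $\phi([0,1))\subset\bigcap_{n\ge 1}f^n([0,1))$ and $\bigcap_{n\ge 1}f^n([0,1))\subset\phi([0,1))$ separately. The first inclusion is already available: it is exactly Lemma \ref{PHIINCLA}, which applies since $\delta$ and $a$ satisfy \eqref{CONDTHIR}. So the whole content of the proof is the reverse inclusion, and for this the structural description of $\phi([0,1))$ provided by Lemma \ref{IMAGEPHIRAT} is the right tool. Since $\alpha\notin\{i/q:i=0,\dots,q\}$, item 1) of that lemma tells us $\phi([0,1))=P_1\cup P_2$ where $P_1=\{\sigma_0,\dots,\sigma_{q-1}\}$ and $P_2=\{\gamma_0,\dots,\gamma_{q-1}\}$ are two $q$-periodic orbits interlaced as in \eqref{RATPART}, and it identifies the images under $f$ of each of the $2q$ complementary open gaps of $\phi([0,1))$ in $[0,1)$ via \eqref{RATINC}, \eqref{RATINC1} and \eqref{RATINC2}.

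The key step is then a combinatorial argument showing that the gaps of $\phi([0,1))$ are eventually swept out of $[0,1)$, or rather absorbed, by forward iteration of $f$ — more precisely, that every point of $[0,1)\setminus\phi([0,1))$ leaves $[0,1)\setminus\phi([0,1))$ after finitely many iterations, i.e.\ is not in $f^n([0,1))$ for $n$ large, while every point of $\phi([0,1))$ stays in $f^n([0,1))$ for all $n$. For the latter, periodicity of $P_1$ and $P_2$ is enough. For the former, I would argue as follows. Label the $2q$ gaps: the ``$\sigma$-type'' gaps $[\sigma_i,\gamma_i)$ for $i\in\{0,\dots,q-1\}$, the ``$\gamma$-type'' gaps $[\gamma_i,\sigma_{i+1})$ for $i\in\{0,\dots,q-2\}$, and the ``wrap-around'' gap $[0,\sigma_0)\cup[\gamma_{q-1},1)$. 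By \eqref{RATINC}, $f$ permutes the $\sigma$-type gaps except $[\sigma_s,\gamma_s)$ (whose image is strictly contained in $[\sigma_{j_s},\gamma_{j_s})$ by \eqref{RATINC1}), and permutes the $\gamma$-type gaps except $[\gamma_l,\sigma_{l+1})$, whose image by \eqref{RATINC2} is the wrap-around gap, which in turn maps strictly inside $[\gamma_{p-1},\sigma_p)$. Since $f$ is injective and contracting, and the orbit of the index $s$ (resp.\ $l$) under the permutation $i\mapsto i+p\bmod q$ covers all of $\{0,\dots,q-1\}$ (as $p,q$ are coprime), iterating $f$ shows that every gap's image eventually lands strictly inside another gap, with a loss of length controlled by the factor $\lambda$ at each pass through $[\sigma_s,\gamma_s)$ or through the $[\gamma_l,\sigma_{l+1})\to[0,\sigma_0)\cup[\gamma_{q-1},1)\to[\gamma_{p-1},\sigma_p)$ route. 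Hence $\bigcup_{j\ge 0}f^j(\text{all gaps})$ has the same total length as the union of the gaps, so $\bigcap_{n\ge 1}f^n([0,1))$ has zero Lebesgue measure and its complement in $[0,1)$ is the (closure of the) union of all these forward images; tracking which forward images coincide shows the complement is exactly $[0,1)\setminus\phi([0,1))$.

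I expect the main obstacle to be the bookkeeping in that last step: one must check that the forward images of the $2q$ gaps, together with the extra ``shrinking'' copies produced at the exceptional indices $s$ and $l$, exactly tile $[0,1)\setminus\phi([0,1))$ without overlap and without leaving anything out — i.e.\ that $\Lambda=[0,1)\setminus\bigcup_{j\ge 0}f^j(G)$ over the appropriate finite family of generating gaps $G$, in the same spirit as the decomposition \eqref{DECOMPAT} used in the irrational case. Concretely, one shows $\Lambda=[0,1)\setminus\bigl(\bigcup_{j\ge 0}f^j([\sigma_s,\gamma_s))\cup\bigcup_{j\ge 0}f^j([0,\sigma_0)\cup[\gamma_{q-1},1))\bigr)$ and then uses \eqref{RATINC}, \eqref{RATINC1}, \eqref{RATINC2} to identify this complement with $P_1\cup P_2$. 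Once the two inclusions are in place, combining them with Lemma \ref{PHIINCLA} gives $\bigcap_{n\ge 1}f^n([0,1))=\phi([0,1))$, completing the proof.
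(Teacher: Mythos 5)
Your setup is right: the inclusion $\phi([0,1))\subset\bigcap_{n\ge1}f^n([0,1))$ is exactly Lemma \ref{PHIINCLA}, and Lemma \ref{IMAGEPHIRAT} is indeed the structural input for the converse. But the converse inclusion is the entire content of the proposition, and your plan for it does not go through. Two concrete problems. First, the formula you propose,
$\Lambda=[0,1)\setminus\bigl(\bigcup_{j\ge 0}f^j([\sigma_s,\gamma_s))\cup\bigcup_{j\ge 0}f^j([0,\sigma_0)\cup[\gamma_{q-1},1))\bigr)$,
is false: the $j=0$ term already removes $\sigma_s$, yet $\sigma_s\in P_1\subset\phi([0,1))\subset\Lambda$ (and for $j\ge1$ the terms $f^j(\sigma_s)$ sweep out all of $P_1$). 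You have conflated the gaps of the \emph{set} $\phi([0,1))$ (the $2q$ intervals between consecutive points of $P_1\cup P_2$) with the gaps of the \emph{map} $f$ (the two components of $[0,1)\setminus f([0,1))$); the correct analogue of \eqref{DECOMPAT} removes forward images of the latter, not of the former. Second, the conclusion you extract from the length bookkeeping --- that $\bigcap_{n\ge1}f^n([0,1))$ has zero Lebesgue measure --- is attainable (the forward images of $[0,1)\setminus f([0,1))$ are pairwise disjoint and their measures sum to $1$), but it is far too weak: a measure-zero $\Lambda$ could still be an uncountable set strictly containing the $2q$ points of $P_1\cup P_2$. The step you defer as ``the main obstacle'' --- that the forward images of the holes exhaust all of $[0,1)\setminus(P_1\cup P_2)$ --- \emph{is} the proposition, and no mechanism for proving it is supplied.

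The mechanism the paper uses is a contraction-plus-anchoring argument that your sketch lacks. From \eqref{RATINC}--\eqref{RATINC2} one first deduces \eqref{qIMAGE}: $f^q([0,1))$ is contained in the union of the first $q$ forward images of the two special gaps $A_s=[\sigma_s,\gamma_s)$ and $E_l=[\gamma_l,\sigma_{l+1})$, each split by $a$, respectively $\eta$, into two subintervals $J=[b,c)$. One then proves by induction (\eqref{fkJ}) that $f^k(J)$ stays a single half-open interval of length at most $\lambda^k|b-c|$ whose interior avoids $a$ and $\eta$, and --- this is the decisive point, \eqref{PBORDER} --- which always retains at least one endpoint on the periodic set $P=P_1\cup P_2$. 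Hence every $x\in\bigcap_{n\ge1}f^n([0,1))$ lies within $\lambda^k$ of $P$ for every $k$, forcing $x\in P=\phi([0,1))$. Without an anchoring statement of this type, a purely ``sweeping/tiling'' picture cannot exclude that a residual Cantor set survives inside the gaps; if you want to repair your argument, \eqref{PBORDER} (or an equivalent) is the ingredient you need to add.
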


\begin{proof} As $\alpha\notin \{\tfrac{i}{q}: i=0,\dots,q\}$, by Lemma \ref{IMAGEPHIRAT}, $\phi([0,1))$ is the union of two $q$-periodic orbits of $f$ given by $P_1=\{\sigma_0,\dots,\sigma_{q-1}\}$ and $P_2=\{\gamma_0,\dots,\gamma_{q-1}\}$. Let $A_i:=[\sigma_i,\gamma_i)$ for all $i\in \{0,...,q-1\}$, $E_i:=[\gamma_i,\sigma_{i+1})$ for all $i\in \{0,...,q-2\}$ and $E_{q-1}:=[0,\sigma_0)\cup [\gamma_{q-1},1)$. Then, by \eqref{RATPART}, we have the following partition,
\[
[0,1)= A_0\cup E_0\cup\cdots \cup A_{q-2}\cup E_{q-2}\cup A_{q-1}\cup E_{q-1}.
\]
and there exist $l\in\{0,\dots, q-2\}$ and $s\in\{0,\dots, q-1\}$ such that $\eta\in (\gamma_l,\sigma_{l+1}]$ and $a\in (\sigma_s,\gamma_s]$.

Let $i\in \{0,\dots,q-1\}$ with $i\neq s$ and $k_i\in\{1,\dots,q-1\}$ be such that $f^{k_i}(\sigma_i)=\sigma_s$. Then, $f^n(\sigma_i)\neq\sigma_s$ for all $0\leq n<k_i$ and we
deduce from \eqref{RATINC} that $f^{k_i}([\sigma_i,\gamma_i))=[\sigma_s,\gamma_s)=A_s$. It follows that $f^q([\sigma_i,\gamma_i))=f^{q-{k_i}}(A_s)$. Analogously, we deduce from \eqref{RATINC} and \eqref{RATINC2} that for every $i\in \{0,\dots,q-2\}$ with $i\neq l$, there exists $m_i\in\{1,\dots,q-1\}$ such that $f^q([\gamma_i,\sigma_{i+1}))=f^{q-{m_i}}(E_l)$. It follows then from \eqref{RATINC2} and \eqref{RATINC} that
\begin{equation}\label{qIMAGE}
f^q([0,1))\subset \bigcup_{j=1}^q f^j(A_s)\cup f^j(E_l)=\bigcup_{j=1}^q f^j([\sigma_s,a))\cup f^j([a,\gamma_s))\cup f^j([\gamma_{l},\eta))\cup f^j([\eta,\sigma_{l+1})).
\end{equation}
In the sequel of the proof $J:=[b,c)\neq\emptyset$ is one of the interval $[\sigma_s,a),[a,\gamma_s),[\gamma_{l},\eta),[\eta,\sigma_{l+1})$ and $(c_k)_{k\in\N}$ is the sequence defined by  $c_0=c$ and $c_{k+1}=f(c_k^-)$ for every $k\in\N$. 

Let $P:=P_1\cup P_2$. Note that $a\notin P_1$ and  
that  $\eta\in P$ only if $\eta=\sigma_{l+1}$. Therefore, if $\eta\neq\sigma_{l+1}$, then $f$ is continuous in $P_1$ and $c_k=f^k(\sigma_{l+1})$ if $c=\sigma_{l+1}$. Also $\eta\notin P_2$ and $a\in P$ only if $a=\gamma_s$. Therefore, if $a\neq\gamma_s$, then $f$ is continuous in $P_2$ and $c_k=f^k(\gamma_s)$ if $c=\gamma_s$. Let us show that this implies that
\begin{equation}\label{PBORDER}
P\cap\{f^k(b),c_k\}\neq\emptyset\quad\forall k\in\N.
\end{equation}
Indeed, if $f^{k}(b)\notin P$, then $b\notin\{\sigma_s,\gamma_l\}$ and $J=[a,\gamma_s)$ or $J=[\eta,\sigma_{l+1})$. If $J=[a,\gamma_s)\neq\emptyset$, then $a\neq\gamma_s$, and $c_{k}=f^{k}(\gamma_s)\in P$. If $J=[\eta,\sigma_{l+1})\neq\emptyset$, then $\eta\neq\sigma_{l+1}$, and $c_{k}=f^{k}(\sigma_{l+1})\in P$.

Now, let us show by induction that for any $k\in\N$ we have
\begin{equation}\label{fkJ}
f^k(J)=[f^k(b),c_k),\quad|f^k(b)-c_k|\leq\lambda^k|b-c|
\quand\{\eta,a\}\cap(f^k(b),c_k) =\emptyset.
\end{equation}
For $k=0$, the property is true. Let us suppose that it is true for some given $k\in\N$. As $\{\eta,a\}\cap(f^k(b),c_k) =\emptyset$ we deduce that   
$f^{k+1}(J)=[f^{k+1}(b),c_{k+1})$ and $|f^{k+1}(b)-c_{k+1}|\leq\lambda^{k+1}|b-c|$.   It remains to show that $\{\eta,a\}\cap(f^{k+1}(b),c_{k+1}) =\emptyset$.

\noindent{\bf Case 1:} Suppose that $J\subset [\sigma_s,\gamma_s)$. Then by Lemma \ref{IMAGEPHIRAT}, $f^{k+1}(J)=[f^{k+1}(b),c_{k+1})\subset A_j$ for some $j\in\{0,\dots,q-1\}$. Therefore, $\eta\notin(f^{k+1}(b),c_{k+1})$. Also, $a\notin(f^{k+1}(b),c_{k+1})$ if $j\neq s$. So, it only remains to study the case $j=s$, that is, when 
\begin{equation}\label{SIGSGAMS}
[f^{k+1}(b),c_{k+1})\subset[\sigma_s,\gamma_s).
\end{equation}
If $J=[\sigma_s,a)$, then $f^{k+1}(b)=f^{k+1}(\sigma_s)\in P_1$ and it follows from \eqref{SIGSGAMS} that 
$f^{k+1}(b)=\sigma_s$, since $P_1\cap[\sigma_s,\gamma_s)=\{\sigma_s\}$. We deduce that
\[
c_{k+1}-\sigma_s=|f^{k+1}(b)-c_{k+1}|\leq\lambda^{k+1}|b-c|=\lambda^{k+1}|\sigma_s-a|<a-\sigma_s,
\]
which implies that $a>c_{k+1}$ and $a\notin(f^{k+1}(b),c_{k+1})$. If $J=[a,\gamma_s)\neq\emptyset$, then $a\neq\gamma_s$ and $c_{k+1}=f^{k+1}(\gamma_s)\in P_2$. It follows from \eqref{SIGSGAMS} that 
$c_{k+1}=\gamma_s$, since $c_{k+1}\in[\sigma_s,\gamma_s]\cap P_2=\{\gamma_s\}$.
We deduce that
\[
\gamma_s-f^{k+1}(b)=|f^{k+1}(b)-c_{k+1}|\leq\lambda^{k+1}|b-c|=\lambda^{k+1}|a-\gamma_s|<\gamma_s-a,
\]
which implies that $a<f^{k+1}(b)$ and $a\notin(f^{k+1}(b),c_{k+1})$. It follows that \eqref{fkJ} holds for  $J\subset[\sigma_s,\gamma_s)$.

\noindent{\bf Case 2:} The proof for the remaining case $J\subset [\gamma_l,\sigma_{l+1})$ is the same, but replacing $a$, $\eta$, $s$, $\sigma_s$, $\gamma_s$, $A_j$, $P_2$, $P_1$ with $\eta$, $a$, $l$, $\gamma_l$, $\sigma_{l+1}$, $E_j$, $P_1$, $P_2$, respectively.

To end the proof, suppose that $x\in f^n([0,1))$ for all $n\in \N$ and let $k\ge q$. By \eqref{qIMAGE}, there exists an interval $J=[b,c)\neq \emptyset$ such that $x\in f^k(J)$. By \eqref{fkJ},  $f^k(J)=[f^k(b),c_k)$ and $|f^k(b)-c_k|\le \lambda^k |b-c|\le \lambda^k$. As by \eqref{PBORDER} we have $\{f^k(b),c_k\}\cap P\neq \emptyset$,  it follows that
\[
d(x,P):=\inf\{|x-y|: y\in P\}\le |f^k(b)-c_k|\le \lambda^k.
\] 
Since $k$ is arbitrary, we deduce that $x\in P$. Therefore, we have shown that 
\[
\bigcap_{n\ge 1}f^n([0,1))\subset P=\phi([0,1)),
\]
which together with Lemma \ref{PHIINCLA} allows us to conclude.
\end{proof}

\subsection{End of proof of Theorem \ref{CODDING2}}

Suppose that item 1), 2) or 3) holds. Then, by Lemma \ref{PHIINCLA} we have $\phi([0,1))\subset\Lambda$. Suppose $\rho=p/q\in\Q$, with $0<p<q$ co-prime. If $\alpha\in \{i/q: i=0,\dots,q\}$, then $\phi([0,1))$ is a $q$-periodic orbit of $f$, by item 2) of Lemma \ref{IMAGEPHIRAT}. If $\alpha\notin \{i/q: i=0,\dots,q\}$, then $\phi([0,1))$ is the union of two $q$-periodic orbits of $f$, by item 1) of Lemma \ref{IMAGEPHIRAT} and $\phi([0,1))=\Lambda$ by Proposition \ref{ATTRAT}.
Suppose $\rho\in(0,1)\setminus\Q$. Then, $\overline{\phi([0,1))}$ is a Cantor set by Lemma \ref{PHICANT} and $\phi([0,1))=\Lambda$ by Proposition \ref{ATTIR}.

\section{Proof of Theorem \ref{RECTHROT}}\label{PTH3}

\begin{lemma}\label{CONPSIALP}  Recall the function $\psi_{\alpha}:\R\to\R$ defined  in \eqref{PSI} for every $\alpha\in [0,1]$ by  
\[
\psi_\alpha(z)=(1-\lambda)\ent{z}+d\theta_{\alpha}(\{z\})\qquad\forall z\in\R.
\]
Let $z\in\R$. Then, 
\begin{enumerate}
\item  For every $0\leq\alpha_1<\alpha_2\leq 1$,
\begin{equation}\label{PSIALPMON}
\psi_{\alpha_1}(z)-\psi_{\alpha_2}(z)=
\left\{
\begin{array}{cl}
 d & \text{if}\quad \{z\}\in[\alpha_1,\alpha_2)   \\
 0 & \text{otherwise} 
\end{array}
\right..
\end{equation} 
\item $\psi_{\alpha^-}(z)=\psi_{\alpha}(z)$ for all $\alpha\in(0,1]$.
\item $\psi_{\alpha^+}(z)=\psi_{\alpha}(z)$ for every  $\alpha\in[0,1)$ such that $\{z\}\neq\alpha$. 
\end{enumerate}
\end{lemma}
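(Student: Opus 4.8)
The statement to prove is Lemma~\ref{CONPSIALP}, which collects three elementary facts about the step function $\psi_\alpha(z)=(1-\lambda)\ent{z}+d\theta_\alpha(\{z\})$, viewed now as a function of the index $\alpha$ (with $z$ fixed) or of $z$ with $\alpha$ fixed. All three items are direct consequences of the definition of $\theta_\alpha$ in \eqref{THETAA}, so the plan is simply to unwind the definitions carefully, keeping track of the boundary cases.

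\begin{proof} Let $z\in\R$ and write $w:=\{z\}\in[0,1)$, so that $\psi_\alpha(z)=(1-\lambda)\ent{z}+d\theta_\alpha(w)$. Since the term $(1-\lambda)\ent{z}$ does not depend on $\alpha$, all the comparisons below reduce to comparisons of the values $\theta_\alpha(w)$.

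\emph{Item 1.} Let $0\le\alpha_1<\alpha_2\le 1$. Then $\psi_{\alpha_1}(z)-\psi_{\alpha_2}(z)=d(\theta_{\alpha_1}(w)-\theta_{\alpha_2}(w))$. By \eqref{THETAA}, $\theta_{\alpha_i}(w)=1$ iff $w\ge\alpha_i$. If $w<\alpha_1$, then also $w<\alpha_2$, so both terms are $0$ and the difference is $0$. If $w\ge\alpha_2$, then also $w\ge\alpha_1$, so both terms are $1$ and the difference is $0$. If $\alpha_1\le w<\alpha_2$, then $\theta_{\alpha_1}(w)=1$ while $\theta_{\alpha_2}(w)=0$, so the difference is $d$. (Note that $w<1\le\alpha_2$ covers the endpoint case $\alpha_2=1$, and $w\in[0,1)$ means $w\ge\alpha_1$ is possible only when $\alpha_1<1$.) This is exactly \eqref{PSIALPMON}.

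\emph{Item 2.} Fix $\alpha\in(0,1]$. For $\alpha'<\alpha$ close enough to $\alpha$, item~1 gives $\psi_{\alpha'}(z)-\psi_\alpha(z)=d$ if $w\in[\alpha',\alpha)$ and $0$ otherwise. If $w\ne\alpha$, then for $\alpha'$ sufficiently close to $\alpha$ (namely $\alpha'>w$ if $w<\alpha$, and any $\alpha'<\alpha$ if $w>\alpha$) we have $w\notin[\alpha',\alpha)$, so $\psi_{\alpha'}(z)=\psi_\alpha(z)$; hence $\psi_{\alpha^-}(z)=\psi_\alpha(z)$. If $w=\alpha$ then $\theta_\alpha(w)=1$ while for every $\alpha'<\alpha$ we have $w=\alpha>\alpha'$ so $\theta_{\alpha'}(w)=1$ as well, and again $\psi_{\alpha'}(z)=\psi_\alpha(z)$ for all $\alpha'<\alpha$; thus the left-sided limit equals $\psi_\alpha(z)$. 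In all cases $\psi_{\alpha^-}(z)=\psi_\alpha(z)$.

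\emph{Item 3.} Fix $\alpha\in[0,1)$ with $w=\{z\}\ne\alpha$. For $\alpha'>\alpha$ close to $\alpha$, item~1 gives $\psi_\alpha(z)-\psi_{\alpha'}(z)=d$ if $w\in[\alpha,\alpha')$ and $0$ otherwise. Since $w\ne\alpha$, either $w<\alpha$, in which case $w\notin[\alpha,\alpha')$ for every $\alpha'>\alpha$, or $w>\alpha$, in which case $w\notin[\alpha,\alpha')$ once $\alpha'<w$. In both cases $\psi_{\alpha'}(z)=\psi_\alpha(z)$ for $\alpha'$ close enough to $\alpha$ from the right, so $\psi_{\alpha^+}(z)=\psi_\alpha(z)$.
\end{proof}

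There is no real obstacle here; the only point demanding care is the bookkeeping of the half-open interval $[\alpha_1,\alpha_2)$ and the endpoint cases $\alpha_2=1$ in item~1 and $w=\alpha$ in item~2, where right-continuity of $\theta_\alpha$ (as a function of its argument) interacts with the behaviour in the index~$\alpha$. Everything else is a one-line case split off the definition \eqref{THETAA}.
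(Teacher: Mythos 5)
Your proof is correct and follows essentially the same route as the paper: item 1 by reducing to the observation that $\theta_{\alpha_1}(w)-\theta_{\alpha_2}(w)=1$ exactly when $w\in[\alpha_1,\alpha_2)$, and items 2 and 3 by applying item 1 to nearby indices $\alpha'$ and checking that $w$ eventually escapes the half-open interval. The extra attention you give to the endpoint cases ($\alpha_2=1$ and $w=\alpha$) matches the paper's case analysis and is handled correctly.
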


\begin{proof} Let $0\leq\alpha_1<\alpha_2\leq 1$ and $x\in\R$. Then, $\theta_{\alpha_1}(x)-\theta_{\alpha_2}(x)=1$
if and only if $x\in[\alpha_1,\alpha_2)$, otherwise $\theta_{\alpha_1}(x)-\theta_{\alpha_2}(x)=0$.
Then,  \eqref{PSIALPMON} follows from 
\[
\psi_{\alpha_1}(z)-\psi_{\alpha_2}(z)=d(\theta_{\alpha_1}(\{z\})-\theta_{\alpha_2}(\{z\})).
\]

Let $\alpha\in[0,1]$, $z\in\R$ and $(\alpha_n)_{n\in\N}$ be a sequence of $[0,1]$ converging to $\alpha$. Suppose $\alpha\neq 0$ and $\alpha_n<\alpha$ for every $n\in\N$. Then, either $\{z\}\geq\alpha$ or there exists $n_0$ such that 
$\{z\}<\alpha_n$ for every $n\geq n_0$. We deduce from \eqref{PSIALPMON} that $\psi_{\alpha_n}(z)-\psi_{\alpha}(z)\to 0$, that is $\psi_{\alpha^{-}}(z)=\psi_\alpha(z)$. Now, suppose $\alpha\neq1$,  $\alpha<\alpha_n$ for every $n\in\N$ and 
$\{z\}\neq\alpha$. Then, either $\{z\}<\alpha$ or there exists $n_0$ such that 
$\{z\}>\alpha_n$ for every $n\geq n_0$. Once again, from \eqref{PSIALPMON} we obtain that $\psi_{\alpha_n}(z)-\psi_{\alpha}(z)\to 0$, that is $\psi_{\alpha^{+}}(z)=\psi_\alpha(z)$.
\end{proof}


\begin{lemma}\label{ALPHATODELTA} For any $\rho\in[0,1]$ the function  $\alpha\mapsto\delta(\rho,\alpha)$ defined in $[0,1]$ is non-decreasing and  increasing if $\rho\in(0,1)\setminus\Q$. Also, $\alpha\mapsto\delta(\rho,\alpha)$ is right continuous at every $\alpha\in[0,1)$ and continuous at every $\alpha\in[0,1]$ such that $\{k\rho\}\neq 1-\alpha$
for all $k\geq 1$.

\end{lemma}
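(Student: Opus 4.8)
The plan is to read off all three assertions directly from the explicit expression
\[
\delta(\rho,\alpha)=1-\lambda-d+\frac{1-\lambda}{\lambda}\sum_{k=1}^{\infty}\lambda^k\psi_{1-\alpha}(k\rho),
\]
which already appears in the proof of Lemma \ref{RANGEDELTA}, together with the monotonicity and one-sided continuity properties of the building block $\psi_\beta$ that are collected in Lemma \ref{CONPSIALP}. The point is that $\alpha\mapsto\psi_{1-\alpha}(z)$ behaves, as a function of $\alpha$, like the right-continuous non-decreasing step function $\beta\mapsto\psi_{\beta}(z)$ with $\beta=1-\alpha$, but read backwards; so I need to track carefully how the reflection $\alpha\mapsto 1-\alpha$ turns right continuity in $\beta$ into the stated one-sided continuity in $\alpha$, and which side of the jump survives.

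First I would establish monotonicity. Fix $\rho\in[0,1]$ and take $0\le\alpha_1<\alpha_2\le 1$, so $0\le 1-\alpha_2<1-\alpha_1\le 1$. By item 1) of Lemma \ref{CONPSIALP}, for every $z\in\R$,
\[
\psi_{1-\alpha_2}(z)-\psi_{1-\alpha_1}(z)=d\,\Ind_{[1-\alpha_2,\,1-\alpha_1)}(\{z\})\ge 0,
\]
so each summand $\psi_{1-\alpha}(k\rho)$ is non-decreasing in $\alpha$, hence $\delta(\rho,\alpha_1)\le\delta(\rho,\alpha_2)$. If moreover $\rho\in(0,1)\setminus\Q$, then $\{k\rho\}$ is dense in $[0,1)$, so there is some $k\ge 1$ with $\{k\rho\}\in[1-\alpha_2,1-\alpha_1)$, which makes the corresponding summand strictly increase; hence $\delta(\rho,\alpha_1)<\delta(\rho,\alpha_2)$, giving that $\alpha\mapsto\delta(\rho,\alpha)$ is increasing in the irrational case.

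Next I would prove right continuity at $\alpha\in[0,1)$: letting $\alpha_n\downarrow\alpha$, i.e. $1-\alpha_n\uparrow 1-\alpha$, item 2) of Lemma \ref{CONPSIALP} (left continuity of $\beta\mapsto\psi_\beta(z)$) gives $\psi_{1-\alpha_n}(k\rho)\to\psi_{1-\alpha}(k\rho)$ for each $k$; by the uniform convergence of the series (the tail is dominated by a geometric series independent of $\alpha$, as already used for $\rho\mapsto\delta(\rho,\alpha)$) we may pass to the limit term by term and conclude $\delta(\rho,\alpha_n)\to\delta(\rho,\alpha)$. For full continuity at a point $\alpha\in[0,1]$ with $\{k\rho\}\ne 1-\alpha$ for all $k\ge 1$, I also need the left limit. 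Let $\alpha_n\uparrow\alpha$, so $1-\alpha_n\downarrow 1-\alpha$; item 3) of Lemma \ref{CONPSIALP} gives $\psi_{(1-\alpha)^+}(k\rho)=\psi_{1-\alpha}(k\rho)$ precisely because $\{k\rho\}\ne 1-\alpha$, so again each summand converges and, by the same uniform bound, $\delta(\rho,\alpha_n^-)=\delta(\rho,\alpha)$. Combined with right continuity this yields continuity at such $\alpha$.

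The only mild subtlety — and the step I would double-check most carefully — is the bookkeeping of the reflection $\alpha\mapsto 1-\alpha$ when converting ``left/right limit in $\beta$'' into ``left/right limit in $\alpha$'': an increasing sequence $\alpha_n\uparrow\alpha$ corresponds to a decreasing sequence $1-\alpha_n\downarrow 1-\alpha$, which is why it is the \emph{right} continuity of $\psi_\beta$ (item 3), valid away from jump points) that controls the \emph{left} continuity of $\delta(\cdot,\cdot)$ in $\alpha$, and dually for the other side; the excluded set $\{\alpha:\{k\rho\}=1-\alpha\text{ for some }k\ge1\}$ is exactly the image under $\alpha\mapsto1-\alpha$ of the jump set of $\beta\mapsto\psi_\beta$ along the orbit $\{k\rho\}$. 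Apart from this, everything is a routine term-by-term limit justified by the already-noted uniform (geometric) convergence of the defining series.
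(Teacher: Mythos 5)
Your proof is correct and follows essentially the same route as the paper's: monotonicity from item 1) of Lemma \ref{CONPSIALP} applied to $\psi_{1-\alpha}(k\rho)$ (with density of $(\{k\rho\})_k$ giving strictness for irrational $\rho$), and the one-sided continuity statements from items 2) and 3) after the reflection $\alpha\mapsto 1-\alpha$ swaps left and right limits, with the term-by-term passage to the limit justified by the uniform (geometric) convergence of the series. The paper's proof is just a terser version of the same argument.
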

\begin{proof} Let $\rho\in[0,1]$ and $0\leq\alpha_1<\alpha_2\leq 1$. Then, by \eqref{PSIALPMON} we have
\[
\delta(\rho,\alpha_2)-\delta(\rho,\alpha_1)=\frac{1-\lambda}{\lambda}\sum_{k=1}^{\infty}\lambda^k(\psi_{1-\alpha_2}(\{k\rho\})-\psi_{1-\alpha_1}(\{k\rho\}))\geq 0,
\]
since $0\leq1-\alpha_2<1-\alpha_1\leq 1$. If $\rho\in(0,1)\setminus\Q$, then there 
exists $k\geq 1$ such that $\{k\rho\}\in(1-\alpha_2,1-\alpha_1)$
and from \eqref{PSIALPMON} we deduce that $\delta(\rho,\alpha_2)-\delta(\rho,\alpha_1)>0$.

Now, as
\[
\delta(\rho,\alpha^-)=\frac{1-\lambda}{\lambda}\sum_{k=1}^{\infty}\lambda^k\psi_{(1-\alpha)^+}(\{k\rho\})
\quand 
\delta(\rho,\alpha^+)=\frac{1-\lambda}{\lambda}\sum_{k=1}^{\infty}\lambda^k\psi_{(1-\alpha)^-}(\{k\rho\}),
\]
the continuity properties of $\alpha\mapsto\delta(\rho,\alpha)$ follow from Lemma \ref{CONPSIALP}.
\end{proof}

\begin{proposition}\label{CONTRHO} Let $\delta\in(1-\lambda-d,1)$. The function $\rho_\delta:[0,1]\to[0,1]$ defined
 by 
\begin{equation}\label{RHODELALP}
\rho_\delta(\alpha):=\min\{\rho\in[0,1]:\delta(\rho,\alpha)\geq\delta\}
\qquad\forall \alpha\in[0,1]
\end{equation}
is non-increasing and continuous. Moreover, if $\alpha\in(0,1)$ or $\alpha\in\{0,1\}$ and $\delta\in(1-\lambda-d\Ind_{\{0\}}(\alpha),1-d\Ind_{\{0\}}(\alpha))$, then $\rho_\delta(\alpha)\in(0,1)$ and 
\begin{equation}\label{INTDELRHOALP}
\delta\in[\delta(\rho_\delta(\alpha)^-,\alpha),\delta(\rho_\delta(\alpha),\alpha)].
\end{equation}
\end{proposition}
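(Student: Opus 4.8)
The plan is to derive all three claims about $\rho_\delta$ from the monotonicity and continuity properties of the map $\rho\mapsto\delta(\rho,\alpha)$ (Lemma \ref{RANGEDELTA}) and of the map $\alpha\mapsto\delta(\rho,\alpha)$ (Lemma \ref{ALPHATODELTA}), together with the limit values \eqref{LIMRHOEXT}. First I would check that the defining set $\{\rho\in[0,1]:\delta(\rho,\alpha)\geq\delta\}$ is nonempty and that its infimum is attained, so $\rho_\delta(\alpha)$ is well defined. Nonemptiness follows because $\delta(1,\alpha)=\lim_{\rho\to1^-}\delta(\rho,\alpha)=1-d\Ind_{\{0\}}(\alpha)\geq\delta$ for every admissible $\delta<1$ (using right continuity at $\rho=1$ is not needed; one evaluates the series directly at $\rho=1$, or simply notes the supremum over $\rho<1$). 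That the infimum is attained is immediate from right continuity of $\rho\mapsto\delta(\rho,\alpha)$ (Lemma \ref{RANGEDELTA}): if $\rho_n\downarrow\rho_*$ with $\delta(\rho_n,\alpha)\geq\delta$, then $\delta(\rho_*,\alpha)=\lim\delta(\rho_n,\alpha)\geq\delta$.

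Next, monotonicity in $\alpha$: since $\alpha\mapsto\delta(\rho,\alpha)$ is non-decreasing (Lemma \ref{ALPHATODELTA}), for $\alpha_1<\alpha_2$ any $\rho$ with $\delta(\rho,\alpha_1)\geq\delta$ also satisfies $\delta(\rho,\alpha_2)\geq\delta(\rho,\alpha_1)\geq\delta$; hence the feasible set for $\alpha_2$ contains that for $\alpha_1$, and taking infima gives $\rho_\delta(\alpha_2)\leq\rho_\delta(\alpha_1)$, so $\rho_\delta$ is non-increasing.

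For \eqref{INTDELRHOALP} and $\rho_\delta(\alpha)\in(0,1)$: put $\rho_*:=\rho_\delta(\alpha)$. By definition $\delta(\rho_*,\alpha)\geq\delta$, and by minimality $\delta(\rho,\alpha)<\delta$ for every $\rho<\rho_*$, so letting $\rho\to\rho_*^-$ gives $\delta(\rho_*^-,\alpha)\leq\delta$; combining, $\delta\in[\delta(\rho_*^-,\alpha),\delta(\rho_*,\alpha)]$, which is \eqref{INTDELRHOALP}. To see $\rho_*\neq0$, note $\delta(0^+,\alpha)=1-\lambda-d+d\Ind_{\{1\}}(\alpha)$ by \eqref{LIMRHOEXT}; since $\delta>1-\lambda-d$ when $\alpha\neq1$ and $\delta>1-\lambda$ when $\alpha=1$ (the hypothesis), in all cases $\delta>\delta(0^+,\alpha)\geq\delta(0,\alpha)$, so by strict monotonicity (Lemma \ref{RANGEDELTA}) no $\rho$ in a right neighbourhood of $0$ is feasible, i.e.\ $\rho_*>0$. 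To see $\rho_*\neq1$, use $\delta(1^-,\alpha)=1-d\Ind_{\{0\}}(\alpha)$ from \eqref{LIMRHOEXT}: the hypothesis gives $\delta<\delta(1^-,\alpha)$ (strictly, when $\alpha=0$ this is $\delta<1-d$; when $\alpha\neq0$ this is $\delta<1$), so by strict monotonicity there is $\rho<1$ with $\delta(\rho,\alpha)\geq\delta$, hence $\rho_*<1$.

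Finally, continuity of $\rho_\delta$, which I expect to be the main obstacle. Since $\rho_\delta$ is monotone, it suffices to rule out jumps. Suppose $\rho_\delta$ jumped at some $\alpha_0$; because it is non-increasing, there would be an open interval $(\beta_1,\beta_2)$ of values of $\rho$ missed, i.e.\ for all $\alpha$ either $\rho_\delta(\alpha)\geq\beta_2$ or $\rho_\delta(\alpha)\leq\beta_1$. I would argue via the identity $\delta(\rho_\delta(\alpha),\alpha)\geq\delta>\delta((\rho_\delta(\alpha))^-,\alpha)$ (when the interval is nondegenerate) or $\delta(\rho_\delta(\alpha),\alpha)=\delta$ (when $\alpha$ is a continuity point of $\alpha\mapsto\delta(\rho_\delta(\alpha),\alpha)$), combined with the joint behaviour of $\delta(\cdot,\cdot)$: pick a rational $\rho'\in(\beta_1,\beta_2)$; then $\alpha\mapsto\delta(\rho',\alpha)$ is non-decreasing and, by Lemma \ref{RANGEDELTA} applied with $\rho<\rho'<\rho''$, we have $\delta(\beta_1^+,\alpha)\le \delta(\rho',\alpha)\le\delta(\beta_2^-,\alpha)$ type inequalities. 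A jump forces a sharp transition: there is $\alpha_0$ with $\delta(\rho',\alpha)<\delta$ for $\alpha<\alpha_0$ and $\delta(\rho',\alpha)\ge\delta$ for $\alpha>\alpha_0$ (or vice versa), i.e.\ $\alpha\mapsto\delta(\rho',\alpha)$ crosses the level $\delta$ by a jump at $\alpha_0$. But by Lemma \ref{ALPHATODELTA} that map is right continuous everywhere and discontinuous only where $\{k\rho'\}=1-\alpha$ for some $k\ge1$; for $\rho'=p/q$ rational these finitely many discontinuity values $1-\{kp/q\}$ are of the form $i/q$. One checks the jump sizes there are bounded and that, as $\rho'$ ranges over rationals with large denominator in $(\beta_1,\beta_2)$, the level-crossing location varies, contradicting the existence of a single missed interval $(\beta_1,\beta_2)$ independent of $\alpha$. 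A cleaner route, which I would prefer to write up: show directly that $\rho_\delta$ has no jump by proving the graph $\{(\alpha,\rho_\delta(\alpha))\}$ is closed (from right continuity of $\delta(\cdot,\alpha)$ in $\rho$ and of $\delta(\rho,\cdot)$ in $\alpha$, plus the minimality characterisation) and that $\rho_\delta$ is surjective onto an interval (from the strict monotonicity of $\delta(\cdot,\alpha)$ in $\rho$, which makes $\rho\mapsto\delta(\rho,\alpha)$ ``almost'' invertible, so every value of $\rho$ in the relevant range is $\rho_\delta(\alpha)$ for a suitable $\alpha$, using the intermediate-value-type control coming from letting $\alpha$ vary). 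A monotone function with closed graph and connected image is continuous, which closes the argument. The delicate point throughout is bookkeeping the one-sided limits $\delta(\rho^-,\alpha)$, $\delta(\rho,\alpha^-)$, $\delta(\rho,\alpha^+)$ consistently, since the jumps of $\psi$ in both variables interact at the countably many bad pairs $(\rho,\alpha)$ with $\{k\rho\}=1-\alpha$.
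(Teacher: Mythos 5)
Your treatment of well-definedness, of the monotonicity of $\rho_\delta$, of the inclusion \eqref{INTDELRHOALP}, and of the fact that $\rho_\delta(\alpha)\in(0,1)$ under the stated hypotheses is correct and essentially identical to the paper's (nonemptiness and attainment of the minimum via right continuity of $\rho\mapsto\delta(\rho,\alpha)$; nesting of the feasible sets via monotonicity in $\alpha$; the endpoint exclusions via \eqref{LIMRHOEXT} and strict monotonicity). The problem is the continuity of $\rho_\delta$, which you correctly identify as the main obstacle but do not actually prove. Your first route (choosing rationals $\rho'$ of large denominator in the missed interval and arguing that ``the level-crossing location varies'') is not an argument, only a description of one. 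Your second, ``cleaner'' route has two genuine gaps. First, closedness of the graph of $\rho_\delta$ is not automatic: $\delta(\rho,\alpha)$ is only \emph{separately} right continuous in each variable, and the superlevel set $\{(\rho,\alpha):\delta(\rho,\alpha)\geq\delta\}$ is in general not closed, because $\theta_{1-\alpha}(\{k\rho\})$ drops from $1$ to $0$ when $k\rho$ crosses an integer from below (take $\{k\rho_n\}\to 1^-$ while $\{k\rho\}=0$). Second, the connectedness (equivalently, surjectivity onto $[\rho_\delta(1),\rho_\delta(0)]$) of the image is, for a monotone function with attained one-sided limits, \emph{equivalent} to the continuity you are trying to prove; asserting it ``from intermediate-value-type control coming from letting $\alpha$ vary'' is circular unless that control is actually exhibited.

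The missing idea, which is how the paper proceeds, is to prove left and right continuity pointwise by a direct contradiction that uses a carefully chosen intermediate value $\rho'$. If, say, $\rho_\delta(\alpha^-)>\rho_\delta(\alpha)$ for some $\alpha\in(0,1]$, one picks $\rho'$ strictly between them such that $\{k\rho'\}\neq 1-\alpha$ for all $k\geq 1$ (take $\rho'$ rational if $\alpha\notin\Q$ and irrational if $\alpha\in\Q$); then $\delta(\rho',\alpha)>\delta(\rho_\delta(\alpha),\alpha)\geq\delta$ by strict monotonicity in $\rho$, and since $\alpha'\mapsto\delta(\rho',\alpha')$ is \emph{continuous} at $\alpha$ for this choice of $\rho'$ (Lemma \ref{ALPHATODELTA}), one gets $\delta(\rho',\alpha_n)>\delta$ for $\alpha_n\uparrow\alpha$, whence $\rho_\delta(\alpha_n)\leq\rho'$, contradicting $\rho_\delta(\alpha^-)>\rho'$. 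Right continuity is handled symmetrically and is easier, since $\alpha'\mapsto\delta(\rho',\alpha')$ is right continuous for \emph{every} $\rho'$: if $\rho_\delta(\alpha^+)<\rho'<\rho_\delta(\alpha)$ then $\delta(\rho',\alpha)<\delta$ by minimality, hence $\delta(\rho',\alpha_n)<\delta$ for $\alpha_n\downarrow\alpha$, forcing $\rho_\delta(\alpha_n)>\rho'$, a contradiction. Without some version of this (or a genuinely completed alternative), the continuity claim remains unproved.
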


\begin{proof} Let $\delta\in(1-\lambda-d,1)$. We first show that $\rho_\delta$ is well defined. Let $\alpha\in[0,1]$ and consider the set 
\[
R_{\delta,\alpha}:=\{\rho\in[0,1]:\delta(\rho,\alpha)\geq\delta\}.
\]
By Lemma \ref{RANGEDELTA} we have that 
\[
\lim_{\rho\to 1^-}\delta(\rho,\alpha)=1 \quad\forall \alpha\in(0,1]\quand \delta(1,0)=1+1-\lambda-d>1>\delta.
\]
It follows that $R_{\delta,\alpha}\neq\emptyset$ and there exists $\rho^*\in[0,1]$ such that 
$\rho^*=\inf R_{\delta,\alpha}$. If $\rho^*\notin R_{\delta,\alpha}$, then    $\rho^*\neq 1$ and
$\delta(\rho^*,\alpha)<\delta$. Since $\rho\mapsto\delta(\rho,\alpha)$ is right continuous, $\delta(\rho^*,\alpha)<\delta$ implies that there exists $\rho\in R_{\delta,\alpha}$ such 
that $\delta(\rho,\alpha)<\delta$, which is a contradiction. We deduce that $\rho^*=\min R_{\delta,\alpha}$ and that $\rho_{\delta}(\alpha)$ is well defined.

Now, for any $0\leq\alpha_1<\alpha_2\leq1$ we have $R_{\delta,\alpha_1}\subset R_{\delta,\alpha_2}$
because, by Lemma \ref{ALPHATODELTA}, $\alpha\mapsto\delta(\rho,\alpha)$ is non-decreasing. It follows 
$\rho_{\delta}$ is non-increasing.

Let $\alpha\in(0,1]$ and let us show that $\rho_{\delta}$ is left continuous at $\alpha$. As $\rho_\delta$
is monotonous and non-increasing, it admits a left limit $\rho_{\delta}(\alpha^-)$  such that
$0\leq\rho_{\delta}(\alpha)\leq\rho_{\delta}(\alpha^-)\leq 1$. Suppose that $\rho_{\delta}(\alpha)<\rho_{\delta}(\alpha^-)$.
Then, let $\rho'\in(\rho_{\delta}(\alpha),\rho_{\delta}(\alpha^-))$ be such that $\{k\rho'\}\neq 1-\alpha$
for all $k\geq 1$ (take $\rho'\in\Q$ if $\alpha\notin\Q$ and $\rho'\notin\Q$ if $\alpha\in\Q$). Recalling that $\rho\mapsto\delta(\rho,\alpha)$ is increasing, we obtain that $\delta(\rho',\alpha)>\delta(\rho_{\delta}(\alpha),\alpha)\geq\delta$. Let $(\alpha_n)_{n\in\N}$ be a  sequence of $(0,\alpha)$ converging to $\alpha$. Then, there exists $n_0$ such that $\delta(\rho',\alpha_n)>\delta$ for all $n\geq n_0$, since by Lemma \ref{ALPHATODELTA}  $\alpha'\mapsto\delta(\rho',\alpha')$ is continuous at $\alpha$ for our choice of $\rho'$.
It follows that $\rho'\in R_{\delta,\alpha_n}$ and $\rho_\delta(\alpha_n)\leq\rho'$ if $n\geq n_0$.
We obtain that  $\rho_\delta(\alpha^-)\leq\rho'$, which is a contradiction. So, we have proved that $\rho_{\delta}(\alpha^-)=\rho_{\delta}(\alpha)$. 

Let $\alpha\in[0,1)$ and let us show that $\rho_{\delta}$ is right continuous at $\alpha$. As $\rho_\delta$
is monotonous and non-increasing, it admits a right limit $\rho_{\delta}(\alpha^+)$  such that
$0\leq\rho_{\delta}(\alpha^+)\leq\rho_{\delta}(\alpha)\leq 1$. Suppose that $\rho_{\delta}(\alpha^+)<\rho_{\delta}(\alpha)$ and 
let $\rho'\in(\rho_{\delta}(\alpha^+),\rho_{\delta}(\alpha))$. Then, $0<\rho'<\rho_{\delta}(\alpha)\leq 1$ and $\delta(\rho',\alpha)<\delta$ by definition of $\rho_{\delta}(\alpha)$. Let $(\alpha_n)_{n\in\N}$ be a  sequence of $(\alpha,1)$ converging to $\alpha$. Then, there exists $n_0$ such that $\delta(\rho',\alpha_n)<\delta$ for all $n\geq n_0$, since by Lemma \ref{ALPHATODELTA}  $\alpha'\mapsto\delta(\rho',\alpha')$ is right continuous at $\alpha$.
As $\delta(\rho(\alpha_n),\alpha_n)\geq\delta$ for every $n$ and $\rho\mapsto\delta(\rho,\alpha)$ is increasing, we deduce that $\rho_\delta(\alpha_n)>\rho'$ if $n\geq n_0$.
We obtain that  $\rho_\delta(\alpha^+)\geq\rho'$, which is a contradiction. So, we have proved that $\rho_{\delta}(\alpha^+)=\rho_{\delta}(\alpha)$. 

Suppose that $\rho_\delta(\alpha)=0$ for some $\alpha\in[0,1]$. Then, $\delta(0,\alpha)\geq\delta$, that is
\[
1-\lambda-d+\frac{1-\lambda}{\lambda}d\sum_{k=1}^{\infty}\lambda^k\theta_{1-\alpha}(0)\geq\delta.
\]
As $\delta>1-\lambda-d$, necessarily $\alpha=1$, and $\delta \leq 1-\lambda$. Suppose that $\rho_\delta(\alpha)=1$ for some $\alpha\in[0,1]$. Then, 
$\delta(\rho,\alpha)<\delta$ for every $\rho\in[0, 1)$. As 
\[
\lim_{\rho\to 1^-}\delta(\rho,\alpha)=1 \quad\forall \alpha\in(0,1] \quand \lim_{\rho\to 1^-}\delta(\rho,0)=1-d, 
\]
necessarily $\alpha=0$ and $\delta\in[1-d,1)$. We have show that if $\rho_\delta(\alpha)\in\{0,1\}$ for some $\alpha\in[0,1]$, then $\alpha\in\{0,1\}$
and $\delta\in(1-\lambda-d,1)\setminus (1-\lambda-d\Ind_{\{0\}}(\alpha),1-d\Ind_{\{0\}}(\alpha))$. In other words, we have proved that if $\alpha\in(0,1)$ or $\alpha\in\{0,1\}$ and $\delta\in(1-\lambda-d\Ind_{\{0\}}(\alpha),1-d\Ind_{\{0\}}(\alpha))$, then $\rho_\delta(\alpha)\in(0,1)$.

To end the proof we show \eqref{INTDELRHOALP}. Let $\alpha\in[0,1]$ and $\delta$ be such that $\rho_{\delta}(\alpha)\neq 0$. By definition of $\rho_{\delta}(\alpha)$, we only have to prove that $\delta\geq\delta(\rho_\delta(\alpha)^-,\alpha)$.
Let $(\rho_n)_{n\in\N}$ be a  sequence  converging to $\rho_\delta(\alpha)$ and such that  $0\leq\rho_n<\rho_\delta(\alpha)$ for all $n\in\N$. Then $\rho_n\notin R_{\delta,\alpha}$ and therefore $\delta(\rho_n,\alpha)<\delta$ for all $n\in\N$. We conclude that $\delta(\rho(\alpha)^-,\alpha)\leq\delta$. 
\end{proof}

\begin{proposition}
For every $\delta\in (1-\lambda-d,1)$, let $\Phi_\delta:[0,1]\to \R$ be defined by  
\[
\Phi_\delta(\alpha):=\phi_{\delta,\rho_\delta(\alpha),\alpha}(\alpha)\qquad \forall\alpha\in [0,1],
\]
where 
\[
\phi_{\delta,\rho,\alpha}(y)=\frac{\delta}{1-\lambda}
+\frac{1}{\lambda}\sum_{k=1}^{\infty}\lambda^k\psi_\alpha(y-k\rho) \quad\forall y\in\R
\]
is \eqref{DEFPHI} and $\rho_\delta$ is given in  \eqref{RHODELALP}.
Then, $\Phi_\delta$ is non-decreasing and right continuous.
\end{proposition}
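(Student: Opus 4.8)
The plan is to work directly with the series. From the definitions,
\[
\Phi_\delta(\alpha)=\phi_{\delta,\rho_\delta(\alpha),\alpha}(\alpha)=\frac{\delta}{1-\lambda}+\frac1\lambda\sum_{k=1}^{\infty}\lambda^k\,\psi_\alpha\big(\alpha-k\rho_\delta(\alpha)\big),\qquad\alpha\in[0,1],
\]
and since $\alpha,\rho_\delta(\alpha)\in[0,1]$ we have $\alpha-k\rho_\delta(\alpha)\in[-k,1]$, hence $|\psi_\alpha(\alpha-k\rho_\delta(\alpha))|\le(1-\lambda)k+d$ and the series converges uniformly in $\alpha$. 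This summable bound will justify interchanging limits with the sum, so both assertions reduce to statements about the individual terms $(\alpha,\rho)\mapsto\psi_\alpha(\alpha-k\rho)$ on $[0,1]^2$.

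The key point is that each such term is non-decreasing in $\alpha$ (for fixed $\rho$) and non-increasing in $\rho$ (for fixed $\alpha$). The latter is immediate, since $\rho\mapsto\alpha-k\rho$ is decreasing and $\psi_\alpha$ is non-decreasing (Lemma \ref{MONPSIALP}). For the former, observe that $\theta_\alpha(\{\alpha-k\rho\})=1$ exactly when $\{\alpha-k\rho\}\ge\alpha$, i.e.\ when $\lf\alpha-k\rho\rf\le-k\rho$, so
\[
\psi_\alpha(\alpha-k\rho)=(1-\lambda)\lf\alpha-k\rho\rf+d\,\Ind_{(-\infty,-k\rho]}\!\big(\lf\alpha-k\rho\rf\big);
\]
as $\alpha$ ranges over $[0,1]$ with $\rho$ fixed, $\lf\alpha-k\rho\rf$ is non-decreasing and takes at most two consecutive integer values, so it has a single upward jump, at which the first summand increases by $1-\lambda$ while the second can only decrease by $d$ — a net change equal to $1-\lambda$ or $1-\lambda-d$, both positive because $d\in(0,1-\lambda)$ — and between jumps the term is constant. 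Hence $\alpha\mapsto\psi_\alpha(\alpha-k\rho)$ is non-decreasing.

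Granting this, monotonicity of $\Phi_\delta$ follows term by term: for $\alpha_1<\alpha_2$ put $\rho_i=\rho_\delta(\alpha_i)$, so $\rho_1\ge\rho_2$ by Proposition \ref{CONTRHO}, and
\[
\psi_{\alpha_1}(\alpha_1-k\rho_1)\le\psi_{\alpha_2}(\alpha_2-k\rho_1)\le\psi_{\alpha_2}(\alpha_2-k\rho_2)\qquad\forall k\ge1,
\]
using monotonicity in $\alpha$ at $\rho=\rho_1$ and then in $\rho$ at $\alpha=\alpha_2$; summing gives $\Phi_\delta(\alpha_1)\le\Phi_\delta(\alpha_2)$. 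For right continuity, fix $\alpha\in[0,1)$ and let $\alpha'\downarrow\alpha$; by Proposition \ref{CONTRHO} the function $\rho_\delta$ is continuous and non-increasing, so $\rho_\delta(\alpha')\to\rho_\delta(\alpha)$ with $\rho_\delta(\alpha')\le\rho_\delta(\alpha)$. Setting $z'=\alpha'-k\rho_\delta(\alpha')$ and $z=\alpha-k\rho_\delta(\alpha)$, the identity $z'-z=(\alpha'-\alpha)+k(\rho_\delta(\alpha)-\rho_\delta(\alpha'))\ge\alpha'-\alpha>0$ shows $z'\downarrow z$, whence $\lf z'\rf\to\lf z\rf$ and $\{z'\}\to\{z\}$, both from the right. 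If $\{z\}\ne\alpha$ this already gives $\theta_{\alpha'}(\{z'\})\to\theta_\alpha(\{z\})$; and if $\{z\}=\alpha$ — the only delicate case — then $\lf z\rf=-k\rho_\delta(\alpha)$ and $\{z'\}-\alpha'=z'-\lf z\rf-\alpha'=k(\rho_\delta(\alpha)-\rho_\delta(\alpha'))\ge0$, so $\{z'\}\ge\alpha'$ and $\theta_{\alpha'}(\{z'\})=1=\theta_\alpha(\{z\})$. Hence $\psi_{\alpha'}(\alpha'-k\rho_\delta(\alpha'))\to\psi_\alpha(\alpha-k\rho_\delta(\alpha))$ for every $k$, and the uniform summable bound of the first paragraph lets us pass to the limit in the series, giving $\Phi_\delta(\alpha')\to\Phi_\delta(\alpha)$.

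The main obstacle is precisely the monotonicity in $\alpha$ of $\psi_\alpha(\alpha-k\rho)$, where $\alpha$ enters both the argument and the subscript with competing effects; it — together with the single delicate case in the right-continuity argument — is resolved by the facts that the relevant limits are approached from the right (because $\alpha'>\alpha$ while $\rho_\delta$ is non-increasing) and that the jump $1-\lambda$ of $(1-\lambda)\lf\cdot\rf$ strictly dominates the jump $d$ of $d\,\theta_\alpha(\cdot)$.
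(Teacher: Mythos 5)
Your proof is correct and follows essentially the same route as the paper's: both rest on the key inequality $z_2-z_1\ge\alpha_2-\alpha_1>0$ coming from the monotonicity and continuity of $\rho_\delta$ (Proposition \ref{CONTRHO}), on the fact that the jump $1-\lambda$ of $(1-\lambda)\lfloor\cdot\rfloor$ dominates the possible drop $d$ of $d\,\theta_\alpha(\cdot)$, and on the same case split $\{z\}=\alpha$ versus $\{z\}\neq\alpha$ (with $k\rho_\delta(\alpha)\in\Z$ in the delicate case) for right continuity. The only difference is cosmetic: you factor the termwise monotonicity through the intermediate quantity $\psi_{\alpha_2}(\alpha_2-k\rho_1)$, proving separate monotonicity in $\alpha$ and in $\rho$, whereas the paper compares $\psi_{\alpha_1}(z_1)$ and $\psi_{\alpha_2}(z_2)$ in a single two-case step.
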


\begin{proof}
Let $\alpha_1<\alpha_2$ in $[0,1]$ and $k\in \N$ fixed. Denote by $z_1:=\alpha_1-k\rho_\delta(\alpha_1)$ and by $z_2:=\alpha_2-k\rho_\delta(\alpha_2)$. Then, 
\begin{align*}
\psi_{\alpha_2}(\alpha_2-k\rho_\delta(\alpha_2))-\psi_{\alpha_1}(\alpha_1-k\rho_\delta(\alpha_1))
&=\psi_{\alpha_2}(z_2)-\psi_{\alpha_1}(z_1)\\
&=(1-\lambda)(\ent{z_2}-\ent{z_1})+d(\theta_{\alpha_2}(\{z_2\})-\theta_{\alpha_1}(\{z_1\}))
\end{align*}
and
\[
z_2-z_1=\alpha_2-\alpha_1+k(\rho_\delta(\alpha_1)-\rho_\delta(\alpha_2))
\ge \alpha_2-\alpha_1>0
\]
because $\rho_\delta(\alpha_2)\le \rho_\delta(\alpha_1)$. If $\ent{z_2}>\ent{z_1}$, then
\[
\psi_{\alpha_2}(z_2)-\psi_{\alpha_1}(z_1)\geq 1-\lambda-d>0.
\]
If $\ent{z_2}=\ent{z_1}$, then
\[
\psi_{\alpha_2}(z_2)-\psi_{\alpha_1}(z_1)=d(\theta_{\alpha_2}(\{z_2\})-\theta_{\alpha_1}(\{z_1\}))
\quand 
\{z_2\}-\{z_1\}=z_2-z_1.
\]
But, $\{z_2\}-\{z_1\}\ge \alpha_2-\alpha_1>0$  implies that $\theta_{\alpha_1}(\{z_1\})\le \theta_{\alpha_2}(\{z_2\})$. So, we have proved that $\psi_{\alpha_2}(\alpha_2-k\rho_\delta(\alpha_2))\geq\psi_{\alpha_1}(\alpha_1-k\rho_\delta(\alpha_1))$ for very $k\geq 1$ and therefore that $\Phi_\delta$ is non-decreasing.

Let us prove that $\Phi_\delta$ is right continuous. Let $\alpha\in[0,1)$ and $(\alpha_n)_{n\in\N}$
be a sequence of $(\alpha,1)$ converging to $\alpha$. Let $k\geq 1$ and let us show that there exists
$n_0\in\N$ such that
\[
\psi_{\alpha}(\alpha-k\rho_\delta(\alpha))=\psi_{\alpha_n}(\alpha_n-k\rho_\delta(\alpha_n))
\qquad\forall n\geq n_0.
\]
Since $\rho_\delta$ is  non-increasing and  continuous we have 
\begin{equation}\label{LIMRNU}
\alpha-k\rho_\delta(\alpha)<\alpha_n-k\rho_\delta(\alpha_n)\quad\forall n\in\N
\quand
\lim_{n\to\infty}\alpha_n-k\rho_\delta(\alpha_n)=\alpha-k\rho_\delta(\alpha).
\end{equation}
Therefore, by right continuity of the integer part, there exists $n_1\in\N$ such that 
\[
\ent{\alpha-k\rho_\delta(\alpha)}=\ent{\alpha_n-k\rho_\delta(\alpha_n)}\qquad\forall n\geq n_1.
\]
If $\{\alpha-k\rho_\delta(\alpha)\}<\alpha$, then from \eqref{LIMRNU} and the right continuity of the fractional part, we deduce that there exists $n_2\in\N$ such that $\{\alpha_n-k\rho_\delta(\alpha_n)\}<\alpha<\alpha_n$ for all $n\geq n_2$. If $\alpha<\{\alpha-k\rho_\delta(\alpha)\}$, then there exists $n_2\geq n_1$ such that 
$\alpha<\alpha_n<\{\alpha-k\rho_\delta(\alpha)\}<\{\alpha_n-k\rho_\delta(\alpha_n)\}$
for all $n\geq n_2$. Therefore, in both cases, for $n_0\geq\max\{n_1,n_2\}$, we have
\begin{equation}\label{DIFTETAS}
\theta_{\alpha}(\{\alpha-k\rho_\delta(\alpha)\})=\theta_{\alpha_n}(\{\alpha_n-k\rho_\delta(\alpha_n)\})
\quand
\psi_{\alpha}(\alpha-k\rho_\delta(\alpha))=\psi_{\alpha_n}(\alpha_n-k\rho_\delta(\alpha_n))
\qquad\forall n\geq n_0.
\end{equation}
Now, if $\{\alpha-k\rho_\delta(\alpha)\}=\alpha$, then $\theta_\alpha(\{\alpha-k\rho_\delta(\alpha)\})=1$ and $k\rho_\delta(\alpha)\in\Z$. For every $n\in\N$, let $\epsilon_n:=\rho_\delta(\alpha)-\rho_\delta(\alpha_n)$. Then $\epsilon_n\geq 0$
and $\epsilon_n\to 0$ by continuity of $\rho_\delta$. Also,
\[
\{\alpha_n-k\rho_\delta(\alpha_n)\}=\{\alpha_n-k\rho_\delta(\alpha)+k\epsilon_n\}=\{\alpha_n+k\epsilon_n\}\qquad \forall n\in\N.
\]
As $\alpha<1$, $\alpha_n\to\alpha^+$ and $\epsilon_n\to 0^+$, there exists $n_2\in\N$ such that
$\alpha<\alpha_n\leq \alpha_n+k\epsilon_n<1$ for all $n\geq n_2$. Therefore, 
\eqref{DIFTETAS} also holds for $n_0\geq\max\{n_1,n_2\}$.

We have proved that $\psi_{\alpha_n}(\alpha_n-k\rho_\delta(\alpha_n))\to \psi_{\alpha}(\alpha-k\rho_\delta(\alpha))$ for every $k\geq 1$, and therefore that $\Phi_\delta$ is right continuous.  
\end{proof}

\begin{lemma}\label{Prop:PHIDPHI}
For every $\delta\in (1-\lambda-d,1)$ and $\alpha\in (0,1]$, we have $
\Phi_\delta(\alpha^-)\geq \phi_{\delta,\rho_\delta(\alpha),\alpha}(\alpha^-)$.
\end{lemma}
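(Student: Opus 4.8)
The plan is to reduce the inequality to a term-by-term comparison of the series defining $\Phi_\delta$ and $\phi_{\delta,\rho_\delta(\alpha),\alpha}$, and then pass to the limit by a Fatou-type argument. By the preceding proposition $\Phi_\delta$ is non-decreasing, so $\Phi_\delta(\alpha^-)$ exists; I fix a sequence $\beta_n\uparrow\alpha$ with $\beta_n<\alpha$, so that $\Phi_\delta(\alpha^-)=\lim_n\Phi_\delta(\beta_n)$. Since the series $\sum_k\lambda^k\psi_\alpha(y-k\rho_\delta(\alpha))$ converges uniformly for $y$ in a bounded set and $\psi_\alpha$ is non-decreasing and right continuous (Lemma~\ref{MONPSIALP}, so its left limits exist), one may pass the limit $y\to\alpha^-$ inside the sum to get $\phi_{\delta,\rho_\delta(\alpha),\alpha}(\alpha^-)=\frac{\delta}{1-\lambda}+\frac1\lambda\sum_{k\ge1}\lambda^k\psi_\alpha\big((\alpha-k\rho_\delta(\alpha))^-\big)$. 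Hence it suffices to establish, for each fixed $k\ge1$, the bound
\[
\liminf_{n\to\infty}\psi_{\beta_n}\big(\beta_n-k\rho_\delta(\beta_n)\big)\ \ge\ \psi_\alpha\big((\alpha-k\rho_\delta(\alpha))^-\big).
\]

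For the term-by-term step, fix $k$ and put $z_n:=\beta_n-k\rho_\delta(\beta_n)$ and $z:=\alpha-k\rho_\delta(\alpha)$. Because $\rho_\delta$ is non-increasing and continuous (Proposition~\ref{CONTRHO}), $\rho_\delta(\beta_n)\ge\rho_\delta(\alpha)$ and $\rho_\delta(\beta_n)\to\rho_\delta(\alpha)$, hence $z_n<z$ and $z_n\to z$, i.e. $z_n\uparrow z$ from the left. Let $m:=\lim_{s\to z^-}\ent{s}$ (which equals $\ent{z}$ if $z\notin\Z$ and $z-1$ if $z\in\Z$); since $\ent{\cdot}$ is constant on a left neighbourhood of $z$, $\ent{z_n}=m$ for $n$ large, so $\psi_{\beta_n}(z_n)=(1-\lambda)m+d\,\theta_{\beta_n}(\{z_n\})$ and $\psi_\alpha(z^-)=(1-\lambda)m+d\,t$ with $t:=\lim_{s\to z^-}\theta_\alpha(\{s\})\in\{0,1\}$. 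The required inequality thus reduces to $\theta_{\beta_n}(\{z_n\})\ge t$ for $n$ large, which is trivial if $t=0$. If $t=1$, a short case distinction on whether $z\in\Z$ shows that $w:=z-m$ satisfies $w>\alpha$ (namely $w=1>\alpha$ when $z\in\Z$, forced by $\alpha<1$; and $w=\{z\}>\alpha$ when $z\notin\Z$); since $\{z_n\}=z_n-m\to w$ for $n$ large while $\beta_n\to\alpha<w$, we get $\{z_n\}>\beta_n$ eventually, i.e. $\theta_{\beta_n}(\{z_n\})=1\ge t$. This yields, for each $k$, an index $N_k$ with $\psi_{\beta_n}(\beta_n-k\rho_\delta(\beta_n))\ge\psi_\alpha((\alpha-k\rho_\delta(\alpha))^-)$ whenever $n\ge N_k$.

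To conclude, set $a_k^{(n)}:=\lambda^k\big(\psi_{\beta_n}(\beta_n-k\rho_\delta(\beta_n))-\psi_\alpha((\alpha-k\rho_\delta(\alpha))^-)\big)$. Both evaluation points lie in $[-k,1]$, so $|a_k^{(n)}|\le b_k:=\lambda^k\big((1-\lambda)k+d\big)$ with $\sum_k b_k<\infty$, and by the previous paragraph $\liminf_n a_k^{(n)}\ge0$ for every $k$. Applying Fatou's lemma for the counting measure on $\N$ to the nonnegative quantities $a_k^{(n)}+b_k$ gives $\liminf_n\sum_k a_k^{(n)}\ge\sum_k\liminf_n a_k^{(n)}\ge0$. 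Since $\sum_k a_k^{(n)}=\lambda\big(\Phi_\delta(\beta_n)-\phi_{\delta,\rho_\delta(\alpha),\alpha}(\alpha^-)\big)$, letting $n\to\infty$ yields $\Phi_\delta(\alpha^-)=\lim_n\Phi_\delta(\beta_n)\ge\phi_{\delta,\rho_\delta(\alpha),\alpha}(\alpha^-)$, as claimed.

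The main obstacle is the case $t=1$ of the term-by-term estimate: the discontinuity set $\Z+\beta_n$ of $\psi_{\beta_n}$ moves with $n$, so one must check that the upward jump of $\psi_\alpha$ sitting just to the left of $z=\alpha-k\rho_\delta(\alpha)$ is still registered by $\theta_{\beta_n}(\{z_n\})$ in the limit; this is exactly where the continuity (not just monotonicity) of $\rho_\delta$ and the strict inequality $w>\alpha$ enter. One must also bear in mind that $\psi_\alpha(z^-)$ is the lower of the two values of $\psi_\alpha$ at a jump, so the comparison is genuinely an inequality rather than an equality, which is why it is the one-sided limit $\Phi_\delta(\alpha^-)$ (and not $\Phi_\delta(\alpha)$) that appears.
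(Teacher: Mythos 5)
The proposal is correct and takes essentially the same route as the paper's proof: approximate $\alpha$ from the left by $\beta_n\uparrow\alpha$, use the monotonicity and continuity of $\rho_\delta$ to compare $\psi_{\beta_n}(\beta_n-k\rho_\delta(\beta_n))$ with the left limit $\psi_\alpha\bigl((\alpha-k\rho_\delta(\alpha))^-\bigr)$ term by term, and then sum. The only cosmetic differences are that you settle the $\theta$-comparison directly via the observation that $t=1$ forces $w>\alpha$ (the paper argues through subsequences), and you justify the interchange of limit and sum with Fatou and a dominating sequence rather than by uniform convergence; both are fine.
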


\begin{proof} Let $\alpha\in(0,1]$ and $(\alpha_n)_{n\in \N}$ be a sequence in $(0,\alpha)$ converging to $\alpha$. Since $\alpha'\mapsto \rho_\delta(\alpha')$ is  continuous we have that $\rho_\delta(\alpha_n)\to\rho_\delta(\alpha)$ as $n$ goes to infinity. Also observe that 
\[
\Phi_\delta(\alpha^-)=\lim_{n\to \infty} \phi_{\delta,\rho_\delta(\alpha_n),\alpha_n}(\alpha_n) \quand  \phi_{\delta,\rho_\delta(\alpha),\alpha}(\alpha^-)=\lim_{n\to \infty} \phi_{\delta,\rho_\delta(\alpha),\alpha}(\alpha_n),
\] 
since both left limits at $\alpha$ exist by monotony of $\Phi_\delta$ and
$\phi_{\delta,\rho_\delta(\alpha),\alpha}$.  To show that $\Phi_\delta(\alpha^-)\geq \phi_{\delta,\rho_\delta(\alpha),\alpha}(\alpha^-)$, it is enough to prove
that 
\[
\lim_{n\to\infty}\psi_{\alpha_n}(\alpha_n-k\rho_\delta(\alpha_n))
\geq\lim_{n\to\infty}\psi_\alpha(\alpha_n-k\rho_\delta(\alpha))\quad\forall k\geq 1.
\]

Let $k\geq 1$ fixed. First, note that
\[
\alpha_n-k\rho_\delta(\alpha_n)\leq\alpha_n-k\rho_\delta(\alpha)<\alpha-k\rho_\delta(\alpha)\qquad\forall n\in\N,
\]
since $\rho_\delta$ is non-increasing, and that
\[
\lim_{n\to\infty}\alpha_n-k\rho_\delta(\alpha_n)=\lim_{n\to\infty}\alpha_n-k\rho_\delta(\alpha)=\alpha-k\rho_\delta(\alpha).
\]
This implies that there exists $m_0\in\N$ such that
\begin{equation}\label{DESLALP}
l<\alpha_n-k\rho_\delta(\alpha_n)\leq\alpha_n-k\rho_\delta(\alpha)
<\alpha-k\rho_\delta(\alpha)\leq l+1\qquad\forall n\geq m_0,
\end{equation}
where $l=\ent{\alpha-k\rho_\delta(\alpha)}-1$ if $\alpha-k\rho_\delta(\alpha)\in \Z$
and $l=\ent{\alpha-k\rho_\delta(\alpha)}$ otherwise. So, we have that 
\begin{equation}\label{ENTN}
\ent{\alpha_n-k\rho_\delta(\alpha)}=\ent{\alpha_n-k\rho_\delta(\alpha_n)}
=\ent{\alpha-k\rho_\delta(\alpha)}-\Ind_\Z(\alpha-k\rho_\delta(\alpha))\qquad \forall n\ge m_0.
\end{equation}

Now, let us show that there exists $m_1\geq m_0$ such that 
\begin{equation}\label{FRACNU}
\theta_{\alpha_n}(\{\alpha_n-k\rho_\delta(\alpha_n)\})\geq\theta_{\alpha}(\{\alpha_n-k\rho_\delta(\alpha)\})\qquad\forall n\geq m_1.
\end{equation}
If $\alpha=1$, or if there exists $m_1\geq m_0$ such that 
$\theta_{\alpha_n}(\{\alpha_n-k\rho_\delta(\alpha_n)\})=1$ for all $n\geq m_1$, then  \eqref{FRACNU} holds immediately. So, suppose $\alpha\neq 1$ and let us assume that there exist infinitely many $n\geq m_0$ such that $\theta_{\alpha_n}(\{\alpha_n-k\rho_\delta(\alpha_n)\})=0$.
Let $(\alpha_{n_s})_{s\in\N}$ be any subsequence such that $n_s\geq m_0$ and $\theta_{\alpha_{n_s}}(\{\alpha_{n_s}-k\rho_\delta(\alpha_{n_s})\})=0$ for all $s\in\N$. Then,
\begin{equation}\label{ALNS}
\{\alpha_{n_s}-k\rho_\delta(\alpha_{n_s})\}<\alpha_{n_s}\qquad\forall s\in\N.
\end{equation}
If $\alpha-k\rho_\delta(\alpha)\in\Z$. Then by \eqref{ENTN} we have 
$\{\alpha_{n_s}-k\rho_\delta(\alpha_{n_s})\}=\alpha_{n_s}-k\rho_\delta(\alpha_{n_s})-(\alpha-k\rho_\delta(\alpha)-1)<\alpha_{n_s}$
 for all $s\in\N$. Taking the limit $s\to\infty$, we obtain that $\alpha\geq 1$, which is a contradiction. Therefore, $\alpha-k\rho_\delta(\alpha)\notin\Z$ and  taking the limit $s\to\infty$ in \eqref{ALNS}, we deduce that $\{\alpha-k\rho_\delta(\alpha)\}\leq\alpha$. Together with \eqref{DESLALP}, this implies that 
$\{\alpha_{n_s}-k\rho_\delta(\alpha)\}<\alpha$ and $\theta_{\alpha}(\{\alpha_{n_s}-k\rho_\delta(\alpha)\})=0$ for all $s\in\N$. Assuming, without loss of generality, that $\theta_{\alpha_{n}}(\{\alpha_{n}-k\rho_\delta(\alpha_{n})\})=1$
for every $n\geq m_0$ such that $n\notin\{n_s,s\in\N\}$, we obtain \eqref{FRACNU}  with $m_1=m_0$.

Now, \eqref{ENTN} and \eqref{FRACNU} imply that 
\begin{align*}
\psi_{\alpha_n}(\alpha_n-k\rho_\delta(\alpha_n))
&=(1-\lambda)\ent{\alpha_n-k\rho_\delta(\alpha_n)}+d\theta_{\alpha_n}(\{\alpha_n-k\rho_\delta(\alpha_n)\})\\
&\geq (1-\lambda)\ent{\alpha_n-k\rho_\delta(\alpha)}+d\theta_{\alpha}(\{\alpha_n-k\rho_\delta(\alpha)\})=\psi_{\alpha}(\alpha_n-k\rho_\delta(\alpha))
\end{align*}
for all $n\geq m_1$.
\end{proof}

\begin{proposition}\label{PROPTHREC} For every  $\delta\in(1-\lambda-d,1)$ and $a\in[0,1]$ such that
$(\delta,a)\notin \mathcal{F}_1\cup\mathcal{F}_2$
there exist $\rho\in(0,1)$ and $\alpha\in[0,1]$ such that
\begin{equation}\label{INTINV}
\delta\in[\delta(\rho^-,\alpha),\delta(\rho,\alpha)]\quand
a\in[a(\delta,\rho^+,\alpha),a(\delta,\rho,\alpha)].
\end{equation}
\end{proposition}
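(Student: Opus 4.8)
The plan is to realize the desired pair $(\rho,\alpha)$ as $(\rho_\delta(\alpha^*),\alpha^*)$ for a suitably chosen $\alpha^*\in[0,1]$. Recall from Lemma~\ref{PROPPHIA} that $\Phi_\delta(\alpha)=\phi_{\delta,\rho_\delta(\alpha),\alpha}(\alpha)=a(\delta,\rho_\delta(\alpha),\alpha)$ and $\phi_{\delta,\rho_\delta(\alpha),\alpha}(\alpha^-)=a(\delta,\rho_\delta(\alpha)^+,\alpha)$, and from Proposition~\ref{CONTRHO} that whenever $\rho_\delta(\alpha)\in(0,1)$ one has $\delta\in[\delta(\rho_\delta(\alpha)^-,\alpha),\delta(\rho_\delta(\alpha),\alpha)]$. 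Hence it is enough to produce $\alpha\in[0,1]$ with $\rho_\delta(\alpha)\in(0,1)$ and $a\in[\phi_{\delta,\rho_\delta(\alpha),\alpha}(\alpha^-),\Phi_\delta(\alpha)]$: then \eqref{INTINV} holds with $\rho=\rho_\delta(\alpha)$.

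First I would set $\alpha^*:=\inf\{\alpha\in[0,1]:\Phi_\delta(\alpha)\ge a\}$. This set is non-empty because $\Phi_\delta(1)\ge 1\ge a$: if $\delta>1-\lambda$ this is item~2(b) of Proposition~\ref{RANGE}, while if $\delta\le 1-\lambda$ then $\rho_\delta(1)=0$ (as $\delta\le 1-\lambda=\delta(0,1)$) and a short geometric-series computation gives $\Phi_\delta(1)=a(\delta,0,1)=\tfrac{\delta}{1-\lambda}+1>1$. Since $\Phi_\delta$ is non-decreasing and right continuous (as established above), $\Phi_\delta(\alpha^*)\ge a$ while $\Phi_\delta(\alpha)<a$ for every $\alpha<\alpha^*$; in particular $a\le\Phi_\delta(\alpha^*)$, and $\Phi_\delta(\alpha^{*-})\le a$ whenever $\alpha^*>0$.

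Next I would check $\rho_\delta(\alpha^*)\in(0,1)$. By Proposition~\ref{CONTRHO} this is automatic when $\alpha^*\in(0,1)$, so only $\alpha^*\in\{0,1\}$ needs attention; and for those values $\rho_\delta(0)\notin(0,1)$ forces $\delta\ge 1-d$ while $\rho_\delta(1)\notin(0,1)$ forces $\delta\le 1-\lambda$, and since $\delta(0,0)=1-\lambda-d<\delta$ and $\sup_{\rho<1}\delta(\rho,1)=1>\delta$ we then have $\rho_\delta(0)=1$ and $\rho_\delta(1)=0$ respectively. If $\alpha^*=0$ and $\delta\ge 1-d$, then a geometric-series computation gives $\Phi_\delta(0)=a(\delta,1,0)=\tfrac{\delta+d-1}{1-\lambda}$, so $0\le a\le\Phi_\delta(0)$ would mean $(\delta,a)\in\mathcal{F}_2$ by \eqref{F2}, contrary to hypothesis; hence $\delta<1-d$ and $\rho_\delta(0)\in(0,1)$. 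If $\alpha^*=1$ and $\delta\le 1-\lambda$, then $\phi_{\delta,0,1}(1^-)=\tfrac{\delta}{1-\lambda}$, so Lemma~\ref{Prop:PHIDPHI} gives $\Phi_\delta(1^-)\ge\tfrac{\delta}{1-\lambda}$; but $(\delta,a)\notin\mathcal{F}_1$ together with $\delta\le 1-\lambda$ forces $a<\tfrac{\delta}{1-\lambda}\le\Phi_\delta(1^-)=\sup_{\alpha<1}\Phi_\delta(\alpha)$ by \eqref{F1}, so $\Phi_\delta(\alpha_0)\ge a$ for some $\alpha_0<1$, contradicting $\alpha^*=1$; hence $\delta>1-\lambda$ and $\rho_\delta(1)\in(0,1)$. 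In every case $\rho_\delta(\alpha^*)\in(0,1)$.

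Finally I would verify $a\in[\phi_{\delta,\rho_\delta(\alpha^*),\alpha^*}(\alpha^{*-}),\Phi_\delta(\alpha^*)]$. The upper bound $a\le\Phi_\delta(\alpha^*)$ was already noted. For the lower bound: if $\alpha^*>0$, Lemma~\ref{Prop:PHIDPHI} gives $\phi_{\delta,\rho_\delta(\alpha^*),\alpha^*}(\alpha^{*-})\le\Phi_\delta(\alpha^{*-})\le a$; if $\alpha^*=0$, then $\rho_\delta(0)\in(0,1)$ and item~1(b) of Proposition~\ref{RANGE} gives $\phi_{\delta,\rho_\delta(0),0}(0^-)=a(\delta,\rho_\delta(0)^+,0)\le 0\le a$. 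This yields \eqref{INTINV} with $\rho=\rho_\delta(\alpha^*)\in(0,1)$ and $\alpha=\alpha^*$, completing the proof. The step I expect to demand the most care is the boundary analysis of the third paragraph: one must identify $\Phi_\delta(0)$, $\Phi_\delta(1)$ and $\phi_{\delta,0,1}(1^-)$ with the explicit thresholds of \eqref{F1}--\eqref{F2}, so that the only configurations in which $\rho_\delta$ reaches $\{0,1\}$ at $\alpha^*$ are precisely those removed by excluding $\mathcal{F}_1\cup\mathcal{F}_2$, while the passage from $\phi_{\delta,\rho_\delta(\alpha),\alpha}(\alpha^-)$ to $\Phi_\delta(\alpha^-)$ rests essentially on Lemma~\ref{Prop:PHIDPHI}.
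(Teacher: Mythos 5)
Your proof is correct and takes essentially the same route as the paper's: both realize $(\rho,\alpha)$ as $(\rho_\delta(\alpha^*),\alpha^*)$ where $\alpha^*$ is the first point at which $\Phi_\delta$ reaches $a$, using the monotonicity and right continuity of $\Phi_\delta$, Lemma \ref{Prop:PHIDPHI} for the left limit, and the exclusion of $\mathcal{F}_1\cup\mathcal{F}_2$ to rule out $\rho_\delta(\alpha^*)\in\{0,1\}$. The only difference is organizational — you take a single infimum over $[0,1]$ where the paper splits into three cases according to the position of $a$ relative to $\Phi_\delta(0)$ and $\Phi_\delta(1^-)$ — and your boundary computations of $\Phi_\delta(0)$, $\Phi_\delta(1)$ and $\phi_{\delta,0,1}(1^-)$ match the paper's.
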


\begin{proof} Let $\delta\in(1-\lambda-d,1)$ and $a\in[0,1]$. By Proposition \ref{CONTRHO}, we have
\[ 
\rho_\delta(\alpha)\in(0,1)\quand\delta\in[\delta(\rho_\delta(\alpha)^-,\alpha),\delta(\rho_\delta(\alpha),\alpha)]\quad\forall\alpha\in(0,1).
\]
Suppose that $\delta\in[1-d,1)$. Then, Lemma \ref{INTERVALDELTA} implies that $\alpha\geq 1-\rho_\delta(\alpha)>0$ for every $\alpha\in(0,1)$. It follows that $\rho_\delta(0)=1$ and $\Phi_\delta(0)=\phi_{\delta,\rho_\delta(0),0}(0)=\phi_{\delta,1,0}(0)$. As
\[
\phi_{\delta,1,0}(0)=\frac{\delta+d}{1-\lambda}-(1-\lambda)\sum_{k=1}^{\infty}k\lambda^{k-1}=\frac{\delta+d-1}{1-\lambda}\geq 0,
\]
we deduce that $(\delta,a)\in\mathcal{F}_2$ if $a\in[0,\Phi_\delta(0)]$. Now suppose $\delta\in(1-\lambda-d,1-\lambda]$. Then, Lemma \ref{INTERVALDELTA} implies that $\alpha\leq 1-\rho_\delta(\alpha)<1$ for every $\alpha\in(0,1)$. It follows that $\rho_\delta(1^-)=\rho_\delta(1)=0$ and  that $\phi_{\delta,0,1}(1^-)\leq\Phi_\delta(1^-)$ using Lemma \ref{Prop:PHIDPHI}. As
\[
\phi_{\delta,0,1}(1^-)=\frac{\delta+d}{1-\lambda}-1-\frac{1}{\lambda}\sum_{k=1}^{\infty}\lambda^{k}((1-\lambda)\ent{-1}+d\theta_{0}(\{-1\}))=\frac{\delta}{1-\lambda}\leq 1,
\]
we deduce that $(\delta,a)\in\mathcal{F}_1$ if $a\in[\Phi_\delta(1^-),1]$.  We have proved that if $\delta\in(1-\lambda-d,1)$ and $a\in[0,1]$ are such that 
$(\delta,a)\notin \mathcal{F}_1\cup\mathcal{F}_2$, then only three cases are possible:

\medskip
\noindent {\bf Case 1:} $a\in[0,\Phi_\delta(0)]$ and $\delta\in(1-\lambda-d,1-d)$. Then, by \eqref{INTDELRHOALP} we have
\[ 
\rho_\delta(0)\in(0,1)\quand\delta\in[\delta(\rho_\delta(0)^-,0),\delta(\rho_\delta(0),0)]. 
\] 
It follows, from Lemma \ref{PROPPHI0}, that $\phi_{\delta,\rho_\delta(0),0}(0^-)\leq 0\leq a$ and, from definition of $\Phi_\delta$, that
$a\leq\Phi_\delta(0)=\phi_{\delta,\rho_\delta(0),0}(0)$. Then, by Lemma \ref{PROPPHIA}, it follows that \eqref{INTINV} holds for $\alpha=0$ and $\rho=\rho_\delta(0)$.

\medskip
\noindent {\bf Case 2:} $a\in[\Phi_\delta(1^-), 1]$ and  $\delta\in(1-\lambda,1)$. Then, by \eqref{INTDELRHOALP} we have
\[ 
\rho_\delta(1)\in(0,1)\quand\delta\in[\delta(\rho_\delta(1)^-,1),\delta(\rho_\delta(1),1)]. 
\]
It follows, from Lemma \ref{PROPPHI0}, that $\phi_{\delta,\rho_\delta(1),1}(1)=
\phi_{\delta,\rho_\delta(1),1}(0)+1\geq 1\geq a$ and, from Lemma  \ref{Prop:PHIDPHI}, that $\phi_{\delta,\rho_\delta(1),1}(1^-)\leq \Phi_\delta(1^-)\leq a$. Then, by Lemma \ref{PROPPHIA}, it follows that \eqref{INTINV} holds for $\alpha=1$ and $\rho=\rho_\delta(1)$.

\medskip
\noindent {\bf Case 3:} $a\in(\Phi_\delta(0),\Phi_\delta(1^-))$. Then, as $\Phi_\delta(1^-)>a$, the set
\[
A_{\delta,a}:=\{\alpha\in(0,1):\Phi_\delta(\alpha)\geq a\}
\] 
is non-empty and $\alpha^*:=\inf A_{\delta,a}$ is well defined. As  $\Phi_\delta(0)=\Phi_\delta(0^+)<a$ and $\Phi_\delta$ is non-decreasing, there exists $\alpha'\in (0,1)$ such that $\Phi_\delta(\alpha)<a$ for every $0<\alpha<\alpha'$ and $\alpha'$ is lower bound of $A_{\delta,a}$. This implies that $\alpha^*\in(0,1)$ and, by right continuity of $\Phi_\delta$,  we deduce that $\Phi_\delta(\alpha^*)\geq a$, that is, $\alpha^*\in A_{\delta,a}$. Therefore, we can define  
\[
\alpha_{\delta}(a):=\min\{\alpha\in(0,1):\Phi_\delta(\alpha)\geq a\}
\qquad\forall a\in(\Phi_\delta(0),\Phi_\delta(1^-)).
\] 
Thus, for $\alpha=\alpha_{\delta}(a)$ and $\rho=\rho_\delta(\alpha)$, we have $a\in[\Phi_\delta(\alpha^-),\Phi_\delta(\alpha)]$ and by Lemma \ref{Prop:PHIDPHI} and \eqref{INTDELRHOALP}, we have
\[
a\in[\phi_{\delta,\rho,\alpha}(\alpha^-),\phi_{\delta,\rho,\alpha}(\alpha)]
\quand
\delta\in[\delta(\rho^-,\alpha),\delta(\rho,\alpha)],
\]
where $\rho\in(0,1)$. Then, \eqref{INTINV} follows from Lemma \ref{PROPPHIA}. 
\end{proof}

\begin{proof}[Proof of Theorem \ref{RECTHROT}] Let $\delta\in(1-\lambda-d,1)$ and $a\in[0,1]$ be such that $(\delta,a)\notin \mathcal{F}_1\cup\mathcal{F}_2$ and let $f=f_{\delta,a}$ be the map \eqref{PROJ}. Then, by Proposition \ref{PROPTHREC}
there exist $\rho\in(0,1)$ and $\alpha\in[0,1]$ such that such that $\delta$ and $a$ satisfy \eqref{CONDTH}. By Theorem \ref{THROT}, it follows that $\rho=\rho_f$, where $\rho_f$ is the rotation number of $f$, and \eqref{INTREC} holds. If \eqref{INTREC} holds with $\alpha\in(1-\rho_f,1]$, then  $a\geq \eta_2$ or $a= 1$, by item 2) of Proposition \ref{RANGE}. Also $\delta\in(1-\lambda,1)$, which implies that $\eta_2\leq 1$, so in any case $a\geq \eta_2$. For $(\delta,a)\in M_1$, we have $a<\eta_1<\eta_2$ and necessarily  \eqref{INTREC} holds with $\alpha\in[0,1-\rho_f]$. For $(\delta,a)\in M_3$, we can show with an analogous proof using  item 1) of Proposition \ref{RANGE} that \eqref{INTREC} holds with $\alpha\in[1-\rho_f,1]$. If $(\delta,a)\in M_2$ and $\rho_f\notin\Q$, then \eqref{INTREC} holds with $\alpha=1-\rho_f$, by Proposition \ref{RANGE}. If $\rho_f\in\Q$ and $a\notin\{\eta_1,\eta_2\}$, then 
$a\in(\eta_1,\eta_2)$ and item 1) and 2) of Proposition \ref{RANGE} imply that \eqref{INTREC} cannot hold with $\alpha\neq 1-\rho_f$.
\end{proof}

\appendix
\section{Appendix}\label{APPA}

\subsection{Continuity of the rotation number}\label{ACONT}
For sake of completeness, here we recall and apply some results of  \cite{Rhodes1991} to show that the rotation number of the lift \eqref{LIFT} is a continuous function of $(\delta,a)$. We replace the notations of \cite{Rhodes1991} by those of this paper.
 
Let  $\mathcal M:=\{F:\R\to \R : \text{ $F$ is non-decreasing and  $F(t+1)=F(t)+1$ $\forall t\in\R$}\}$. For any $F\in \mathcal M$ let  $R(F)$ be defined by
\[
R(F)=\{(x,y)\in \R^2 :\ F^-(x)\le y\le F^+(x)\}\quad\text{where}\quad F^-(x)=\lim_{t\to x^-} F(t) \quand F^+(x)=\lim_{t\to x^+} F(t).
\] 
As mentioned in \cite{Rhodes1991}, a topology on the set $R(\mathcal M):=\{R(F):\ F\in \mathcal M \}$ is induced by the Hausdorff metric defined on the restrictions of the elements of $R(\mathcal M)$ to the points such that $0\le x\le 1$.

Let  $(F_{\delta,a})$ be a family of functions in $\mathcal{M}$, where $(\delta,a)$ belongs to a subset $M$ of some normed space. For $(\delta_0,a_0) \in M$, we say that $F_{\delta,a}$ is $H$-convergent to $F_{\delta_0,a_0}$ as $(\delta,a) \to (\delta_0,a_0)$, and we write $F_{\delta,a} \xrightarrow{H} F_{\delta_0,a_0}$, if $R(F_{\delta,a})$ converges to $R(F_{\delta_0,a_0})$ in the Hausdorff metric as $(\delta,a) \to (\delta_0,a_0)$. Then, we have the following:

\begin{theorem}{\cite[Theorem 5.8]{Rhodes1991}}\label{TA1} Suppose that $F_{\delta,a}$ is in $\mathcal M$ for all $(\delta,a)$ and that $F_{\delta,a} \xrightarrow{H} F_{\delta_0,a_0}$ as $(\delta,a) \to (\delta_0,a_0)$. If  $F_{\delta_0,a_0}$ is continuous or $F_{\delta_0,a_0}$ is strictly increasing then 
\[
\lim_{(\delta,a)\to (\delta_0,a_0)} \rho(F_{\delta,a})=\rho(F_{\delta_0,a_0}),
\] 
where $\rho(F)$ denotes the rotation number of $F$. 
\end{theorem}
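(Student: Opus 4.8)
The plan is to reduce the statement to the classical bracketing of the rotation number by rational inequalities, exactly as in \cite{Rhodes1991}, and then to transfer these inequalities along the Hausdorff convergence $R(F_{\delta,a})\to R(F_{\delta_0,a_0})$. First I would recall the elementary facts valid for any $F\in\mathcal M$: the limit $\rho(F)=\lim_{n\to\infty}F^n(x)/n$ exists and does not depend on $x$ (by monotonicity and $F(t+1)=F(t)+1$), and for all $p\in\Z$, $q\geq 1$ one has $\rho(F)>p/q$ if and only if $F^q(x)>x+p$ for every $x\in\R$, and $\rho(F)<p/q$ if and only if $F^q(x)<x+p$ for every $x\in\R$. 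The forward implications follow by iterating the inequality using monotonicity of $F^q$, and periodicity reduces the test to $x\in[0,1]$; the converses are immediate from the definition of $\rho(F)$.

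Next, I would fix $\rho_0:=\rho(F_{\delta_0,a_0})$ and an arbitrary $\varepsilon>0$, and pick rationals $p_-/q_-<\rho_0<p_+/q_+$ with $\rho_0-p_-/q_-<\varepsilon$ and $p_+/q_+-\rho_0<\varepsilon$ (possible by density of $\Q$, whether or not $\rho_0$ is rational). By the facts above, $F_{\delta_0,a_0}^{q_-}(x)>x+p_-$ and $F_{\delta_0,a_0}^{q_+}(x)<x+p_+$ for all $x$. Here is where the hypothesis that $F_{\delta_0,a_0}$ is continuous or strictly increasing is used: it guarantees that on the compact interval $[0,1]$ the quantities $x\mapsto F_{\delta_0,a_0}^{q_\pm}(x)-x$ are in fact \emph{uniformly} bounded away from $p_\pm$, i.e.\ there is $c>0$ with
\[
F_{\delta_0,a_0}^{q_-}(x)\geq x+p_-+c
\quad\text{and}\quad
F_{\delta_0,a_0}^{q_+}(x)\leq x+p_+-c
\qquad\forall x\in[0,1].
\]
Without such a hypothesis a jump of $F_{\delta_0,a_0}$ straddling the line $y=x+p_\pm$ could make the supremum equal to $p_\pm$ without being attained, and then $\rho$ would genuinely be discontinuous; the ``continuous or strictly increasing'' assumption rules this out.

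Then I would propagate these strict inequalities to the perturbed maps. Composition is continuous for the topology on $R(\mathcal M)$ induced by the Hausdorff metric, so $R(F_{\delta,a}^{q_\pm})\to R(F_{\delta_0,a_0}^{q_\pm})$ as $(\delta,a)\to(\delta_0,a_0)$; combining this with the uniform gap $c$ and the compactness of $[0,1]$, there is a neighbourhood of $(\delta_0,a_0)$ on which $F_{\delta,a}^{q_-}(x)>x+p_-$ and $F_{\delta,a}^{q_+}(x)<x+p_+$ for every $x$. By the bracketing characterization this yields $p_-/q_-<\rho(F_{\delta,a})<p_+/q_+$ on that neighbourhood, hence $|\rho(F_{\delta,a})-\rho_0|<\varepsilon$ there, and since $\varepsilon$ was arbitrary we get $\lim_{(\delta,a)\to(\delta_0,a_0)}\rho(F_{\delta,a})=\rho(F_{\delta_0,a_0})$.

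The main obstacle is precisely the handling of discontinuities: one must make rigorous that iteration and the passage from $\rho(F)$ to pointwise inequalities behave well with respect to Hausdorff convergence of the ``filled-in'' graphs $R(F)$, and this is exactly the point at which the hypothesis on $F_{\delta_0,a_0}$ is indispensable. Since all of this is carried out in detail in \cite[Theorem 5.8 and its preliminaries]{Rhodes1991}, I would only sketch the argument above and refer to that reference for the technical verifications.
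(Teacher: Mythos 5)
The first thing to note is that the paper does not prove this statement: Theorem \ref{TA1} is imported, with notation adapted, from \cite[Theorem 5.8]{Rhodes1991}, and Appendix \ref{ACONT} only applies it (via Lemma \ref{LA2}) to the family \eqref{LIFT}. So there is no internal proof to compare yours against; what you have written is a sketch of the external result, and the overall strategy --- rational bracketing of $\rho$, a uniform gap at $F_{\delta_0,a_0}$, transfer of the gap along the $H$-convergence --- is indeed the one used there.

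There is, however, one step that is false as you state it. Your bracketing lemma claims that for any $F\in\mathcal M$ one has $\rho(F)>p/q$ \emph{if and only if} $F^q(x)>x+p$ for every $x$, with the ``if'' direction ``immediate from the definition''. It is not: take $p=0$, $q=1$ and $F\in\mathcal M$ given on $[0,1)$ by $F(x)=x/2+1/4$ for $x\in[0,1/2)$ and $F(x)=x+1/4$ for $x\in[1/2,1)$. This $F$ is even strictly increasing and satisfies $F(x)>x$ for every $x\in\R$, yet the orbit of $0$ increases to $1/2$ and stays bounded, so $\rho(F)=0=p/q$. What the definition gives immediately is only the weak implication ($F^q(x)\geq x+p$ for all $x$ implies $\rho(F)\geq p/q$, and its mirror image), and that is what you must use for the perturbed maps: the weak inequalities $p_-/q_-\leq\rho(F_{\delta,a})\leq p_+/q_+$ already give $|\rho(F_{\delta,a})-\rho_0|<\varepsilon$, so your conclusion survives with no other change. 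The strict form is needed, and is true, only at $F_{\delta_0,a_0}$, where it is exactly your ``uniform gap'' $c>0$; this gap is where the hypothesis enters and it deserves an argument rather than an assertion, e.g.: if $\inf_{x\in[0,1]}\bigl(F_{\delta_0,a_0}^{q}(x)-x-p\bigr)=0$ with $G:=F_{\delta_0,a_0}^{q}-p$ strictly increasing and $G>\mathrm{id}$, the infimum must be approached from the left at some $x_*$ with $G(x_*^-)=x_*$, so $G$ maps $(x_*-\varepsilon,x_*)$ into itself, orbits there are bounded and $\rho_0=p/q$, a contradiction. Finally, the transfer of the gap to nearby maps is more safely obtained from Lemma \ref{LA2} by deriving a two-sided bound of the form $F_{\delta_0,a_0}(x-\epsilon)-\epsilon\leq F_{\delta,a}(x)\leq F_{\delta_0,a_0}(x+\epsilon)+\epsilon$ and iterating it $q_\pm$ times, rather than from the unproved claim that composition is continuous for the Hausdorff topology on filled graphs. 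With these repairs your sketch is a faithful account of the Rhodes--Thompson argument.
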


Also, a criterion for the $H$-convergence is given by the following lemma:

\begin{lemma}{\cite[Lemma 3.3]{Rhodes1991}}\label{LA2} Let $(F_{\delta,a})$ be a family of functions in $\mathcal M$. Then $F_{\delta,a}\xrightarrow{H} F_{\delta_0,a_0}$ as $(\delta,a) \to (\delta_0,a_0)$ if and only if for each point $x_0$ of continuity of $F_{\delta_0,a_0}$ and for each $\epsilon>0$ there exists $\xi>0$ such that for every $(\delta,a)$ satisfying $\|(\delta,a)\|<\xi$ there exists $x_{\delta,a}$ such that 
\begin{equation}\label{Hconvergence}
|x_{\delta,a}-x_0|<\epsilon \quand |F_{\delta,a}(x_{\delta,a})-F_{\delta_0,a_0}(x_0)|<\epsilon.
\end{equation}
\end{lemma}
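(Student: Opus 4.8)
The plan is to prove the equivalence directly from the geometric description of $R(F)$ for $F\in\mathcal M$. On the strip $\{0\le x\le 1\}$ the set $R(F)$ is the \emph{completed graph} of $F$: it is obtained from the ordinary graph by inserting, at each $x$, the vertical segment $\{x\}\times[F^-(x),F^+(x)]$, and because $F$ is non-decreasing with $F(1)=F(0)+1$ it is a compact, connected, monotone curve running from $(0,F^-(0))$ to $(1,F^+(1))$. Write $G:=F_{\delta_0,a_0}$, and $d_H$ for the Hausdorff distance of these restricted curves; by periodicity it suffices to argue for points $x_0\in(0,1)$. A preliminary fact, used throughout, is that when $x_0$ is a continuity point of $G$ one has $G^-(x)\to G(x_0)$ and $G^+(x)\to G(x_0)$ as $x\to x_0$ (monotonicity together with $G^-(x_0)=G(x_0)=G^+(x_0)$), so for every $\epsilon>0$ there is $\gamma>0$ with $|x-x_0|<\gamma\ \Rightarrow\ R(G)\cap(\{x\}\times\R)\subset\{x\}\times(G(x_0)-\epsilon,G(x_0)+\epsilon)$.

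For the implication ``$\xrightarrow{H}$ $\Rightarrow$ the approximation condition'', given a continuity point $x_0$ of $G$ and $\epsilon>0$ I would simply take $x_{\delta,a}:=x_0$. Since $(x_0,F_{\delta,a}(x_0))\in R(F_{\delta,a})$, the hypothesis $d_H\to 0$ yields $(x',y')\in R(G)$ with $|x'-x_0|$ and $|y'-F_{\delta,a}(x_0)|$ both less than $\min\{\gamma,\epsilon/2\}$ once $\|(\delta,a)-(\delta_0,a_0)\|$ is small; then $|x'-x_0|<\gamma$ forces $|y'-G(x_0)|<\epsilon/2$ by the preliminary fact, hence $|F_{\delta,a}(x_0)-G(x_0)|<\epsilon$. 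This half is routine.

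The converse is the substantive direction; here one must bound $d_H$ from above knowing only the approximation condition at continuity points of $G$. First, for $\epsilon>0$ I would fix a finite set $Z$ of continuity points of $G$ which is $\epsilon$-dense in $[0,1]$ and such that $G$ has oscillation $<\epsilon$ on each gap of $Z$, with finitely many exceptional gaps of length $<\epsilon$ each isolating exactly one jump of $G$ of size $\ge\epsilon$; such a $Z$ exists because $G$ is monotone with $G(1)-G(0)=1$, so only finitely many jumps are $\ge\epsilon$ and the remaining variation splits into finitely many short, small-oscillation pieces. Applying the hypothesis at each point of $Z$ with a small tolerance and letting $\xi$ be the minimum of the finitely many resulting constants, one gets: for $\|(\delta,a)-(\delta_0,a_0)\|<\xi$, the curve $R(F_{\delta,a})$ contains a point within $O(\epsilon)$ of $(z,G(z))$ for every $z\in Z$. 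From this I would deduce the two one-sided inclusions. For ``$R(G)\subset$ $O(\epsilon)$-neighbourhood of $R(F_{\delta,a})$'': a point of $R(G)$ over a small-oscillation gap is $O(\epsilon)$-close to $(z,G(z))$ for a neighbouring $z\in Z$, hence to $R(F_{\delta,a})$; a point on a vertical segment of $R(G)$ at an isolated large jump $d_i$ is handled by noting that $R(F_{\delta,a})$ already has points near $(d_i^\flat,G(d_i^\flat))\approx(d_i,G^-(d_i))$ and near $(d_i^\sharp,G(d_i^\sharp))\approx(d_i,G^+(d_i))$, so by connectedness and monotonicity the curve $R(F_{\delta,a})$ meets every horizontal level in between within horizontal distance $O(\epsilon)$ of $d_i$, i.e.\ it passes $O(\epsilon)$-close to every point of that segment. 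For the reverse inclusion ``$R(F_{\delta,a})\subset$ $O(\epsilon)$-neighbourhood of $R(G)$'': given $(x,y)\in R(F_{\delta,a})$ pick $z_1<x\le z_2$ in $Z$ with $z_2-z_1=O(\epsilon)$; monotonicity of $F_{\delta,a}$ gives $F_{\delta,a}(z_1)\le y\le F_{\delta,a}(z_2)$, which are $O(\epsilon)$-close to $G(z_1)$ and $G(z_2)$, and the connected monotone sub-curve of $R(G)$ joining $(z_1,G(z_1))$ to $(z_2,G(z_2))$ attains every intermediate level, hence passes $O(\epsilon)$-close to $(x,y)$. Combining the two inclusions gives $d_H(R(F_{\delta,a})|_{[0,1]},R(G)|_{[0,1]})=O(\epsilon)$ for $\|(\delta,a)-(\delta_0,a_0)\|<\xi$, which is $H$-convergence.

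I expect the main obstacle to lie entirely in the converse, and within it in two coupled points: approximating a point in the \emph{interior} of a vertical segment of $R(G)$ — where no continuity point of $G$ has its graph value, so the estimate must come from connectedness and monotonicity of the completed graph $R(F_{\delta,a})$ rather than from the hypothesis directly — and upgrading the pointwise estimates to a single $\xi$ valid over the whole compact curve $R(G)|_{[0,1]}$, which is exactly what forces the reduction to the finite net $Z$ and the use of the finiteness of the set of large jumps of $G$. Everything else is bookkeeping with monotone functions.
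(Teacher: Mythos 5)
The paper does not prove this lemma: it is imported verbatim (as Lemma 3.3 of \cite{Rhodes1991}) and used as a black box in Appendix \ref{ACONT}, so there is no in-paper argument to compare yours against. On its own merits, your reconstruction is the right one and the forward implication is complete: taking $x_{\delta,a}=x_0$ and using that $G^{\pm}(x)\to G(x_0)$ at a continuity point $x_0$ (so that the fibre of $R(G)$ over any nearby $x$ lies in a small interval around $G(x_0)$) is exactly what is needed. The converse is also structured correctly — finite net $Z$ of continuity points with small-oscillation gaps plus finitely many short gaps isolating the large jumps, then two one-sided Hausdorff inclusions using connectedness and monotonicity of the completed graphs.

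One step in the second inclusion is stated imprecisely and would fail as written: you claim $F_{\delta,a}(z_1)\le y\le F_{\delta,a}(z_2)$ with $F_{\delta,a}(z_i)$ being $O(\epsilon)$-close to $G(z_i)$, but the hypothesis does not control $F_{\delta,a}$ \emph{at} the net points $z_i$ — it only produces some $x_{z_i}$ with $|x_{z_i}-z_i|<\epsilon'$ and $|F_{\delta,a}(x_{z_i})-G(z_i)|<\epsilon'$, and $F_{\delta,a}(z_1)$ itself has no lower bound from this (e.g.\ if $x_{z_1}>z_1$ and $F_{\delta,a}$ jumps in between). The repair is to sandwich $y$ by $F_{\delta,a}(x_{z_1})$ and $F_{\delta,a}(x_{z_2})$ instead, which requires choosing $z_1,z_2\in Z$ with $z_1<x-\epsilon'$ and $z_2>x+\epsilon'$ so that $x_{z_1}<x<x_{z_2}$ and monotonicity applies; this costs only an extra $O(\epsilon)$ in the horizontal estimate. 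A similar (smaller) implicit assumption is that the net points flanking a large jump $d_i$ are chosen close enough that $G(d_i^\flat)$ and $G(d_i^\sharp)$ approximate $G^{-}(d_i)$ and $G^{+}(d_i)$; you use this but should build it into the construction of $Z$. Finally, the boundary of the strip $0\le x\le 1$ needs a word, since the points $x_{z}$ produced by the hypothesis near $z=0$ or $z=1$ may fall outside $[0,1]$ and hence outside the restricted sets on which the Hausdorff metric is taken; periodicity handles this but it is not automatic. With these adjustments the argument is complete.
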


Now, we apply these results to the family of maps \eqref{LIFT}. So,
let $\lambda\in (0,1)$, $d\in (0,1-\lambda)$ and  $M=(1-\lambda-d,1)\times[0,1]$. Consider the family of functions $F_{\delta,a}:\R\to\R$ defined by
\[
F_{\delta,a}(x)=\lambda x+\delta+(1-\lambda)\ent{x}+d\theta_a(\{x\})\quad\forall x\in \R,	 
\]
for every $(\delta,a)\in M$. 
Since  $\lambda\in (0,1)$ and $d\in (0,1-\lambda)$,  any function $F_{\delta,a}$ belongs to the family $\mathcal{M}$ and is strictly increasing.

\begin{lemma}\label{ContRot}
For any $(\delta_0,a_0)\in M$, we have $F_{\delta,a}\xrightarrow{H} F_{\delta_0,a_0}$ as $(\delta,a) \to (\delta_0,a_0)$ and
\begin{equation}\label{CONTROTF}
\lim_{(\delta,a)\to (\delta_0,a_0)} \rho(F_{\delta,a})=\rho(F_{\delta_0,a_0}).
\end{equation}
\end{lemma}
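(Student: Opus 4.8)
The plan is to verify the two hypotheses of Theorem \ref{TA1}, namely that $F_{\delta,a}$ is $H$-convergent to $F_{\delta_0,a_0}$ as $(\delta,a)\to(\delta_0,a_0)$ and that $F_{\delta_0,a_0}$ is continuous or strictly increasing; then \eqref{CONTROTF} is immediate. As already observed just before the statement, every $F_{\delta,a}$ lies in $\mathcal M$ and is \emph{strictly increasing} (because its slope on each linearity interval is $\lambda>0$ and it has only upward jumps), so the second hypothesis holds for $F_{\delta_0,a_0}$ with nothing further to check. Thus the whole content of the proof is the $H$-convergence, which I would establish by verifying the criterion of Lemma \ref{LA2}.

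To apply Lemma \ref{LA2}, fix $(\delta_0,a_0)\in M$, let $x_0$ be a point of continuity of $F_{\delta_0,a_0}$ and let $\epsilon>0$. The discontinuities of $F_{\delta_0,a_0}$ are exactly the points of $\Z\cup(\Z+a_0)$, so $x_0\notin\Z\cup(\Z+a_0)$; in particular $\{x_0\}\ne a_0$ and $x_0\notin\Z$. I would simply take $x_{\delta,a}=x_0$ for all $(\delta,a)$. Then the first inequality in \eqref{Hconvergence} is trivial, and for the second one we compute
\[
F_{\delta,a}(x_0)-F_{\delta_0,a_0}(x_0)=(\delta-\delta_0)+d\bigl(\theta_a(\{x_0\})-\theta_{a_0}(\{x_0\})\bigr).
\]
Since $\{x_0\}\ne a_0$, the step function $a\mapsto\theta_a(\{x_0\})$ is locally constant near $a_0$: indeed $\theta_a(\{x_0\})=\theta_{a_0}(\{x_0\})$ whenever $|a-a_0|<|\{x_0\}-a_0|$. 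Hence for $(\delta,a)$ close enough to $(\delta_0,a_0)$ the second term vanishes and $|F_{\delta,a}(x_0)-F_{\delta_0,a_0}(x_0)|=|\delta-\delta_0|<\epsilon$. Choosing $\xi>0$ smaller than both $\epsilon$ and $|\{x_0\}-a_0|$ (and small enough that $(\delta,a)$ stays in $M$) gives the required $\xi$ in Lemma \ref{LA2}, so $F_{\delta,a}\xrightarrow{H}F_{\delta_0,a_0}$.

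With both hypotheses of Theorem \ref{TA1} verified, \eqref{CONTROTF} follows directly, completing the proof. There is essentially no genuine obstacle here: the only subtlety is recognizing that the map $a\mapsto\theta_a(z)$ is continuous at $a_0$ precisely when $z\ne a_0$, which is exactly the case at a continuity point $x_0$ of $F_{\delta_0,a_0}$, so the constant choice $x_{\delta,a}=x_0$ works and one never needs to track how the jump of $F_{\delta,a}$ at $\{x_0\}=a_0$ moves. I would present the argument in this order: recall $F_{\delta,a}\in\mathcal M$ and is strictly increasing; verify the Lemma \ref{LA2} criterion via the displayed identity and the local constancy of $\theta_a(\{x_0\})$; conclude by Theorem \ref{TA1}.
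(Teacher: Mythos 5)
Your proposal is correct and follows essentially the same route as the paper: both verify the criterion of Lemma \ref{LA2} with the constant choice $x_{\delta,a}=x_0$, using that $\{x_0\}\neq a_0$ at a continuity point so that $\theta_a(\{x_0\})=\theta_{a_0}(\{x_0\})$ for $a$ near $a_0$, and then invoke Theorem \ref{TA1} via the strict monotonicity of $F_{\delta_0,a_0}$. No gaps.
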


\begin{proof} Let $(\delta_0,a_0)\in M$, $x_0\in\R$ be a continuity point of  $F_{\delta_0,a_0}$ and $\epsilon>0$. As $F_{\delta_0,a_0}$ is discontinuous at $a_0$, we have $\{x_0\}\neq a_0$ and $\xi:=\min \{|\{x_0\}-a_0|/2, \epsilon\}>0$. Let $(\delta,a)\in M$ be such that $\|(\delta,a)-(\delta_0,a_0)\|:=\max \{|\delta-\delta_0|, |a-a_0|\}<\xi$. Then $\{x_0\}<\min \{a,a_0\}$ or $\{x_0\}>\max \{a,a_0\}$, which implies $\theta_a(\{x_0\})=\theta_{a_0}(\{x_0\})$. Thus, $|F_{\delta,a}(x_0)-F_{\delta_0,a_0}(x_0)|=|\delta-\delta_0|<\epsilon$. Taking $x_{\delta,a}=x_0$ we obtain \eqref{Hconvergence} and it follows that $F_{\delta,a}\xrightarrow{H} F_{\delta_0,a_0}$ as $(\delta,a) \to (\delta_0,a_0)$. Now, as $F_{\delta_0,a_0}$ is strictly increasing, 
the limit \eqref{CONTROTF}  follows from Theorem \ref{TA1}.
\end{proof}

\subsection{Semi-conjugacy for irrational rotation numbers}\label{SEMICONJ}

\begin{lemma}\label{SEMICONJ0}
Let $\rho\in (0,1)\setminus \Q$, $\alpha\in [0,1]$ and suppose $\delta$ and $a$ satisfy \eqref{CONDTHIR}. Denote $f=f_{\delta,a}$ the map \eqref{PROJ} and $\phi=\phi_{\delta,\rho,\alpha}$. Let $\tilde\varphi:[\phi(0),\phi(1^-))\to [0,1)$ be defined by $\tilde\varphi(x)=\inf \{y\in [0,1): \phi(y)\ge x\}$ for all $x\in [\phi(0),\phi(1^-))$. Then $\tilde\varphi$ is continuous both with the euclidean and the circle topology, and $\varphi:=\tilde\varphi|_\Lambda:\Lambda \to [0,1)$ is a topological semi-conjugacy from $f|_\Lambda$ to $R_\rho$.
\end{lemma}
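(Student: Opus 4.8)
The plan is to recognize $\tilde\varphi$ as the generalized inverse (the ``gap-collapsing map'') of the strictly increasing function $\phi=\phi_{\delta,\rho,\alpha}$ restricted to $[0,1)$, and to transport the intertwining relation \eqref{CONJUGf} through it. First I would collect the facts available under hypothesis \eqref{CONDTHIR} with $\rho\in(0,1)\setminus\Q$: by Proposition~\ref{PHISCR} the map $\phi$ is right continuous and strictly increasing, by Theorem~\ref{CODDING2} we have $\phi([0,1))=\Lambda$, $\phi([0,1))\subset[0,1)$ and $f\circ\phi=\phi\circ R_\rho$ on $[0,1)$, and by \eqref{TPHI} one has $\phi(y+1)=\phi(y)+1$. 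Strict monotonicity gives $\phi(0)\le\phi(y)<\phi(1^-)$ for all $y\in[0,1)$, so $\Lambda\subset[\phi(0),\phi(1^-))$, the domain of $\tilde\varphi$ is a nondegenerate interval, and $\varphi:=\tilde\varphi|_\Lambda$ is well posed. For $x\in[\phi(0),\phi(1^-))$ the set $\{y\in[0,1):\phi(y)\ge x\}$ is nonempty (since $\phi(y)\to\phi(1^-)>x$ as $y\to1^-$) and, by right continuity of $\phi$, the infimum defining $\tilde\varphi(x)$ is attained; one checks easily $0\le\tilde\varphi(x)<1$, so $\tilde\varphi$ indeed maps into $[0,1)$.

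The core identity is $\tilde\varphi(\phi(y))=y$ for every $y\in[0,1)$: since $\phi$ is strictly increasing, $\phi(y')\ge\phi(y)$ iff $y'\ge y$, whence $\min\{y':\phi(y')\ge\phi(y)\}=y$. This immediately yields surjectivity of $\tilde\varphi$ (and of $\varphi$, because $y=\varphi(\phi(y))$ with $\phi(y)\in\Lambda$) onto $[0,1)$. For continuity I would argue that $\tilde\varphi$ is nondecreasing (enlarging $x$ shrinks $\{y:\phi(y)\ge x\}$), so its only possible discontinuities are jumps; but a nondecreasing function whose image is the whole interval $[0,1)$ omits no value and hence can have no jump, so $\tilde\varphi$ is continuous for the euclidean topology, including one-sided continuity at the left endpoint $\phi(0)$ (where $\tilde\varphi(\phi(0))=0$ and surjectivity forces the right limit to be $0$). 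Continuity for the circle topology on $[0,1)$ then follows since the canonical bijection $[0,1)_{\mathrm{eucl}}\to[0,1)_{\mathrm{circ}}$ is continuous; equivalently, preimages under $\tilde\varphi$ of wrap-around basic open sets $[0,a)\cup(b,1)$ have the form $[\phi(0),c)\cup(d,\phi(1^-))$, open in the domain, because the ``seam'' of the circle lies in the gap $[\phi(1^-),1)\cup[0,\phi(0))$, outside the domain of $\tilde\varphi$. Restricting gives continuity of $\varphi$ on $\Lambda$.

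It remains to check the semi-conjugacy relation and invariance. From $\phi([0,1))=\Lambda$ and \eqref{CONJUGf}, $f(\Lambda)=f(\phi([0,1)))=\phi(R_\rho([0,1)))=\phi([0,1))=\Lambda$, so $f|_\Lambda:\Lambda\to\Lambda$. Given $x\in\Lambda$, write $x=\phi(y)$ with $y\in[0,1)$; then $\varphi(x)=\tilde\varphi(\phi(y))=y$, while $f(x)=f(\phi(y))=\phi(R_\rho(y))=\phi(\{y+\rho\})$ with $\{y+\rho\}\in[0,1)$, so $\varphi(f(x))=\tilde\varphi(\phi(\{y+\rho\}))=\{y+\rho\}=R_\rho(y)=R_\rho(\varphi(x))$. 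Hence $\varphi\circ f|_\Lambda=R_\rho\circ\varphi$, which together with the continuity and surjectivity established above shows that $\varphi$ is a topological semi-conjugacy from $f|_\Lambda$ to $R_\rho$.

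I do not expect a real obstacle in this lemma. The only two points requiring care are: (i) that $\phi$ is \emph{strictly} increasing — this is precisely where the irrationality of $\rho$ is used, through Proposition~\ref{PHISCR} — since it is what makes $\tilde\varphi$ a genuine left inverse of $\phi$ rather than merely collapsing flat parts; and (ii) the elementary but slightly fiddly verification that a monotone surjection onto an interval is continuous, to be carried out carefully at the endpoint $\phi(0)$ and for the circle topology. Everything else is a direct transfer of \eqref{CONJUGf} along $\tilde\varphi$.
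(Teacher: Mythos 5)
Your proof is correct and follows essentially the same route as the paper's: identify $\tilde\varphi$ as the generalized inverse of the right-continuous, strictly increasing $\phi$ (strictness coming from $\rho\notin\Q$ via Proposition~\ref{PHISCR}), establish $\tilde\varphi(\phi(y))=y$, deduce continuity from monotone surjectivity (with the same explicit preimage check for the circle topology), and transport \eqref{CONJUGf} through $\tilde\varphi$ using $\Lambda=\phi([0,1))$ from Theorem~\ref{CODDING2}. No gaps.
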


\begin{proof}
Since $\phi$ is increasing and right continuous by Proposition \ref{PHISCR}, the function $\tilde\varphi$ is the generalized inverse of $\phi|_{[0,1)}$. It follows that  $\tilde\varphi$ is continuous with the euclidean topology, is non-decreasing, satisfies $\tilde\varphi([\phi(0),\phi(1^-)))=[0,1)$  and  $\tilde\varphi(\phi(y))=y$ for all $y\in [0,1)$. The map $\tilde\varphi$ is also continuous with the circle topology, as
 for any $a<b$ in $(0,1)$ we have $\tilde\varphi^{-1}([0,a)\cup (b,1))=[\phi(0),\phi(a^-))\cup (\phi(b),\phi(1^-))$ and $\tilde\varphi^{-1}((a,b))=(\phi(a),\phi(b^-))$. 
As $\tilde\varphi(\phi([0,1)))=[0,1)$ and $\phi([0,1))\subset [\phi(0),\phi(1^-))$, it follows that
$\tilde \varphi|_{\phi([0,1))}$ is continuous and surjective. By Theorem \ref{CODDING2} we have $\Lambda=\phi([0,1))$, so  it only remains to prove that $\varphi=\tilde\varphi|_\Lambda$ satisfies $\varphi\circ f|_{\Lambda}=R_\rho\circ \varphi$. Let $x\in \Lambda$ and $y\in [0,1)$ be such that $x=\phi(y)$. Using \eqref{CONJUGf}, we obtain $(\varphi\circ f|_\Lambda)(x)=\varphi(f(\phi(y)))=\varphi(\phi(R_\rho(y)))=R_\rho(y)=R_\rho(\varphi(\phi(y)))=(R_\rho\circ \varphi)(x)$. It follows that $\varphi\circ f|_{\Lambda}=R_\rho\circ \varphi$.
\end{proof}

\begin{lemma}\label{SEMICONJ1}
Let $\rho\in (0,1)\setminus \Q$, $\alpha\in [0,1]$ and suppose $\delta$ and $a$ satisfy \eqref{CONDTHIR}. Denote $f=f_{\delta,a}$ the map \eqref{PROJ}. Let $D:=\{k\in \Z^*: k\rho\in \Z+\alpha\}$. Whenever $D\neq\emptyset$, suppose $a\neq a(\delta,\rho^+,\alpha)$ if $D\cap\Z^+\neq\emptyset$, and suppose  $\delta\neq \delta(\rho,\alpha)$ if $D\cap\Z^-\neq\emptyset$. Endow $[0,1)$ with the circle topology and let $\overline \Lambda$ be the closure of $\Lambda$ in the 
circle. Then there exists a topological semi-conjugacy $\bar\varphi$ from $f|_{\overline \Lambda}$ to $R_\rho$. 
\end{lemma}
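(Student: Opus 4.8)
The plan is to extend the semi-conjugacy $\varphi$ from Lemma~\ref{SEMICONJ0} (defined on $\Lambda$ with the circle topology) to the closure $\overline\Lambda$, and check that the extension remains a topological semi-conjugacy. First I would recall from Lemma~\ref{SEMICONJ0} that $\tilde\varphi:[\phi(0),\phi(1^-))\to[0,1)$ is the generalized inverse of $\phi|_{[0,1)}$, is non-decreasing, surjective, and continuous for both the Euclidean and the circle topologies. Since $\rho\notin\Q$, Lemma~\ref{PHICANT} tells us that $\overline{\phi([0,1))}=\overline\Lambda$ is a Cantor set, and under the hypotheses on $\delta$, $a$ (via Lemma~\ref{DMDP} and Lemma~\ref{PROPPHI0}/\ref{PROPPHIA}) we get $\phi(0^-)=\phi(0)=0$ and $\phi(1^-)=1$, so that $\overline\Lambda\subset[0,1)$ and $0,1$ are (identified) endpoints of the Cantor set in the circle. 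Thus $\overline\Lambda$ is a compact subset of the domain $[\phi(0),\phi(1^-))$ of $\tilde\varphi$ once we view things on the circle, and we can set $\bar\varphi:=\tilde\varphi|_{\overline\Lambda}$.

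The key steps are then: (1) show $\bar\varphi$ is continuous on $\overline\Lambda$ with the circle topology — this is immediate since $\tilde\varphi$ is continuous there and $\overline\Lambda$ is a subspace; (2) show $\bar\varphi(\overline\Lambda)=[0,1)$ — here we use that $\bar\varphi(\Lambda)=[0,1)$ already by Lemma~\ref{SEMICONJ0} and $\Lambda\subset\overline\Lambda$, so surjectivity is inherited; (3) show $f(\overline\Lambda)\subset\overline\Lambda$, so that $f|_{\overline\Lambda}$ makes sense — this follows because $\Lambda=\bigcap_n f^n([0,1))$ is not quite forward-invariant as stated, but $f(\Lambda)\subset\Lambda$ holds (from $\Lambda=\phi([0,1))$ and \eqref{CONJUGf}: $f(\phi(y))=\phi(R_\rho(y))\in\phi([0,1))$), and then continuity of $f$ restricted to each continuity interval together with the structure of the gaps (the discontinuity points $a$, $\eta$ lie in gaps of $\phi$ by \eqref{CODES}, hence are not in $\overline\Lambda$, or are approached only from one side) gives $f(\overline\Lambda)\subset\overline{f(\Lambda)}\subset\overline\Lambda$; (4) verify the intertwining relation $\bar\varphi\circ f|_{\overline\Lambda}=R_\rho\circ\bar\varphi$ — on the dense subset $\Lambda$ it holds by Lemma~\ref{SEMICONJ0}, and since $f|_{\overline\Lambda}$, $\bar\varphi$ and $R_\rho$ are all continuous on $\overline\Lambda$ (with the circle topology $R_\rho$ is a homeomorphism), the relation extends to all of $\overline\Lambda$ by density and continuity.

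The main obstacle I expect is step (3), controlling $f$ on $\overline\Lambda$: one must argue that no point of $\overline\Lambda$ is an ``interior'' discontinuity point of $f$ from both sides, so that $f|_{\overline\Lambda}$ is well-defined and continuous. This is precisely where the extra hypotheses of the lemma enter — the conditions $a\neq a(\delta,\rho^+,\alpha)$ when $D\cap\Z^+\neq\emptyset$ and $\delta\neq\delta(\rho,\alpha)$ when $D\cap\Z^-\neq\emptyset$ ensure, via Lemma~\ref{DMDP} and Lemma~\ref{PHICANT}, that the discontinuities $a$ and the endpoint $0\sim 1$ fall strictly inside gaps of $\phi$ (not at gap boundaries that belong to $\overline\Lambda$), so that a neighbourhood of each point of $\overline\Lambda$ in the circle misses these discontinuities and $f$ is locally a homeomorphism there. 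Once this is settled, the closure of $\phi([0,1))$ is mapped into itself by $f$, and the semi-conjugacy passes to the closure by the standard density argument. I would also remark that $\bar\varphi$ need not be injective — it collapses each closed gap of $\overline\Lambda$ to a point — which is why the conclusion is only a semi-conjugacy and not a conjugacy, consistent with the Cantor nature of $\overline\Lambda$ versus the connected circle $[0,1)$.
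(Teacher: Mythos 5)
There is a genuine gap: your argument silently assumes you are always in the situation $\phi(0)=0$ and $\phi(1^-)=1$, and this is false in one of the two cases the hypotheses are designed to cover. By Lemma \ref{LINTIR}, if $D\cap\Z^-\neq\emptyset$ then $\delta(\rho^-,\alpha)<\delta(\rho,\alpha)$, and the hypothesis $\delta\neq\delta(\rho,\alpha)$ together with \eqref{PHI0PHI0M} gives $\phi(0^-)<0$, hence $\phi(1^-)=\phi(0^-)+1<1$ (while $\phi(0)\geq 0$, possibly $>0$). So in this case it is \emph{not} true that $0$ and $1$ are the endpoints of the Cantor set; rather, the hypothesis pushes $\overline\Lambda$ into $[\phi(0),\phi(1^-)]\subsetneq[0,1)$ so that the circle's base point causes no trouble. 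Your claim that ``$\overline\Lambda$ is a compact subset of the domain $[\phi(0),\phi(1^-))$ of $\tilde\varphi$'' then fails concretely: the point $\phi(1^-)$ is an accumulation point of $\Lambda=\phi([0,1))$ (take $\phi(y_n)$ with $y_n\to 1^-$) and does not lie in $\Lambda$ since $\phi$ is strictly increasing, so $\phi(1^-)\in\overline\Lambda\setminus[\phi(0),\phi(1^-))$ and your $\bar\varphi:=\tilde\varphi|_{\overline\Lambda}$ is simply undefined there. The correct statement is only available when $D\cap\Z^-=\emptyset$, where $\delta(\rho^-,\alpha)=\delta(\rho,\alpha)$ forces $\delta=\delta(\rho,\alpha)$, $\phi(0)=0$, $\phi(1^-)=1$, and $\tilde\varphi$ is defined on the whole circle; there your argument coincides with the paper's.

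To repair the missing case one must extend the generalized inverse to the \emph{closed} interval $[\phi(0),\phi(1^-)]$ by setting its value at $\phi(1^-)$ equal to $1$ and then compose with the projection $\pi(x)=\{x\}$, so that $\phi(1^-)$ is sent to $0$; this is what makes $\bar\varphi$ well defined, continuous for the circle topology, and compatible with the intertwining relation ($f(\phi(1^-))=\phi(\rho^-)$ must be sent to $\rho=R_\rho(0)$, not to $R_\rho(1)$). Your discussion of step (3) — that the hypotheses $a\neq a(\delta,\rho^+,\alpha)$ and $\delta\neq\delta(\rho,\alpha)$ ensure the circle discontinuities $a$ and $0$ of $f$ are not accumulation points of $\Lambda$ from the wrong side, so $f$ is continuous at every point of $\overline\Lambda\setminus\Lambda$ — is essentially the right idea and matches the paper; but note that in the case $D\cap\Z^-\neq\emptyset$ the role of the hypothesis $\delta\neq\delta(\rho,\alpha)$ is to guarantee either $0=\phi(0)\in\Lambda$ or $0\notin\overline\Lambda$ (because $\overline\Lambda\subset[\phi(0),\phi(1^-)]\subset(0,1)$ when $\phi(0)>0$), not to place $0$ at a Cantor-set endpoint. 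The density-and-continuity argument in your step (4) is fine once the domain issue is fixed.
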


\begin{proof}{\bf Case 1:} Suppose that $D=\emptyset$ or $D\cap \Z^-=\emptyset$. Then $\delta(\rho^-,\alpha)=\delta(\rho,\alpha)$ by Lemma \ref{LINTIR}. So $\phi(0)=0$ and $\phi(1^-)=\phi(0^-)+1=1$ by \eqref{PHI0PHI0M}. Then, the function $\tilde \varphi$ in Lemma \ref{SEMICONJ0} is defined in the whole circle $[0,1)$, in particular it is well defined in $\overline \Lambda$. As $\tilde\varphi$ is continuous and $\tilde\varphi|_{\Lambda}$ is a semi-conjugacy from $f|_\Lambda$ to $R_\rho$ by Lemma \ref{SEMICONJ0}, it follows that $\bar\varphi:=\tilde\varphi|_{\overline\Lambda}:\overline \Lambda\to [0,1)$ is continuous, surjective and  $\bar\varphi\circ f|_{\Lambda}=R_\rho\circ \bar\varphi|_{\Lambda}$. 

Let $x\in \overline \Lambda\setminus \Lambda$. Note that $0=\{\phi(1^-)\}$ is an accumulation point of $\Lambda$ such that $0=\phi(0)\in \Lambda$, since $\Lambda=\phi([0,1))$ by Theorem \ref{CODDING2}. So $x\neq 0$. As $\phi$ is increasing and right continuous, it follows that $x= \phi(y^-)$ for some $y\in [0,1)$ with $\phi(y^-)<\phi(y)$, that is, $y$ is a point of discontinuity of $\phi$. By definition, $f$ is discontinuous at $0$ and $a$ with the circle topology, so it is continuous at $x$ unless $x=a$. If $D=\emptyset$, then $a(\delta,\rho^+,\alpha)=a=a(\delta,\rho,\alpha)$ by Lemma \ref{LINTIR} and \eqref{CONDTHIR}. So $\phi(\alpha^-)=a=\phi(\alpha)$ by \eqref{PHIAPHIAM} and it follows that $x\neq a$. If $D\neq\emptyset$, then $D\cap \Z^+\neq \emptyset$ and $\phi(\alpha^-)=a(\delta,\rho^+,\alpha)<a(\delta,\rho,\alpha)=\phi(\alpha)$ by Lemma \ref{LINTIR} and \eqref{PHIAPHIAM}. Since $a\neq a(\delta,\rho^+,\alpha)$ by hypothesis, we deduce also that $x\neq a$. Therefore, $f$ is continuous at $x$. 

Let $(x_n)_{n\in \N}\subset \Lambda$ be a sequence converging to $x$. As  $(f(x_n))_{n\in \N}\subset \Lambda$ we have $f(x)=\lim_{n\to \infty} f(x_n)\in \overline \Lambda$. Using the continuity of $\bar\varphi$ and $R_\rho$, and the relation $\bar\varphi\circ f|_{\Lambda}=R_\rho\circ \bar\varphi|_{\Lambda}$, we obtain $(\bar\varphi\circ f)(x)=\lim_{n\to \infty} (\bar\varphi\circ f)(x_n)=\lim_{n\to \infty} (R_\rho\circ \bar\varphi)(x_n)=(R_\rho\circ \bar\varphi)(x)$. It follows that, $\bar\varphi$ is a semi-conjugacy from $f|_{\overline\Lambda}$ to $R_\rho$.

\medskip
\noindent{\bf Case 2:} Suppose that $D\cap \Z^-\neq \emptyset$. Then  $\delta(\rho^-,\alpha)<\delta(\rho,\alpha)$ by Lemma \ref{LINTIR}. So $0\le \phi(0)$ and $\phi(1^-)=\phi(0^-)+1< 1$ by \eqref{PHI0PHI0M}, \eqref{CONDTHIR} and the hypothesis $\delta\neq \delta(\rho,\alpha)$. Then $[\phi(0),\phi(1^-)]\subset [0,1)$ and the induced euclidean and circle topology in $[\phi(0),\phi(1^-)]$ are the same.
Consider the function $\varphi':[\phi(0),\phi(1^-)]\to [0,1]$ defined by $\varphi'(x)=\inf\{y\in [0,1]: \phi(y)\ge x \}$ for all $x\in [\phi(0),\phi(1^-)]$. Note that $\varphi'|_{[\phi(0),\phi(1^-))}=\tilde \varphi$ and $\varphi'(\phi(1^-))=1$. 
This function $\varphi'$ is also continuous in the euclidean topology.
Let us consider the projection $\pi:\R\to [0,1)$ defined by $\pi(x)=\{x\}$ for all $x\in \R$. Then $\pi\circ \varphi':[\phi(0),\phi(1^-)]\to [0,1)$ is continuous in the circle topology. In particular, $\bar\varphi:=\pi\circ \varphi'|_{\overline \Lambda}:\overline \Lambda\to [0,1)$ is continuous. As $(\pi\circ \varphi')|_{\Lambda}=\pi\circ \tilde \varphi=\tilde\varphi$, it follows that  $\bar\varphi$ is surjective and $\bar\varphi\circ f|_\Lambda=R_\rho\circ \bar\varphi|_\Lambda$ by Lemma \ref{SEMICONJ0}.

Let $x\in \overline \Lambda\setminus \Lambda$. Since $D\cap \Z^-\neq \emptyset$ implies $D\cap \Z^+=\emptyset$, it follows that $\phi(\alpha^-)=a=\phi(a)\in \Lambda$. 
If $\phi(0)$, then $0\in\Lambda$. If $\phi(0)>0$, then $\overline{\Lambda}\subset[\phi(0),\phi(1^-)]\subset(0,1)$ and we have $0\notin\overline{\Lambda}$. Therefore, $f$ is continuous at $x$. So we can conclude as in Case $1$.
\end{proof}

\bigskip
\noindent{\bf Acknowledgments:} This work was supported by the project MATH-AmSud TOMCAT 22-MATH-10 and by the Chilean ANID founds FONDECYT Reg. 1230569 and BECAS Doctorado Nacional 2024-21240193.


\begin{thebibliography}{50}

\bibitem{AB} P. Alessandri and V. Berth\'e, {\it Three distance theorems and combinatorics on words}, L'Enseignement Math\'ematique {\bf 44} (1998) 103-132. 

\bibitem{BS20} J. Bowman and S. Sanderson, {\it Angels' staircases, Sturmian sequences, and trajectories on homothety surfaces}, Journal of Modern Dynamics {\bf 16} (2020) 109-153. 

\bibitem{B06} J. Br\'emont, {\it Dynamics of injective quasi-contractions}, Ergodic. Theory Dynam. Systems {\bf 26} (2006) 19-44.

\bibitem{Brette} R. Brette, {\it Rotation Numbers of Discontinuous Orientation-Preserving Circle Maps}, Set-Valued Analysis {\bf 11} (2003) 359-371. 

\bibitem{B93} Y. Bugeaud, {\it Dynamique de certaines applications contractantes, lin\'eaires par morceaux, sur [0, 1)}, C. R. Acad. Sci. Paris S\'er I Math. {\bf 317} (1993) 575-578.

\bibitem{BC99} Y. Bugeaud and J.-P. Conze, {\it Calcul de la dynamique de transformations lin\'eaires contractantes mod 1 et arbre de Farey}, Acta Arith. {\bf 88} (1999) 201-218.

\bibitem{BuKLN} Y. Bugeaud, D. H. Kim, M. Laurent and  A. Nogueira, {\it On the diophantine nature of the elements of Cantor sets arising in the dynamics of contracted rotations}, The Annali della Scuola Normale di Pisa - Classe di Scienze (2020).

\bibitem{CCG20} A. Calder\'on, E. Catsigeras and P. Guiraud, {\it A spectral decomposition of the attractor of piecewise-contracting maps of the interval}, Ergodic Theory and Dynamical Systems, {\bf 41} (2021) 1940-1960.

\bibitem{CGM20} E. Catsigeras, P. Guiraud,  and A. Meyroneinc, {\it Complexity of injective piecewise contracting interval maps}, Ergodic Theory Dynamical Systems {\bf 40} (2020) 64-88.

\bibitem{Coutinho1999} R. Coutinho, Din\^amica simb\'olica linear, Ph.D Thesis, Instituto Superior T\'ecnico, Universidade T\'ecnica de Lisboa, 1999.

\bibitem{CFLM} R. Coutinho, B. Fernandez, R. Lima and A. Meyroneinc, {\it Discrete time piecewise affine models of genetic regulatory networks}, J. Math. Biol. {\bf 52} (2006) 524-570.

\bibitem{FC91} O. Feely and L. O. Chua, {\it The effect of integrator leak in $\Sigma-\Delta$ modulation}, IEEE Transactions on Circuits and Systems {\bf 38} (1991) 1293-1305.

\bibitem{FP20} F. Fernandes and B. Pires, {\it A switched server system semiconjugate to a minimal interval exchange}, European Journal of Applied Mathematics {\bf 31} (2020) 682-708. 

\bibitem{G25} J. P. Gaiv\~{a}o, {\it Hausdorff Dimension of the Exceptional Set of Interval Piecewise Affine Contractions}, Qual. Theory Dyn. Syst. {\bf 24} (2025)

\bibitem{GP} J. P. Gaiv\~{a}o and B. Pires, {\it Multi-dimensional piecewise contractions are asymptotically periodic}, preprint (2024), arXiv:2405.08444. 

\bibitem{GN22} J. P. Gaiv\~{a}o  and A. Nogueira, {\it Dynamics of piecewise increasing contractions}, Bulletin of the London Mathematical Society {\bf 54} (2022) 482-500.

\bibitem{GAK17} A. Granados, L. Alsed\`a and M. Krupa, {\it The Period Adding and
Incrementing Bifurcations: From Rotation Theory to Applications}, SIAM Review 
{\bf 59} (2017) 225-292.

\bibitem{JL25} S. Jain and C. Liverani, {\it Piecewise contractions}, Ergodic Theory and Dynamical Systems {\bf 45} (2025) 1503-1540.


\bibitem{JO19} S. Janson and A. \"{O}berg, {\it A piecewise contractive dynamical system and Phragm\'en's election method}, Bull. Soc. Math. France {\bf 147} (2019) 395-441.

\bibitem{LN18} M. Laurent and A. Nogueira, {\it Rotation number of contracted rotations}, J. Mod. Dyn., {\bf 12} (2018) 175-191.

\bibitem{LN21} M. Laurent and A. Nogueira, {\it  Dynamics of 2-interval piecewise affine maps and Hecke-Mahler series}, J. Mod. Dyn. {\bf 17} (2021) 33-63.


\bibitem{MSG} K. Matsuyama, I. Sushko and L. Gardini, {\it A piecewise linear model of credit traps and credit cycles: a complete characterization}, Decisions Econ. Finan. {\bf 41} (2018) 119-143.

\bibitem{NP15} A. Nogueira and B. Pires, {\it Dynamics of piecewise contractions of the interval}, Ergodic Theory Dynam. Systems, {\bf 35} (2015), 2198-2215.

\bibitem{NPR18} A. Nogueira, B. Pires and R. A. Rosales, {\it Topological dynamics of piecewise $\lambda$-affine maps}, Ergodic Theory Dynam. Systems, {\bf 38} (2018), 1876-1893.

\bibitem{NS72} J. Nagumo and S. Sato, {\it On a response characteristic of a mathematical neuron model}, Kybernetik, {\bf 10} (1972), 155-164.


\bibitem{P19} B. Pires, {\it Symbolic dynamics of piecewise contractions}, Nonlinearity {\bf 32} (2019) 4871-4889.


\bibitem{P20} B. Pires, {\it Piecewise Contractions and $b$-adic Expansions}
C. R. Math. Rep. Acad. Sci. {\bf 42} (2020) 1-9.

\bibitem{RT86} F. Rhodes and C. Thompson, {\it Rotation Numbers for Monotone Functions on the Circle}, J. London Math. Soc. (2) {\bf 34} (1986) 360-368. 

\bibitem{Rhodes1991} F. Rhodes and C. Thompson, {\it Topologies and rotation numbers for families of monotone functions on the circle}, J. London Math. Soc. (2) {\bf 43} (1991) 156-170.


\end{thebibliography}
\end{document}